\documentclass[11pt]{article}

\usepackage{amssymb}
\usepackage{amsmath}
\usepackage{theorem}
\usepackage{epsfig}
\usepackage{verbatim}
\usepackage{graphicx}
\usepackage{subfigure}

\usepackage{color}

\textwidth 155mm \evensidemargin 0.5cm \oddsidemargin 0.5cm
\textheight 21cm 

\newtheorem{theorem}{Theorem}[section]
\newtheorem{lemma}[theorem]{Lemma}

\newtheorem{proposition}[theorem]{Proposition}
\newtheorem{corollary}[theorem]{Corollary}

\newtheorem{defi}[theorem]{Definition}
\newtheorem{rem}[theorem]{Remark}

\newcommand{\T}{\mathbb{T}}
\newcommand{\F}{\mathcal{F}}
\newcommand{\D}{\displaystyle}

\newcommand{\grad}{\nabla}

\newcommand{\dpt}{\partial_t}
\newcommand{\da}{\partial_{\alpha}}

\newcommand{\la}{\Lambda}

\newcommand{\al}{\alpha}
\newcommand{\ep}{\varepsilon}

\newcommand{\ztil}{\tilde{z}}
\newcommand{\dpa}{\partial^{\bot}_{\alpha}}

\newcommand{\pa}{\partial}

\newcommand{\vp}{\varphi}
\newcommand{\om}{\omega}
\newcommand{\si}{\sigma}
\newcommand{\be}{\beta}
\newcommand{\g}{\gamma}

\newenvironment{proof}{\begin{trivlist} \item[] {\em Proof:}}{\hfill $\Box$
                       \end{trivlist}}

\newenvironment{proofthm}[1]{\begin{trivlist} \item[] {\em Proof of Theorem \ref{#1}:}}{\hfill $\Box$
                       \end{trivlist}}

\hyphenation{vi-ce-ver-sa}

\makeatletter
\renewcommand*\l@section{\@dottedtocline{1}{0em}{1.5em}}
\renewcommand*\l@subsection{\@dottedtocline{2}{1.5em}{2.3em}}
\renewcommand*\l@subsubsection{\@dottedtocline{3}{3.8em}{3.7em}}
\makeatother

\numberwithin{equation}{section}

\begin{document}

\title{Finite time singularities for the free boundary incompressible Euler equations}
\date{\today}
\author{ Angel Castro, Diego C\'ordoba, Charles Fefferman, \\ Francisco Gancedo and Javier G\'omez-Serrano}

\maketitle


\begin{abstract}

In this paper, we prove the existence of smooth initial data for the 2D free boundary incompressible Euler equations (also known for some particular scenarios as the water wave problem), for which the smoothness of the interface breaks down in finite time into a splash singularity or a splat singularity.

\vskip 0.3cm
\textit{Keywords: Euler, incompressible, blow-up, water waves, splash, splat.}

\end{abstract}

\tableofcontents

\maketitle


\section{Introduction}
\label{Section1Introduction}

\subsection{Statement of the Problem}

In this paper, we prove that water waves in two space dimensions can form a singularity in finite time by either of two simple, natural scenarios, which we call a ``splash'' and a ``splat''.

The water wave equations (or 2D incompressible free boundary Euler equations) describe a system consisting of a water region $\Omega(t) \subset \mathbb{R}^{2}$ and a vacuum region $\mathbb{R}^{2} \setminus \Omega(t)$, evolving as a function of time $t$, and separated by a smooth interface

$$ \partial \Omega(t) = \{z(\al,t) : \al \in \mathbb{R}\}.$$

We write $\Omega^{1}(t) = \mathbb{R}^{2} \setminus \Omega(t)$, $\Omega^{2}(t) = \Omega(t)$. The fluid velocity $v(x,y,t) \in \mathbb{R}^{2}$ and the pressure $p(x,y,t) \in \mathbb{R}$ are defined for $(x,y) \in \Omega(t)$. The fluid is assumed to be incompressible and irrotational
\begin{equation}
\label{Charlie11}
\nabla \cdot v = 0, \quad \text{curl } v = 0 \quad \text{ in } \Omega(t),
\end{equation}

and to satisfy the 2D Euler equation
\begin{equation}
\label{Charlie12}
[\partial_t + (v \cdot \nabla_{x})]v(x,y,t) = -\nabla p(x,y,t) - (0,g) \quad \text{ in } \Omega(t),
\end{equation}

where $g > 0$ is a constant, and the term $(0,g)$ takes gravity into account.

Neglecting surface tension, we assume that the pressure satisfies
\begin{equation}
\label{Charlie13}
p = p^*(t) \quad \text{ at } \partial \Omega(t), \text{ where $p^*(t)$ is a function of $t$ alone.}
\end{equation}

Finally, we assume that the interface moves with the fluid, i.e.,
\begin{equation}
\label{Charlie14}
\partial_t z(\al,t) = v(z(\al,t),t) + c^{\#}(\al,t) \da z(\al,t),
\end{equation}

where $c^{\#}(\al,t)$ is an arbitrary smooth function of $\al,t$ (the choice of $c^{\#}$ affects only the parametrization of $\partial \Omega(t)$) and $z(\al,t) = (z_1(\al,t), z_2(\al,t))$.

At an initial time $t_0$, we specify the fluid region $\Omega(t_0)$ and the velocity $v(x,y,t_0)$ $((x,y) \in \Omega(t_0))$, subject to the constraint \eqref{Charlie11}. We then solve equations (\ref{Charlie11}-\ref{Charlie14}) with the given initial conditions, and we ask whether a singularity can form in finite time from an initially smooth velocity $v(\cdot,t_0)$ and fluid interface $\partial \Omega(t_0)$.

The water wave problem comes in three flavors:
\begin{itemize}
\item[$\bullet$] Asymptotically Flat: We may demand that $z(\al,t) - (\al,0) \to 0$ as $\alpha \to \pm \infty$.
\item[$\bullet$] Periodic: We may instead demand that $z(\al,t) - (\al,0)$ is a $2\pi$-periodic function of $\al$.
\item[$\bullet$] Compact: Finally, we may demand that $z(\al,t)$ is a $2\pi$-periodic function of $\al$.
\end{itemize}

To obtain physically meaningful solutions in the Asymptotically Flat and Periodic flavors, we demand that
$$ p(x,y,t) + gy = O(1) \quad \text{ in }\Omega(t)$$

and that

$$ \int_{\Omega(t)} |v(x,y,t)|^2dxdy < \infty \text{ (finite energy), }$$

where we regard $\Omega(t)$ as a subset of $\mathbb{T} \times \mathbb{R}$, $\mathbb{T} = \mathbb{R} / 2\pi \mathbb{Z}$, in the Periodic case.

In this paper, we restrict attention to periodic water waves, although our arguments can be easily modified to apply to the other flavors. (See Remark
\ref{Rem3Settings} below).

Let us summarize some of the previous work on water waves. We discuss the real-analytic case later in this introduction. The existence and Sobolev regularity of water waves for short time is due to S. Wu \cite{Wu:well-posedness-water-waves-2d}. Her proof applies to smooth interfaces that need not be graphs of functions, but \cite{Wu:well-posedness-water-waves-2d} assumes the arc-chord condition

$$ |z(\al,t) - z(\beta,t)| \geq c_{AC}|\al - \beta|, \quad \text{all $\al,\beta \in \mathbb{R}$}.$$

The constant $c_{AC} > 0$ is called the arc-chord constant, which may vary with time.

The issue of long-time existence has been treated in Alvarez-Lannes \cite{AlvarezSamaniego-Lannes:large-time-existence-water-waves}, where well-posedness over large time scales is shown, and several asymptotic regimes are justified. By taking advantage of the dispersive properties of the water-wave system, Wu \cite{Wu:almost-global-wellposedness-2d} proved exponentially large time of existence for small initial data.

In three space dimensions, Wu \cite{Wu:well-posedness-water-waves-3d} proved short-time existence; and Germain et al \cite{Germain-Masmoudi-Shatah:global-solutions-gravity-water-waves}, \cite{Germain-Masmoudi-Shatah:global-solutions-gravity-water-waves-annals} and Wu \cite{Wu:global-wellposedness-3d} proved existence for all time in the case of small initial data.

There are several important variants of the water wave problem. One can drop the assumption that the fluid is irrotational. See Christodoulou-Lindblad \cite{Christodoulou-Lindblad:motion-free-surface}, Lindblad \cite{Lindblad:well-posedness-motion}, Coutand-Shkoller \cite{Coutand-Shkoller:well-posedness-free-surface-incompressible}, Shatah-Zeng  \cite{Shatah-Zeng:geometry-priori-estimates}, Zhang-Zhang
\cite{Zhang-Zhang:free-boundary-3d-euler}. Lannes \cite{Lannes:well-posedness-water-waves} considered the case in which water is moving over a fixed bottom.
Ambrose-Masmoudi \cite{Ambrose-Masmoudi:zero-surface-tension-2d-waterwaves} considered the case where the equations include surface tension, and the limit where the coefficient of surface tension tends to zero. Lannes \cite{Lannes:stability-criterion} discussed the problem of two fluids separated by an interface with small non-zero surface tension.
Alazard et al. \cite{Alazard-Burq-Zuily:water-wave-surface-tension} took advantage of the dispersive properties of the equations to lower the regularity of the initial data.

See also the papers of C\'ordoba et al. \cite{Cordoba-Cordoba-Gancedo:interface-water-waves-2d} and Alazard-Metivier \cite{Alazard-Metivier:paralinearization}.

In the case of large data for the two-dimensional problem (\ref{Charlie11}-\ref{Charlie14}), Castro et al. in \cite{Castro-Cordoba-Fefferman-Gancedo-LopezFernandez:rayleigh-taylor-breakdown}, \cite{Castro-Cordoba-Fefferman-Gancedo-LopezFernandez:turning-waves} showed that there exist initial data for which the interface is the graph of a function, but after a finite time the water wave ``turns over'' and the interface is no longer a graph. For previous numerical simulations showing this turning phenomenon, see Baker et al. \cite{Baker-Meiron-Orszag:vortex-methods-free-surface} and Beale et al. \cite{Beale-Hou-Lowengrub:convergence-boundary-integral}.

Next, we describe a singularity that can form in water waves. We start by presenting what we believe based on numerical simulations; then, we explain what we can prove.

\begin{figure}[ht]
\centering
\subfigure[The initial water region $\Omega(t_0)$.]
{
\includegraphics[width=0.4\textwidth]{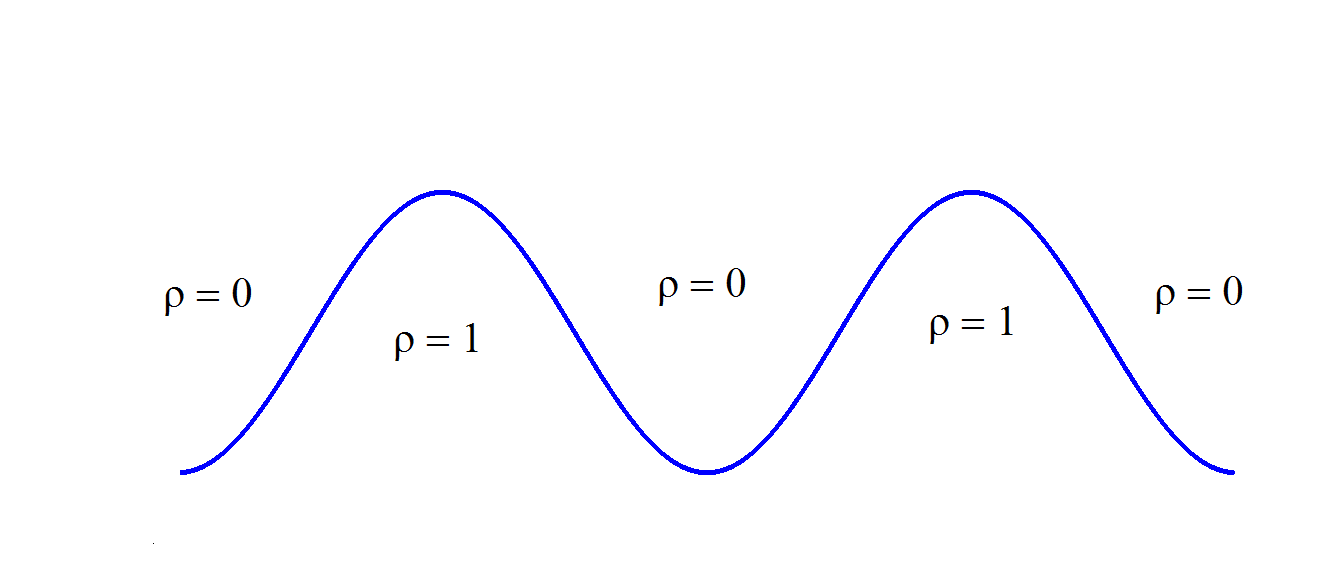}
\label{CharlieFig1a}
}
\subfigure[The water region $\Omega(t_1)$ at a later time $t_1$.]
{
\includegraphics[width=0.4\textwidth]{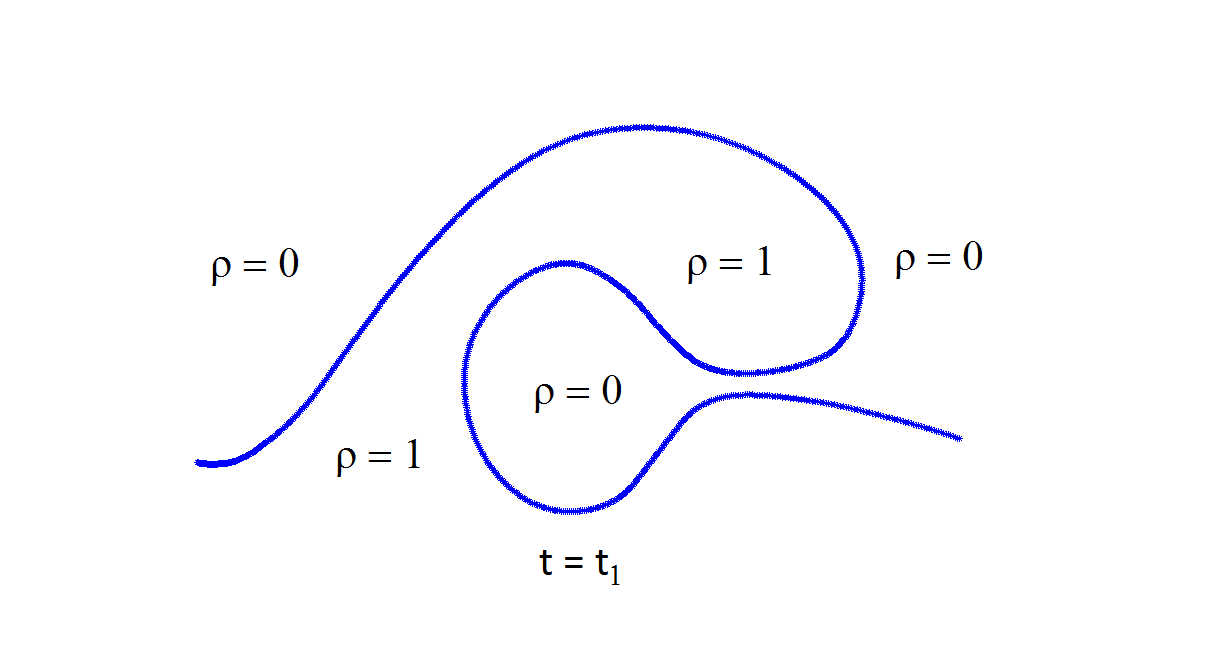}
\label{CharlieFig1b}
}
\subfigure[A ``splash'' forms at time $t_2 > t_1$.]
{
\includegraphics[width=0.4\textwidth]{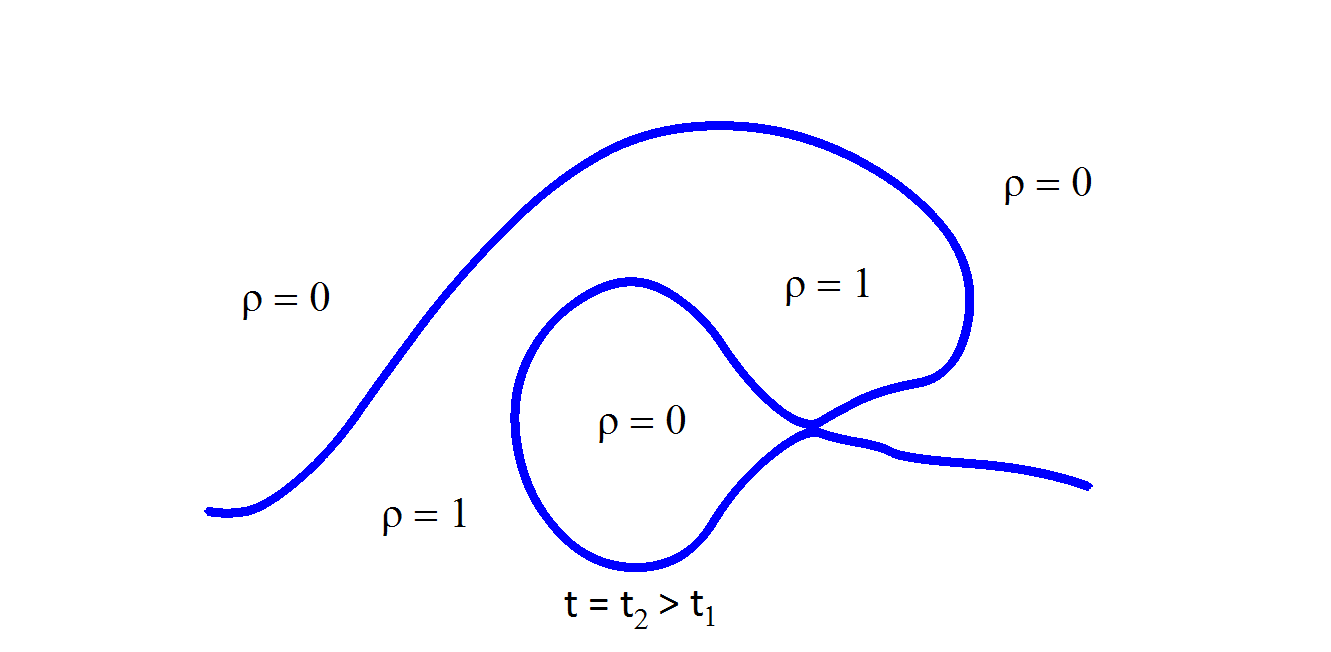}
\label{CharlieFig1c}
}
\caption{Evolution of a ``splash'' singularity.}
\label{splashFig}
\end{figure}

Our simulations show an initially smooth water wave, for which the fluid interface is a graph as in Figure \ref{CharlieFig1a}. At a later time $t_1$, the water wave has ``turned over'' as described in \cite{Castro-Cordoba-Fefferman-Gancedo-LopezFernandez:rayleigh-taylor-breakdown}, \cite{Castro-Cordoba-Fefferman-Gancedo-LopezFernandez:turning-waves}, i.e., the interface is no longer a graph. Finally, in Figure \ref{CharlieFig1c}, the fluid interface self-intersects at a single point \footnote{Here, we regard the fluid interface as sitting inside $\mathbb{T} \times \mathbb{R}$; recall that our water waves are $2\pi$-periodic under horizontal translation.}, but is otherwise smooth. We call this scenario a ``splash'', and we call the single point at which the interface self-intersects, the ``splash point''. Beyond the time $t_2$ pictured in Figure \ref{CharlieFig1c}, there is no physically meaningful solution of (\ref{Charlie11}-\ref{Charlie14}).

Note that the arc-chord condition holds for times $t < t_2$, but the arc-chord constant tends to zero as $t$ tends to $t_2$.

The numerics that led us to Figures \ref{CharlieFig1a}, \ref{CharlieFig1b} and \ref{CharlieFig1c} were performed using the method of Beale-Hou-Lowengrub \cite{Beale-Hou-Lowengrub:growth-rates-linearized}, with special modifications to maintain accuracy up to the splash. In this paper, we use the numerics only as motivation for conjectures, so we omit a detailed discussion of the algorithms used. Actual results from our simulations are shown in Figures \ref{PictureNonTilda}, \ref{PictureSplash} and \ref{PictureNonTildaZoom}. Figures \ref{splashFig} and \ref{splatFig} are cartoons.

Now let us explain what we can prove regarding the splash scenario. Recall that \cite{Castro-Cordoba-Fefferman-Gancedo-LopezFernandez:rayleigh-taylor-breakdown}, \cite{Castro-Cordoba-Fefferman-Gancedo-LopezFernandez:turning-waves} already proved that a water wave may start as in Figure \ref{CharlieFig1a} and later evolve to look like Figure \ref{CharlieFig1b}. In this paper, we prove that a water wave may start as in Figure \ref{CharlieFig1b}, and later form a splash, as in Figure \ref{CharlieFig1c}.

We would like to prove that an initially smooth water wave may start as in Figure \ref{CharlieFig1a}, then turn over as in Figure \ref{CharlieFig1b}, and finally produce a splash as in Figure \ref{CharlieFig1c}. To do so, our plan is to use interval arithmetic \cite{Moore-Bierbaum:methods-applications-interval-analysis} to produce a rigorous computer-assisted proof that, close to the approximate solution arising from our numerics, there exists an exact solution of (\ref{Charlie11}-\ref{Charlie14}) that ends in a splash. The stability result announced in \cite[Theorem 4.1]{Castro-Cordoba-Fefferman-Gancedo-GomezSerrano:splash-water-waves} is a first step in this direction. We are grateful to R. de la Llave for introducing us to interval arithmetic and demonstrating its power.

A variant of the splash singularity is shown in Figures \ref{CharlieFig2a} and \ref{CharlieFig2b}.

\begin{figure}[ht]
\centering
\subfigure[The initial water region]
{
\includegraphics[width=0.4\textwidth]{QualitativeSplashGoodA.png}
\label{CharlieFig2a}
}
\subfigure[The ``splat''.]
{
\includegraphics[width=0.4\textwidth]{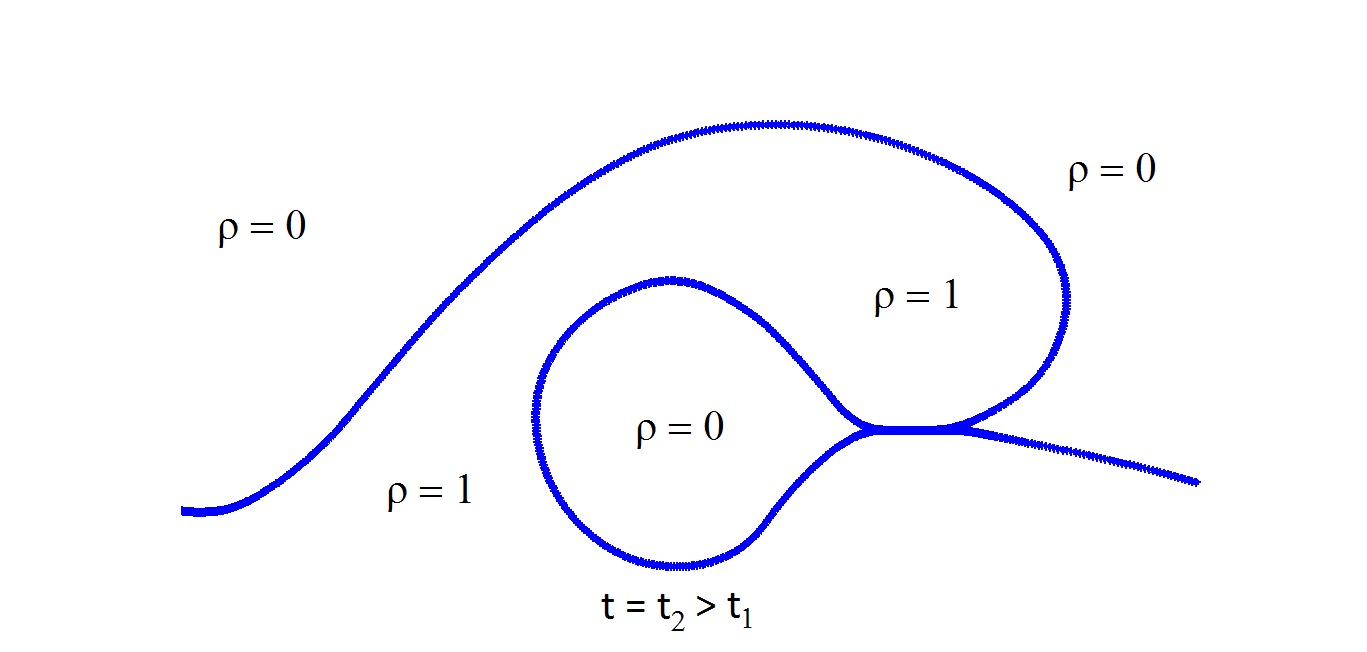}
\label{CharlieFig2b}
}
\caption{Evolution of a ``splat'' singularity.}
\label{splatFig}
\end{figure}

The water wave starts out smooth, as in Figure \ref{CharlieFig2a}, although the interface is not a graph. At a later time, the interface self-intersects along an arc, but is otherwise smooth. Again, no physically meaningful solution of (\ref{Charlie11}-\ref{Charlie14}) exists after the time depicted in Figure \ref{CharlieFig2b}. We call this scenario a ``splat''. In this paper, we prove that water waves can form a splat.

The stability theorem announced in \cite{Castro-Cordoba-Fefferman-Gancedo-GomezSerrano:splash-water-waves} shows that a sufficiently small perturbation of initial conditions that lead to the splash will again lead to a splash. We expect that the analogous statement for a splat is not true.

We make no claim that the splash and the splat are the only singularities that can arise in solutions of the water wave equation.

\subsection{Elementary Potential Theory}
\label{Section1BCharlie}

To formulate precisely our main results, and to explain some ideas from their proofs, we recall some elementary potential theory for irrotational divergence-free vector fields $v(x,y,t)$ defined on a region $\Omega(t) \subset \mathbb{R}^{2}$ with a smooth periodic boundary $\{z(\al,t): \al \in \mathbb{R}\}$ for fixed $t$. We assume that $v$ is smooth up to the boundary and $2\pi$-periodic with respect to horizontal translations. We suppose that $v$ has finite energy.

Such a velocity field $v$ may be represented in several ways:
\begin{itemize}
\item[$\bullet$] We may write $v = \nabla \phi$ for a velocity potential $\phi(x,y,t)$ defined on $\Omega(t)$ and smooth up to the boundary.
\item[$\bullet$] We may also write $v = \nabla^{\perp} \psi = (-\partial_{y} \psi, \partial_{x} \psi)$ for a stream function $\psi$, defined on $\Omega(t)$ and smooth up to the boundary.
\item[$\bullet$] The normal component of $v$ at the boundary, given by
$$ u_{normal}(\al,t) = v(z(\al,t),t) \cdot \frac{(\da z(\al,t))^{\perp}}{|\da z(\al,t)|}$$
uniquely specifies $v$ on $\Omega(t)$. Here, $u^{\perp} = (-u_2, u_1)$ for $u = (u_1, u_2) \in \mathbb{R}^{2}$, and we always orient $\partial \Omega(t)$ so that the normal vector $(\da z(\al,t))^{\perp}$ points into the vacuum region $\mathbb{R}^{2} \setminus \Omega(t)$.

The function $u_{normal}(\al,t)$ satisfies
$$ \int_{\mathbb{T}}u_{normal}(\al,t)|\da z(\al,t)|d\al = 0,$$
but is otherwise arbitrary.

Note that, because $v$ has finite energy, $\phi$ and $\psi$ are $2\pi$-periodic with respect to horizontal translations. (Without the assumption of finite energy, $\phi$ and $\psi$ could be ``periodic plus linear''). The functions $\phi$ and $\psi$ are conjugate harmonic functions.

\item[$\bullet$] There is another way to specify $v$, namely
\begin{equation}
\label{Charlie15}
v(x,y,t) = \frac{1}{2\pi}PV\int_{\mathbb{R}} \frac{(x-z_{1}(\beta,t), y - z_{2}(\beta,t))^{\perp}}{|(x-z_{1}(\beta,t), y - z_{2}(\beta,t))|^{2}} \om(\beta,t) d\beta, \quad ((x,y) \in \Omega(t))
\end{equation}
for a $2\pi$-periodic function $\om(\beta,t)$ called the ``vorticity amplitude''. See \cite{Baker-Meiron-Orszag:vortex-methods-free-surface}.
\end{itemize}

 Formula \eqref{Charlie15} holds only in the interior of $\Omega(t)$. Taking the limit as $(x,y) \to (z_{1}(\al,t),z_{2}(\al,t)) \in \partial \Omega(t)$ from the interior, we find that
 \begin{equation}
 \label{Charlie16}
 v(z(\al,t),t) = BR(z,\om)(\al,t) + \frac{1}{2}\om(\al,t) \frac{\da z(\al,t)}{|\da z(\al,t)|^{2}},
 \end{equation}

 where $BR$ denotes the Birkhoff-Rott integral
 \begin{equation}
 \label{Charlie17}
 BR(z,\om)(\al,t) = \frac{1}{2\pi}P.V.
 \int_{\mathbb{R}} \frac{(z_{1}(\al,t)-z_{1}(\beta,t), z_{2}(\al,t) - z_{2}(\beta,t))^{\perp}}{|(z_{1}(\al,t)-z_{1}(\beta,t), z_{2}(\al,t) - z_{2}(\beta,t))|^{2}} \om(\beta,t) d\beta.
 \end{equation}

 To see that $v$ may be represented as in \eqref{Charlie15}, \eqref{Charlie16}, one applies the Biot-Savart law to a discontinuous extension of $v$ from its initial domain $\Omega(t)$ to all of $\mathbb{R}^{2}$; to make the extension, one solves a Neumann problem in $\mathbb{R}^{2} \setminus \Omega(t)$.

 Thus, our velocity field $v$ admits multiple descriptions. Note that the description in terms of $\omega$ is significantly different from the descriptions in terms of $\phi$, $\psi$ and $u_{normal}$, because we bring in the Neumann problem on $\mathbb{R}^{2} \setminus \Omega(t)$ to justify \eqref{Charlie15} and \eqref{Charlie16}. When $\partial \Omega(t)$ is a ``splash curve'' as in Figure \ref{CharlieFig1c}, there is no problem defining $\phi$ and it is smooth up to the boundary, except that it can take two different values at the splash point, for obvious reasons. The same is true of $\psi$. Similarly, $u_{normal}(\al,t)$ continues to behave well.

 However, there is no reason to believe that $\om(\al,t)$ will be well-defined and smooth for a splash curve, since $\mathbb{R}^{2} \setminus \Omega(t)$ is a somewhat pathological domain. Our numerics suggest that $\max_{\al}|\om(\al,t)| \sim \frac{C}{t_s - t}$, where $t_s$ is the time of the splash.

 Let us apply the above potential theory to the water wave problem. A standard formulation of the problem \cite{Baker-Meiron-Orszag:vortex-methods-free-surface} takes $z(\al,t)$ and $\om(\al,t)$ as unknowns. This has the advantage that at least we know where our unknown functions are supposed to be defined, which is more than we can say for $\phi$, $\psi$ and $u$. Standard computations (see e.g. \cite[Section 2]{Cordoba-Cordoba-Gancedo:interface-water-waves-2d}) show that the water wave problem is equivalent to the following equations

 \begin{equation}
 \label{Charlie17b}
 \partial_t z(\al,t) = BR(z,\om)(\al,t) + \overline{c}(\al,t) \da z(\al,t)
 \end{equation}
 and
 \begin{align}
 \label{Charlie18}
 \partial_t \om(\al,t) = & -2\da z(\al,t) \cdot \partial_t BR(z,\om)(\al,t) \nonumber \\
 & - \da\left(\frac{|\om|^2}{4|\da z|^{2}}\right)(\al,t) + \da\left(\overline{c}(\al,t) \om(\al,t)\right) \nonumber \\
 & + 2\overline{c}(\al,t) \da z(\al,t) \cdot \da BR(z,\om)(\al,t) - 2g \da z_2(\al,t).
 \end{align}

 Here, $\overline{c}(\al,t)$ is a function that we may pick arbitrarily, since it influences only the parametrization of $\partial \Omega(t)$. For future reference, we write down several standard equations that follow from (\ref{Charlie11}-\ref{Charlie14}) by routine computation and elementary potential theory.
 \begin{align}
 \label{Charlie19}
 \Delta_{x} \phi(x,y,t) & = \Delta_{x} \psi(x,y,t) = 0 \quad \text{ in } \Omega(t); \quad \text{ $\phi$ and $\psi$ are harmonic conjugates.} \nonumber \\
 p(x,y,t) & = -\partial_t \phi(x,y,t) - \frac{1}{2}|\nabla \phi(x,y,t)|^{2} - gy \nonumber \\
 \left.\partial_{n} \psi\right|_{z(\al,t)} & = - \frac{\da \Phi(\al,t)}{|\da z(\al,t)|}, \quad \text{ where $\Phi(\al,t) = \phi(z(\al,t),t)$} \nonumber \\
  & \text{ and $n$ is the outward-pointing unit normal to $\partial\Omega(t)$.} \nonumber \\
 \psi(x+2\pi, y,t) & = \psi(x,y,t) \text{ and } \phi(x+2\pi,y,t) = \phi(x,y,t) \nonumber \\
 \psi(x,y,t) & = O(1) \text{ as } y \to -\infty \nonumber \\
 v & = \nabla^{\perp} \psi \text{ in $\Omega(t)$} \nonumber \\
 \partial_t z(\al,t) & = v(z(\al,t),t) + c(\al,t) \da z(\al,t) \nonumber \\
 \partial_t \Phi(\al,t) & = \frac{1}{2}|v(z(\al,t),t)|^{2} + c(\al,t)v(z(\al,t),t) \cdot \da z(\al,t) - gy(\al,t) + p^{*}(t).
 \end{align}

 We may write $u(\al,t)$ to denote $v(z(\al,t),t)$.

 \subsection{Main Results}

 Our main result is the following theorem. For the definition of a splash curve see Definition \ref{defsplash} in Section \ref{SectionDiscussionOmega}. The interface shown in Figure \ref{CharlieFig1c} is an example of a splash curve.
 \begin{theorem}
 \label{localexistencenontilde}
 Let $z^{0}(\al)$ be a splash curve, where the splash point is given by $z^{0}(\al_1) = z^{0}(\al_2)$, $\al_1 \neq \al_2$. Let $u^{0}_{normal}(\al)$ be a scalar function in $H^{4}(\mathbb{T})$, satisfying
 \begin{equation}
 \label{Charlie110}
 \int_{\mathbb{T}}u^{0}_{normal}(\al)|\da z^{0}(\al)|d\al = 0
 \end{equation}
 and
 \begin{equation}
 \label{Charlie111}
 u^{0}_{normal}(\al_1), u^{0}_{normal}(\al_2) < 0.
 \end{equation}
 Then there exist a time $T > 0$; a time-varying domain $\Omega(t)$ defined for $t \in [0,T]$ and a velocity field $v(x,y,t)$ defined for $(x,y) \in \Omega(t)$, $t \in [0,T]$ such that the following hold:
 \begin{equation}
 \label{Charlie112}
 \text{$\Omega(t)$ and $v(x,y,t)$ solve the water wave equations (\ref{Charlie11}-\ref{Charlie14}) for all $t \in [0,T]$.}
 \end{equation}
 \begin{align}
 \label{Charlie113}
& \text{$\partial \Omega(t)$ is given as a parametrized curve $\{z(\al,t): \al \in \mathbb{R}\}$}, \nonumber \\ & \text{with $z(\al,t) - (\al,0)$ $2\pi$-periodic in $\al$ for fixed $t$.}
 \end{align}
 \begin{align}
 \label{Charlie114}
 & z(\al,t) - (\al,0) \in C([0,T], H^{4}(\mathbb{T})) \text{ and } v(z(\al,t),t) \in C([0,T],H^{3}(\mathbb{T}))
 \end{align}
 \begin{equation}
 \label{Charlie115}
 z(\al,0) = z^{0}(\al) \text{ and $u_{normal}(\al,0) = u_{normal}^{0}(\al)$ for all $\al \in \mathbb{R}$.}
 \end{equation}
 \begin{align}
 \label{Charlie116}
& \text{For each $t \in [0,T]$, the curve $\partial \Omega(t)$ satisfies the arc-chord condition,} \nonumber \\
& \text{ but the arc-chord constant tends to zero as $t \to 0$.}
 \end{align}
 \end{theorem}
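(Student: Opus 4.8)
\emph{Proof strategy.} The plan is to desingularize the interface by a conformal change of variables, prove local well-posedness for the resulting non-degenerate system by energy estimates, and then transfer the solution back to the original frame.

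First I would set up the conformal transformation. After a rigid motion sending the splash point $z^0(\al_1)=z^0(\al_2)$ to the origin, fix the branch $P$ of $w\mapsto w^{1/2}$ whose cut emanates from the origin through the thin sliver of vacuum pinched between the two tangent arcs of $z^0$ meeting at the splash point. By the definition of a splash curve (Definition \ref{defsplash}), this $P$ is conformal on $\overline{\Omega(0)}$ with the splash point removed and maps $\Omega(0)$ onto a non-degenerate domain $\tilde\Omega(0)$ near the image of the splash point (the opening angle of the fluid there, which equals $2\pi$, is halved), and $\ztil^{0}:=P\circ z^0$ is a smooth closed curve for which the arc-chord condition holds --- after possibly composing with a further conformal map to accommodate the $2\pi$-periodic geometry. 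Write $Q:=P^{-1}$ (essentially $w\mapsto w^2$). Since harmonicity is conformally invariant in the plane, a divergence-free, curl-free $v=\grad\phi$ on $\Omega=Q(\tilde\Omega)$ with $\phi$ harmonic corresponds to $\tilde v=\grad\tilde\phi$ on $\tilde\Omega$ with $\tilde\phi:=\phi\circ Q$ harmonic; the boundary potential $\Phi$ and the vorticity amplitude $\om$ are unchanged, while $v$ (and the speed) pick up the conformal factor $Q'\circ\ztil$, which vanishes at the preimage of the splash point.

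Next I would derive the transformed equations and check the sign condition. Substituting $z=Q\circ\ztil$ in \eqref{Charlie17b}--\eqref{Charlie18} and using \eqref{Charlie19} produces a system for $(\ztil,\om)$ of the same structure, with the Birkhoff--Rott operator keeping its form (its kernel transforms conformally) but with weights $|Q'\circ\ztil|^{2}$ attached to the gravity and Bernoulli terms and $z_2$ replaced by the vertical component of $Q\circ\ztil$. The decisive point is the Rayleigh--Taylor coefficient of the transformed system: it equals $|Q'\circ\ztil|^{2}$ times the original one, plus lower-order terms built from derivatives of $Q$ and from $|v|^{2}$; since $|Q'|$ vanishes at the splash point, near there the transformed Rayleigh--Taylor coefficient is governed by those lower-order terms, whose value at $\al_1,\al_2$ is a positive multiple of $-u^{0}_{normal}(\al_i)$. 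Thus hypothesis \eqref{Charlie111} is precisely what makes the transformed Rayleigh--Taylor coefficient strictly positive at $t=0$, hence on a short time interval. Granting this, one proves local well-posedness of the transformed system in the class $\ztil(\al,t)-(\al,0)\in C([0,T],H^{4}(\mathbb{T}))$, $\tilde u(\al,t):=\tilde v(\ztil(\al,t),t)\in C([0,T],H^{3}(\mathbb{T}))$ by energy estimates: differentiate three or four times, let the Rayleigh--Taylor term furnish the coercive top-order contribution, control the commutators (the conformal weights are entire functions of $\ztil\in H^{4}\subset C^{3}$, hence harmless), close a Gronwall inequality, and obtain existence from a regularized scheme and uniqueness from the same estimates. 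This well-posedness argument is the heart of the matter and the main obstacle; everything else is comparatively routine once the transformation is in place.

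Finally I would transfer back. Fixing the reparametrization $\overline{c}$ and the transformed initial data so that $Q$ recovers $z^0$ and $u^{0}_{normal}$, set $z:=Q\circ\ztil$, $\Omega(t):=Q(\tilde\Omega(t))$, and let $v:=\grad(\tilde\phi\circ P)$ on $\Omega(t)$. Conformal invariance of the potential theory, together with the equivalence above, shows that $(\Omega,v)$ solves \eqref{Charlie11}--\eqref{Charlie14}, giving \eqref{Charlie112}; properties \eqref{Charlie113} and \eqref{Charlie115} are built into the construction, and \eqref{Charlie114} follows from the regularity of $\ztil$ and $\tilde u$ and the smoothness of $Q$ (using that $H^{3}(\mathbb{T})$ and $H^{4}(\mathbb{T})$ are algebras), once one checks that $z(\cdot,t)$ is non-degenerate for $t>0$. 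For \eqref{Charlie116}, note that $|z(\al,t)-z(\be,t)|$ factors as $|\ztil(\al,t)-\ztil(\be,t)|$ times a factor coming from the quadratic map $Q$: the first factor is bounded below uniformly on $[0,T]$ by the arc-chord constant of $\ztil$, while the second is bounded below away from $(\al_1,\al_2)$ and, near that pair, the tangency built into the splash geometry makes it a second-order quantity whose only vanishing at $t=0$ is the splash itself, and \eqref{Charlie111} supplies exactly the sign that makes it comparable to $t$ for $t>0$. Hence $z(\cdot,t)$ is injective and satisfies the arc-chord condition for $t\in(0,T]$, with constant $\sim t\to 0$ as $t\to 0$, while at $t=0$ it coincides with the splash curve $z^0$.
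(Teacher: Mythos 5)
There is a fundamental gap in your conformal set-up, and a second error in how you use hypothesis \eqref{Charlie111}.

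\textbf{The branch point of $P$ must not sit at the splash point.} You place the branch point of $w\mapsto w^{1/2}$ at the splash point itself. But the splash curve has $z^0(\al_i)=0$ with $|\da z^0(\al_i)|>0$, so near $\al_i$ one has $z^0(\al)\approx \da z^0(\al_i)(\al-\al_i)$, and therefore
$\tilde z^0(\al)=P(z^0(\al))\sim (\al-\al_i)^{1/2}$. The transformed curve then has a square-root singularity at $\al_1,\al_2$: it is not even $C^1$, let alone in $H^4(\T)$, so the Sobolev well-posedness step you propose cannot get off the ground. Your parenthetical remark about the fluid's opening angle being halved would rescue this only if $\da z^0(\al_i)=0$, which contradicts the splash-curve definition. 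The paper avoids this entirely by requiring (Definition \ref{defsplash}, Condition 5) that the branch point $(0,0)$ (and $(\pm\pi,0)$) of $P(z)=(\tan(z/2))^{1/2}$ lie in the vacuum region, away from the curve. Then $P$ and $dP/dz$ are smooth and nonvanishing on the whole closed fluid region; the branch cut runs through the vacuum sliver so that the single splash point has \emph{two distinct} images $P(z^0(\al_1))\neq P(z^0(\al_2))$, and $\tilde z^0$ inherits the full $H^4$ regularity of $z^0$.

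\textbf{Hypothesis \eqref{Charlie111} is not a Rayleigh--Taylor condition.} You argue that the positivity of the tilde-domain Rayleigh--Taylor function depends on lower-order terms whose value at $\al_i$ is a positive multiple of $-u^0_{normal}(\al_i)$, and that \eqref{Charlie111} is what makes the transformed coefficient positive. This is not what happens. The paper shows in Section IV.A that the Rayleigh--Taylor coefficients agree \emph{exactly}, $-\nabla p\cdot z_\al^\perp = -\nabla \tilde p\cdot\tilde z_\al^\perp$, and that positivity $\sigma>0$ follows automatically from Hopf's lemma applied to $p+gy$ in the (non-tilde) fluid region, independently of \eqref{Charlie111}. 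The role of \eqref{Charlie111} is topological: it is what guarantees that for small $t>0$ the curve $\al\mapsto z(\al,t)=P^{-1}(\tilde z(\al,t))$ is simple (i.e.\ that the interface \emph{separates}, Figure \ref{goodbadA}, rather than self-crosses, Figure \ref{goodbadB}). The paper proves this by a compactness/contradiction argument: any self-intersection for a sequence $t_\nu\to 0$ would, by the modified arc-chord condition, have to occur at $\al'_\nu\to\al_1$, $\al''_\nu\to\al_2$; then \eqref{Charlie111} forces $\tilde z(\al'_\nu,t_\nu),\tilde z(\al''_\nu,t_\nu)$ into the image under $P$ of the open time-zero water region, where $P^{-1}$ is one-to-one, contradicting the assumed coincidence.

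Your final heuristic that the arc-chord constant behaves like $\sim t$ is consistent with the paper's numerics, but the proof of \eqref{Charlie116} only requires that the arc-chord condition holds for $t>0$ and fails as $t\to 0$, which follows from simplicity of $z(\cdot,t)$ plus continuity and the fact that $z^0$ is a splash curve; no rate is asserted.
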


This result was announced in \cite{Castro-Cordoba-Fefferman-Gancedo-GomezSerrano:splash-water-waves}.

To prove that ``splash singularities'' can form, we note that the water wave equations are invariant under time reversal. Therefore, it is enough to exhibit a solution of the water wave equations that starts as a splash at time zero, but satisfies the arc-chord condition for each small positive time. Theorem \ref{localexistencenontilde} provides such solutions.

Since the curve touches itself it is not clear if the vorticity amplitude is well defined, although the velocity potential remains nonsingular. In order to get around this issue we will apply a transformation from the original coordinates to new ones which we will denote with a tilde. The purpose of this transformation is to be able to deal with the failure of the arc-chord condition. Let us consider the scenario in the periodic setting and then the transformation defined by  $\ztil(\al,t) \equiv P(z(\al,t))$ where $P$ is a conformal map that will be given as:

$$ P(z) = \left(\tan\left(\frac{z}{2}\right)\right)^{1/2}$$
 and the branch of the root will be taken in such a way that it separates the self-intersecting points of the interface. We will also need that the interface passes below the points $(\pm\pi,0)$ (or, equivalently, that those points belong to the vacuum region) in order for the tilde region to lie inside a closed curve and the vacuum region to lie on the outer part. See Figures \ref{PictureNonTilda} and \ref{PictureSplash}. Here $P(z)$ will refer to a 2 dimensional vector whose components are the real and imaginary parts of $P(z_1 + iz_2)$. Its inverse is given by

$$ P^{-1}(w) = i \log \left(\frac{1-iw^2}{1+iw^2}\right) = 2\arctan(w^{2}) \text{ for } w \in \mathbb{C}.$$

In this setting, $P^{-1}(z)$ will be well defined modulo multiples of $2\pi$.
\begin{rem}
Note that $P(z)$ is periodic such that $P(z+2k\pi) = P(z)$. Moreover, $P(z)$ is one-to-one in the water region and single-valued except at the splash point.
\end{rem}

\begin{rem}
Although the transformation to the tilde domain is convenient, the real reason for Theorem \ref{localexistencenontilde} is that
the potential theory inside the water region does not go bad as we approach the splash even
though it goes bad in the vacuum region.
\end{rem}

\begin{figure}
\centering
\includegraphics[scale=0.4]{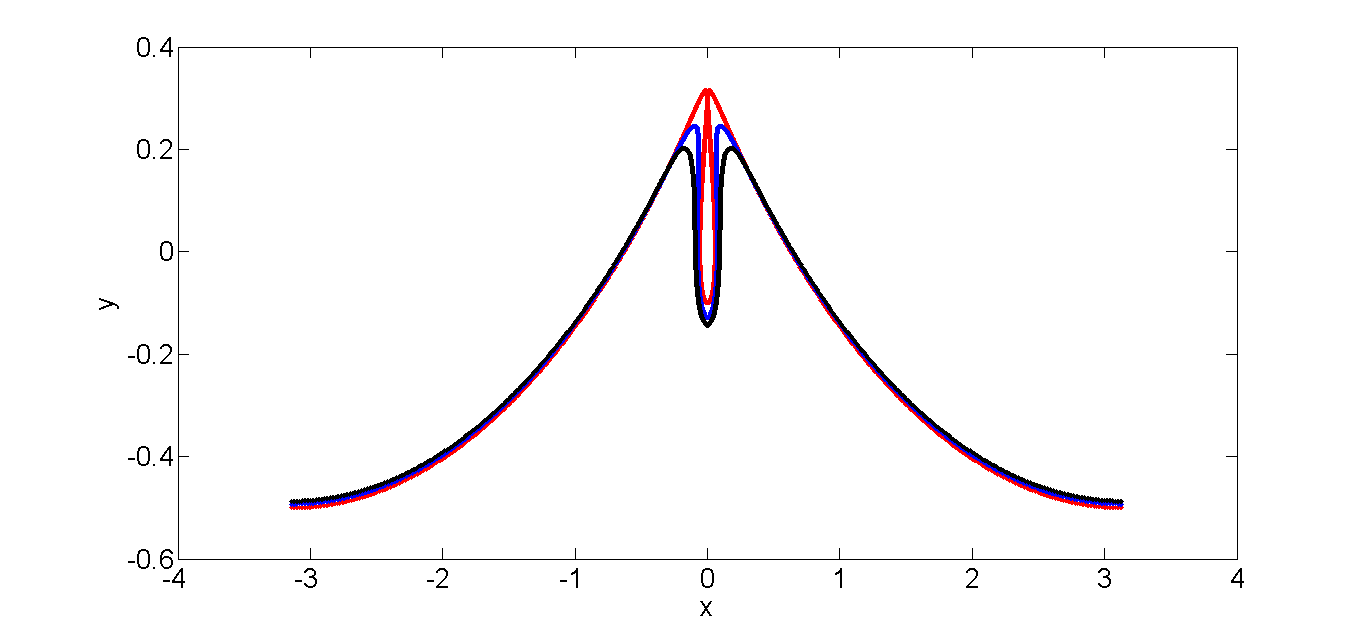}
\caption{Splash singularity at times $t = 0$ (Red - splash), $t = 4 \cdot 10^{-3}$ (Blue - turning) and $t = 7 \cdot 10^{-3}$ (Black - graph).}
\label{PictureNonTilda}
\end{figure}

 We define the following quantities:

$$ \tilde{\psi}(\tilde{x},\tilde{y},t) \equiv \psi(P^{-1}(\tilde{x},\tilde{y}),t), \quad \tilde{\phi}(\tilde{x},\tilde{y},t) \equiv \phi(P^{-1}(\tilde{x},\tilde{y}),t), \quad \tilde{v}(\tilde{x},\tilde{y},t) \equiv \nabla \tilde{\phi}(\tilde{x},\tilde{y},t), $$
$$ \tilde{\Phi}(\al,t) = \tilde{\phi}(\tilde{z}(\al,t),t), \quad \tilde{\Psi}(\al,t) = \tilde{\psi}(\tilde{z}(\al,t),t).$$

Also we define $\tilde{\Omega}(t) = P(\Omega(t))$. Let us note that since $\psi$ and $\phi$ are $2\pi$ periodic, the resulting $\tilde{\psi}$ and $\tilde{\phi}$ are well defined. We do not have problems with the harmonicity of $\tilde{\psi}$ or $\tilde{\phi}$ at the point which is mapped from minus infinity times $i$ (which belongs to the water region) by $P$ since $\phi$ and $\psi$ tend to finite limits at minus infinity times $i$. Also, the periodicity of $\phi$ and $\psi$ causes $\tilde{\phi}$ and $\tilde{\psi}$ to be continuous (and harmonic) at the interior of $P(\Omega^2(t))$.

Let us assume that there exists a solution of \eqref{Charlie19} and that we take $u_{normal} = \frac{\Psi_{\al}}{|z_{\al}|}$ such that $u_{normal}(\al_1), u_{normal}(\al_2) < 0$ for all $0 < t < T$, with $T$ small enough, thus $z(\al,t)$ satisfies the arc-chord condition and does not touch the removed branch from $P(w)$.

The system \eqref{Charlie19} in the new coordinates reads
\begin{align}
\label{tildastream}
\Delta \tilde{\psi}(\tilde{x},\tilde{y},t) & = 0 \quad \text{in $P(\Omega^2(t))$} \nonumber\\
\left.\partial_{n} \tilde{\psi}\right|_{\tilde{z}(\al,t)} & =- \frac{\tilde{\Phi}_{\al}(\al,t)}{|\tilde{z}_{\al}(\al,t)|} \nonumber\\
\tilde{v}&\equiv\nabla^\perp \tilde{\psi}\quad \text{in $P(\Omega^2(t))$}\nonumber\\
\tilde{z}_{t}(\al,t) & = Q^2(\al,t)\tilde{u}(\al,t) + c(\al,t)\tilde{z}_{\al}(\al,t) \nonumber\\
\tilde{\Phi}_{t}(\al,t) & = \frac{1}{2}Q^2(\al,t)|\tilde{u}(\al,t)|^2 + c(\al,t)\tilde{u}(\al,t) \cdot \tilde{z}_{\al}(\al,t) - gP^{-1}_2(\tilde{z}(\al,t))\nonumber\\
\tilde{z}(\alpha,0)&=\tilde{z}^0(\alpha)\nonumber\\
\tilde{\Phi}_\alpha(\alpha,0)&=\tilde{\Phi}_\alpha^0(\alpha) = \Phi_{\al}^{0}(\al),
\end{align}
where $\tilde{u}$ is the limit of the velocity coming from the fluid region in the tilde domain and
$$Q^2(\tilde{z}(\al,t),t) = \left|\frac{dP}{dw}(P^{-1}(\tilde{z}(\al,t)))\right|^{2}, \quad Q^2(\al,t) = \left|\frac{dP}{dw}(z(\al,t))\right|^{2}.$$

We can solve the Neumann problem in the complement of $\tilde{\Omega}(t)$. Therefore we can represent the velocity field $\tilde{v}$ in terms of a vorticity amplitude $\tilde{\omega}$.

We will see that $\tilde{z}$ and $\tilde{\omega}$ satisfy the following equations

\begin{align}\label{zeq}
\tilde{z}_{t}(\al,t) & = Q^2(\al,t)BR(\tilde{z},\tilde{\omega})(\al,t) + \tilde{c}(\al,t)\tilde{z}_{\al}(\al,t).
\end{align}

\begin{align}\label{eqomega}
\tilde{\omega}_{t}(\al,t) & = -2 \partial _{t} BR(\tilde{z},\tilde{\omega})(\al,t) \cdot \tilde{z}_{\al}(\al,t) - |BR(\tilde{z},\tilde{\omega})|^{2} \partial_{\al}Q^{2}(\al,t) \nonumber \\
& - \partial_{\al}\left(\frac{Q^2(\al,t)}{4}\frac{\tilde{\omega}(\al,t)^2}{|\tilde{z}_{\al}(\al,t)|^{2}}\right)
 + 2\tilde{c}(\al,t) \partial_{\al}BR(\tilde{z},\tilde{\omega}) \cdot \tilde{z}_{\al}(\al,t) \nonumber \\
& + \partial_{\al}\left(\tilde{c}(\al,t)\tilde{\omega}(\al,t)\right)
- 2g\partial_{\al}\left(P^{-1}_2(\tilde{z}(\al,t))\right).
\end{align}

\begin{rem}
Equations (\ref{zeq}-\ref{eqomega}) are analogous to (\ref{Charlie17b}-\ref{Charlie18}). In fact, if we  set $Q \equiv 1$ in (\ref{zeq}-\ref{eqomega}) we recover (\ref{Charlie17b}-\ref{Charlie18}).
\end{rem}

Our strategy will be the following: we will consider the evolution of the solutions in the tilde domain and then see that everything works fine in the original domain.

We will have to obtain the normal velocity once given the tangential velocity, and viceversa. To do this, we just have to notice that
$$ \tilde{\Phi}_{\al}(\al,t)  = \tilde{u}(\al,t) \cdot \tilde{z}_{\al}(\al,t) =
BR(\tilde{z},\tilde{\omega})\cdot \tilde{z}_{\al}(\al,t) + \frac{\tilde{\omega}(\al,t)}{2}.$$
From that, we can invert the equation (see \cite{Cordoba-Cordoba-Gancedo:interface-water-waves-2d}) and get $\tilde{\omega}$. Equation \eqref{Charlie16} in the tilde domain then tells us $\tilde{v}$ on the boundary $\partial \tilde{\Omega}(t)$.

We  now note that a solution of the system \eqref{tildastream} in the tilde domain gives rise to a solution of the system \eqref{Charlie19} in the non-tilde domain, by inverting the map $P$. In fact, this will be the implication used in Theorem \ref{localexistencenontilde} (finding a solution in the tilde domain, and therefore in the non-tilde).

\begin{rem}
\label{Rem3Settings}
It is likely that a similar argument works for the other two settings (closed contour and asymptotic to horizontal) by choosing an appropriate $P(w)$ that separates the singularity. For example, for the closed contour we could consider $P_{clo}(z) = \sqrt{z}$, taking the branch so that it separates the singularity, and for the asymptotic to horizontal scenario, it is enough to move the interface such that the water region is entirely contained in the lower halfplane (and the point $-i$ belongs to the vacuum region) and apply the relation $\frac{\tilde{z}+i}{\tilde{z}-i} = \sqrt{\frac{z+i}{z-i}}$.
\end{rem}

We now state the local existence results that lead to the proof of the existence of a splash singularity (Theorem \ref{localexistencenontilde}). To avoid the failure of the arc-chord condition, we will prove the local existence in the tilde domain. This can be done in two different settings, namely in the space of analytic functions and the Sobolev space $H^{s}$.

For the analytic version we define
$$ S_{r}= \{\al + i \eta, |\eta| < r\},$$
$$ \|f\|^2_{L^2(\partial S_r)}=\sum_{\pm}\int_{-\pi}^{\pi}|f(\al\pm ir)|^{2}d\al,$$
$$ \|f\|_r^2=\|f\|^2_{L^2(\partial S_r)}+\|\da^3f\|^2_{L^2(\partial S_r)}, $$

we consider the space
\begin{align*}
H^3(\partial S_r) = \left\{f \text{ analytic in } S_{r}, \|f\|_{r}^{2}<\infty, f \; 2\pi \text{-periodic}\right\}
\end{align*}
and we take $(z_1-\al, z_2,\Phi) \in (H^3(\partial S_r))^3 \equiv X_{r}$.

The first results concerning the Cauchy problem for
small data in Sobolev spaces near the equilibrium point are due to Craig \cite{Craig:existence-theory-water-waves}, Nalimov \cite{Nalimov:cauchy-poisson} and Yosihara \cite{Yosihara:gravity-waves}. Beale et al. \cite{Beale-Hou-Lowengrub:growth-rates-linearized} considered the Cauchy problem in the linearized version.
For local existence with small analytic data see Sulem-Sulem \cite{Sulem-Sulem:finite-time-analyticity-rt}. Our main results regarding local existence in the tilde domain are the following theorems:
\begin{theorem}[Local existence for analytic initial data in the tilde domain]
\label{localexistencetildeAnalytic}
Let $z^{0}(\al)$ be a splash curve and let $u^{0}\cdot\frac{z^0_\alpha}{|z^0_\alpha|}(\alpha)=\frac{\Phi^0_\alpha}{|z^0_\alpha|}(\alpha)$ be the initial tangential velocity such that
$$(z_{1}^{0}(\al) - \al, z_2^{0},(\al), \Phi^0(\alpha)) \in X_{r_0},$$
for some $r_0>0$, and  satisfying:
\begin{enumerate}
\item $ \displaystyle u_{normal}^{0}(\al_1) = u_{normal}(\al_1,0) < 0, \quad u_{normal}^{0}(\al_2) = u_{normal}(\al_2,0) < 0$
\item $\displaystyle \int_{\T} u^{0}_{normal}(\al)|\da z^0(\al)| d\al = 0$.
\end{enumerate}
Then there exist a finite time $T > 0$, $0<r<r_0$, a time-varying curve $\tilde{z}(\al,t)$ and a function $\tilde{\Phi}(\al,t)$ satisfying:
\begin{enumerate}
\item $P^{-1}(\tilde{z}_{1}(\al,t)) - \al, P^{-1}(\tilde{z}_{2}(\al,t))$ are $2\pi$-periodic,
\item $P^{-1}(\tilde{z}(\al,t))$ satisfies the arc-chord condition for all $t \in (0,T]$,
\end{enumerate}
and $\tilde{u}(\al,t)$ with
 $$(\tilde{z}_{1}(\al,t), \tilde{z}_2(\al,t), \tilde{\Phi}(\alpha,t))\in C([0,T],  X_{r})$$
 which provides a solution of the water wave equations \eqref{tildastream} with $\tilde{z}^0(\al)=P(z^0(\al))$ and $\tilde{u}(\al,0)\cdot(\tilde{z}_\alpha)^\perp(\alpha,0)=\tilde{u}^0(\al)\cdot(\tilde{z}^0)^\perp_\alpha(\alpha)$.

\end{theorem}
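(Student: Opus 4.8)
The plan is to recast the system \eqref{tildastream} — equivalently \eqref{zeq}--\eqref{eqomega} — as an abstract evolution equation $\partial_t U=\mathcal{F}(U)$ for the tilde unknowns $U=(\tilde{z}_1,\tilde{z}_2,\tilde{\Phi})$ in the scale of Banach spaces $\{X_r\}_{0<r\le r_0}$, and to solve it by an abstract Cauchy--Kovalevskaya theorem of the type used by Nishida and, for the water wave problem, by Sulem--Sulem \cite{Sulem-Sulem:finite-time-analyticity-rt}. The mechanism is that, although each term on the right-hand side loses one derivative, on functions analytic in the strip $S_r$ this loss is exactly compensated by the Cauchy-type estimate $\|\da f\|_{r'}\lesssim (r-r')^{-1}\|f\|_r$ for $r'<r$, and by the analogous gain for the singular operators that occur. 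Accordingly the solution will live on a strip whose half-width $r(t)=r_0-\nu t$ shrinks linearly in $t$, which is all we need for local existence.

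First I would make the system genuinely explicit. Given an analytic chord-arc curve $\tilde{z}$ and a datum $\tilde{\Phi}_\al$, one recovers the vorticity amplitude $\tilde{\omega}$ from $\tilde{\Phi}_\al=BR(\tilde{z},\tilde{\omega})\cdot\tilde{z}_\al+\tfrac12\tilde{\omega}$ by inverting $\tfrac12 I+K_{\tilde z}$, where $K_{\tilde z}\tilde{\omega}:=BR(\tilde{z},\tilde{\omega})\cdot\tilde{z}_\al$; on a chord-arc curve $K_{\tilde z}$ is smoothing, so this operator is invertible (the $L^2_0$ theory of the double-layer potential, cf. \cite{Cordoba-Cordoba-Gancedo:interface-water-waves-2d}), and its inverse then propagates to the analytic scale. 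Likewise the term $\partial_t BR(\tilde{z},\tilde{\omega})$ in \eqref{eqomega} is implicit in $\tilde{\omega}_t$: writing $\partial_t BR(\tilde{z},\tilde{\omega})=BR(\tilde{z},\tilde{\omega}_t)+BR(\tilde{z}_t,\tilde{\omega})+(\text{lower order})$ and contracting with $\tilde{z}_\al$, the $\tilde{\omega}_t$ contribution enters through an operator $I+(\text{smoothing})$, which I again invert. Fixing the free reparametrization function $\tilde{c}$ (for instance $\tilde c\equiv 0$, with $c$ adjusted accordingly, or a choice keeping $|\tilde z_\al|$ under control) closes the formulation; note $Q^2=|dP/dw(P^{-1}(\tilde z))|^2$ is real analytic and bounded above and below so long as $\tilde z$ avoids the critical set and the removed branch of $P$, an open condition satisfied near $t=0$.

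The heart of the matter is the pair of estimates $\|\mathcal{F}(U)\|_{r'}\le C(R)(r-r')^{-1}$ and $\|\mathcal{F}(U)-\mathcal{F}(V)\|_{r'}\le C(R)(r-r')^{-1}\|U-V\|_r$, valid whenever $\|U\|_r,\|V\|_r\le R$ and $P^{-1}(\tilde z)$ satisfies the arc-chord condition and stays off the removed branch. Establishing these requires, first, showing that for $\tilde z$ analytic and chord-arc the Birkhoff--Rott integral $BR(\tilde z,\tilde\omega)$ and its $\al$- and $t$-derivatives extend analytically to $S_{r'}$ with the claimed bounds — proved by deforming the $\be$-contour and using that the chord-arc condition keeps the kernel $|\tilde z(\al)-\tilde z(\be)|^{-2}$ away from its singularity even after $\al$ and $\be$ are pushed into the complex strip; and second, controlling the compositions with the analytic function $Q^2$ and with the algebraic inverses introduced above, again in the analytic scale. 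This is exactly where the chord-arc property of the \emph{tilde} curve — the whole purpose of the conformal change of variables $P$ — is used decisively, and it is the step I expect to be the main obstacle.

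Granting these estimates, the abstract Cauchy--Kovalevskaya theorem produces a finite $T>0$, some $0<r<r_0$, and a unique $U\in C([0,T];X_r)$ with $U(0)=U^0$ solving $\partial_t U=\mathcal{F}(U)$; unwinding the reductions above recovers $\tilde\omega$, hence $\tilde u$ via \eqref{Charlie16} in the tilde domain. It then remains to verify the listed conclusions. Periodicity of $P^{-1}(\tilde z_i)-\al$ is preserved by the equations. For the arc-chord condition on $P^{-1}(\tilde z(\cdot,t))$ when $t\in(0,T]$: the tilde curve is chord-arc throughout by construction, so the only possible failure for $P^{-1}(\tilde z)$ is at the splash point $z^0(\al_1)=z^0(\al_2)$; the hypotheses $u^0_{normal}(\al_1),u^0_{normal}(\al_2)<0$ force the gap between the two sheets through that point to open linearly in $t$, so $P^{-1}(\tilde z(\cdot,t))$ is a genuine chord-arc curve, bounded away from the branch cut of $P$, for all small $t>0$. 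Finally, inverting the map $P$ turns the solution of \eqref{tildastream} into a solution of the original system \eqref{Charlie19}, as explained in the text preceding the statement, which completes the argument.
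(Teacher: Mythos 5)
Your proposal is essentially the paper's argument: in its Section III the paper also takes $U=(z,\Phi)$ as unknowns (choosing $c=0$ in \eqref{equations-hou-tilde} to get the explicit system \eqref{1paco}), recovers $\omega$ by inverting $\tfrac12 I+J$ on the chord-arc tilde curve, establishes the $(r'-r)^{-1}$-Lipschitz estimate for the right-hand side on the scale of Hardy--Sobolev spaces $X_r$ (Proposition \ref{analyticestimates}), invokes the Nirenberg--Nishida abstract Cauchy--Kowalewski theorem, and finishes by using the sign condition $u^0_{normal}(\al_1),u^0_{normal}(\al_2)<0$ to rule out the ``bad'' crossing in Figure \ref{goodbadB}. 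One remark: having chosen $(z,\Phi)$ as unknowns, the implicit $\partial_t BR$ issue you flag does not in fact arise --- the equation $\Phi_t=\tfrac12 Q^2|u|^2-gP_2^{-1}(z)$ involves only $z$, $\Phi_\al$ and the algebraically recovered $\omega$, with no time derivative of $BR$; that complication only appears if one instead evolves $(z,\omega)$ via \eqref{eqomega}, where $\omega_t$ sits on both sides.
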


 The main tool in the proof is an abstract  Cauchy-Kowalewski theorem from \cite{Nirenberg:abstract-cauchy-kowalewski} and \cite{Nishida:theorem-Nirenberg}. For more details see \cite{Castro-Cordoba-Gancedo:naive-vortex-sheet}.

 For the proof of local existence in Sobolev spaces we will take the following $\tilde{c}(\al,t)$:

 \begin{align*}\tilde{c}(\al,t) & = \frac{\al+\pi}{2\pi}\int_{-\pi}^{\pi}(Q^{2}BR(\tilde{z},\tilde{\omega}))_\beta(\beta,t)\cdot\frac{\tilde{z}_{\beta}(\beta,t)}{|\tilde{z}_{\beta}(\beta,t)|^{2}}d\beta \\
& - \int_{-\pi}^{\al}(Q^{2}BR(\tilde{z},\tilde{\omega}))_\beta(\beta,t)\cdot\frac{\tilde{z}_{\beta}(\beta,t)}{|\tilde{z}_{\beta}(\beta,t)|^{2}}d\beta.
\end{align*}

This choice of $\tilde{c}$ will ensure that $|\tilde{z}(\al,t)|$ depends only on $t$. 
We will also define an auxiliary function $\tilde{\varphi}(\al,t)$ analogous to the one introduced in \cite{Beale-Hou-Lowengrub:growth-rates-linearized} (for the linear case) and \cite{Ambrose-Masmoudi:zero-surface-tension-2d-waterwaves} (nonlinear case) which helps us to bound several of the terms that appear:
\begin{equation}\label{fvarsect1}
\tilde{\varphi}(\al,t) = \frac{Q^2(\al,t)\tilde{\omega}(\al,t)}{2|\tilde{z}_{\al}(\al,t)|} - \tilde{c}(\al,t)|\tilde{z}_{\al}(\al,t)|.
\end{equation}

Then, we can prove the following theorem:

\begin{theorem}[Local existence for initial data in Sobolev spaces in the tilde domain]
\label{localexistencetilde}
In the setting of Section \ref{Section1BCharlie}, let $\tilde{z}^{0}(\al)$ be the image of a splash curve by the map $P$ parametrized in such a way that $|\da \tilde{z}^{0}(\al)|$ does not depend on $\alpha$, and such that $\tilde{z}_{1}^{0}(\al), \tilde{z}_2^{0}(\al) \in H^{4}(\T)$. Let $\tilde{\varphi}(\al,0) \in H^{3+\frac{1}{2}}(\T)$ be as in \eqref{fvarsect1} and let $\tilde{\omega}(\al,0) \in H^{2}(\T)$.
Then there exist a finite time $T > 0$, a time-varying curve $\tilde{z}(\al,t)  \in C([0,T];H^{4})$, and functions $\tilde{\omega}(\al,t) \in C([0,T];H^{2})$ and $\tilde{\varphi} \in C([0,T]; H^{3+\frac{1}{2}})$ providing a solution of the water wave equations (\ref{zeq} - \ref{eqomega}).
\end{theorem}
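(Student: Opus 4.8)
The plan is to prove Theorem~\ref{localexistencetilde} by the energy method for the Birkhoff--Rott (vortex-sheet) formulation of the water wave problem, following \cite{Cordoba-Cordoba-Gancedo:interface-water-waves-2d} and adapting it to the conformal factor $Q^2$ appearing in \eqref{zeq}--\eqref{eqomega}. The first step is to rewrite the system as closed evolution equations for a convenient set of unknowns: the choice of $\tilde c(\al,t)$ displayed before \eqref{fvarsect1} forces $|\da\tilde z(\al,t)|$ to depend only on $t$, which fixes the parametrization, and together with the Birkhoff--Rott inversion $\tilde\Phi_\al = BR(\tilde z,\tilde{\omega})\cdot\tilde z_\al+\tfrac{1}{2}\tilde{\omega}$ and the relation \eqref{fvarsect1} this lets us treat $\big(\tilde z-(\al,0),\ \tilde\varphi\big)$ as the true unknowns, with $\tilde{\omega}$ and $\tilde c$ recovered from them. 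I would then introduce an energy of the form
\begin{align*}
E(t) ={}& \|\tilde z-(\al,0)\|_{H^4(\T)}^2+\|\tilde\varphi\|_{H^{3+\frac{1}{2}}(\T)}^2+\|\tilde{\omega}\|_{H^2(\T)}^2+\Big\|\tfrac{1}{|\da\tilde z|}\Big\|_{L^\infty}\\
&{}+\|\mathcal{F}(\tilde z)\|_{L^\infty}+\Big\|\tfrac{1}{\si}\Big\|_{L^\infty}+\int_\T \frac{\si(\al,t)}{|\da\tilde z(\al,t)|}\,|\da^4\tilde z(\al,t)|^2\,d\al,
\end{align*}
where $\mathcal{F}(\tilde z)(\al,\be)=|\be|^2/|\tilde z(\al,t)-\tilde z(\al-\be,t)|^2$ controls the arc--chord constant of the (non-self-intersecting) tilde curve and $\si$ is the Rayleigh--Taylor function of the tilde problem. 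The theorem then reduces to the a priori bound $\tfrac{d}{dt}E(t)\le C\big(E(t)\big)$ together with a construction of solutions consistent with it.

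For the a priori estimate I would proceed term by term. The $H^4$ bound for $\tilde z$ comes from applying $\da^4$ to \eqref{zeq}: since $BR(\tilde z,\tilde{\omega})$ is a Calder\'on--Zygmund operator in $\tilde{\omega}$ and an operator of the same order in $\tilde z$, the only dangerous top-order contributions are those already present in \cite{Cordoba-Cordoba-Gancedo:interface-water-waves-2d}, plus new terms carrying derivatives of $Q^2$; the latter are harmless because $Q^2$ is a fixed smooth function of the curve that stays bounded above and below for short time, as long as the interface avoids $z=0,\pm\pi$ and the removed branch (all open conditions holding initially). The delicate part is the cross term pairing the top-order part of $\da^4\tilde z$ with the top-order part of $\tilde\varphi$ in $H^{3+\frac{1}{2}}$: the auxiliary function \eqref{fvarsect1}, like those of \cite{Beale-Hou-Lowengrub:growth-rates-linearized,Ambrose-Masmoudi:zero-surface-tension-2d-waterwaves}, is built so that its evolution equation has leading term $-\si\,\da(\text{normal velocity})$ modulo lower order, and the two cross terms then cancel in $\tfrac{d}{dt}\big(\int_\T \tfrac{\si}{|\da\tilde z|}|\da^4\tilde z|^2\,d\al+\|\tilde\varphi\|_{H^{3+\frac{1}{2}}}^2\big)$, provided $\si>0$. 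The $H^2$ bound for $\tilde{\omega}$ uses that the implicit term $-2\,\da^2\big(\partial_t BR(\tilde z,\tilde{\omega})\cdot\tilde z_\al\big)$ involves $\partial_t\tilde{\omega}$ only through $-2\,\tilde z_\al\cdot BR(\tilde z,\partial_t\tilde{\omega})$, which is a smoothing operator in $\partial_t\tilde{\omega}$ because $\tilde z_\al(\al)\cdot\big(\tilde z(\al)-\tilde z(\be)\big)^\perp$ vanishes to second order on the diagonal; hence $\partial_t\tilde{\omega}$ can be solved for and the remaining terms estimated directly.

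The positivity $\si(\cdot,0)>0$ must itself be established, since it is not assumed. Here I would use that $p\circ P^{-1}$ is superharmonic in $\tilde\Omega(0)$ --- by conformal invariance of the Laplacian together with $\Delta p\le 0$ in the irrotational fluid --- equals the boundary constant $p^{*}(0)$ on $\partial\tilde\Omega(0)$, and tends to $+\infty$ at the interior point which is the image of $-i\infty$; since $\tilde\Omega(0)$ is a genuine $H^4$ domain satisfying the interior ball condition at every boundary point (the map $P$ having separated the splash point), the minimum principle and Hopf's lemma give $-\partial_n(p\circ P^{-1})>0$ along all of $\partial\tilde\Omega(0)$, hence, $\si(\cdot,0)$ being a positive multiple of this quantity, $\si(\cdot,0)>0$; positivity for a short time then follows because $\partial_t\si$ is controlled by $E$.

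For the construction I would regularize \eqref{zeq}--\eqref{eqomega} by mollification, inserting $\rho_\ep\ast$ in the singular operators and in the $\al$-derivatives; for each $\ep>0$ this becomes an ODE in $H^4\times H^2\times H^{3+\frac{1}{2}}$ with Lipschitz right-hand side, so Picard yields a unique local solution, and repeating the estimates above with the mollifiers in place (arranged so that the cancellations survive) gives a lifespan $T>0$ and a bound on $E$ uniform in $\ep$. Passing to the limit $\ep\to 0$ by Arzel\`a--Ascoli and Aubin--Lions, the time derivatives being bounded in lower-order norms via the equations, produces a solution with the asserted regularity, continuity in time at top order being recovered by a Bona--Smith-type argument, and uniqueness following from the same estimates applied to the difference of two solutions in a norm one derivative lower. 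The main obstacle is the top-order cancellation in the a priori estimate: one must pin down the precise structure of $\partial_t BR$ and of the $\tilde\varphi$-equation so that, after all the new $Q^2$-terms are accounted for, the energy closes with no loss of derivatives --- this is exactly where the special choice of $\tilde c$ and the definition \eqref{fvarsect1} of $\tilde\varphi$ do their work, and where the sign $\si>0$ is indispensable.
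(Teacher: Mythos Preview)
Your approach is essentially the same as the paper's: an energy method adapting \cite{Cordoba-Cordoba-Gancedo:interface-water-waves-2d} to the tilde domain, with the $S(t)/-S(t)$ cancellation between the top-order curve term and the $H^{3+\frac12}$ estimate for $\tilde\varphi$, Rayleigh--Taylor positivity via Hopf's lemma, and regularization plus compactness. A few points in your write-up need correcting to match the actual structure.

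First, in the tilde domain the curve $\tilde z$ is a \emph{closed contour} (the map $P$ compactifies the periodic water region), so $\tilde z-(\al,0)$ is not the right object; the energy uses $\|\tilde z\|_{H^{k-1}}^2$ directly. Second, the weighted top-order piece must be $\int_\T \frac{Q^2\sigma}{|\tilde z_\al|^2}|\da^k\tilde z|^2\,d\al$, not $\int \frac{\sigma}{|\tilde z_\al|}|\da^k\tilde z|^2$: the conformal factor $Q^2$ is exactly what is needed for the unsigned term $S(t)=\int 2Q^2\sigma\,\frac{\da^k\tilde z\cdot\tilde z_\al^\perp}{|\tilde z_\al|^3}\Lambda(\da^{k-1}\tilde\varphi)\,d\al$ produced by the curve estimate to cancel against the term $-S(t)$ coming from $\tfrac{d}{dt}\|\tilde\varphi\|_{H^{k-\frac12}}^2$; without $Q^2$ in the weight the cross terms do not match. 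Third, the energy must also carry $\sum_{l=0}^4 1/m(q^l)(t)$, where $m(q^l)=\min_\al|\tilde z(\al,t)-q^l|$ for the five singular points $q^0,\dots,q^4$ of $P$ and $P^{-1}$: otherwise you have no quantitative control on $Q$, $Q^{-1}$, $\nabla Q$, $P_2^{-1}$, and the estimates for $BR$, $z_t$, $\omega_t$, $\sigma$, $\sigma_t$ do not close.

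Finally, the paper's regularization is more delicate than a single mollifier $\rho_\ep$. It uses three layers: mollifiers $\phi_\mu$ (on the transport term) and $\phi_\delta$ (on the bulk nonlinearities), together with an artificial dissipation $-\ep\Lambda\tilde\varphi$ inserted in the $\omega$-equation. The $(\ep,\delta,\mu)$-system is an honest ODE in $H^4\times H^2\times H^{3+\frac12}$; one first passes $\mu\to0$ (recovering $|\tilde z_\al|=A(t)$), then $\delta\to0$ (using the commutator bound $\|\phi_\delta\!*\!(g\,\da f)-g\,\phi_\delta\!*\!\da f\|_{L^2}\le C\|\da g\|_{L^\infty}\|f\|_{L^2}$ so the cancellations survive), and only at the last step $\ep\to0$ does one need $\sigma>0$ to get a lifespan independent of $\ep$. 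Your Rayleigh--Taylor argument directly in $\tilde\Omega$ via superharmonicity of $\tilde p=p\circ P^{-1}$ is a valid alternative to the paper's route (which instead proves $\nabla p\cdot z_\al^\perp=\nabla\tilde p\cdot\tilde z_\al^\perp$ and runs Hopf in the non-tilde domain); either way the conclusion is the same.
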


The proof is based on the adaptation of the local existence proof in \cite{Cordoba-Cordoba-Gancedo:interface-water-waves-2d} to the tilde domain.

%

Some of the relevant estimates from \cite{Cordoba-Cordoba-Gancedo:interface-water-waves-2d} obviously hold here as well, with essentially unchanged proofs. We state such results in Lemmas \ref{lemmaBR} and Lemmas \ref{lemmaomt}, $\ldots$, \ref{lemmasigmat} below; and refer the reader to the relevant sections of \cite{Cordoba-Cordoba-Gancedo:interface-water-waves-2d} for the proofs.

However, \cite{Cordoba-Cordoba-Gancedo:interface-water-waves-2d} contains several ``miracles'', i.e., complicated calculations and estimates that lead to simple favorable results for no apparent reason. To see that analogous ``miracles'' occur in our present setting, we have to go through the arguments in detail; see Lemmas \ref{lemmaenergyS} and \ref{lemmaphit}, $\ldots$, \ref{lemmaenergyphiS} below.

We have tried to make it possible to check the correctness of our arguments without extreme effort, and without undue repetitions from \cite{Cordoba-Cordoba-Gancedo:interface-water-waves-2d}.

It would be very interesting to understand a-priori why the ``miracles'' in this paper and in \cite{Ambrose-Masmoudi:zero-surface-tension-2d-waterwaves}, \cite{Cordoba-Cordoba-Gancedo:interface-water-waves-2d} occur. Presumably there is a simple, conceptual explanation, which at present we do not know.


At the end of Section \ref{SectionDiscussionOmega} we will define the notion of a ``splat curve''. The curve depicted in Figure \ref{CharlieFig2b} is an example of a splat curve.

In the statement of Theorem \ref{localexistencetilde}, we may take $\tilde{z}_{0}(\al)$ to be the image of a splat curve under $P$ rather than the image of a splash curve.

The proof of Theorem \ref{localexistencetilde} goes through for this case with trivial changes. Consequently, we obtain an analogue of Theorem \ref{localexistencetildeAnalytic}, with hypothesis 1 replaced by

\underline{Hypothesis $1'$:} $u_{normal}^{0} = u_{normal}(\al,0)$ is negative for all $\al \in I_{1} \cup I_{2}$, where $I_1$, $I_2$ are the intervals appearing in the definition of a splat curve in Section \ref{SectionDiscussionOmega}.

Just as Theorem \ref{localexistencetildeAnalytic} implies the formation of splash singularities for water waves, the above analogue of Theorem \ref{localexistencetildeAnalytic} for splat curves implies

\begin{corollary}[Splat singularity]
There exist solutions of the water wave system that collapse along an arc in finite time, but remain otherwise smooth.
\end{corollary}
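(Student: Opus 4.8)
The plan is to deduce the Corollary in the same way splash singularities were obtained from Theorem~\ref{localexistencetildeAnalytic}: combine the splat analogue of Theorem~\ref{localexistencetildeAnalytic} described above (which follows from Theorem~\ref{localexistencetilde} with Hypothesis~$1'$ in place of Hypothesis~$1$) with the time-reversibility of the water wave equations. So the bulk of the work has already been done; what remains is a short reduction, whose only delicate point is the passage between the tilde and the non-tilde domains.

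First I would fix a splat curve $z^{0}(\al)$ whose self-intersection arc corresponds to the identified intervals $I_1, I_2$ from the definition in Section~\ref{SectionDiscussionOmega}, and choose an initial tangential velocity, equivalently a function $\Phi^{0}(\al)$, so that $(z_1^0(\al)-\al,\, z_2^0(\al),\, \Phi^0(\al)) \in X_{r_0}$ for some $r_0 > 0$, so that the zero-mean condition $\int_{\T} u^{0}_{normal}(\al)\,|\da z^{0}(\al)|\, d\al = 0$ holds, and so that $u^{0}_{normal} < 0$ on $I_1 \cup I_2$. Such data exist because Hypothesis~$1'$ is an open condition on $u^{0}_{normal}$ and the latter is otherwise free subject only to the one linear constraint; this is no harder than in the splash case. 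Applying the splat version of Theorem~\ref{localexistencetildeAnalytic} in the tilde domain then yields a time $T > 0$, a radius $0 < r < r_0$, and a solution $(\tilde{z}(\al,t), \tilde{\Phi}(\al,t)) \in C([0,T], X_r)$ of \eqref{tildastream} with $\tilde{z}^{0} = P(z^{0})$, such that $P^{-1}(\tilde{z}(\cdot,t)) - (\al,0)$ is $2\pi$-periodic and $P^{-1}(\tilde{z}(\cdot,t))$ satisfies the arc-chord condition for every $t \in (0,T]$. As explained after \eqref{zeq}, inverting the conformal map $P$ turns this into a genuine solution $\Omega(t), v(x,y,t)$ of the water wave equations (\ref{Charlie11}-\ref{Charlie14}) on $[0,T]$: since $P$ is biholomorphic off the removed branch, harmonicity of $\phi$ and $\psi$, the boundary conditions, and Bernoulli's law all transfer back, and since $P^{-1}$ is a local diffeomorphism away from the splat point, the pulled-back interface $z(\cdot,t) = P^{-1}(\tilde{z}(\cdot,t))$ and velocity $v$ inherit the regularity they have in the tilde domain except at the degenerate arc. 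Thus at $t = 0$ the interface is the splat curve $z^{0}$, while for $t \in (0,T]$ it is smooth and satisfies the arc-chord condition.

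Finally I would reverse time. The equations (\ref{Charlie11}-\ref{Charlie14}) are invariant under $t \mapsto -t$ (with $v \mapsto -v$, and $p$, $p^{*}$, $c^{\#}$ relabelled accordingly), hence under $t \mapsto T - t$; so $\Omega^{*}(t) := \Omega(T-t)$, $v^{*}(x,y,t) := -v(x,y,T-t)$ is again a solution on $[0,T]$. Its interface is smooth and satisfies the arc-chord condition for $t \in [0,T)$, but at $t = T$ it coincides with $z^{0}$, which self-intersects along an arc while remaining otherwise smooth, and the arc-chord constant tends to zero as $t \to T$. This $(\Omega^{*}, v^{*})$ is the desired solution collapsing along an arc in finite time but otherwise smooth.

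The main obstacle is the transfer from the tilde domain back to the original domain: one must check that the Neumann-problem and Birkhoff--Rott representation used in the well-behaved domain $P(\Omega(t))$ indeed corresponds, under $P^{-1}$, to the genuine dynamics of (\ref{Charlie11}-\ref{Charlie14}) in the original (pathological) domain, and that no loss of regularity occurs except at the single collapsing arc. Everything else — selecting admissible splat data and the time-reversal argument — is routine once the splat analogue of Theorem~\ref{localexistencetildeAnalytic} is granted.
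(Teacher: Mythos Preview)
Your overall strategy---apply the splat analogue of the local existence theorem, transfer from the tilde domain back via $P^{-1}$, then reverse time---is exactly the paper's argument, and your description of the transfer step and of the time-reversal is correct.

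There is, however, one genuine slip in your setup. You ask that $(z_1^0(\al)-\al,\, z_2^0(\al),\, \Phi^0(\al)) \in X_{r_0}$, i.e.\ that the initial splat curve be real-analytic. But as the paper notes at the very end of Section~\ref{SectionDiscussionOmega}, a splat curve \emph{cannot} be real-analytic: two disjoint closed arcs $z(I_1)$ and $z(I_2)$ coincide, and analytic continuation would then force global coincidence. So no admissible data of the kind you describe exist, and the analytic Theorem~\ref{localexistencetildeAnalytic} is unavailable here. This is precisely why the paper phrases the splat analogue as a consequence of the Sobolev result, Theorem~\ref{localexistencetilde}: take $\tilde z^0 \in H^4$, $\tilde\varphi(\cdot,0)\in H^{7/2}$, $\tilde\omega(\cdot,0)\in H^2$, and impose Hypothesis~$1'$ on $u^0_{normal}$. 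You in fact cite Theorem~\ref{localexistencetilde} as the source, so the fix is simply to drop the $X_{r_0}$ requirement and state the initial regularity in Sobolev terms; the rest of your argument goes through unchanged.
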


\subsection{Further Results}
Here we mention some 
immediate 
consequences of our results which are relevant:
\begin{itemize}
\item[1.] (Splash and Splat singularities for $3D$ water waves) It is possible to extend our results to the periodic three dimensional setting by considering scenarios invariant under translation in one of the coordinate directions. While preparing the final revisions of this manuscript, we noticed that in a very recent arXiv posting \cite{Coutand-Shkoller:finite-time-splash}, Coutand-Shkoller consider additional 3D splash singularities.
\item[2.] (No gravity) The existence of a splash singularity can also be proved in the case where the gravity constant $g$ is equal to zero, as long as the Rayleigh-Taylor condition holds.
\end{itemize}

%
%

\begin{figure}
\centering
\includegraphics[scale=0.4]{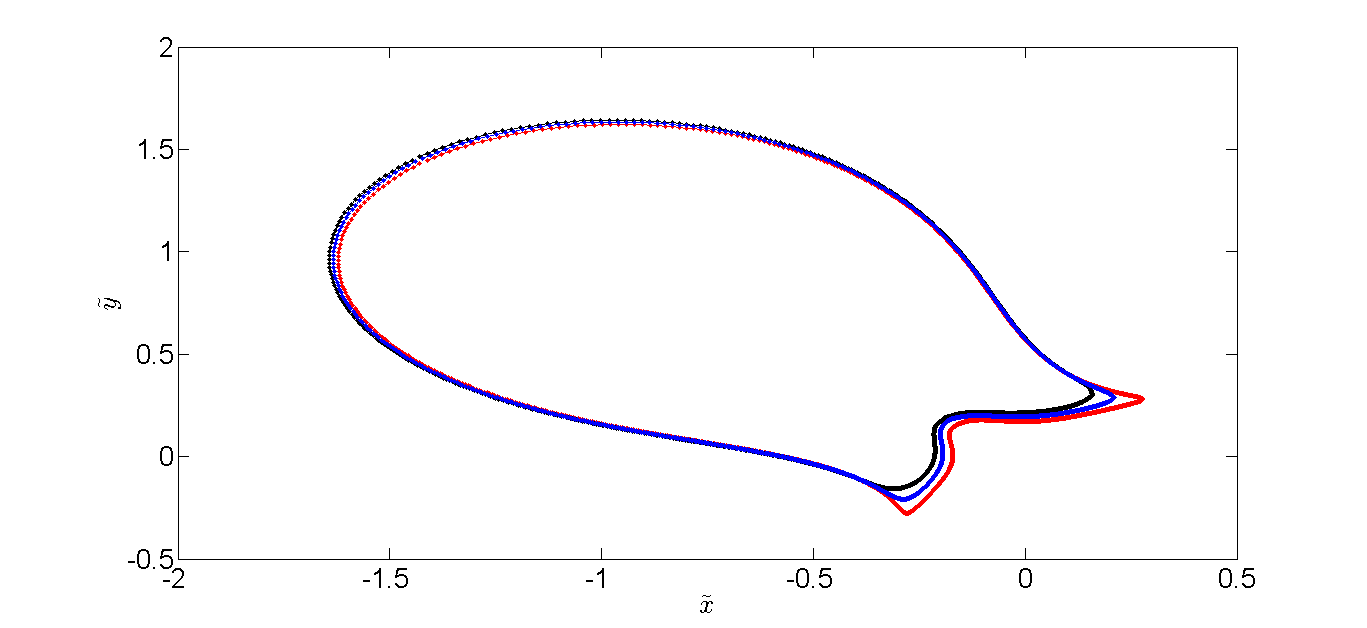}
\caption{Tilde domain at times $t = 0$ (Red - splash), $t = 4 \cdot 10^{-3}$ (Blue - turning) and $t = 7 \cdot 10^{-3}$ (Black - graph).}
\label{PictureSplash}
\end{figure}
\begin{figure}
\centering
\includegraphics[scale=0.4]{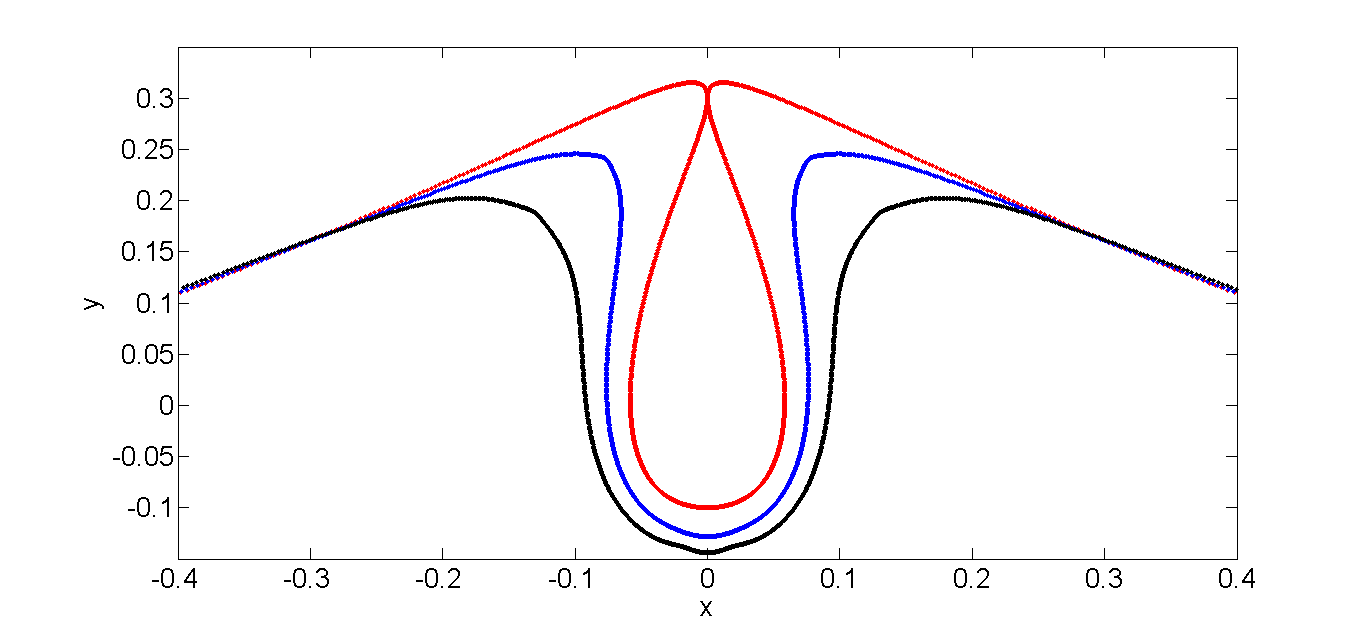}
\caption{Zoom of the splash singularity at times $t = 0$ (Red - splash), $t = 4 \cdot 10^{-3}$ (Blue - turning) and $t = 7 \cdot 10^{-3}$ (Black - graph).}
\label{PictureNonTildaZoom}
\end{figure}

\section{Splash curves: transformation to the tilde domain and back}
\label{SectionDiscussionOmega}
In this section we will rewrite the equations by applying a transformation from the original coordinates to new ones which we will denote by tilde. The purpose of this transformation is to be able to deal with the failure of the arc-chord condition.

For initial data we are interested in considering a self-intersecting curve in one point. More precisely, we will use as initial data \emph{splash curves} which are defined this way:

\begin{defi}
\label{defsplash}
We say that $z(\al) = (z_1(\al),z_2(\al))$ is a \emph{splash curve} if
\begin{enumerate}
\item $z_{1}(\al) - \al, z_2(\al)$ are smooth functions and $2\pi$-periodic.
\item $z(\al)$ satisfies the arc-chord condition at every point except at $\alpha_1$ and $\alpha_2$, with $\alpha_1 < \alpha_2$ where $z(\al_1) = z(\al_2)$ and $|z_{\al}(\al_1)|, |z_{\al}(\al_2)| > 0$. This means $z(\al_1) = z(\al_2)$, but if we remove either a neighborhood of $\al_1$ or a neighborhood of $\al_2$ in parameter space, then the arc-chord condition holds.
\item The curve $z(\alpha)$ separates the complex plane into two regions; a  connected water region and a vacuum region (not necessarily connected). The water region contains each point $x+iy$ for which y is large negative. We choose the parametrization such that the normal vector $n=\frac{(-\pa_\alpha z_2(\alpha), \pa_\alpha z_1(\alpha))}{|\pa_\alpha z(\alpha)|}$ points to the vacuum region. We regard the interface to be part of the water region.
\item We can choose a branch of the function $P$ on the water region such that the curve $\tilde{z}(\al) = (\tilde{z}_1(\al),\tilde{z}_2(\al)) = P(z(\al))$ satisfies:
\begin{enumerate}
\item $\tilde{z}_1(\al)$ and $\tilde{z}_2(\al)$ are smooth and $2\pi$-periodic.
\item $\tilde{z}$ is a closed contour.
\item $\tilde{z}$ satisfies the arc-chord condition.
\end{enumerate}
We will choose the branch of the root that produces that
$$ \lim_{y \to -\infty}P(x+iy) = -e^{-i \pi/4}$$
independently of $x$.
\item $P(w)$ is analytic at $w$ and $\frac{dP}{dw}(w) \neq 0$ if $w$ belongs to the interior of the water region. Furthermore, $(\pm \pi, 0)$ and $(0,0)$ belong to the vacuum region.
\item $\tilde{z}(\al) \neq q^l$ for $l=0,...,4$, where
\begin{equation}\label{points}
q^0=\left(0,0\right),\quad
q^1=\left(\frac{1}{\sqrt{2}},\frac{1}{\sqrt{2}}\right),\quad
q^2=\left(\frac{-1}{\sqrt{2}},\frac{1}{\sqrt{2}}\right),\quad
q^3=\left(\frac{-1}{\sqrt{2}}, \frac{-1}{\sqrt{2}}\right),\quad
q^4=\left(\frac{1}{\sqrt{2}}, \frac{-1}{\sqrt{2}}\right).
\end{equation}
\end{enumerate}
\end{defi}

From now on, we will always work with splash curves as initial data unless we say otherwise. Condition 6 will be used in the local existence theorems and can be proved to hold for short enough time as long as the initial condition satisfies it.  It is also immediate to check that the previous choice of $P$ transforms any periodic interface into a closed curve. Here are two examples of curves which are not splash curves (see Figure \ref{nosplash}).

\begin{figure}[h!]\centering
\includegraphics[width=0.45\textwidth, height=0.3\textwidth]{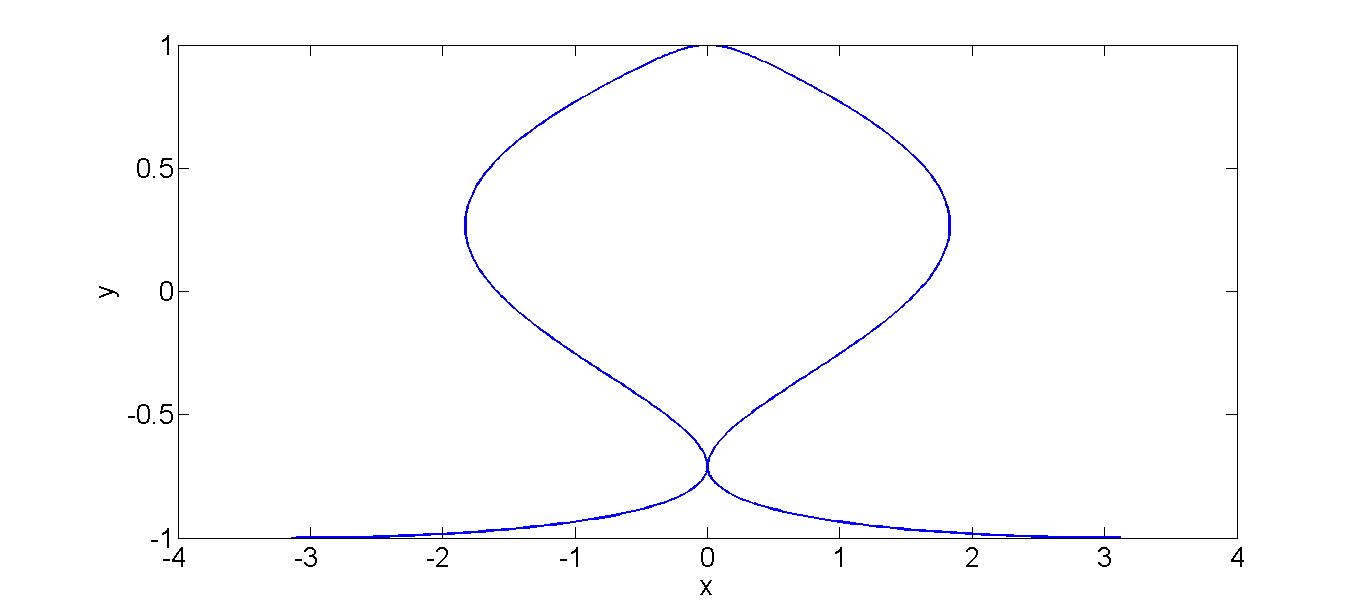}
\includegraphics[width=0.45\textwidth, height=0.3\textwidth]{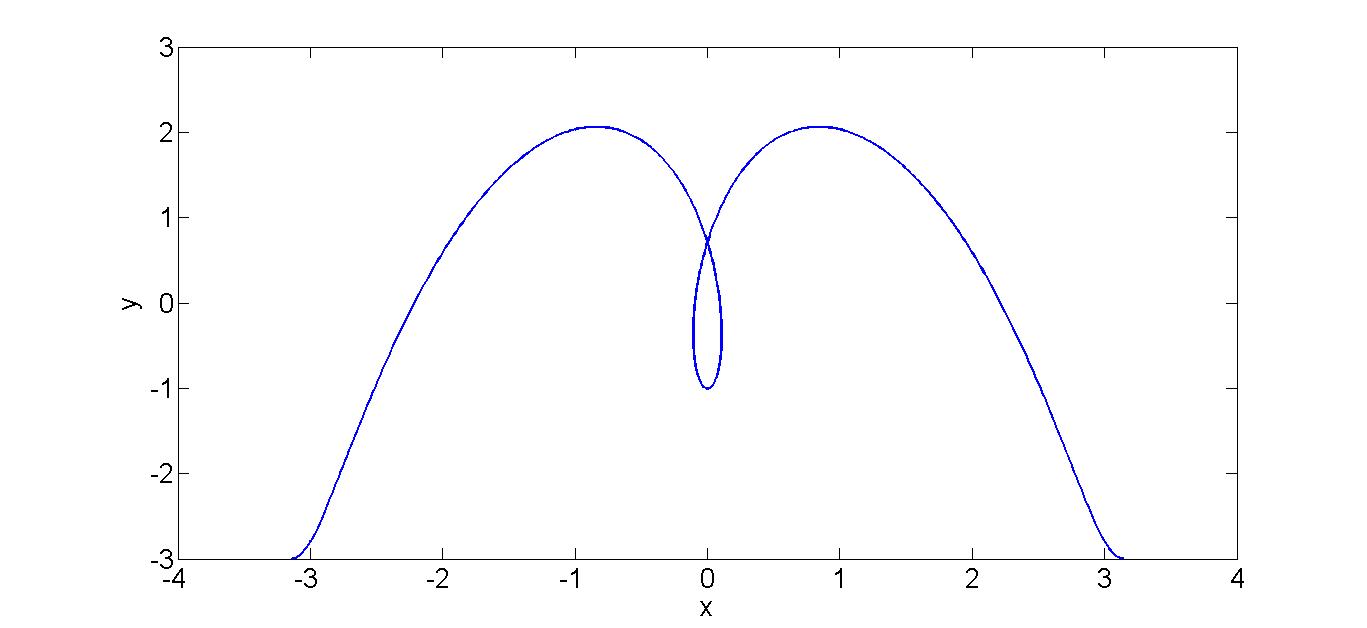}
\caption{Two examples of non splash curves.}
\label{nosplash}
\end{figure}

Now we will show a careful deduction of the equations in the tilde domain. From the definition of $\tilde{z}$ we have that

\begin{equation}
\label{deriv_ztilde_al}
\tilde{z}_{\al}(\al,t) = \nabla P(z(\al,t)) \cdot z_{\al}(\al,t)
\end{equation}
and
\begin{align}
 \tilde{z}_{t}(\al,t) & = \nabla P(z(\al,t)) \cdot z_t(\al,t) = \nabla P(z(\al,t)) \cdot (u(\al,t) + c(\al,t)z_{\al}(\al,t)) \nonumber\\
\label{deriv_ztilde_t}
 & = \nabla P(z(\al,t)) \cdot u(\al,t) + c \tilde{z}_{\al}(\al,t).
 \end{align}

Since $\phi = \tilde{\phi} \circ P$ and $v = \nabla \phi = \nabla(\tilde{\phi} \circ P)$, we obtain

\begin{equation}
\label{u-utilde}
v_{i} = \partial_{i} \phi = \partial_{i} (\tilde{\phi} \circ P) = \sum_{j}(\partial_{j} \tilde{\phi} \circ P)\frac{\partial P_{j}}{\partial x_i}
= \sum_{j} (\tilde{v}_{j} \circ P)\partial_{i} P_{j}.
\end{equation}

This implies that

\begin{equation}
\label{u-utilde2}
u(\al,t) = \nabla P(z(\al,t))^{T} \tilde{u}(\al,t).
\end{equation}

Plugging this into \eqref{deriv_ztilde_t} we get

\begin{equation}
\label{deriv_ztilde_t2}
 \tilde{z}_{t}(\al,t) = \nabla P(z(\al,t)) \cdot \nabla P(z(\al,t))^{T} \cdot \tilde{u}(\al,t) + c \tilde{z}_{\al}(\al,t).
\end{equation}

From the Cauchy-Riemann equations

\begin{equation}
\label{qscalar}
\nabla P(z(\al,t)) \cdot \nabla P(z(\al,t))^{T} = Q^2(\al,t) \cdot Id_{2}, \quad Q^2(\al,t) = \left|\frac{dP(z)}{dz}\right|^{2}.
\end{equation}

In this particular case, this means that

$$ Q^2(\al,t) = \left|\frac{1+\tilde{z}(\al,t)^4}{4\tilde{z}(\al,t)}\right|^{2}, \quad \tilde{z}(\al,t) = \tilde{z}_{1}(\al,t) + i \tilde{z}_{2}(\al,t).$$

Recall that $\tilde{\Phi}$ is the restriction of $\tilde{\phi}$ to the interface, i.e. $\tilde{\Phi}(\al,t) = \tilde{\phi}(\tilde{z}(\al,t),t)$. Then

\begin{equation}
\label{Phitilde}
\tilde{\Phi}(\al,t) = \tilde{\phi}(\tilde{z}(\al,t),t) = \phi (P^{-1}(\tilde{z}(\al,t)),t) = \phi(z(\al,t),t) = \Phi(\al,t)
\end{equation}

Thus, $\tilde{\Phi}$ satisfies

\begin{align}
\label{evolution-Phitilde}
\frac{\partial \tilde{\Phi}}{\partial t} & = \frac{1}{2}|u(\al,t)|^{2} + c(\al,t) u(\al,t) \cdot z_{\al}(\al,t) - gz_{2}(\al,t) \nonumber \\
& = \frac{1}{2}|\nabla P(z(\al,t))^{T} \cdot \tilde{u}(\al,t)|^{2} + c(\al,t) \tilde{u}(\al,t) \cdot \tilde{z}_{\al}(\al,t) - gP^{-1}_{2}(\tilde{z}(\al,t)),
\end{align}

where the subscript in the gravity term of the last line denotes the second component. Thus the system \eqref{Charlie19} in the new coordinates reads
\begin{align}
\label{tildastreamSection2}
\Delta \tilde{\psi}(x,y,t) & = 0 \quad \text{in $P(\Omega^2(t))$} \nonumber\\
\left.\partial_{n} \tilde{\psi}\right|_{\tilde{z}(\al,t)} & =- \frac{\tilde{\Phi}_{\al}(\al,t)}{|\tilde{z}_{\al}(\al,t)|} \nonumber\\
\tilde{v}&\equiv\nabla^\perp \tilde{\psi}\quad \text{in $P(\Omega^2(t))$}\nonumber\\
\tilde{z}_{t}(\al,t) & = Q^2(\al,t)\tilde{u}(\al,t) + c(\al,t)\tilde{z}_{\al}(\al,t) \nonumber\\
\tilde{\Phi}_{t}(\al,t) & = \frac{1}{2}Q^2(\al,t)|\tilde{u}(\al,t)|^2 + c(\al,t)\tilde{u}(\al,t) \cdot \tilde{z}_{\al}(\al,t) - gP^{-1}_2(\tilde{z}(\al,t)) + p^{*}(t)\nonumber\\
\tilde{z}(\alpha,0)&=\tilde{z}^0(\alpha)\nonumber\\
\tilde{\Phi}_\alpha(\alpha,0)&=\tilde{\Phi}_\alpha^0(\alpha) = \Phi_{\al}^{0}(\al).
\end{align}

We have seen that $\tilde{v}$ can be represented in the form

$$ \tilde{v}(\tilde{x},\tilde{y},t) = \nabla^{\perp} \tilde{\psi}(\tilde{x},\tilde{y},t) = \frac{1}{2\pi}P.V\int_{-\pi}^{\pi}\frac{(\tilde{x} - \tilde{z}_1(\al,t),\tilde{y} - \tilde{z}_2(\al,t))^{\perp}}{|(\tilde{x} - \tilde{z}_1(\al,t),\tilde{y} - \tilde{z}_2(\al,t))|^2}\tilde{\omega}(\al,t)d\al.$$
Taking limits from the fluid region we obtain
$$ \tilde{u}(\al,t) = BR(\tilde{z},\tilde{\omega}) + \frac{\tilde{\omega}}{2|\tilde{z}_{\al}|^2}\tilde{z}_{\al}.$$
The evolution of $\tilde{\omega}$ is calculated in the following way. First, let us recall the equations
\begin{align}
\label{equations-hou-tilde}
\tilde{z}_{t}(\al,t) & = Q^2(\al,t)\tilde{u}(\al,t) + c(\al,t)\tilde{z}_{\al}(\al,t) \nonumber\\
\tilde{\Phi}_{t}(\al,t) & = \frac{1}{2}Q^2(\al,t)|\tilde{u}(\al,t)|^2 + c(\al,t)\tilde{u}(\al,t) \cdot \tilde{z}_{\al}(\al,t) - gP^{-1}_2(\tilde{z}(\al,t)) \nonumber\\
\tilde{\Phi}_{\al}(\al,t) & = \tilde{u}(\al,t) \cdot \tilde{z}_{\al}(\al,t) \nonumber \\
\tilde{z}(\alpha,0)&=\tilde{z}^0(\alpha)\nonumber\\
\tilde{\Phi}_\alpha(\alpha,0)&=\tilde{\Phi}_\alpha^0(\alpha) = \Phi_{\al}^{0}(\al).
\end{align}

Substituting the expression for $\tilde{u}(\al,t)$ and performing the change $\tilde{c}(\al,t) = c(\al,t) + \frac{1}{2}Q^2(\al,t)\frac{\tilde{\om}(\al,t)}{|\tilde{z}_{\al}(\al,t)|^{2}}$ we obtain

\begin{align}\label{ikea}
\tilde{z}_{t}(\al,t) & = Q^2(\al,t)BR(\tilde{z},\tilde{\om})(\al,t) + \tilde{c}(\al,t)\tilde{z}_{\al}(\al,t) \nonumber\\
\tilde{\Phi}_{\al}(\al,t) & = BR(\tilde{z},\tilde{\om})(\al,t) \cdot \tilde{z}_{\al}(\al,t) + \frac{1}{2}\tilde{\om}(\al,t) \nonumber\\
\tilde{\Phi}_{t}(\al,t) & = \frac{1}{2}Q^2(\al,t)|\tilde{u}(\al,t)|^2 + c(\al,t)\tilde{u}(\al,t) \cdot \tilde{z}_{\al}(\al,t) - gP^{-1}_{2}(\tilde{z}(\al,t)) \nonumber\\
& = \frac{1}{2}Q^2(\al,t)|BR(\tilde{z},\tilde{\om})(\al,t)|^2 - \frac{Q^2(\al,t)}{8}\frac{\tilde{\om}(\al,t)^2}{|\tilde{z}_{\al}(\al,t)|^{2}}\nonumber \\
& + \tilde{c}(\al,t)BR(\tilde{z},\tilde{\om}) \cdot \tilde{z}_{\al}(\al,t)
+ \frac{1}{2}\tilde{c}(\al,t)\tilde{\om}(\al,t) - gP^{-1}_{2}(\tilde{z}(\al,t)).
\end{align}

On the one hand, by taking derivatives with respect to $t$ in the second equation follows
\begin{align}
\tilde{\Phi}_{\al t}(\al,t) & = \partial _{t} BR(\tilde{z},\tilde{\om})(\al,t) \cdot \tilde{z}_{\al}(\al,t) + BR(\tilde{z},\tilde{\om})(\al,t) \cdot \tilde{z}_{\al t}(\al,t) + \frac{\tilde{\om}_{t}(\al,t)}{2} \nonumber \\
& = \partial _{t} BR(\tilde{z},\tilde{\om}) \cdot \tilde{z}_{\al}(\al,t) + |BR(\tilde{z},\tilde{\om})|^{2} \partial_{\al}Q^{2}(\al,t)\nonumber \\
& + Q^{2}(\al,t) BR(\tilde{z},\tilde{\om}) \cdot \partial_{\al} BR(\tilde{z},\tilde{\om})
+ \tilde{c}_{\al}(\al,t) BR(\tilde{z},\tilde{\om}) \cdot \tilde{z}_{\al}(\al,t) \nonumber \\
& + \tilde{c}(\al,t) BR(\tilde{z},\tilde{\om}) \cdot \tilde{z}_{\al \al}(\al,t)
+ \frac{\tilde{\om}_{t}(\al,t)}{2}.
\end{align}

On the other, taking derivatives with respect to $\al$ in the third equation in \eqref{ikea} yields
\begin{align}
\tilde{\Phi}_{\al t}(\al,t) & = \frac{1}{2}|BR(\tilde{z},\tilde{\om})|^{2} \partial_{\al}Q^{2}(\al,t) + Q^{2}(\al,t) BR(\tilde{z},\tilde{\om}) \cdot \partial_{\al} BR(\tilde{z},\tilde{\om}) \nonumber \\
& - \frac{1}{2}\partial_{\al}\left(\frac{Q^2(\al,t)}{4}\frac{\tilde{\om}(\al,t)^2}{|\tilde{z}_{\al}(\al,t)|^{2}}\right)
 + \tilde{c}_{\al}(\al,t) BR(\tilde{z},\tilde{\om}) \cdot \tilde{z}_{\al}(\al,t)\nonumber \\
& + \tilde{c}(\al,t) \partial_{\al}BR(\tilde{z},\tilde{\om}) \cdot \tilde{z}_{\al}(\al,t)+ \tilde{c}(\al,t) BR(\tilde{z},\tilde{\om}) \cdot \tilde{z}_{\al \al}(\al,t)\nonumber \\
&  + \frac{1}{2}\partial_{\al}\left(\tilde{c}(\al,t)\tilde{\om}(\al,t)\right) - \partial_{\al}\left(gP^{-1}_{2}(\tilde{z}(\al,t))\right).
\end{align}

Combining both equations, we find that
\begin{align}
\tilde{\om}_{t}(\al,t) & = -2 \partial _{t} BR(\tilde{z},\tilde{\om})(\al,t) \cdot \tilde{z}_{\al}(\al,t) - |BR(\tilde{z},\tilde{\om})|^{2} \partial_{\al}Q^{2}(\al,t) \nonumber \\
& - \partial_{\al}\left(\frac{Q^2(\al,t)}{4}\frac{\tilde{\om}(\al,t)^2}{|\tilde{z}_{\al}(\al,t)|^{2}}\right)
 + 2\tilde{c}(\al,t) \partial_{\al}BR(\tilde{z},\tilde{\om}) \cdot \tilde{z}_{\al}(\al,t) \nonumber \\
& + \partial_{\al}\left(\tilde{c}(\al,t)\tilde{\om}(\al,t)\right)
- 2\partial_{\al}\left(gP^{-1}_{2}(\tilde{z}(\al,t))\right).
\end{align}

We will proceed in the following way: we will consider the evolution of the solutions in the tilde domain and see that everything works fine in the original domain. For example, the sign condition on the normal vectors in the non-tilde domain has an equivalent form in the tilde domain (i.e. the two normal components have negative sign).

In the non-tilde domain, this implies that the interface moves away from the branch removed from the square root, and therefore the interface touches neither the branch cut nor the conflictive points $q^{l}$ (see Condition 6 in Definition \ref{defsplash}). Hence $P$ and $P^{-1}$ will be well defined and one-to-one. (See Figure \ref{goodbad}).

Let us note that getting $\phi = \tilde{\phi} \circ P$ is not a problem since $\phi$ is bounded and harmonic. Moreover, as $\tilde{v} = \nabla^{\perp} \tilde{\psi}$ and
$$ v = \nabla P^{T} (\tilde{v} \circ P)$$
and $\nabla P$ has exponential decay at infinity, the velocity $v$ belongs to $L^{2}(\Omega^2(t) \cap [-\pi,\pi] \times \mathbb{R})$.

\begin{rem}
$\Psi, \Phi, u$ and $z$ have easy transformations to the tilde domain but $\om$ has not.
\end{rem}

We would like to discuss what happens to the amplitude of the vorticity $\om$ in the non-tilde domain as the curve approaches the splash.

If the vorticity belongs to $C([0,T_{splash}],C^{\delta}(\T))$, then the normal  velocity should be continuous at the
splash point and therefore the normal component of the restriction of the velocity to the curve from the water region cannot have
the same sign at $z(\al_1)$ and $z(\al_2)$ (see Theorem \ref{localexistencenontilde}). This means that the $C^{\delta}-$norm of the amplitude of the
vorticity becomes unbounded at the time of the splash.

We illustrate this phenomenon by plotting $1/\max{|\om|}$ (see Figure \ref{vorticityblowup}), where the blue curve is the calculated $\om$ and the red curve is a potential fitting to the data as numerical instabilities don't allow us to compute $\om$ with enough precision when we are in the regime which is close to the splash. Time has been reversed so that the splash occurs at time $t = 0$ and the interface separates from itself at $t > 0$.

\begin{figure}[h!]\centering
\includegraphics[scale=0.3]{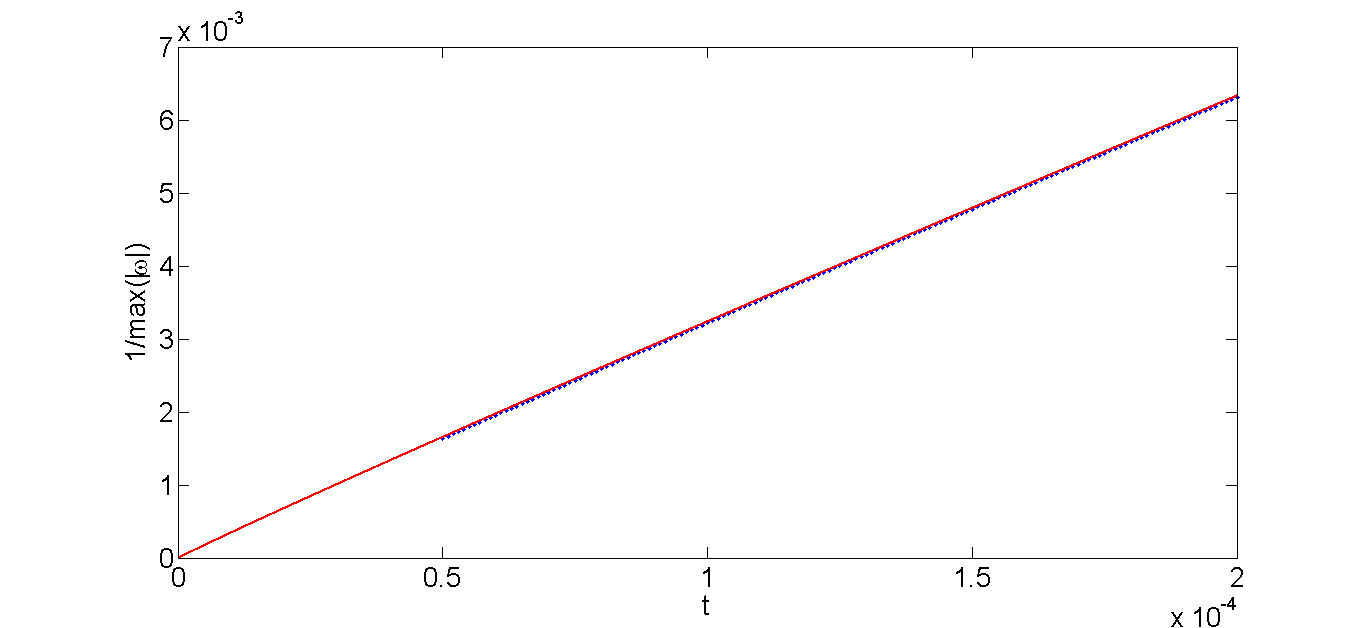}
\caption{Vorticity amplitude in the nontilde domain. The vorticity reaches infinity at a rate of approximately $\frac{1}{(T_{splash}-t)^{0.966}} \approx \frac{1}{(T_{splash}-t)}$. The fit is given by $F = 23.72 \cdot t^{0.966} - 1.476 \cdot 10^{-6}$.}
\label{vorticityblowup}
\end{figure}

We also have performed numerical simulations in order to get a blowup rate for the arc-chord condition. As in Figure \ref{vorticityblowup}, we plot the inverse of the arc-chord constant. The blue curve is made by the calculated points and the red curve is the interpolating one. We see a very good fitting. Time follows the same convention as before and the numerical evidence indicates a blowup of the arc-chord as $\frac{1}{T_{splash}-t}$. The results can be seen in Figure \ref{arcchordblowup}.

\begin{figure}[h!]\centering
\includegraphics[scale=0.3]{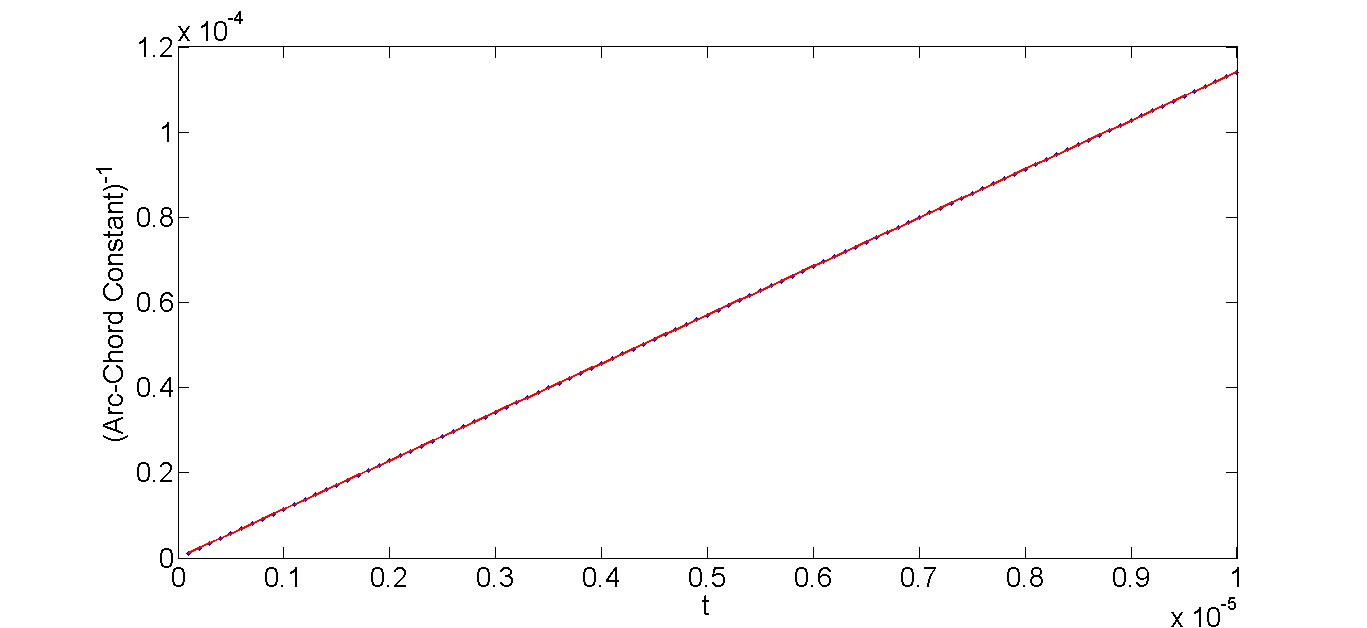}
\caption{Arc-chord condition in the non-tilde domain. The arc-chord reaches infinity at a rate of approximately $\frac{1}{(T_{splash}-t)}$. The fit is given by $F = 11.41 \cdot t + 5.104 \cdot 10^{-9}$.}
\label{arcchordblowup}
\end{figure}

We also kept track of the energy conservation. If we consider the following energy (not to be confused with the one in Section \ref{SectionLocalSobolev}):

\begin{equation}
E_S(t) = \frac{1}{2}\int_{\Omega^2_f(t)}|v(x,y,t)|^2dxdy + \frac{1}{2}\int_{-\pi}^{\pi}g(z_2(\alpha,t))^2\da z_{1}(\alpha,t) d\alpha \equiv E_k(t) + E_p(t)
\end{equation}

where $z(\alpha,t) = (z_{1}(\alpha,t), z_{2}(\alpha,t)), u(\alpha,t) = v(z(\alpha,t),t)$, and $\Omega^{2}_f(t)=\Omega^{2}(t) \cap [-\pi,\pi] \times \mathbb{R}$ is a fundamental domain in the water region in a period, then we can see that the energy is conserved; this is a check of the accuracy of our numerics.

\begin{align}
\frac{dE_k(t)}{dt} & = \int_{\Omega^2_f(t)}v(x,y,t)(v_t(x,y,t) + v(x,y,t)\cdot \nabla v(x,y,t))dxdy \nonumber \\
& = \int_{\Omega^2_f(t)}v(x,y,t)(-\nabla p(x,y,t) - g(0,1))dxdy \nonumber \\
& = -\int_{\Omega^2_f(t)}v(x,y,t)(\nabla (p(x,y,t) + gy))dxdy \nonumber \\
& =-
\int_{\partial (\Omega^{2}_f(t))}v(x,y,t)\cdot \overrightarrow{n} gy ds \nonumber \\
& = -\int_{-\pi}^{\pi}gz_{2}(\alpha,t)u(\alpha,t)\cdot \da z^\bot(\alpha,t)d\alpha
\end{align}
where we have used the incompressibility of the fluid ($\nabla \cdot v = 0$) and the continuity of the pressure on the interface ($\left.p^{*}(t)\right|_{\partial\Omega^{2}(t)} = 0$). Next
\begin{align}
\frac{dE_p(t)}{dt} & = \int_{-\pi}^{\pi}gz_2(\alpha,t)\partial_t z_2(\alpha,t)\da z_{1}(\alpha,t)d\alpha + \frac{1}{2}\int_{-\pi}^{\pi}g(z_{2}(\alpha,t))^2
\dpt \da  z_{1}(\alpha,t)d\alpha \nonumber \\
& = \int_{-\pi}^{\pi}gz_2(\alpha,t)\partial_t z_2(\alpha,t)\da z_{1}(\alpha,t)d\alpha - \int_{-\pi}^{\pi}gz_{2}(\alpha,t)\da z_{2}(\alpha,t)\partial_t z_1(\alpha,t)d\alpha \nonumber \\
& = \int_{-\pi}^{\pi}gz_2(\alpha,t)u(\alpha,t)\cdot \da z^\bot(\alpha,t)d\alpha.
\end{align}

This proves that the energy is constant. Note that:

\begin{align}
\int_{\Omega^{2}_f(t)}|v(x,y,t)|^2dxdy & = \int_{\Omega^{2}_f(t)}|\nabla \phi(x,y,t)|^{2}dxdy \nonumber \\
& = -\int_{\Omega^{2}_f(t)}\phi(x,y,t)\Delta \phi(x,y,t)dxdy \nonumber \\
& + \int_{\partial (\Omega^{2}_f(t))}\phi(x,y,t) \nabla \phi(x,y,t) \cdot \overrightarrow{n}dxdy \nonumber \\
& \stackrel{\Delta \phi = 0}{=} \int_{\partial (\Omega^{2}_f(t))}\phi(x,y,t) \nabla \phi(x,y,t) \cdot \overrightarrow{n}dxdy
\end{align}

so the numerical calculation is restricted to the values at the boundary. We observe that the energy of our system is conserved, as we have
$$ \displaystyle E_S(t) \approx 38.3936, \quad \frac{\displaystyle \max_t E_S(t) - \min_t E_S(t)}{\displaystyle \min_{t}E_S(t)} \approx 6 \cdot 10^{-11}.$$

We now give the proof of Theorem \ref{localexistencenontilde} using Theorem \ref{localexistencetilde}.
\begin{proofthm}{localexistencenontilde}
Using the fact that there is local existence to the initial data in the tilde domain and applying $P^{-1}$ to the solution obtained there, we can get a curve $z(\al,t)$ that solves the water wave equation in the non tilde domain. Details on the local existence in the tilde domain are shown below. Note that the sign condition \eqref{Charlie111} assumed in Theorem \ref{localexistencenontilde} guarantees that 
for positive time $t$ the curve in the nontilde domain will separate (as depicted in Figure \ref{goodbadA}) instead of crossing itself (as depicted in Figure \ref{goodbadB}).
More precisely, we check that for small positive time $t$ the curve $\al \mapsto z(\al,t) = (z_1(\al,t), z_2(\al,t)) = P^{-1}(\tilde{z}(\al,t)) \in
\mathbb{R}/2\pi \mathbb{Z} \times \mathbb{R}$ is a simple closed curve, i.e. that $\al \mapsto z(\al,t)$ is one-to-one. Indeed, if not, there exist a sequence of positive times $t_{\nu} \to 0$ and points $\al_{\nu}^{'}, \al_{\nu}^{''}$ such that $\al_{\nu}^{'} \neq \al_{\nu}^{''} \mod 2\pi \mathbb{Z}$, but $z(\al_{\nu}^{'}, t_{\nu}) = z(\al_{\nu}^{''}, t_{\nu})$. Since the initial splash curve $\al \mapsto z(\al,0)$ satisfies the modified chord-arc condition described in Condition 2 of Definition \ref{defsplash}, we may assume without loss of generality that $\al_{\nu}^{'} \rightarrow \al_{1}$ and $\al_{\nu}^{''} \rightarrow \al_{2}$ (with $\al_1, \al_2$ as in Definition \ref{defsplash}). The sign condition \eqref{Charlie111} therefore guarantees that (for large $\nu$), $\tilde{z}(\al_{\nu}^{'}, t_{\nu})$ and $\tilde{z}(\al_{\nu}^{''}, t_{\nu})$ lie in the image of the (open) time-zero water region under the map $P$. Moreover (for large $\nu$), $\tilde{z}(\al_{\nu}^{'}, t_{\nu}) \neq \tilde{z}(\al_{\nu}^{''}, t_{\nu})$ since $\tilde{z}(\al_1,0) \neq \tilde{z}(\al_2,0)$.

Since $P^{-1}$ is one-to-one on the image of the open time-zero water region under $P$, it follows that (for large $\nu$) we have $z(\al_{\nu}^{'}, t_{\nu}) \neq z(\al_{\nu}^{''}, t_{\nu}) \in \mathbb{R}/2\pi \mathbb{Z} \times \mathbb{R}$, with $z(\al,t) \equiv P^{-1}(\tilde{z}(\al,t)).$ This contradicts the defining condition $z(\al_{\nu}^{'}, t_{\nu}) = z(\al_{\nu}^{''}, t_{\nu})$, completing the proof that $\al \mapsto z(\al,t)$ is a simple closed curve for small positive $t$.

The proof of Theorem \ref{localexistencenontilde} is complete.
\end{proofthm}

We end this section by defining a ``splat curve'', as promised in Section \ref{Section1Introduction}. To do so, we simply modify our Definition \ref{defsplash} for a splash curve, by replacing Condition 2 in that definition by the following

\underline{Condition $2'$:} We are given two disjoint closed non-degenerate intervals $I_1, I_2 \subset [0,2\pi)$ whose images under $\alpha \mapsto (z_1(\al), z_2(\al)) \in \mathbb{R}/2\pi \mathbb{Z} \times \mathbb{R}$ coincide.

The map $\al \mapsto (z_1(\al), z_2(\al)) \in \mathbb{R}/2\pi \mathbb{Z} \times \mathbb{R}$ satisfies the chord-arc condition when restricted to the complement of any open interval $J$ such that $J \supset I_1$ or $J \supset I_2$.

As promised, the curve depicted in Figure \ref{CharlieFig2b} is a splat curve. Observe that the curve in Figure \ref{CharlieFig2b} cannot be real-analytic.
\color{black}

\section{Proof of real-analytic short-time existence in tilde domain}

The main goal of this section is to prove Theorem \ref{localexistencetildeAnalytic}. In order to accomplish this task we will prove local well-posedness for the system \eqref{1paco} below. In this section, we will drop the tildes from the notation. The system arises from \eqref{equations-hou-tilde} taking $c=0$:
\begin{equation}
\label{1paco}
\left\{
\begin{array}{rcl}
z_t & = & \left|\frac{dP}{dw}(P^{-1}(z))\right|^{2}u \\
\Phi_t & = & \frac{1}{2}\left|\frac{dP}{dw}(P^{-1}(z))\right|^{2}|u|^{2} - gP_2^{-1}(z)\\
u & = & BR(z,\omega) + \frac{\omega}{2|z_{\al}|^{2}}z_{\al} \\
\Phi_{\al} & = & \frac{\omega}{2} + BR(z,\omega) \cdot z_{\al} \\
\left|\frac{dP}{dw}(P^{-1}(z(\al,t)))\right|^{2} & = & \frac{1}{16}\left|\frac{1+ (z_1(\al,t)+iz_2(\al,t))^{4}}{z_1(\al,t)+iz_2(\al,t)} \right|^2 \\
P_2^{-1}(z(\al,t)) & = & \log\left|\frac{i+(z_1(\al,t)+iz_2(\al,t))^2}{i-(z_1(\al,t)+iz_2(\al,t))^2}\right|.
\end{array}
\right.
\end{equation}

We demand that $z^0(\al) \neq (0,0)$ to find the function $\frac{dP}{dw}(P^{-1}(z(\al,t)))$ well defined. This condition is going to remain true for short time. We also consider $z^0(\al) \neq q_l$, $l=1,...,4$ in \eqref{points} to get $P_2^{-1}(z(\al,t))$ well defined. Again this is going to remain true for short time.

The main tool in this section is a Cauchy-Kowalewski theorem (see \cite[Section 5]{Castro-Cordoba-Fefferman-Gancedo-LopezFernandez:rayleigh-taylor-breakdown} for more details). We recall the following definitions
$$ S_{r}= \{\al + i \eta, |\eta| < r\},$$
$$ \|f\|^2_{L^2(\partial S_r)}=\sum_{\pm}\int_{-\pi}^{\pi}|f(\al\pm ir)|^{2}d\al,$$
$$ \|f\|_r^2=\|f\|^2_{L^2(\partial S_r)}+\|\da^3f\|^2_{L^2(\partial S_r)}, $$

the space
\begin{align*}
H^3(\partial S_r) = \left\{f \text{ analytic in } S_{r}, \|f\|_{r}^{2}<\infty, f \; 2\pi \text{-periodic}\right\}
\end{align*}
and we now take $(z_1, z_2,\Phi) \in (H^3(\partial S_r))^3 \equiv X_{r}$. We have the following theorem:
\begin{theorem}
\label{localexistenceanalyticsection3}
Let $z^{0}(\alpha)$ be a curve satisfying the arc-chord condition
$$ \frac{|z^0(\alpha) - z^{0}(\alpha - \beta)|^{2}}{|\beta|^{2}} > \frac{1}{M^2}$$
which doesn't touch the points $q_l$, $l=0,...,4$ in \eqref{points},and $(z^0, \Phi^0) \in X_{r_{0}}$ for some $r_0 > 0$. Then, there exist a time $T > 0$ and $0 < r < r_0$ such that there is a unique solution to the system \eqref{1paco} in $C([0,T],X_{r})$ with initial conditions $z(\al,0) = z^{0}(\al), \Phi(\al,0) = \Phi^{0}(\al)$, for all $\al \in \mathbb{T}$.
\end{theorem}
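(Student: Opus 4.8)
The plan is to prove Theorem \ref{localexistenceanalyticsection3} by recasting the system \eqref{1paco} as an abstract ordinary differential equation $\frac{dF}{dt}=\mathcal{N}(F,t)$ on the scale of Banach spaces $\{X_r\}_{0<r\le r_0}$, where $F=(z_1-\al,z_2,\Phi)$, and then invoking the Cauchy--Kowalewski theorem of Nirenberg--Nishida (as used in \cite[Section 5]{Castro-Cordoba-Fefferman-Gancedo-LopezFernandez:rayleigh-taylor-breakdown}). To apply that abstract theorem it suffices to verify three things: (i) for $0<r'<r\le r_0$, the nonlinear operator $\mathcal{N}(\cdot,t)$ maps a ball in $X_r$ into $X_{r'}$ and satisfies the ``loss of derivative'' estimate $\|\mathcal{N}(F,t)-\mathcal{N}(G,t)\|_{r'}\le \frac{C}{r-r'}\|F-G\|_r$ on that ball; (ii) $\|\mathcal{N}(0,t)\|_{r}\le C_0$ uniformly; (iii) $\mathcal{N}$ is continuous in $t$. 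The conclusion then yields, for suitable $0<r<r_0$ and $T>0$, a unique solution in $C([0,T],X_r)$, which is exactly the assertion. Uniqueness also comes packaged in the abstract statement.

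The first step is therefore to write $\mathcal{N}$ explicitly: solve for $\omega$ from the two algebraic relations $\Phi_\al=\frac{\omega}{2}+BR(z,\omega)\cdot z_\al$ (invertible as in \cite{Cordoba-Cordoba-Gancedo:interface-water-waves-2d}, since $I+$ (the relevant operator) is invertible on the analytic class under the arc-chord hypothesis), substitute into $u=BR(z,\omega)+\frac{\omega}{2|z_\al|^2}z_\al$, and then read off $z_t$ and $\Phi_t$ from the first two equations of \eqref{1paco}; the quantities $\left|\frac{dP}{dw}(P^{-1}(z))\right|^2$ and $P_2^{-1}(z)$ are given by the last two lines of \eqref{1paco} as explicit analytic functions of $z_1+iz_2$, hence holomorphic and bounded on a complex neighborhood provided $z$ stays away from $q^0,\dots,q^4$ — which is why those exclusions are in the hypothesis, and which persists for short time by continuity. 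The key analytic estimates needed are: (a) bounds for the Birkhoff--Rott integral $BR(z,\omega)$ and its $\al$-derivatives in $\|\cdot\|_r$ in terms of $\|z\|_r$, $\|\omega\|_r$ and the analytic arc-chord constant — these are standard for singular integral operators with analytic Calderón commutator structure, and the gain $\frac{1}{r-r'}$ comes from the Cauchy estimate when one shrinks the strip; (b) the invertibility and boundedness of the map $\omega\mapsto \frac{\omega}{2}+BR(z,\omega)\cdot z_\al$ on $X_r$-type spaces; (c) that composition with the fixed analytic functions coming from $P^{-1}$ preserves $H^3(\partial S_r)$ and is Lipschitz on balls (a Faà di Bruno / Banach-algebra argument, using that $H^3(\partial S_r)\hookrightarrow$ a Banach algebra because of the $\da^3$ control plus $2\pi$-periodicity). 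Then one assembles (i)--(iii) by the triangle inequality and the product/quotient rules in the Banach algebra.

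The main obstacle I expect is the bookkeeping in step (a)--(b): showing that the operator norm of $(I+\text{commutator})^{-1}$, and the $\|\cdot\|_r$ bounds on $BR$ and its derivatives, depend on the solution only through quantities that are themselves controlled in $X_r$ (in particular the \emph{analytic} arc-chord quantity $\inf_{\al,\be}\frac{|z(\al\pm ir)-z(\be\pm ir)|^2}{|\al-\be|^2}$, and $\inf |z_\al|$ staying positive and $z$ staying away from $q^l$), and that all of this is stable under shrinking $r\to r'$ with the quantitative loss $\frac{C}{r-r'}$. This is precisely the kind of ``miracle-free but tedious'' estimate that is carried out in \cite{Castro-Cordoba-Fefferman-Gancedo-LopezFernandez:rayleigh-taylor-breakdown} and \cite{Castro-Cordoba-Gancedo:naive-vortex-sheet}; since here $\mathcal{N}$ differs from that setting only by the extra, harmless analytic factors $\left|\frac{dP}{dw}(P^{-1}(z))\right|^2$ and the term $gP_2^{-1}(z)$, I would organize the proof so as to quote the vortex-sheet estimates verbatim for the $BR$-part and only check by hand that multiplication by, and composition with, these explicit functions of $z$ respects the estimates — which reduces to the two facts that $w\mapsto\frac{1}{16}\left|\frac{1+w^4}{w}\right|^2$ and $w\mapsto\log\left|\frac{i+w^2}{i-w^2}\right|$ are real-analytic and non-singular on a neighborhood of the (compact) curve, guaranteed by the hypotheses $z^0(\al)\neq q^l$. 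Finally, continuity in $t$ of $\mathcal{N}$ is immediate since $t$ enters only through $F(\cdot,t)$ and (in the $\Phi_t$ equation) through no explicit $t$-dependence at all, so (iii) follows from (i).
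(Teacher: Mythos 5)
Your proposal is correct and matches the paper's approach: both recast \eqref{1paco} as $\frac{du}{dt}=F(u)$ on the scale $\{X_r\}$, verify the $\frac{C_R}{r'-r}$ Lipschitz bound (Proposition \ref{analyticestimates}) by inverting $\omega\mapsto\frac{\omega}{2}+BR(z,\omega)\cdot z_\al$, using the Cauchy estimate for the derivative loss and treating $\left|\frac{dP}{dw}(P^{-1}(z))\right|^2$, $P_2^{-1}(z)$ as bounded analytic multipliers away from the $q^l$, and then invoke the Nirenberg--Nishida theorem. The one minor difference is that the paper bootstraps the invertibility of $(I+J)$ from the real-$L^2$ case to $\partial S_r$ via the equation's structure rather than asserting invertibility directly on the analytic class, but this does not change the argument in substance.
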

Equation \eqref{1paco} can be extended for complex variables:
\begin{equation*}
z_t(\al + i \xi,t) = F^1(z(\al + i\xi, t), \Phi(\al + i \xi, t)), \quad \Phi_t(\al + i \xi,t) = F^2(z(\al + i\xi, t), \Phi(\al + i \xi, t)).
\end{equation*}
Here
$$
F^1(z,\Phi)=\left|\frac{dP}{dw}(P^{-1}(z))\right|^{2}u
$$
where we abuse notation by writing
$$
\left|\frac{dP}{dw}(P^{-1}(z(\al+i\xi,t)))\right|^{2}=
\frac{1}{16}\frac{\Pi_{l=1}^4[(z_1(\al+i\xi,t)-q^l_1)^2+
(z_2(\al+i\xi,t)-q^l_2)^2]}{(z_1(\al+i\xi,t))^2
+(z_2(\al+i\xi,t))^2}
$$
and
$$
u(\al+i\xi,t)=BR(z(\al+i\xi,t),\omega(\al+i\xi,t))+ \frac{1}{2}\left(\frac{\omega(\al+i\xi,t)\da z(\al+i\xi,t)}{
(\da z_1(\al+i\xi,t))^2+(\da z_2(\al+i\xi,t))^2}\right)
$$
with
$$
BR(z(\al+i\xi,t),\omega(\al+i\xi,t))=
$$
$$
\frac{1}{2\pi}PV\int_\T \frac{(z_2(\al+i\xi-\beta,t)-z_2(\al+i\xi,t),z_1(\al+i\xi,t)-z_1(\al+i\xi-\beta,t))}
{(z_1(\al+i\xi,t)-z_1(\al+i\xi-\beta,t))^2+
(z_2(\al+i\xi,t)-z_2(\al+i\xi-\beta,t))^2}\omega(\al+i\xi-\beta,t)d\beta
$$
and $\omega$ given implicitly by
$$
\Phi_{\al}=\frac{\omega}{2} + BR(z,\omega)\cdot z_{\al}.
$$
We will also abuse notation by writing $|u|^{2}$ for $u_1^{2} + u_2^{2}$, even for complex $u = (u_1,u_2)$.
The operator $F^2$ is given by
$$
F^2(z,\Phi)=\frac12\left|\frac{dP}{dw}\left(P^{-1}(z)\right)\right|^{2}|u|^2-gP_2^{-1}(z)
$$
where
$$
P_2^{-1}(z(\al+i\xi,t))=\frac12 \sum_{l=1}^4(-1)^{l}\log [(z_1(\al+i\xi,t)-q^l_1)^2+
(z_2(\al+i\xi,t)-q^l_2)^2].
$$
Below we will use a strip of analyticity small enough so that the complex logarithm above is continuous.
We use the following proposition:
\begin{proposition}
\label{analyticestimates}
Consider $0 \leq r < r'$ and the open set $O \subset X_{r'}$ given by:

\begin{align*}
O =\{(z,\Phi) \in X_{r'}:\, &\|z_i\|_{r'}, \|\Phi\|_{r'} < R, \inf_{\al + i \xi \in S_r}|(z_1(\al+i\xi)-q^l_1)^2+(z_2(\al+i\xi)-q^l_2)^2| > R^{-2}, \\
&l=0,...,4,\,\inf_{\stackrel{\al + i \xi \in S_r}{\beta \in [-\pi,\pi]}}G(z)(\al + i \xi,\beta) > R^{-2}\}
\end{align*}

with

$$ G(z)(\al + i\xi,\beta) = \left|\frac{(z_1(\al + i\xi) - z_1(\al + i\xi - \beta))^2+(z_2(\al + i\xi) - z_2(\al + i\xi - \beta))^2}{\beta^2}\right| $$
then the function $F = (F^1,F^2)$ for $F: O \rightarrow X_{r}$ is a continuous mapping. In addition, there is a constant $C_R$ (depending on $R$ only) such that

\begin{equation}
\label{2paco}
\|F(z,\Phi)\|_{r} \leq \frac{C_R}{r'-r}\|(z,\Phi)\|_{r'}
\end{equation}
\begin{equation}
\label{3paco}
\|F(z^{2},\Phi^{2}) - F(z^{1},\Phi^{1})\|_{r} \leq \frac{C_R}{r'-r}\|(z^{2}-z^{1},\Phi^{2} - \Phi^{1})\|_{r'}
\end{equation}
and
\begin{equation}
\label{4paco}
\sup_{\stackrel{\al + i \xi \in S_r}{\beta \in [-\pi,\pi]}}|F^1(z,\Phi)(\al + i \xi) - F^1(z,\Phi)(\al + i \xi - \beta)| \leq C_R|\beta|
\end{equation}
for $z,z^{j},\Phi,\Phi^{j} \in O$.
\end{proposition}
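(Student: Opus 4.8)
\emph{Proof plan.} The proposition is precisely the hypothesis check for the abstract Cauchy--Kowalewski theorem of \cite[Section 5]{Castro-Cordoba-Fefferman-Gancedo-LopezFernandez:rayleigh-taylor-breakdown}, so the whole task is to establish \eqref{2paco}--\eqref{4paco} for the explicit operator $F=(F^1,F^2)$. First I would record the calculus on the scale $\{H^3(\partial S_\rho)\}$ that makes everything go: $H^3(\partial S_\rho)$ is a Banach algebra obeying the tame product estimate $\|fg\|_\rho\le C(\|f\|_\rho\|g\|_{L^\infty(\partial S_\rho)}+\|f\|_{L^\infty(\partial S_\rho)}\|g\|_\rho)$; for $f$ analytic and $2\pi$-periodic in $S_{r'}$ and $r<r'$ one has $\|f\|_r\le\|f\|_{r'}$ and the Cauchy estimate $\|\da f\|_{r}\le \frac{C}{r'-r}\|f\|_{r'}$ (proved by expanding in the modes $e^{in\al}$ and using $|n|e^{-|n|(r'-r)}\le C/(r'-r)$) --- this is the \emph{only} mechanism that produces a factor $1/(r'-r)$; if $h\in H^3(\partial S_\rho)$ with $\inf_{S_\rho}|h|\ge c>0$ then $1/h\in H^3(\partial S_\rho)$ and, more generally, $g\circ h\in H^3(\partial S_\rho)$ for $g$ holomorphic on a fixed neighbourhood of the range of $h$, with the modulus of continuity of $g$ controlling the difference estimate; and $H^3(\partial S_{r'})\hookrightarrow C^2(\overline{S_{r'}})$, with the $C^2$-norm on $\overline{S_r}$ ($r\le r'$) bounded by the $H^3(\partial S_{r'})$-norm \emph{uniformly in $r$}, via $H^3(\mathbb T)\hookrightarrow C^2(\mathbb T)$ on the two boundary lines together with the maximum principle on the fixed strip $S_{r'}$ (no $1/(r'-r)$ enters here). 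These are all in \cite[Section 5]{Castro-Cordoba-Fefferman-Gancedo-LopezFernandez:rayleigh-taylor-breakdown}.

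The core step is recovering $\omega$ and $u$ from $(z,\Phi)\in O$. The inequalities defining $O$ supply, on $S_r$, the arc-chord bound $G(z)>R^{-2}$ and the non-vanishing bounds $|(z_1-q^l_1)^2+(z_2-q^l_2)^2|>R^{-2}$ for $l=0,\dots,4$ (the case $l=0$ being $|z_1^2+z_2^2|>R^{-2}$). Shrinking the strip so the complex logarithm in $P_2^{-1}$ stays single-valued, $A(z):=\big|\frac{dP}{dw}(P^{-1}(z))\big|^{2}$ and $P_2^{-1}(z)$ become holomorphic functions of $z$ with values in fixed compact sets avoiding the singular loci, so by the composition calculus $A(z),P_2^{-1}(z)\in H^3(\partial S_r)$ with norm and Lipschitz constant $\le C_R$. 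To get $\omega$, invert $\Phi_\al=\frac{\omega}{2}+BR(z,\omega)\cdot z_\al$, i.e.\ $(\frac12 I+\mathcal D[z])\omega=\Phi_\al$ with $\mathcal D[z]\omega:=BR(z,\omega)\cdot z_\al$; the dot product with $z_\al$ cancels the principal part of the Birkhoff--Rott kernel, so $\mathcal D[z]$ has a bounded (non-singular) kernel and $\frac12 I+\mathcal D[z]$ is boundedly invertible on $H^k(\partial S_r)$ and on $C^k(\partial S_r)$ with inverse norm $\le C_R$. This, together with the tame estimates for $BR$ on the strip --- $\|BR(z,\omega)\|_{H^3(\partial S_r)}\le C_R\|\omega\|_{H^3(\partial S_r)}+C_R\|z\|_{H^4(\partial S_r)}\|\omega\|_{L^\infty(\partial S_r)}$ plus the Lipschitz-in-$(z,\omega)$ and the sup-norm / Lipschitz-in-$\al$ versions --- are the analytic-strip analogues of Lemma \ref{lemmaBR} and the estimates of \cite{Cordoba-Cordoba-Gancedo:interface-water-waves-2d}, with essentially unchanged proofs: $G(z)>R^{-2}$ controls the kernel away from $\beta=0$, the standard diagonal cancellation handles $\beta\to0$, and analyticity is preserved because the integrand is analytic in $\al+i\xi$. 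Then $\omega=(\frac12 I+\mathcal D[z])^{-1}\Phi_\al$ and $u=BR(z,\omega)+\frac{\omega}{2|z_\al|^{2}}z_\al$, where $G(z)>R^{-2}$ also yields $|(\da z_1)^2+(\da z_2)^2|\ge R^{-2}$ on $S_r$ to control the last term.

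Then I would assemble and read off the three estimates. Applying the Cauchy estimate once gives $\Phi_\al\in H^3(\partial S_r)$ and $z\in H^4(\partial S_r)$ with norms $\le C_R/(r'-r)$; feeding these into the previous step and using the tame product estimate throughout (so that the $1/(r'-r)$ rides on a single top-order $H^3$-norm while higher multiplicities enter only through strip-uniform $L^\infty$ norms) produces $\omega,u\in H^3(\partial S_r)$ with norm $\le C_R/(r'-r)$, hence $F^1=A(z)u$ and $F^2=\frac12 A(z)|u|^2-gP_2^{-1}(z)$ in $H^3(\partial S_r)$ with $\|F(z,\Phi)\|_r\le C_R/(r'-r)$; since $\|(z,\Phi)\|_{r'}$ is bounded below on $O$ ($|z_1^2+z_2^2|>R^{-2}$ and $H^3\hookrightarrow C^1$ forbid $z$ from being small in $L^2$), this is \eqref{2paco}. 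Continuity of $F:O\to X_r$ and \eqref{3paco} follow by rerunning the estimates on differences: $BR$ is multilinear, $(\frac12 I+\mathcal D[z])^{-1}$ depends Lipschitz-continuously on $z$ by the resolvent identity (so $\omega$ depends Lipschitz-continuously on $(z,\Phi)$), and the holomorphic prefactors are Lipschitz on the compact sets fixed by $O$. For \eqref{4paco} I would bound $\|\da F^1(z,\Phi)\|_{L^\infty(\overline{S_r})}$ by the product rule, controlling $A(z)$, $u$ and their $\al$-derivatives --- hence $\omega$, $BR(z,\omega)$, $z_\al$, $(|z_\al|^{2})^{-1}$ --- in $C^1(\overline{S_r})$ by $C_R$ using only the embedding-plus-maximum-principle bound on the \emph{fixed} strip $S_{r'}$, the $C^0$/$C^1$-boundedness of $(\frac12 I+\mathcal D[z])^{-1}$, and the $C^1$-version of the $BR$ estimate; no factor $1/(r'-r)$ arises, and \eqref{4paco} follows by integrating $\da F^1$ along the horizontal segment from $\al+i\xi-\beta$ to $\al+i\xi$.

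I expect the main obstacle to be the second step: verifying that the ``miracles'' of \cite{Cordoba-Cordoba-Gancedo:interface-water-waves-2d} --- the tame $BR$ estimates and the invertibility and $C^k/H^k$-boundedness of $\frac12 I+\mathcal D[z]$ --- survive the complexification $\al\mapsto\al+i\xi$, and carefully bookkeeping where the single factor $1/(r'-r)$ may and may not appear (only from one $\da$, hence only in the top-order $H^3$-norm, never in the sup-norm bound \eqref{4paco}). One must also check that the non-degeneracy constraints imposed in $O$ on $S_r$ are exactly the ones the estimates consume: $G(z)>R^{-2}$ for every principal-value integral, $|z_1^2+z_2^2|>R^{-2}$ and $|(z_1-q^l_1)^2+(z_2-q^l_2)^2|>R^{-2}$ for the conformal factor and for $P_2^{-1}$, and the induced $|(\da z_1)^2+(\da z_2)^2|\ge R^{-2}$ for the $\omega z_\al/|z_\al|^2$ term. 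Once these are in hand, the rest is the routine algebra-and-composition calculus of the first step.
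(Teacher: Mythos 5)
Your overall structure matches the paper's: reduce to analytic-strip analogues of the Birkhoff--Rott estimates, source the single factor $1/(r'-r)$ from one application of the Cauchy estimate, and handle the holomorphic prefactors $\left|\frac{dP}{dw}(P^{-1}(z))\right|^{2}$ and $P_2^{-1}(z)$ by composition calculus under the nondegeneracy constraints of $O$. The $I_1,\dots,I_6$ decomposition the paper performs (with the further splittings $I_{3,1}+I_{3,2}$ and $I_{2,1}+I_{2,2}$) is what you gesture at with ``tame product estimate throughout,'' and your plan for \eqref{4paco} --- bound $\|\da F^1\|_{L^\infty(\overline{S_r})}$ without incurring a $1/(r'-r)$ factor and integrate --- is consistent with the paper's terse ``follows in an easier manner.''

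However, there is a genuine gap in the step recovering $\omega$ from $\Phi_\al$. You assert that $\tfrac12 I+\mathcal{D}[z]$ is ``boundedly invertible on $H^k(\partial S_r)$ and on $C^k(\partial S_r)$ with inverse norm $\le C_R$'' because the kernel of $\mathcal{D}[z]$ is nonsingular. Boundedness of the kernel gives that $\mathcal{D}[z]$ is compact, hence $\tfrac12 I+\mathcal{D}[z]$ is Fredholm of index zero, but that does not give injectivity. Injectivity of $I+J$ on $L^2(\mathbb T)$ is a potential-theoretic fact about the double-layer potential on a real Jordan curve acting on real mean-zero densities, which is precisely what \cite[Section 5]{Cordoba-Cordoba-Gancedo:interface-water-waves-2d} provides. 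Once $\al$ is complexified to $\al\pm ir$, the curve $\al\mapsto z(\al\pm ir)$ is complex-vector valued, there is no maximum principle on its two sides, $J_z$ is no longer a real operator (so one cannot simply extend by $\mathbb C$-linearity), and no analogue of the double-layer argument is available. The same unestablished invertibility underlies your resolvent-identity argument for \eqref{3paco}.

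The paper circumvents this precisely by invoking the invertibility of $(I+J)$ only on $L^2(\mathbb T)$, i.e.\ on the real line $\xi=0$, and treating the strip boundary indirectly. Rearranging the implicit relation and bringing the integral to the real axis gives a representation of the form
\begin{equation*}
\omega(\al\pm ir)=2\Phi_\al(\al\pm ir)-\frac{1}{2\pi}\int_{\mathbb T}\frac{(z(\al\pm ir)-z(\beta))^\perp\cdot z_\al(\al\pm ir)}{|z(\al\pm ir)-z(\beta)|^2}\,\omega(\beta)\,d\beta,
\end{equation*}
whose kernel is controlled by the constraints of $O$, so that
\begin{equation*}
\|\omega\|_{L^2(\partial S_r)}\le 2\|\Phi_\al\|_{L^2(\partial S_r)}+C_R\|\omega\|_{L^2(\partial S_0)};
\end{equation*}
the remaining term $\|\omega\|_{L^2(\partial S_0)}$ is then closed by the real-line invertibility, $\|\omega\|_{L^2(\partial S_0)}\le C_R\|\Phi_\al\|_{L^2(\partial S_0)}\le C_R\|\Phi_\al\|_{L^2(\partial S_r)}$, and similarly with two derivatives to reach \eqref{qtsb}. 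For the Lipschitz estimate \eqref{3paco} the paper writes the explicit identity $\omega^{2}-\omega^{1}=2(I+J_{z^{2}})^{-1}(\Phi_\al^{2}-\Phi_\al^{1})-2(I+J_{z^{2}})^{-1}\big(BR(z^{2},\omega^{1})\cdot z^{2}_{\al}-BR(z^{1},\omega^{1})\cdot z^{1}_{\al}\big)$ and applies $(I+J_{z^{2}})^{-1}$ on $L^2(\mathbb T)$ in the same two-step fashion, rather than via a resolvent identity on a complexified space. You should replace your direct-inversion claim by this argument; with that fix the rest of your proposal is sound.
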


\begin{proof} First we point out that $\omega$ is given in term of $\Phi_\al$ and $z$ by the implicit equation
$$\Phi_\al=\frac{\omega}2+BR(z,\omega)\cdot z_\al \equiv \frac12(I+J)(\omega).$$
It is well known that the operator $(I+J)$ is invertible on $L^2$ for real functions with mean zero (see \cite[Section 5]{Cordoba-Cordoba-Gancedo:interface-water-waves-2d} for more details). Writing
$$
\omega(\al\pm ir)=2\Phi_\al(\al\pm ir)-\frac1{2\pi}\int_\T\frac{(z(\al\pm ir)-z(\beta))^\bot\cdot z_\al(\al\pm ir)}{|z(\al\pm ir)-z(\beta)|^2}\omega(\beta)d\beta
$$
one can find that
$$
\|\omega\|_{L^2(\partial S_r)}\leq 2 \|\Phi_\al\|_{L^2(\partial S_r)}+C_R\|\omega\|_{L^2(\partial S_0)}
$$
(where $C_R$ depends on $R$) for $(z,\Phi)\in O$. The bound of $(I+J)^{-1}$ for real functions yields
$$
\|\omega\|_{L^2(\partial S_0)}\leq 2\|(I+J)^{-1}\|_{L^2\to L^2}\|\Phi_\al\|_{L^2(\partial S_0)}\leq C_R \|\Phi_\al\|_{L^2(\partial S_r)}.
$$

Thus
$$ \|\omega\|_{L^2(\partial S_r)} \leq C_R\|\Phi_{\al}\|_{L^2(\partial S_r)}.$$
Analogously, one finds that
$$ \|\da^2\omega\|_{L^2(\partial S_r)} \leq C_R\|\Phi\|_{r}.$$

This allows us to assert that $\omega$ is at the same level as $\Phi_{\al}$ in terms of derivatives:
\begin{equation}\label{qtsb}
\|\omega\|_{L^2(\partial S_{r})}+\|\da^2\omega\|_{L^2(\partial S_{r})}\leq
C_R \|\Phi\|_{r}\leq C_R \|\Phi\|_{r'}.
\end{equation}

 Then, inequality \eqref{2paco} follows as in \cite[Section 6.3]{Castro-Cordoba-Fefferman-Gancedo-LopezFernandez:rayleigh-taylor-breakdown}. We will see how to deal with the most singular terms. For the first term in the norm, it is easy to find that
\begin{equation}
\|F(z,\Phi)\|_{L^2(\partial S_{r})}\leq C_R\|(z,\Phi)\|_{r}\leq C_R\|(z,\Phi)\|_{r'}.
\end{equation}
In order to control the second one, we will show how to deal with $F^1$ as $F^2$ is analogous. Here we point out that the functions
$$
\left|\frac{dP}{dw}(P^{-1}(z(\al+i\xi,t)))\right|^{2}, \quad P_2^{-1}(z(\al+i\xi,t))
$$
have no loss of derivatives and they are regular as long as $(z,\Phi)\in O$. Therefore, in $\da^3 F^1$ the most singular term is given by
$$
\left|\frac{dP}{dw}(P^{-1}(z(\al+i\xi,t)))\right|^{2}\da^3 u(\al+i\xi,t)
$$
as the rest can be estimated in an easier manner (see \cite[Section 6.1]{Cordoba-Cordoba-Gancedo:interface-water-waves-2d} as an example with more details).
From the definition it is easy to bound $\left|\frac{dP}{dw}(P^{-1}(z))\right|^{2}$ in $L^\infty$, it remains to control $\da^3 u$ in $L^2(\partial S_r)$. To simplify the exposition we ignore the time dependence of the functions, we denote $\gamma=\alpha\pm ir$,
$$
(z_1(\gamma)-z_1(\gamma-\beta))^2+
(z_2(\gamma)-z_2(\gamma-\beta))^2 \equiv |z(\g)-z(\g-\beta)|^2_*,
$$
$$
(\da z_1(\gamma))^2+(\da z_2(\gamma))^2
\equiv |z_\al(\g)|^2_*,
$$
and
$$
(z_2(\gamma-\beta)-z_2(\gamma),z_1(\g)-z_1(\g-\beta)) \equiv (z(\g)-
z(\g-\beta))^{\bot}.
$$
Next, we split as follows
$$
\da^3 u=I_1+I_2+I_3+I_4+I_5+I_6+\mbox{l.o.t.}
$$
where l.o.t. denotes lower order terms which can be estimated in an easier manner. We have
$$
I_1=\frac{1}{2\pi}PV\int_{-\pi}^\pi\frac{(\da^3 z(\g)-\da^3 z(\g-\beta))^{\bot}}{|z(\g)-z(\g-\beta)|_*^2}\omega(\g-\beta)d\beta,
$$
$$
I_2=\frac{-1}{\pi}PV\int_{-\pi}^\pi\frac{(z(\g)\!-\!z(\g\!-\!\beta))^{\bot}}{(|z(\g)\!-\!z(\g\!-\!\beta)|_*^2)^2}
(z(\g)\!-\!z(\g\!-\!\beta))\cdot(\da^3 z(\g)\!-\!\da^3 z(\g\!-\!\beta))\omega(\g\!-\!\beta)d\beta,
$$
$$
I_3=\frac{1}{2\pi}PV\int_{-\pi}^\pi\frac{(z(\g)-z(\g-\beta))^{\bot}}{|z(\g)-z(\g-\beta)|_*^2}\da^3\omega(\g-\beta)d\beta,
$$
$$
I_4=\frac12\frac{\omega(\g) \da^4 z(\g)}{|z_\al(\g)|_*^2},
$$
$$
I_5=-\frac12\frac{\omega(\g) \da z(\g)}{(|z_\al(\g)|_*^2)^2}
\da z(\g)\cdot \da^4z(\g)$$
and
$$
I_6=\frac12\frac{\da^3\omega(\g) \da z(\g)}{|z_\al(\g)|_*^2}.
$$
For $I_6$ we find
$$
\|I_6\|_{L^2(\partial S_{r})}\leq \frac12 \|\da z\|_{L^\infty(S_{r})}\Big(\inf_{\stackrel{\g\in S_{r}}{\beta \in [-\pi,\pi]}}G(z)(\g,\beta)\Big)^{-1}\|\da^3\omega\|_{L^2(\partial S_{r})}
$$
and since $(z,\Phi) \in O$ we get
$$
\|I_6\|_{L^2(\partial S_{r})}\leq C_R\|\da^3\omega\|_{L^2(\partial S_{r})}
$$
by using Sobolev embedding.
A simple application of the Cauchy formula gives
$$
\|\da f\|_{L^2(\partial S_{r})}\leq \frac{C}{r'-r}\|f\|_{L^2(\partial S_{r'})}
$$
which allows us to find
$$
\|I_6\|_{L^2(\partial S_{r})}\leq \frac{C_R}{r'-r}\|\da^2\omega\|_{L^2(\partial S_{r'})}.
$$
The bound \eqref{qtsb} gives finally
$$
\|I_6\|_{L^2(\partial S_{r})}\leq \frac{C_R}{r'-r}\|\Phi\|_{r'}.
$$
In a similar way we obtain
$$
\|I_4\|_{L^2(\partial S_{r})}+\|I_5\|_{L^2(\partial S_{r})}\leq C_R
\|\da^4z\|_{L^2(\partial S_{r})}\leq \frac{C_R}{r'-r}\|z\|_{r'}.
$$
In $I_3$ we decompose further: $I_3=I_{3,1}+I_{3,2}$ where
$$
I_{3,1}=\frac{1}{2\pi}PV\int_{-\pi}^\pi K(\g,\beta)\da^3\omega(\g-\beta)d\beta,\quad I_{3,2}=\frac{1}{2}\frac{z^\bot_\al(\g)}{|z_\al(\g)|_*^2}H(\da^3\omega)(\g),
$$
where $H$ denotes the Hilbert transform
and the kernel $K$ is given by
$$
\frac{(z(\g)-z(\g-\beta))^{\bot}}{|z(\g)-z(\g-\beta)|_*^2}-\frac{z^\bot_\al(\g)}{|z_\al(\g)|_*^2}\frac{1}{2\tan(\beta/2)}.
$$

We can integrate by parts $\partial_\beta(-\da^2\omega(\g-\beta))$ in $I_{3,1}$ to find
$$
\|I_{3,1}\|_{L^2(\partial S_{r})}\leq C_R\|\da^2\omega\|_{L^2(\partial S_{r})}\leq C_R\|\Phi\|_{r'}
$$
(see \cite[Section 3]{Cordoba-Cordoba-Gancedo:interface-water-waves-2d} for more details). The term $I_{3,2}$ can be estimated by
$$
\|I_{3,2}\|_{L^2(\partial S_{r})}\leq C_R\|H(\da^3\omega)\|_{L^2(\partial S_{r})}=C_R\|\da^3\omega\|_{L^2(\partial S_{r})}\leq \frac{C_R}{r'-r}\|\Phi\|_{r'}.
$$
A similar splitting in $I_2=I_{2,1}+I_{2,2}$ with
$$
I_{2,1}=\frac{-1}{\pi}PV\int_{-\pi}^\pi L(\g,\beta)\cdot
(\da^3 z(\g)\!-\!\da^3 z(\g\!-\!\beta))d\beta,
$$
$$
I_{2,2}=-\frac{\omega(\g)z^{\bot}_\al(\g)}{(
|z_\al(\g)|_*^2)^2}z_\al(\g)\cdot \Lambda(\da^3z)(\g),
$$
(where $\Lambda=H\da$) gives the kernel $L$ as follows
\begin{align*}
L(\g,\beta)\cdot
(\da^3 z(\g)\!-\!\da^3 z(\g\!-\!\beta))=&
-\frac{\omega(\g)z^{\bot}_\al(\g)}{(
|z_\al(\g)|_*^2)^2}\frac{z_\al(\g)\cdot(\da^3 z(\g)\!-\!\da^3 z(\g\!-\!\beta))}{4\sin^2(\beta/2)}\\
+\frac{\omega(\g\!-\!\beta)(z(\g)\!-\!z(\g\!-\!\beta))^{\bot}}{(|z(\g)\!-\!z(\g\!-\!\beta)|_*^2)^2}&
(z(\g)\!-\!z(\g\!-\!\beta))\cdot
(\da^3 z(\g)\!-\!\da^3 z(\g\!-\!\beta)).
\end{align*}
Heuristically, we regard this operator as no better or no worse than a Hilbert transform of $\da^3 z$. It is easy to prove that
$$
\|I_{2,1}\|_{L^2(\partial S_{r})}\leq C_R\|\da^3z\|_{L^2(\partial S_{r})}\leq C_R\|\Phi\|_{r'}
$$
(see \cite[Section 6.1]{Cordoba-Cordoba-Gancedo:interface-water-waves-2d} for more details). The term $I_{2,2}$ can be bounded as follows
$$
\|I_{2,2}\|_{L^2(\partial S_{r})}\leq C_R\|\Lambda(\da^3z)\|_{L^2(\partial S_{r})}=C_R\|\da^4z\|_{L^2(\partial S_{r})}\leq \frac{C_R}{r'-r}\|z\|_{r'}.
$$
Analogously, for $I_1$ we find
$$
\|I_{1}\|_{L^2(\partial S_{r})} \leq \frac{C_R}{r'-r}\|z\|_{r'}.
$$
This strategy allows us to deal with $\da^3u$ and therefore with $\da^3F^1$. The same applies to $\da^3F^2$ and we can get finally \eqref{2paco}.

To get \eqref{3paco} we write

$$ \Phi_{\al}^{1} = \frac{1}{2}(I + J_{z^{1}})(\omega^{1}), \quad \Phi_{\al}^{2} = \frac{1}{2}(I + J_{z^{2}})(\omega^{2})$$
where
$$
J_{z^{j}}(\omega)=2BR(z^j,\omega)\cdot z^j_\al
$$
for $z^j\in O$ and $j=1,2$. This implies
$$\Phi_{\al}^{2} - \Phi_{\al}^{1} = \frac{\omega^{2} - \omega^{1}}{2} + BR(z^{2},\omega^{2} - \omega^{1}) \cdot z^{2}_{\al} +
BR(z^{2},\omega^{1}) \cdot z^{2}_{\al} - BR(z^{1},\omega^{1}) \cdot z^{1}_{\al}$$
which yields
$$(\omega^{2} - \omega^{1}) = 2(I+J_{z^{2}})^{-1}(\Phi_{\al}^{2} - \Phi_{\al}^{1}) - 2(I+J_{z^{2}})^{-1}(BR(z^{2},\omega^{1}) \cdot z^{2}_{\al} - BR(z^{1},\omega^{1}) \cdot z^{1}_{\al}).$$

This helps us to find
$$ \|\omega^{2} - \omega^{1}\|_{L^2(\partial S_{r})}+\|\da^2\omega^{2} - \da^2\omega^{1}\|_{L^2(\partial S_{r})}\leq C(R)(\|\Phi^{2} - \Phi^{1}\|_{r} + \|z^{2} - z^{1}\|_{r}).$$
We use a decomposition similar to the one used to prove \eqref{2paco} which allows us to get finally \eqref{3paco}. Inequality \eqref{4paco} follows in an easier manner.
\end{proof}

\begin{proofthm}{localexistenceanalyticsection3}
We apply the following result of Nirenberg \cite{Nirenberg:abstract-cauchy-kowalewski} and Nishida \cite{Nishida:theorem-Nirenberg}.
\vskip 0.5cm
\underline{Abstract Cauchy-Kowalewski Theorem:}

Consider the equation
\begin{equation}
\label{ACharlie}
\frac{du(t)}{dt} = F(u(t)) \text{ for } |t| < \delta
\end{equation}

with initial condition
\begin{equation}
\label{BCharlie}
u(0) = u^0 \in X_{r_0}
\end{equation}

For some numbers $\hat{C}, \hat{R} > 0$, assume the following hypothesis:

For every pair of numbers $r,r'$ such that $0 < r' < r < r_0$, $F$ is a Lipschitz map from $\{u \in X_{r} : \|u - u^{0}\|_{X_{r}} < \hat{R}\}$ into $X_{r'}$, with Lipschitz constant at most $\displaystyle \frac{\hat{C}}{r-r'}$. Then the equation \eqref{ACharlie} with initial condition \eqref{BCharlie} has a solution $u(t)$ in $C([-\delta,\delta],X_{r})$ for small enough $r,\delta > 0$.

The above Abstract Cauchy-Kowalewski Theorem is obviously equivalent to a special case of Nishida's Theorem \cite{Nishida:theorem-Nirenberg}, although our notation differs from that of \cite{Nirenberg:abstract-cauchy-kowalewski}. In place of \eqref{ACharlie}, Nirenberg and Nishida treat the more general equation
$\displaystyle \frac{du(t)}{dt} = F(u(t),t)$.

The proof of the Abstract Cauchy-Kowalewski Theorem in \cite{Nirenberg:abstract-cauchy-kowalewski} proceeds by showing that the obvious iteration scheme

$$ u^{k+1}(t) = u^{0} + \int_{0}^{t}F(u^{k}(s))ds$$

converges in $X_{r}$ for small enough $r$ (depending on $t$).

Our system \eqref{1paco} has the form $\displaystyle \frac{du}{dt} = F(u)$ for $u = (z,\Phi)$. Proposition
\ref{analyticestimates} tells us that the hypothesis of the Abstract Cauchy-Kowalewski Theorem holds for the system \eqref{1paco}. In particular, for $\hat{R} > 0$ small enough, we obtain the arc-chord condition for every $u = (z,\Phi)$ such that $\|(z,\Phi) - (z^{0},\Phi^{0})\|_{X_{r}} < \hat{R}$ for any (arbitrarily small) $r > 0$.

Hence, the conclusion of Theorem \ref{localexistenceanalyticsection3} follows from the Abstract Cauchy-Kowalewski Theorem.

\end{proofthm}

\begin{proofthm}{localexistencetildeAnalytic}

Applying Theorem \ref{localexistenceanalyticsection3}, we obtain a solution of the water wave equation, with the correct initial conditions, in the tilde domain. Passing from the tilde domain back to the original problem, we obtain a solution of the water wave equations as asserted in Theorem \ref{localexistencetildeAnalytic}.

We have to make sure that, for small positive time, the splash curve evolves as in Figure \ref{goodbadA}, rather than Figure \ref{goodbadB}.

\begin{figure}[ht]
\centering
\subfigure[Good]
{
\includegraphics[width=0.4\textwidth]{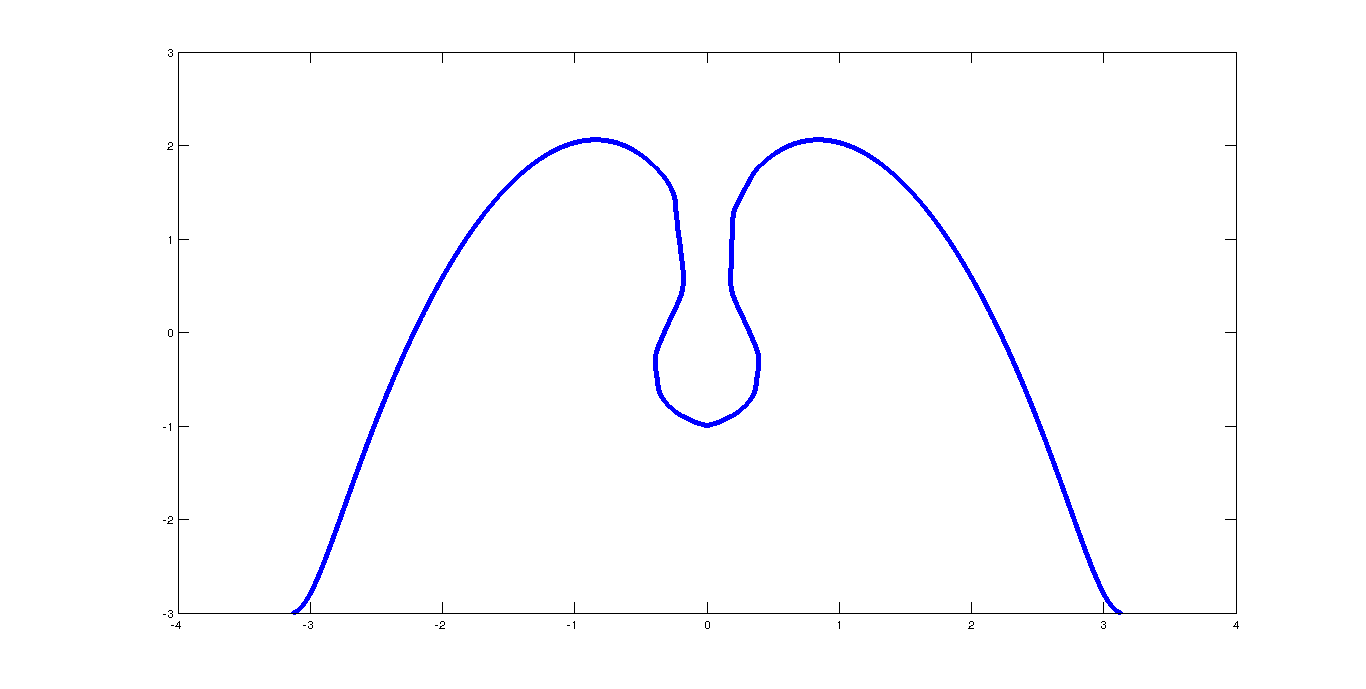}
\label{goodbadA}
}
\subfigure[Bad]
{
\includegraphics[width=0.4\textwidth]{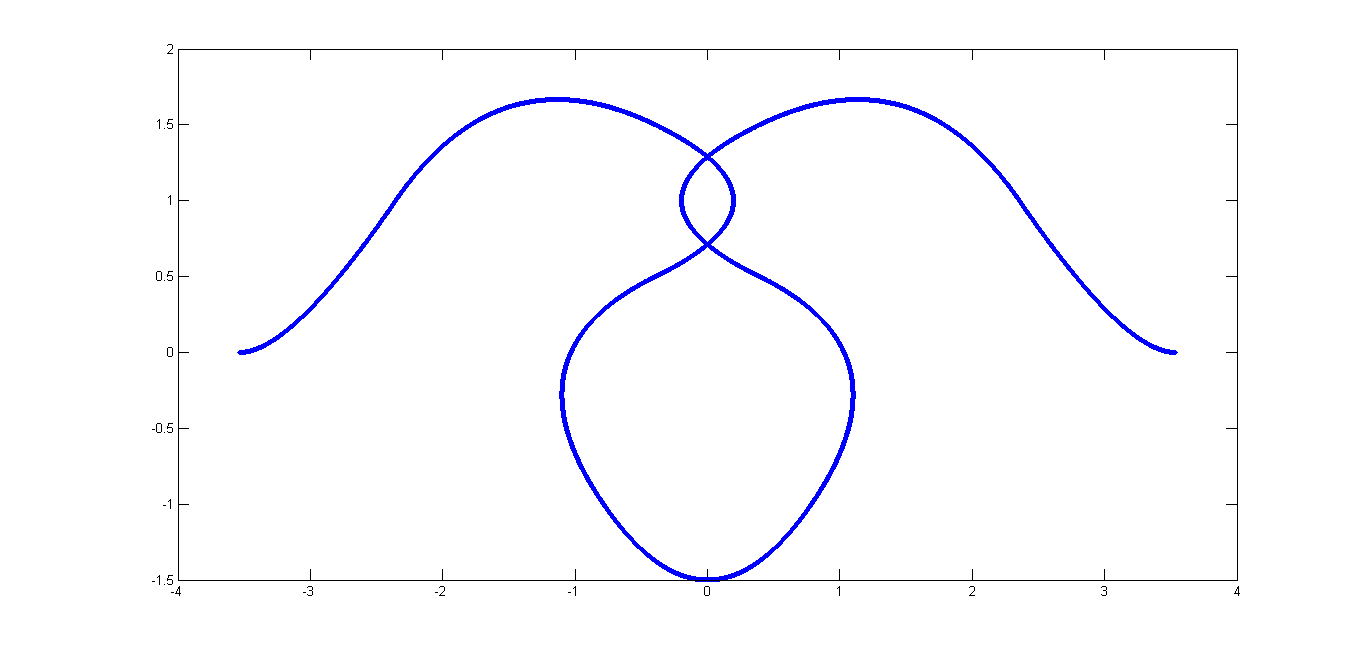}
\label{goodbadB}
}
\caption{Two different evolutions of the interface.}
\label{goodbad}
\end{figure}

This is guaranteed by the hypothesis of Theorem \ref{localexistencetildeAnalytic} regarding the sign of the normal component of the initial velocity at the splash point.

\end{proofthm}

\section{Proof of short-time existence in Sobolev spaces in the tilde domain}
\label{SectionLocalSobolev}
In this section we will show how to obtain a local existence theorem for the water wave equations in the tilde domain. The proof is based on energy estimates and uses the fact that the Rayleigh-Taylor function is positive.

\subsection{The Rayleigh-Taylor function in the tilde domain}

We begin by recalling the function $\tilde{\varphi}(\alpha,t)$, which will be studied in detail in Section \ref{subdefphi} and in the definition of the Rayleigh-Taylor condition, by the expression
\begin{equation}
\label{defphi}
\tilde{\varphi}(\alpha,t)=\frac{Q^2\tilde{\om}(\alpha,t)}{2|\ztil_\alpha(\alpha,t)|}-\tilde{c}|\ztil_\alpha(\alpha,t)|.
\end{equation}

Next we introduce the R-T function:
\begin{align}
\begin{split}\label{R-T}
 \sigma  \equiv& \left(BR_{t}(\tilde{z},\tilde{\omega}) + \frac{\tilde{\varphi}}{|\tilde{z}_{\al}|}BR_{\al}(\tilde{z},\tilde{\omega})\right) \cdot \tilde{z}_{\al}^{\perp} + \frac{\tilde{\omega}}{2|\tilde{z}_{\al}|^{2}}\left(\tilde{z}_{\al t} + \frac{\tilde{\varphi}}{|\tilde{z}_{\al}|}\tilde{z}_{\al \al}\right) \cdot \tilde{z}_{\al}^{\perp} \\
& + Q\left|BR(\tilde{z},\tilde{\omega}) + \frac{\tilde{\omega}}{2|\tilde{z}_{\al}|^{2}}\tilde{z}_{\al}\right|^{2}(\nabla Q)(\tilde{z}) \cdot \tilde{z}_{\al}^{\perp}
 + g(\nabla P_{2}^{-1})(\tilde{z}) \cdot \tilde{z}_{\al}^{\perp}.
\end{split}
\end{align}

This function $\sigma$ coincides with the expression $\tilde{z}^\perp(\alpha,t)\cdot \nabla \tilde{p}(\tilde{z}(\alpha,t),t)$, where $\tilde{p}=p\circ P^{-1}$. Indeed, it is easy to check that
\begin{equation}\label{rt1}
\pa_t\tilde{\phi}+\frac{Q^2}{2}\left|\tilde{v}\right|^2=-\tilde{p}-gP^{-1}_2 + p^*(t).
\end{equation}
And taking the gradient on the equation \eqref{rt1} yields
\begin{equation}\label{rt2}
\tilde{v}_t+\frac{1}{2}\left(\nabla Q^2\right)|\tilde{v}|^2+Q^2(\tilde{v}\cdot\nabla)\tilde{v}=-\nabla\tilde{p}-g\nabla P^{-1}_2.\end{equation}
In addition we know that
\begin{equation}\label{rt3}
\tilde{v}(\tilde{z}(\alpha,t),t)=BR(\tilde{z},\tilde{\om})(\alpha,t)+\frac{\tilde{\om}(\alpha,t)}{2|\tilde{z}_\alpha(\alpha,t)|^2}\tilde{z}_\alpha(\alpha,t)\end{equation}
and therefore
\begin{equation}\label{rt4}
 \frac{d}{dt}\tilde{v}(\ztil(\alpha,t),t)=\pa_t BR(\ztil,\tilde{\om})(\alpha,t)+\pa_t\left(\frac{\tilde{\om}(\alpha,t)}{2|\ztil_\alpha(\alpha,t)|^2}\right)\ztil_\alpha(\alpha,t)
+\frac{\tilde{\om}(\alpha,t)}{2|\ztil_\alpha(\alpha,t)|^2}\pa_t\ztil_\alpha(\alpha,t).
\end{equation}
On the other hand, by using \eqref{rt2} we have
\begin{align}
 \frac{d}{dt}\tilde{v}(\ztil(\alpha,t),t)=&\pa_t \tilde{v}(\ztil(\alpha,t),t)+(\pa_t\tilde{z}(\alpha,t)\cdot\nabla) \tilde{v}(\ztil(\alpha,t),t)\nonumber\\
=&-\frac{1}{2}\left(\nabla Q^2\right)|\tilde{v}(\ztil(\alpha,t),t)|^2-Q^2(\tilde{v}(\ztil(\alpha,t),t)\cdot\nabla)\tilde{v}(\ztil(\alpha,t),t)\nonumber\\
 &-\nabla \tilde{p}(\ztil(\alpha,t),t)-g\nabla P^{-1}_2(\ztil(\alpha,t))+(\pa_t\ztil(\alpha,t)\cdot\nabla )\tilde{v}(\ztil(\alpha,t),t).\label{rt5}
\end{align}
Furthermore the equation \eqref{ikea} together with \eqref{rt3} gives rise to
\begin{align}
\pa_t\ztil(\alpha,t)=& Q^2\tilde{v}(\ztil(\alpha,t),t)-\frac{Q^2\tilde{\om}(\alpha,t)}{2|\ztil_\alpha(\alpha,t)|^2}\ztil_\alpha(\alpha,t)+\tilde{c}\tilde{z}_\alpha(\alpha,t)\nonumber\\
=& Q^2\tilde{v}(\ztil(\alpha,t),t)-\frac{1}{|\ztil_\alpha(\alpha,t)|}\left(\frac{Q^2\tilde{\om}(\alpha,t)}{2|\ztil_\alpha(\alpha,t)|}
-\tilde{c}|\ztil_\alpha(\alpha,t)|\right)\ztil_\alpha(\alpha,t)\label{rt6}.
\end{align}
Therefore by \eqref{defphi}, we obtain
\begin{align}\label{rt7}
\pa_t \ztil(\alpha,t)=& Q^2\tilde{v}(\ztil(\alpha,t),t)-\tilde{\varphi}(\alpha,t)\frac{\ztil_\alpha(\alpha,t)}{|\ztil_\alpha(\alpha,t)|}.
\end{align}
By introducing \eqref{rt7} in \eqref{rt5} we have
\begin{align*}
 \frac{d}{dt}\tilde{v}(\ztil(\alpha,t),t)=&-\frac{1}{2}\left(\nabla Q^2\right)|\tilde{v}(\ztil(\alpha,t),t)|^2-Q^2(\tilde{v}(\ztil(\alpha,t),t)\cdot\nabla)\tilde{v}(\ztil(\alpha,t),t)\\
&-\nabla \tilde{p}(\ztil(\alpha,t),t)-g\nabla P^{-1}_2(\ztil(\alpha,t))\\&+Q^2(\tilde{v}(\ztil(\alpha,t),t)\cdot\nabla)
\tilde{v}(\ztil(\alpha,t),t)-\tilde{\varphi}(\alpha,t)\frac{\ztil_\alpha(\alpha,t)}{|\ztil_\alpha(\alpha,t)|} \cdot \nabla \tilde{v}(\tilde{z}(\al,t),t).
\end{align*}
Therefore
\begin{align}
\frac{d}{dt}\tilde{v}(\ztil(\alpha,t),t)=&-\frac{1}{2}\left(\nabla Q^2\right)|\tilde{v}(\ztil(\alpha,t),t)|^2
-\tilde{\varphi}(\alpha,t)\frac{\pa_\alpha \tilde{v}(\ztil(\alpha,t),t)}{|\ztil_\alpha(\alpha,t)|}\nonumber\\
& -\nabla \tilde{p}(\ztil(\alpha,t),t)-g\nabla P^{-1}_2(\ztil(\alpha,t)).\label{rt8}
\end{align}
Next we take a derivative with respect to $\alpha$ in the equation \eqref{rt3} to get
\begin{align}\label{rt9}
\pa_\alpha \tilde{v}(\ztil(\alpha,t),t)=\pa_\alpha BR(\ztil,\tilde{\om})(\alpha,t)+\pa_\alpha\left(\frac{\tilde{\om}(\alpha,t)}{2|\ztil_\alpha(\alpha,t)|^2}\right)\ztil_\alpha(\alpha,t)
+\left(\frac{\tilde{\om}(\alpha,t)}{2|\ztil_\alpha(\alpha,t)|^2}\right)\ztil_{\alpha\,\alpha}(\alpha,t).
\end{align}

Multiplying equation \eqref{rt8} by $\ztil^{\perp}_\alpha(\alpha,t)$ and using \eqref{rt9} we learn
\begin{align}
\left(\frac{d}{dt}\tilde{v}(\ztil(\alpha,t),t)\right)\cdot \ztil^\perp_\alpha(\alpha,t)=&-Q\nabla Q\cdot \ztil^\perp_\alpha(\alpha,t)|\tilde{v}(\ztil(\alpha,t),t)|^2
\nonumber\\&-\frac{\tilde{\varphi}(\alpha,t)}{|\ztil_\alpha(\alpha,t)|}\pa_\alpha BR(\ztil,\tilde{\om})(\alpha,t)\cdot \ztil^{\perp}_\alpha(\alpha,t)
\nonumber\\&-\frac{\tilde{\varphi}(\alpha,t)}{|\ztil_\alpha(\alpha,t)|}\left(\frac{\tilde{\om}(\alpha,t)}{2|\ztil_\alpha(\alpha,t)|^2}\right)
\ztil_{\alpha\,\alpha}(\alpha,t)\cdot \ztil^\perp_\alpha(\alpha,t)
 \nonumber\\&-\nabla \tilde{p}(\ztil(\alpha,t),t)\cdot \ztil^\perp_\alpha(\alpha,t)-g\nabla P^{-1}_2(\ztil(\alpha,t))\cdot \ztil^\perp_\alpha(\alpha,t).\label{rt10}
\end{align}
On the other hand, by multiplying \eqref{rt4} by  $\ztil^\perp_\alpha(\alpha,t)$ we have
\begin{align}\label{rt11}
\left(\frac{d}{dt}\tilde{v}(\ztil(\alpha,t),t)\right)\cdot \ztil^\perp_\alpha(\alpha,t)=& \pa_t BR(\ztil,\om)\cdot \ztil^\perp_\alpha(\alpha,t)
+\frac{\tilde{\om}}{2|\ztil_\alpha(\alpha,t)|^2}\pa_t\ztil_\alpha(\alpha,t)\cdot \ztil^\perp_\alpha(\alpha,t).
\end{align}
From \eqref{rt10} and \eqref{rt11} we find
\begin{align}
& \pa_t BR(\ztil,\om)\cdot \ztil^\perp_\alpha(\alpha,t)\nonumber
+\frac{\tilde{\om}}{2|\ztil_\alpha(\alpha,t)|^2}\pa_t\ztil_\alpha(\alpha,t)\cdot \ztil^\perp_\alpha(\alpha,t)\\
=&-Q\nabla Q\cdot \ztil^\perp_\alpha(\alpha,t)|\tilde{v}(\ztil(\alpha,t),t)|^2
-\frac{\tilde{\varphi}(\alpha,t)}{|\ztil_\alpha(\alpha,t)|}\pa_\alpha BR(\ztil,\tilde{\om})(\alpha,t)\cdot \ztil^\perp_\alpha(\alpha,t)
\nonumber\\&-\frac{\tilde{\varphi}(\alpha,t)}{|\ztil_\alpha(\alpha,t)|}\left(\frac{\tilde{\om}(\alpha,t)}{2|\ztil_\alpha(\alpha,t)|^2}\right)
\ztil_{\alpha\,\alpha}(\alpha,t)\cdot \ztil^\perp_\alpha(\alpha,t) \nonumber\\
& -\nabla \tilde{p}(\ztil(\alpha,t),t)\cdot \ztil^\perp_\alpha(\alpha,t)-g\nabla P^{-1}_2(\ztil(\alpha,t))\cdot \ztil^\perp_\alpha(\alpha,t).\label{rt12}
\end{align}
Finally, rearranging the terms in \eqref{rt12} yields
\begin{align*}
&-\nabla \tilde{p}(\ztil(\alpha,t),t)\cdot\ztil_\alpha^\perp(\alpha,t)=
\left(\pa_t BR(\ztil,\tilde{\om})(\alpha,t)+\frac{\tilde{\varphi}(\alpha,t)}{|\ztil_\alpha(\alpha,t)|}\pa_\alpha BR(\ztil,\tilde{\om})(\alpha,t)\right)\cdot \ztil_{\al}^\perp(\alpha,t)\\
&+\frac{\tilde{\om}(\alpha,t)}{2|\ztil_\alpha(\alpha,t)|^2}\left(\pa_t\ztil_\alpha(\alpha,t)+\frac{\tilde{\varphi}(\alpha,t)}{|\ztil_\alpha(\alpha,t)|}\ztil_{\alpha\,\alpha}\right)\cdot
 \ztil_{\al}^\perp(\alpha,t)+g\nabla P^{-1}_2\cdot \ztil_{\al}^\perp(\alpha,t)
\\&+Q\left|BR(\ztil,\tilde{\om})(\alpha,t)+\frac{\tilde{\om}(\alpha,t)}{2|\ztil_\alpha(\alpha,t)|^2}\ztil_\alpha(\alpha,t)\right|^2\left(\nabla Q\cdot \ztil_{\al}^\perp(\alpha,t)\right),
\end{align*}
and then, comparing with \eqref{R-T}, we obtain the desired result
$$-\nabla \tilde{p}(\ztil(\alpha,t),t)\cdot\ztil_\alpha^\perp(\alpha,t)=\sigma(\alpha,t).$$

Note that for the tilde domain, the Rayleigh-Taylor condition is the same as in the first domain, i.e:

$$ \nabla p(\al,t) \cdot z_{\al}^{\perp}(\al,t) = \nabla \tilde{p}(\al,t) \cdot \tilde{z}_{\al}^{\perp}(\al,t)$$

where $\tilde{p} = p \circ P^{-1}$ and

$$ \tilde{z}_{\al}(\al,t) = \nabla P(z(\al,t)) \cdot z_{\al}(\al,t)
\Rightarrow \tilde{z}_{\al}^{\perp}(\al,t) = (-J \nabla P(z(\al,t)) J )\cdot z_{\al}^{\perp}(\al,t)$$

where $J$ is the rotation matrix
$\left(
                                                              \begin{array}{cc}
                                                                0 & -1 \\
                                                                1 & 0 \\
                                                              \end{array}
                                                            \right)$. Together with the Cauchy-Riemann equations this implies that

$$ (-J \nabla P(z(\al,t)) J) = \nabla P(z(\al,t)).$$

Moreover

$$ \nabla p(\al,t) = \nabla P(z(\al,t))^{T} \nabla \tilde{p}(\al,t).$$

Hence
\begin{align} \langle\nabla p(\al,t),z_{\al}^{\perp}(\al,t)\rangle
& = \langle   \nabla P(z(\al,t))^{T} \nabla \tilde{p}(\al,t), (\nabla P(z(\al,t)))^{-1}\tilde{z}_{\al}^{\perp}(\al,t)\rangle \\
& = \langle   \nabla \tilde{p}(\al,t), \tilde{z}_{\al}^{\perp}(\al,t)\rangle.
\end{align}

By taking the divergence on the Euler equation (\ref{Charlie11}-\ref{Charlie12}) and because  the flow is irrotational in the interior of the regions $\Omega^{j}(t)$ follows
$$ -\Delta p = |\nabla v|^{2} \geq 0$$
which, together with the fact that the pressure is zero on the interface and 
$p(x,y,t) + gy = O(1)$ when $y$ tends to $-\infty$,
then follows by Hopf's lemma in $\Omega^{2}(t)$ that $$\sigma(\al,t) \equiv -|z_{\al}^{\perp}(\al,t)|\partial_{n} p(z(\al,t),t) > 0,$$
except in the case $v=0$. 
This argument was suggested by Hou and Caflisch (see  \cite{Wu:well-posedness-water-waves-3d}), although the proof of the positivity of the Rayleigh-Taylor condition in the nontilde domain for all time was first introduced by Wu in \cite{Wu:well-posedness-water-waves-2d}.   

The above proof shows that $\sigma > 0$ provided our domain $\tilde{\Omega}(t)$ arises by applying the map $P$ to a domain $\Omega(t)$ with smooth boundary. Here, $\partial \Omega(t)$ may be a splash curve, but we cannot allow boundaries $\partial \tilde{\Omega}(t)$ whose inverse images under $P$ look like figure \ref{goodbadB}.

Nevertheless, since $\sigma > 0$ for the image of $P$ applied to a splash curve, we know that $\sigma > 0$ at time $t = 0$ in the context of Theorem \ref{localexistencetilde}. Our estimates below will guarantee that the condition $\sigma > 0$ persists for a short time. Thus, in proving Theorem \ref{localexistencetilde}, we may use the positivity of $\sigma$.

\subsection{Definition of $c$ in the tilde domain}

From now on, we will drop the tildes from the notation for simplicity. We will choose the following tangential term:

\begin{equation}
\label{defc}
c  =\D \frac{\al + \pi}{2\pi}\int_{-\pi}^{\pi}(Q^2 BR)_{\beta}\cdot\frac{z_\beta}{|z_{\beta}|^{2}} d\beta
- \int_{-\pi}^{\al}(Q^2 BR)_{\beta}\cdot\frac{z_{\beta}}{|z_{\beta}|^{2}}d\beta.
\end{equation}
Here and in \eqref{ikea} we find
\begin{equation*}
P_2^{-1}=P_2^{-1}(z(\al,t)) =  \log\left(\left|\frac{i+(z_1(\al,t)+iz_2(\al,t))^2}{i-(z_1(\al,t)+iz_2(\al,t))^2}\right|\right)
\end{equation*}
and
\begin{equation*}
Q=Q(z(\al,t))= \frac{1}{4}\left|\frac{1+ (z_1(\al,t)+iz_2(\al,t))^{4}}{z_1(\al,t)+iz_2(\al,t)} \right|.
\end{equation*}
These functions are regular as long as $z(\al,t) \neq q^l$. We deal with initial data which satisfy the above condition and we will show that it's going to remain true for short time. In order to measure it we define
$$m(q^l)(t)=\min_{\al\in\T}|z(\al,t)-q^l|$$ for $l=0,...,4$.

 We also point out that, because of our choice of $c(\al,t)$, solutions of (\ref{zeq} - \ref{eqomega}) satisfy that
 $$
 |z_\al(\al,t)|^2=A(t)\quad \mbox{for any } \al\in\T
 $$
as in \cite[Equations (2.2 - 2.5)]{Cordoba-Cordoba-Gancedo:interface-heleshaw-muskat}.

\subsection{Time evolution of the function $\varphi$ in the tilde domain}
\label{subdefphi}

Recall that we have defined an auxiliary function $\varphi(\al,t)$ adapted to the tilde domain, which helps us to bound several of the terms that appear:
\begin{equation}\label{fvar}
\varphi(\al,t) = \frac{Q^2(\al,t)\omega(\al,t)}{2|z_{\al}(\al,t)|} - c(\al,t)|z_{\al}(\al,t)|.
\end{equation}

We will show how to find the evolution equation for $\vp_t$. We have
$$
\vp=\frac{Q^2 \om}{2|z_{\al}|} - c|z_{\al}|
$$
and therefore
$$
\frac{\vp^{2}}{Q^{2}}=\frac{Q^2 \om^{2}}{4|z_{\al}|^{2}} + \frac{c^{2}|z_{\al}|^{2}}{Q^{2}} - c\om
$$
that yields
$$
-\da\left(\frac{\vp^{2}}{Q^{2}}\right)= -\da\left(\frac{Q^2 \om^{2}}{4|z_{\al}|^{2}}\right) -\da\left( \frac{c^{2}|z_{\al}|^{2}}{Q^{2}}\right) + \da(c\om).
$$
The equation for $\om_t$ reads:
\begin{equation}\label{eeomt}
\om_{t}  = -2BR_t \cdot z_{\al} - 2QQ_{\al}|BR|^{2} + \underbrace{2c BR_{\al} \cdot z_{\al}}_{(1a)} -\da\left(\frac{\vp^{2}}{Q^{2}}\right)
\underbrace{+\da\left( \frac{c^{2}|z_{\al}|^{2}}{Q^{2}}\right)}_{(1b)} - 2\da\left(gP_{2}^{-1}\right).
\end{equation}

 For the quantity $(1) = (1a) + (1b)$ we write
\begin{align*}
(1)&=(1a) + (1b) = 2c BR_{\al} \cdot z_{\al}+\da\left( \frac{c^{2}|z_{\al}|^{2}}{Q^{2}}\right)=2c(BR_{\al} \cdot z_{\al}+\frac{c_\al|z_{\al}|^2}{Q^2}-\frac{c|z_\al|^2 Q_\al}{Q^3})\\
&=2c[BR_\al\cdot z_\al+\frac{|z_\al|^2}{2\pi Q^2}\int_{-\pi}^{\pi}(Q^2 BR)_{\beta}\cdot\frac{z_\beta}{|z_{\beta}|^{2}} d\beta- \frac{(Q^2BR)_\al\cdot z_\al}{Q^2}-\frac{c|z_\al|^2Q_\al}{Q^3}]\\
&=2c[\frac{|z_\al|^2}{2\pi Q^2}\int_{-\pi}^{\pi}(Q^2 BR)_{\beta}\cdot\frac{z_\beta}{|z_{\beta}|^{2}} d\beta-\frac{2Q_\al BR\cdot z_\al}{Q}-\frac{c|z_\al|^2Q_\al}{Q^3}]
\end{align*}
and then \eqref{eeomt} becomes
\begin{align}
\begin{split}\label{paraomt}
\om_{t}  =& -2BR_t \cdot z_{\al} - 2QQ_{\al}|BR|^{2} -\da\left(\frac{\vp^{2}}{Q^{2}}\right)\\
&+\frac{c|z_\al|^2}{\pi Q^2}\int_{-\pi}^{\pi}(Q^2 BR)_{\beta}\cdot\frac{z_\beta}{|z_{\beta}|^{2}} d\beta-\frac{4cQ_\al BR\cdot z_\al}{Q}-\frac{2c^2|z_\al|^2Q_\al}{Q^3}- 2\da\left(gP_{2}^{-1}\right).
\end{split}
\end{align}

Furthermore
\begin{align*}
\vp_{t} & = QQ_t \frac{\om}{|z_{\al}|} - \frac{Q^2 \om}{2|z_{\al}|^{3}}z_{\al} \cdot z_{\al t} + \frac{Q^2 \om_t}{2|z_{\al}|} - \partial_t(c|z_{\al}|) \\
& = QQ_t \frac{\om}{|z_{\al}|} - \frac{Q^2 \om}{2|z_{\al}|}\frac{1}{2\pi}\int_{-\pi}^\pi(Q^2BR)_\beta\cdot \frac{z_\beta}{|z_\beta|^2} d\beta \\
&\quad + \frac{Q^2}{2|z_{\al}|}\left[-2BR_t \cdot z_{\al} - 2QQ_{\al}|BR|^{2}-\da\left(\frac{\vp^{2}}{Q^{2}}\right)\right.\\
&\quad + \left.\frac{c|z_\al|^2}{\pi Q^2}\int_{-\pi}^\pi(Q^2BR)_\beta\cdot \frac{z_\beta}{|z_\beta|^2} d\beta-\frac{4cQ_\al BR\cdot z_\al}{Q}-\frac{2c^2|z_\al|^2Q_\al}{Q^3}- 2\da\left(gP_{2}^{-1}\right) \right] - \partial_t(c|z_{\al}|). \\
\end{align*}
We should remark that we have used that
$$
z_\al\cdot z_{\al t}=\frac{1}{2\pi}\int_{-\pi}^\pi (Q^2BR)_\beta\cdot z_\beta d\beta.
$$
For simplicity, we denote
\begin{align}
\label{defb}
B(t)  & = \frac{1}{2\pi}\int_{-\pi}^{\pi}(Q^{2} BR)_{\beta} \cdot \frac{z_{\beta}}{|z_{\beta}|^{2}}d\beta.
\end{align}

Computing
\begin{align*}
\vp_{t} & = QQ_t \frac{\om}{|z_{\al}|} \underbrace{-\frac{Q^2 \om}{2|z_{\al}|}B(t)}_{(2a)} - \frac{Q^2}{|z_{\al}|}BR_t \cdot z_{\al} - \frac{Q^3Q_{\al}}{|z_{\al}|}|BR|^{2}-\frac{Q^2}{2|z_{\al}|}\da\left(\frac{\vp^{2}}{Q^{2}}\right)\\
&\quad\underbrace{+c|z_\al|B(t)}_{(2b)} -\frac{2cQQ_\al}{|z_\al|}BR\cdot z_\al-\frac{c^2|z_\al|Q_\al}{Q}- \frac{Q^2}{|z_{\al}|}\da\left(gP_{2}^{-1}\right) - \partial_t(c|z_{\al}|)
\end{align*}
We can write
\begin{align*}
(2) = & (2a) + (2b) = -B(t)\vp,
\end{align*}
and it yields
\begin{align}
\begin{split}\label{evpt}
\vp_{t} & = - \vp B(t)-\frac{Q^2}{2|z_{\al}|}\da\left(\frac{\vp^{2}}{Q^{2}}\right)- Q^{2}\Big(BR_t \cdot \frac{z_{\al}}{|z_{\al}|}
+ \frac{\da\left(gP_{2}^{-1}\right)}{|z_{\al}|}\Big) \\
&\quad + QQ_t \frac{\om}{|z_{\al}|} - 2c BR \cdot \frac{z_{\al}}{|z_{\al}|} QQ_{\al} - \frac{Q_{\al}}{Q}c^{2} |z_{\al}|
- \frac{Q^3}{|z_{\al}|}Q_{\al}|BR|^{2}- \partial_t(c|z_{\al}|).
\end{split}
\end{align}

We will use the equation above to perform energy estimates.

\subsection{Definition and a priori estimates of the energy in the tilde domain}
\label{subsection4d}
Let us consider for $k\geq 4$ the following definition of energy $E(t)$:
\begin{align}\label{E}
\begin{split}
E(t)&=
1 + 
\|z\|^2_{H^{k-1}}(t)+\int_{-\pi}^\pi\frac{Q^2(z)\sigma}{|z_\al|^2}|\da^k z|^2 d\al+\|\F(z)\|^2_{L^\infty}(t)\\
&\quad+\|\om\|^2_{H^{k-2}}(t)+\|\varphi\|_{H^{k-\frac12}}^2(t)+\frac{|z_\al|^2}{m(Q^2\sigma)(t)}+\sum_{l=0}^4\frac{1}{m(q^l)(t)},
\end{split}
\end{align}
where
$$
\F(z)=\frac{|\beta|}{|z(\al)-z(\al-\beta)|},\quad \al,\beta\in [-\pi,\pi],
$$
and $m(Q^2\sigma)=\min_{\al\in\T}\{Q^2(z(\al,t))\sigma(\al,t)\}$. In the next section we shall show a
proof of the following lemma.
\begin{lemma}
\label{lemmaenergyestimates}
Let $z(\al,t)$ and $\om(\al,t)$ be a solution of (\ref{zeq}-\ref{eqomega}). Then, the following a priori estimate holds:
\begin{align}
\begin{split}\label{ntni}
\frac{d}{dt}E(t)&\leq CE^p(t),
\end{split}
\end{align}
for $k\geq 4$ and $C$ and $p$ constants depending only on $k$.
\end{lemma}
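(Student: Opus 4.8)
The plan is to differentiate $E(t)$ term by term and to bound each contribution by $CE^p(t)$, using the evolution equations (\ref{zeq}--\ref{eqomega}) for $(z,\om)$, equation \eqref{evpt} for $\varphi$, the choice \eqref{defc} of $c$, and the Birkhoff--Rott and commutator estimates recorded in Lemmas \ref{lemmaBR}--\ref{lemmasigmat}. Throughout one uses that $Q=Q(z)$ and all its $\al$-derivatives are bounded, and $1/(Q^2\sigma)$ and $1/|z_\al|^2$ are bounded, precisely because $E$ controls $\sum_{l}1/m(q^l)(t)$, $|z_\al|^2/m(Q^2\sigma)(t)$ and $\|\F(z)\|_{L^\infty}$; the last gives the arc-chord bound needed to treat the singular kernels in $BR$.

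Most of the pieces are ``soft''. The low-order norms $\|z\|_{H^{k-1}}$, $\|\om\|_{H^{k-2}}$ and the sub-top part of $\|\varphi\|_{H^{k-\frac12}}$ are estimated directly: in $z_t=Q^2BR(z,\om)+\tilde c z_\al$ the transport term is symmetrized by integrating by parts in $\al$ (the cost $\da\tilde c$ is, by \eqref{defc}, only an $L^2$-type norm of $BR$, hence $\lesssim E^p$), and $\da^j(Q^2BR)$ for $j\le k-1$ is controlled by Lemma \ref{lemmaBR}. For $\om$ one uses \eqref{paraomt} together with Lemma \ref{lemmaomt}, and for $\varphi$ one uses \eqref{evpt}. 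The quantities $m(q^l)(t)$ and $m(Q^2\sigma)(t)$ move with speed at most $\|z_t\|_{L^\infty}$, resp.\ $\|(Q^2\sigma)_t\|_{L^\infty}$ (the latter via Lemma \ref{lemmasigmat}), both $\lesssim E^p$; since $\frac{d}{dt}(1/m)=-\dot m/m^2$ with $|\dot m|$ bounded by those speeds, these terms are routine, and likewise for $\|\F(z)\|_{L^\infty}$.

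The heart of the matter is the coupled top-order estimate
$$\frac{d}{dt}\int_{-\pi}^{\pi}\frac{Q^2(z)\sigma}{|z_\al|^2}|\da^k z|^2\,d\al \quad\text{together with}\quad \frac{d}{dt}\|\varphi\|_{H^{k-\frac12}}^2 .$$
In the first, the time derivative of the weight is harmless ($\|\sigma_t\|_{L^\infty}$ and $\|z_t\|_{H^{k-1}}$, by Lemma \ref{lemmasigmat}); in the remaining $2\int\frac{Q^2\sigma}{|z_\al|^2}\da^k z\cdot\da^k z_t$ one substitutes $z_t=Q^2BR+\tilde c z_\al$, symmetrizes the transport term $\tilde c\,\da^{k+1}z$ by parts, and finds that in $\da^k(Q^2BR)$ the only dangerous contribution is the one where all $k$ derivatives hit $z$ inside the kernel, which after the standard decomposition (as in the terms $I_1,I_2$ of Proposition \ref{analyticestimates}) behaves like $\frac{\om}{2|z_\al|^2}z_\al\cdot H(\da^{k+1}z)$, i.e.\ a $\Lambda$ applied to $\da^k z$. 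Simultaneously, applying $\Lambda^{k-\frac12}$ to \eqref{evpt} the worst term is $Q^2BR_t\cdot\frac{z_\al}{|z_\al|}$, and $BR_t$ contains, through $z_t=Q^2BR+\tilde c z_\al$, a factor of exactly the same $\Lambda(\da^k z)$ type. The ``miracle'' is that, upon pairing $\Lambda^{k-\frac12}\varphi$ with $\Lambda^{k-\frac12}\varphi_t$ and the weighted $\da^k z$ with $\da^k z_t$, these two singular pieces combine into a quadratic form in $\da^k z$ whose sign is governed by $\sigma$; since $\sigma>0$ (true at $t=0$ by the Rayleigh--Taylor positivity proven in Section \ref{SectionLocalSobolev}, and kept positive by the $m(Q^2\sigma)$-term in $E$), this term has the favorable sign and the rest is absorbed. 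This is the analogue for the tilde system of the cancellation of \cite{Cordoba-Cordoba-Gancedo:interface-water-waves-2d}, and is what Lemmas \ref{lemmaenergyS}, \ref{lemmaphit}--\ref{lemmaenergyphiS} provide.

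The main obstacle is to reproduce this cancellation in the presence of the variable conformal factor $Q^2(z)$ and the modified tangential velocity $\tilde c$: every occurrence of $z_t=BR+\bar c z_\al$ in \cite{Cordoba-Cordoba-Gancedo:interface-water-waves-2d} now carries an extra $Q^2$, so differentiating produces new commutator terms $\da^j(Q^2)\,\da^{k-j}(\cdots)$, and one must check that, apart from the single $\Lambda$-type term handled above, all of these are of strictly lower order — here one crucially uses that $Q$ and its derivatives are bounded away from degeneracy thanks to $\sum_l 1/m(q^l)$ and $|z_\al|^2/m(Q^2\sigma)$ in $E$. A further bookkeeping point is that the energy weight is $Q^2\sigma$ rather than $\sigma$, so the identity $-\nabla\tilde p\cdot\tilde z_\al^\perp=\sigma$ of Section \ref{SectionLocalSobolev}, together with the Cauchy--Riemann relations for $P$, must be invoked to keep the $Q$-factors consistent whenever the pressure enters (through $BR_t$ and through $g\,\da(P_2^{-1})$). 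Assembling the term-by-term bounds then yields $\frac{d}{dt}E(t)\le CE^p(t)$ with $C,p$ depending only on $k$.
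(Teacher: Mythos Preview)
Your overall architecture matches the paper's: differentiate $E(t)$ term by term, dispatch the soft pieces ($\|z\|_{H^{k-1}}$, $\|\om\|_{H^{k-2}}$, $\|\F(z)\|_{L^\infty}$, the $m(q^l)$ and $m(Q^2\sigma)$ terms) via Lemmas \ref{lemmaBR}--\ref{lemmasigmat}, and couple the top-order curve piece with the top-order $\varphi$ piece. You also correctly name Lemmas \ref{lemmaenergyS} and \ref{lemmaphit}--\ref{lemmaenergyphiS} as the engine.

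However, your description of the top-order cancellation is wrong in a way that matters. The uncontrolled piece on the curve side is \emph{not} $\frac{\om}{2|z_\al|^2}z_\al\cdot H(\da^{k+1}z)$: the $\Lambda(\da^k z)$-type contributions coming from differentiating the kernel (the terms $K_1,K_2$ in the paper) are killed by symmetrization and the identity $z_\al\cdot\da^k z=-3\da^2 z\cdot\da^{k-1}z$ etc. What survives comes from $\da^k\om$ inside $BR$ and, after converting $\om$ to $\varphi$, is the \emph{cross term}
\[
S(t)=\int_{-\pi}^{\pi} 2Q^2\sigma\,\frac{\da^k z\cdot z_\al^\perp}{|z_\al|^3}\,\Lambda(\da^{k-1}\varphi)\,d\al,
\]
linear in $\da^k z\cdot z_\al^\perp$ and linear in $\Lambda(\da^{k-1}\varphi)$---not a quadratic form in $\da^k z$, and with no definite sign.

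The mechanism is then \emph{exact cancellation}, not a sign condition. Lemma \ref{lemmaphit} shows (after a long computation that is the ``miracle'' here) that $\varphi_{\al t}=\text{NICE}-\frac{\varphi}{|z_\al|}\varphi_{\al\al}-Q^2\sigma\,\frac{z_{\al\al}\cdot z_\al^\perp}{|z_\al|^3}$, so that in $\frac{d}{dt}\|\Lambda^{1/2}\da^{k-1}\varphi\|_{L^2}^2=2\int\Lambda(\da^{k-1}\varphi)\,\da^{k-1}\varphi_t$ the last term contributes exactly $-S(t)$ (Lemma \ref{lemmaenergyphiS}). Summing, $S(t)$ disappears. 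The positivity $\sigma>0$ plays no role in this cancellation; it is used only to make the weighted integral $\int\frac{Q^2\sigma}{|z_\al|^2}|\da^k z|^2$ dominate $\|\da^k z\|_{L^2}^2$ (see \eqref{controlzH4}), so that $E$ actually controls the $H^k$ norm of $z$. The specific weight $Q^2\sigma/|z_\al|^2$ is chosen precisely so that the two occurrences of $S(t)$ match.
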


The following subsections are devoted to proving Lemma \ref{lemmaenergyestimates}  by showing the regularity of the
different elements involved in the problem: the Birkhoff-Rott
integral, $z_t(\al,t)$, $\om_t(\al,t)$, $\om(\al,t)$; $BR_t(\al,t)$, the R-T function $\sigma(\al,t)$ and its time derivative $\sigma_t(\al,t)$.

\subsubsection{Estimates for $BR$}
\label{subsubBR}
In this section we show that the Birkhoff-Rott integral is
as regular as $\da z$.

\begin{lemma}
\label{lemmaBR}
The following estimate holds
\begin{eqnarray}\label{nsibr}
\|BR(z,\om)\|_{H^k}\leq
C(\|\F(z)\|^2_{L^\infty}+\|z\|^2_{H^{k+1}}+\|\om\|^2_{H^{k}})^j,
\end{eqnarray}
for $k\geq 2$, where $C$ and $j$ are constants independent of $z$
and $\om$.
\end{lemma}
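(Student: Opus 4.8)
The plan is to treat $BR(z,\om)$ as a Cauchy-type singular integral along a chord--arc curve and to adapt the estimates of \cite[Section 3]{Cordoba-Cordoba-Gancedo:interface-water-waves-2d}, which apply here with essentially no change since $BR(z,\om)$ has exactly the same form in the tilde variables as in the original ones (the conformal map $P$ does not enter). First I would isolate the leading singularity of the kernel. Using the periodic version of $BR$, one writes
$$
BR(z,\om)(\al) = \frac{z_\al^{\perp}(\al)}{2|z_\al(\al)|^2}\,H\om(\al) + T(z,\om)(\al),
$$
where $H$ is the periodic Hilbert transform and $T$ is the operator with kernel
$$
K(\al,\beta) = \frac{(z(\al)-z(\al-\beta))^{\perp}}{|z(\al)-z(\al-\beta)|^2} - \frac{z_\al^{\perp}(\al)}{2|z_\al(\al)|^2}\,\frac{1}{\tan(\beta/2)}.
$$
The subtraction of the tangent-line approximation makes $K$ and its $\al$-derivatives bounded in terms of powers of $\|\F(z)\|_{L^\infty}$ and of $\|z\|_{H^{k+1}}$ (the latter via Sobolev embedding): the numerator gains a factor $|\beta|^2$ from Taylor expansion while the denominator is bounded below by $c|\beta|^2$, the constant $c$ being essentially $\|\F(z)\|_{L^\infty}^{-2}$; likewise $1/|z_\al|$ is controlled by $\|\F(z)\|_{L^\infty}$.

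Next I would estimate $\|\da^k BR(z,\om)\|_{L^2}$ by applying $\da^k$ to this decomposition and tracking where the derivatives fall. When all derivatives hit $\om$ one gets $\frac{z_\al^{\perp}}{2|z_\al|^2}H(\da^k\om)$ plus $T(z,\da^k\om)$, bounded by $C(\|\F(z)\|_{L^\infty},\|z\|_{C^1})\|\om\|_{H^k}$ from $L^2$-boundedness of $H$ and of the operator with kernel $K$. When derivatives hit the $z$-dependent coefficient or the kernel, the worst term carries a single factor $\da^{k+1}z$: either as a multiplicative coefficient, placed in $L^2$ with the remaining factors in $L^\infty$ by Sobolev embedding; or inside the singular integral as $H(\da^{k+1}z)$, again $L^2$-bounded; or in a Calder\'on-commutator--type integral, whose kernel --- after the usual subtraction of a tangent approximation and an integration by parts in $\beta$ moving a $\da_\beta$ off $\da^{k-1}\om$ or $\da^{k-1}z$ --- becomes a bounded Calder\'on--Zygmund kernel depending only on lower-order quantities. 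All remaining factors are absorbed into powers of $\|\F(z)\|_{L^\infty}$, $\|z\|_{H^{k+1}}$ and $\|\om\|_{H^{k}}$ via Sobolev, and the low-order part is handled by $\|BR(z,\om)\|_{L^2}\le C(\|\F(z)\|_{L^\infty},\|z\|_{C^1})\|\om\|_{L^2}$. Collecting the terms yields \eqref{nsibr} with $j$ counting how many times the arc--chord factor and the $z$-norms get multiplied.

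The main obstacle is the commutator term in which $k$ derivatives land on the singular kernel without producing the benign $H(\da^{k+1}z)$: a priori this operator could lose a derivative, and one must check it is bounded on $L^2$. This is resolved as in \cite[Section 3]{Cordoba-Cordoba-Gancedo:interface-water-waves-2d}: after subtracting the appropriate tangent approximation and integrating by parts in $\beta$, the operator has a bounded Calder\'on--Zygmund kernel depending only on lower-order quantities, so standard singular-integral theory gives the $L^2$ bound. Since $P$ plays no role in $BR(z,\om)$, the only new bookkeeping relative to \cite{Cordoba-Cordoba-Gancedo:interface-water-waves-2d} is to confirm that every occurrence of $1/|z_\al|$ and $1/|z(\al)-z(\beta)|^2$ is dominated by $\|\F(z)\|_{L^\infty}$, which holds as long as $z$ obeys the arc--chord condition.
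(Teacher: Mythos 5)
Your proposal is correct and follows essentially the same approach as the paper, which simply defers to Section 6.1 of C\'ordoba--C\'ordoba--Gancedo with the observation that the Birkhoff--Rott operator has an identical form in the tilde variables (the map $P$ never enters). Your outline --- peel off the periodic Hilbert-transform piece $\frac{z_\al^\perp}{2|z_\al|^2}H\om$, bound the remainder kernel $K$ using the arc--chord quantity $\F(z)$, track where the $k$ derivatives land, and resolve the dangerous commutator by a further tangent-line subtraction and integration by parts in $\beta$ --- is precisely the mechanism behind the cited estimates, and it is the same decomposition the authors use explicitly elsewhere in the paper (e.g.\ the terms $I_{3,1}$, $I_{3,2}$ in the proof of the analytic estimate).
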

\begin{rem}
Using this estimate for $k=2$ we find easily that
\begin{eqnarray}\label{nliibr}
\|\da BR(z,\om)\|_{L^\infty}\leq
C(\|\F(z)\|^2_{L^\infty}+\|z\|^2_{H^{3}}+\|\om\|^2_{H^{2}})^j,
\end{eqnarray}
which shall be used throughout the paper, where $C$ and $j$ are universal constants.
\end{rem}
\begin{proof}
The proof can be done as in \cite[Section 6.1]{Cordoba-Cordoba-Gancedo:interface-water-waves-2d} since the definition for the Birkhoff-Rott operator is independent of the domain.
\end{proof}

\subsubsection{Estimates for $z_t$}
\label{subsubzt}
In this section we show that $z_t$ is as regular as
$\da z$.

\begin{lemma}The following estimate holds
\begin{eqnarray}\label{nszt}
\|z_t\|_{H^k}\leq
C\left(\|\F(z)\|^2_{L^\infty}+\|z\|^2_{H^{k+1}}+\|\om\|^2_{H^{k}}+\sum_{l=0}^4\frac{1}{m(q^l)(t)}\right)^j,
\end{eqnarray}
for $k\geq 2$, where $C$ and $j$ are constants that depend only on $k$.
\end{lemma}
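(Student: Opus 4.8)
The plan is to start from the evolution equation for $z_t$ in the tilde domain, namely $z_t = Q^2 BR(z,\om) + c\,z_\al$ (equation \eqref{zeq} with the tildes dropped), where $c$ is the explicit tangential term \eqref{defc}. Since $H^k$ is a Banach algebra for $k\ge 1$, and more precisely $\|fg\|_{H^k}\le C(\|f\|_{H^k}\|g\|_{L^\infty}+\|f\|_{L^\infty}\|g\|_{H^k})$, it suffices to produce polynomial-in-the-energy bounds in $H^k$ for the three factors $Q^2$, $BR(z,\om)$ and $c$, together with the pointwise bound $1/|z_\al(\al,t)|\le \|\F(z)\|_{L^\infty}$, which follows by letting $\beta\to 0$ in the definition of $\F$. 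The estimate for $BR(z,\om)$ is precisely Lemma \ref{lemmaBR}: $\|BR(z,\om)\|_{H^k}\le C(\|\F(z)\|^2_{L^\infty}+\|z\|^2_{H^{k+1}}+\|\om\|^2_{H^{k}})^j$, which already explains the appearance of $\|z\|^2_{H^{k+1}}$ on the right-hand side.

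Next I would estimate $Q^2$. Recall $Q^2=\frac{1}{16}\left|\frac{1+\zeta^4}{\zeta}\right|^2$ with $\zeta=z_1(\al,t)+iz_2(\al,t)$; this is the restriction to the curve of a function that is real-analytic in $\zeta$ away from $\zeta=0$, i.e. away from $z=q^0=(0,0)$. Hence by the standard composition (Fa\`a di Bruno) estimate in Sobolev spaces, $\|Q^2\|_{H^k}$ is bounded by a polynomial in $\|z\|_{H^k}$ and $1/m(q^0)(t)\le\sum_{l=0}^4 1/m(q^l)(t)$, and likewise for the $L^\infty$ norms of $Q^2$ and its derivatives. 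Combining with Sobolev algebra and Lemma \ref{lemmaBR} yields the required bound for $\|Q^2 BR\|_{H^k}$.

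Then I would treat $c$ and $c\,z_\al$. Differentiating \eqref{defc} gives $c_\al = B(t) - (Q^2 BR)_\al\cdot\frac{z_\al}{|z_\al|^2}$, with $B(t)=\frac{1}{2\pi}\int_{-\pi}^{\pi}(Q^2 BR)_\beta\cdot\frac{z_\beta}{|z_\beta|^2}\,d\beta$ constant in $\al$ and $c(-\pi)=0$, so $\|c\|_{H^k}\le C\big(|B(t)|+\|(Q^2 BR)\cdot\frac{z_\al}{|z_\al|^2}\|_{H^k}\big)$. Because of the choice of $c$ one has $|z_\al|^2=A(t)$ independent of $\al$, hence $1/|z_\al|^2\le\|\F(z)\|^2_{L^\infty}$ and this factor contributes no $\al$-derivatives; applying Sobolev algebra once more together with the bounds already obtained for $Q^2$ and $BR$ and with $\|z_\al\|_{H^k}\le\|z\|_{H^{k+1}}$ gives a polynomial-in-the-energy bound for $\|c\|_{H^k}$, hence for $\|c\,z_\al\|_{H^k}$. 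Collecting the three pieces produces \eqref{nszt}.

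The routine parts of this argument — the algebra and commutator estimates, and in fact the entire treatment of the terms not involving $Q$ — are identical to the corresponding lemma in \cite{Cordoba-Cordoba-Gancedo:interface-water-waves-2d}, so I would refer to that paper rather than reproduce them. The only genuinely new ingredient, and the one I expect to require the most care, is the composition estimate for $Q^2$ (and, in the later lemmas, for $Q$, $\nabla Q$ and $P_2^{-1}$): one must verify quantitatively that $z$ stays away from the points $q^l$ with gap $m(q^l)(t)$, and that the meromorphic functions of $\zeta$ in \eqref{1paco} composed with $z$ obey the expected Sobolev bounds with an appropriate power of $1/m(q^l)(t)$, while keeping all constants polynomial in the energy $E(t)$ so that the final exponent $j$ depends only on $k$. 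This bookkeeping is the main obstacle; everything else is standard.
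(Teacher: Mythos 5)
Your proposal is correct and follows essentially the same route as the paper's own proof, which is extremely terse: the paper simply cites \cite[Section 6.2]{Cordoba-Cordoba-Gancedo:interface-water-waves-2d} for the non--$Q$ part of the argument and remarks that the only new ingredient is to control $Q^2$ via the $m(q^l)$ quantities (in particular $m(q^0)$, the distance to the origin). Your version spells out the bookkeeping the paper leaves implicit --- the tame product estimate, the Fa\`a di Bruno bound for $\|Q^2\|_{H^k}$ in terms of $\|z\|_{H^k}$ and $1/m(q^0)$, and the integration of $c_\al=B(t)-(Q^2 BR)_\al\cdot z_\al/|z_\al|^2$ with $c(-\pi)=0$ --- and in doing so actually states the requirement more accurately than the paper does: the paper says ``the only additional thing we need to control is an $L^\infty$ norm of $Q^2$,'' but as you correctly observe one needs the full $H^k$ composition estimate (the $L^\infty$ bound via $m(q^0)$ being the new \emph{obstruction}, with the higher derivatives absorbed by $\|z\|_{H^{k+1}}$). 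The one small imprecision in your write-up is the intermediate bound $\|c\|_{H^k}\le C(|B(t)|+\|(Q^2BR)\cdot z_\al/|z_\al|^2\|_{H^k})$: strictly one should estimate $\|c_\al\|_{H^{k-1}}$ via $\|(Q^2BR)_\al\cdot z_\al/|z_\al|^2\|_{H^{k-1}}$, using that $|z_\al|^2=A(t)$ so the last factor costs nothing; the end result is the same.
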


 \begin{proof}
 It follows from \cite[Section 6.2]{Cordoba-Cordoba-Gancedo:interface-water-waves-2d}. The only additional thing we need to control is an $L^{\infty}$ norm of $Q^2$, which we can easily bound by the $m(q^l)$ terms which control the distance from the curve to the $q^l$ points, more precisely, the one that controls the distance from the origin.
 \end{proof}

\subsubsection{Estimates for $\om_t$}
\label{subsubomt}
This section is devoted to showing that $\om_t$ is as regular as $\da \om$.

\begin{lemma}
\label{lemmaomt}
The following estimate holds
\begin{eqnarray}\label{nswt}
\|\om_t\|_{H^k}\leq C
\left(\|\F(z)\|^2_{L^\infty}+\|z\|^2_{H^{k+2}}+\|\om\|^2_{H^{k+1}}+\|\varphi\|^2_{H^{k+1}}+\sum_{l=0}^4\frac{1}{m(q^l)(t)}\right)^j,
\end{eqnarray}
for $k\geq 1$, where $C$ and $j$ are constants that depend only on $k$.
\end{lemma}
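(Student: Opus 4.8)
The plan is to bound $\|\om_t\|_{H^k}$ by taking $H^k$ norms term by term in the evolution equation \eqref{eqomega} (equivalently \eqref{paraomt}), following the argument of \cite[Section 6]{Cordoba-Cordoba-Gancedo:interface-water-waves-2d} and keeping track of the extra $Q$-dependent factors produced by the change of variables. Every term on the right of \eqref{eqomega} other than $-2\partial_t BR(z,\om)\cdot z_\al$ is a finite sum of products and compositions of quantities that are already under control: $BR(z,\om)$ is as regular as $\partial_\al z$ by Lemma \ref{lemmaBR}; the tangential coefficient $c$ of \eqref{defc} is, since $|z_\al|^2 = A(t)$ is $\al$-independent by the choice of $c$, controlled at the required regularity by $(Q^2 BR)_\al$ and $z_\al$; the quantities $\om$ and $\varphi$ appear at the indicated orders; and $Q$, $P_2^{-1}$ are smooth functions of $z$ on $\{z\neq q^l\}$, so their Sobolev norms are bounded in terms of $\|z\|_{H^{k+1}}$ and the distances $m(q^l)(t)$. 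The composition and product estimates in Sobolev spaces then bound the $H^k$ norms of all of these terms by the right-hand side of \eqref{nswt}, invoking in particular $\|z_t\|_{H^{k+1}}$ via \eqref{nszt}.

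The only delicate term is $-2\partial_t BR(z,\om)\cdot z_\al$. Differentiating the Birkhoff--Rott integral in time produces, apart from singular-integral terms in which $z_t$ occupies the positions of $z$ or $\om$ in the kernel — all of them estimated, as in \cite[Section 6]{Cordoba-Cordoba-Gancedo:interface-water-waves-2d}, in terms of $\|z_t\|_{H^{k+1}}$, $\|z\|_{H^{k+1}}$, $\|\om\|_{H^k}$ — exactly one term of the form $BR(z,\om_t)\cdot z_\al$. The crucial point is that $BR(z,\om_t)\cdot z_\al = \tfrac12 J(\om_t)$, where $J$ is the order-zero operator determined by $\Phi_\al = \tfrac12(I+J)\om$, i.e. $J(\om) = 2BR(z,\om)\cdot z_\al$. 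Substituting back into \eqref{eqomega} gives $\om_t = -J(\om_t) + G$, where $G$ collects all the terms estimated in the previous paragraph, hence $(I+J)\om_t = G$ and $\om_t = (I+J)^{-1}G$.

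It then remains to invert $I+J$ on $H^k$. As in \cite[Sections 5 and 6]{Cordoba-Cordoba-Gancedo:interface-water-waves-2d}, the operator $I+J$ is invertible on mean-zero $L^2$ functions, and $\int_\T\om_t\,d\al = 0$ by conservation of circulation, so $\om_t$ lies in that space; applying $\partial_\al^k$ to $(I+J)\om_t = G$ gives $(I+J)(\partial_\al^k\om_t) = \partial_\al^k G - [\partial_\al^k, J]\om_t$, where the commutator is of lower order in $\om_t$ (the top-order contribution $J(\partial_\al^k\om_t)$ cancelling), so that the $L^2$-invertibility together with an induction on $k$ yields $\|\om_t\|_{H^k}\le C\|G\|_{H^k}$. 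Combined with the bounds of the first two paragraphs, $\|G\|_{H^k}$ is dominated by the right-hand side of \eqref{nswt}, which completes the proof.

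I expect the main obstacle to be the term $\partial_t BR(z,\om)\cdot z_\al$: a priori $\partial_t BR$ is as irregular as $\om_t$ itself, and the whole argument rests on the structural fact that the $\om_t$-carrying piece of $\partial_t BR$ pairs with $z_\al$ to reconstitute precisely $\tfrac12 J(\om_t)$, so that $I+J$ appears and can be inverted. Verifying this cancellation, and checking that the remaining pieces of $\partial_t BR$ cost only $z_t$-regularity (supplied by \eqref{nszt}) rather than $\om_t$-regularity, is the subtle point; the additional $Q$-factors and the $P_2^{-1}$ terms introduced by the tilde change of variables are then handled routinely via the $m(q^l)$ quantities, exactly as in the proofs of Lemma \ref{lemmaBR} and \eqref{nszt}.
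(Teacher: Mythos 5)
Your approach is essentially the same as the paper's. The paper's proof of Lemma \ref{lemmaomt} is simply a pointer to formula \eqref{paraomt} and to Section 6.3 of \cite{Cordoba-Cordoba-Gancedo:interface-water-waves-2d}, and that referenced argument is precisely the one you describe: isolate the $\om_t$-dependence hidden in $\partial_t BR(z,\om)\cdot z_\al$, recognize it as $\tfrac12 J(\om_t)$ (the same operator $J$ appearing in $\Phi_\al=\tfrac12(I+J)\om$), rewrite the equation as $(I+J)\om_t = G$ with $G$ built from controlled quantities, and invert $(I+J)$ on $H^k$ using its $L^2$-invertibility plus commutator estimates, with the additional $Q$- and $P_2^{-1}$-factors handled through the $m(q^l)$ quantities.

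One technical point you pass over, which the paper explicitly flags: the $H^k$-operator norm of $(I+J)^{-1}$ is what determines whether the final bound is polynomial (as claimed in \eqref{nswt}) or merely exponential in the controlling norms. The original argument in \cite{Cordoba-Cordoba-Gancedo:interface-water-waves-2d} yields an exponential bound, and the paper invokes the later refinement of \cite{Cordoba-Cordoba-Gancedo:muskat-3d} to justify the polynomial form. Your sketch of the induction on $k$ does not by itself deliver polynomial dependence, so to match the lemma as literally stated you would need to cite that refinement, or else accept an exponential bound (which, as the paper notes, is in any case sufficient for Theorem \ref{localexistencetilde}).
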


\begin{proof}
We use formula \eqref{paraomt} and proceed as in \cite[Section 6.3]{Cordoba-Cordoba-Gancedo:interface-water-waves-2d}. Note that in \cite{Cordoba-Cordoba-Gancedo:interface-water-waves-2d} an exponential growth appears in the bound of the estimates for the nonlocal operator acting on $\om_t$ (see equation \eqref{paraomt}). However, in a recent paper \cite{Cordoba-Cordoba-Gancedo:muskat-3d}
 the authors get a polynomial growth for the operator in both 2 and 3 dimensions. Note that even the exponential growth is still good enough to prove Theorem \ref{localexistencetilde}.

\end{proof}

\subsubsection{Estimates for $\om$}
\label{subsubom}
 In this section we show that the amplitude of the vorticity $\om$ lies at the same level as $\da z$. We shall consider $z\in
H^k(\T)$, $\varphi\in H^{k-\frac12}(\T)$ and $\om\in H^{k-2}(\T)$ as part of the energy estimates. The inequality below yields
$\om\in H^{k-1}(\T)$.

\begin{lemma}The following estimate holds
\begin{eqnarray}\label{nsw}
\|\om \|_{H^{k-1}}\leq
C\left(\|\F(z)\|^2_{L^\infty}+\|z\|^2_{H^{k}}+\|\om\|^2_{H^{k-2}}+\|\varphi\|^2_{H^{k-1}}+\sum_{l=0}^4\frac{1}{m(q^l)(t)}\right)^j,
\end{eqnarray}
for $k\geq 3$, where $C$ and $j$ are constants that depend only on $k$.
\end{lemma}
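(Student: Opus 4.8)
The plan is to reduce the lemma to a single functional equation for $\om$ of the form $(I+\mathcal{T})\om=F$, where $F$ is controlled by $\varphi$, $z$ and the \emph{lower-order} norm $\|\om\|_{H^{k-2}}$, and where the operator $\mathcal{T}$ is \emph{smoothing} — it gains one full derivative. Granting this, $\|\om\|_{H^{k-1}}\le\|F\|_{H^{k-1}}+\|\mathcal{T}\om\|_{H^{k-1}}$, the term $\mathcal{T}\om$ is absorbed into the lower-order data, and \eqref{nsw} follows.

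First I would use that, by the choice \eqref{defc} of $c$, the quantity $|z_{\al}(\al,t)|^2=A(t)$ is independent of $\al$. Solving the definition \eqref{fvar} of $\varphi$ for $\om$ gives
\[
\om=\frac{2|z_{\al}|}{Q^2}\,\varphi+\frac{2|z_{\al}|^2}{Q^2}\,c,
\]
and substituting \eqref{defc} — writing $g(\be,t)=(Q^2 BR(z,\om))_{\be}(\be,t)\cdot\frac{z_{\be}(\be,t)}{|z_{\be}(\be,t)|^2}$ and using that $BR(z,\cdot)$ is linear in its second argument — this turns into
\[
\om+\frac{2|z_{\al}|^2}{Q^2}\int_{-\pi}^{\al}g(\be,t)\,d\be=\frac{2|z_{\al}|}{Q^2}\,\varphi+\frac{2|z_{\al}|^2(\al+\pi)}{Q^2}\,B(t),
\]
with $B(t)$ as in \eqref{defb}. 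This is the announced equation, with $F$ the right-hand side and $\mathcal{T}\om=\frac{2|z_{\al}|^2}{Q^2}\int_{-\pi}^{\al}g\,d\be$, which is linear in $\om$.

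The estimate for $F$ is routine: $Q^2$ and $1/Q^2$ are smooth functions of $z$ away from the points $q^l$ — $Q^2$ being singular only at $q^0=(0,0)$ and $1/Q^2$ only at $q^1,\dots,q^4$ — so Moser/composition estimates bound their $H^{k-1}$ norms by a polynomial in $\|z\|_{H^k}$ and the $m(q^l)$; $|z_{\al}|$ is an $\al$-independent constant bounded by $\|z\|_{H^k}$; $(\al+\pi)$ is a fixed function on $[-\pi,\pi]$; and $B(t)$, after one integration by parts using $|z_{\be}|^2=A(t)$ and the $L^2$-boundedness of $BR(z,\cdot)$ (which follows from the arc-chord condition), is bounded by a polynomial in $\|z\|_{H^k}$, $\|\om\|_{H^{k-2}}$, $\|\F(z)\|_{L^\infty}$ and the $m(q^l)$. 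Since $k\ge 3$ gives $\|\om\|_{L^2}\le\|\om\|_{H^{k-2}}$, we obtain $\|F\|_{H^{k-1}}\le C\big(\|\F(z)\|^2_{L^\infty}+\|z\|^2_{H^k}+\|\om\|^2_{H^{k-2}}+\|\varphi\|^2_{H^{k-1}}+\sum_{l=0}^4 1/m(q^l)\big)^j$.

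The heart of the matter — and the step I expect to be the main obstacle — is to show that $\mathcal{T}$ gains a derivative, i.e. $\|\mathcal{T}\om\|_{H^{k-1}}\le C(\dots)^j\|\om\|_{H^{k-2}}$. Since $\mathcal{T}\om$ is a smooth multiple of the antiderivative $\int_{-\pi}^{\al}g$, and antidifferentiation gains one derivative, it suffices to show $\|g\|_{H^{k-2}}\le C(\dots)^j\|\om\|_{H^{k-2}}$, i.e. that $g$ involves $\om$ with \emph{no} loss of derivatives, even though it contains $(Q^2 BR(z,\om))_{\be}$ which naively loses one. This is where a ``miracle'' enters: from the standard decomposition $BR(z,\om)=\frac{z_{\al}^{\perp}}{2|z_{\al}|^2}H(\om)+R_1(\om)$, with $H$ the periodic Hilbert transform and $R_1$ an operator that gains one derivative, the only potentially derivative-losing contribution to $g$ is proportional to $\big(\frac{z_{\al}^{\perp}}{2|z_{\al}|^2}\partial_{\al}H(\om)\big)\cdot\frac{z_{\al}}{|z_{\al}|^2}$, which vanishes identically since $z_{\al}^{\perp}\cdot z_{\al}=0$. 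Equivalently, $BR(z,\om)\cdot z_{\al}=R_1(\om)\cdot z_{\al}$ is one order smoother than $BR(z,\om)$ itself — which is exactly the statement that the operator $J$ in $\Phi_{\al}=\tfrac12(I+J)\om$ is smoothing. All the remaining pieces of $g$ are order zero in $\om$, with coefficients built from $z$, $Q^2$ and $1/Q^2$ and hence bounded by $\|z\|_{H^k}$, $\|\F(z)\|_{L^\infty}$ and the $m(q^l)$; the precise commutator and singular-kernel estimates needed are of the type already carried out in \cite[Section 5]{Cordoba-Cordoba-Gancedo:interface-water-waves-2d}, and the extra smooth factors $Q^2$ peculiar to the tilde domain (regular as long as $z\neq q^l$) do not alter their nature. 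Combining the two bounds gives \eqref{nsw}. The delicate point throughout is to keep every lower-order and $Q$-dependent term controlled strictly by the quantities entering $E(t)$, so that $\|\om\|_{H^{k-1}}$ never reappears on the right-hand side — without the cancellation $z_{\al}^{\perp}\cdot z_{\al}=0$ the scheme would be circular.
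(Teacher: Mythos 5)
Your proof is correct and captures precisely the idea the paper delegates to \cite[Section 6.4]{Cordoba-Cordoba-Gancedo:interface-water-waves-2d}: solve the relation $\varphi=\frac{Q^2\om}{2|z_\al|}-c|z_\al|$ for $\om$, use that $c$ enters only through $(Q^2 BR)_\al\cdot z_\al$ whose top-order piece is killed by the cancellation $z_\al^{\perp}\cdot z_\al=0$, and absorb the new $Q$-dependence into the $m(q^l)$ terms. Whether one packages this as your functional equation $(I+\mathcal{T})\om=F$ with a smoothing $\mathcal{T}$, or (as in that reference) differentiates the defining relation for $\varphi$ and estimates $(Q^2\om)_\al$ directly, is only a matter of presentation; the substance and the crucial cancellation are the same.
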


\begin{proof}
We can apply the same techniques as in \cite[Section 6.4]{Cordoba-Cordoba-Gancedo:interface-water-waves-2d} since the most singular terms are treated there and the other terms are harmless and can be easily estimated. The impact of $Q$ is now taken into account by the $m(q^{l})$ terms (which now cover all of the points $q^0,...,q^4$).
\end{proof}

\subsubsection{Estimates for $BR_t$.}
\label{subsubBRt}
Here we prove that the time derivative of the Birkhoff-Rott integral is at the same level as $\da^2 z$.

\begin{lemma}
\label{lemmaBRt}
The following estimate holds
\begin{eqnarray}\label{BRt}
\|BR_t\|_{H^k}\leq
C\left(\|\F(z)\|^2_{L^\infty}+\|z\|^2_{H^{k+2}}+\|\om\|^2_{H^{k+1}}+\|\varphi\|^2_{H^{k+1}}+\sum_{l=0}^4\frac{1}{m(q^l)(t)}\right)^j,
\end{eqnarray}
for $k\geq 2$, where $C$ and $j$ are constants that depend only on $k$.
\end{lemma}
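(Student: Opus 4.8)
The plan is to differentiate the Birkhoff--Rott integral \eqref{Charlie17} in time and estimate the resulting pieces, following the scheme used in the analogous lemma of \cite{Cordoba-Cordoba-Gancedo:interface-water-waves-2d} but keeping careful track of the new factors $Q^2$, $Q_\al$ and of the tangential term $\tilde c$ from \eqref{defc}, which are peculiar to the tilde domain. First I would write $BR_t=\partial_t BR(z,\om)$ and separate the contribution in which $\partial_t$ falls on the vorticity amplitude, namely
$$
BR(z,\om_t)(\al,t)=\frac{1}{2\pi}PV\int_{-\pi}^{\pi}\frac{(z(\al)-z(\beta))^\perp}{|z(\al)-z(\beta)|^2}\om_t(\beta)\,d\beta,
$$
from the contributions in which $\partial_t$ falls on the positions inside the kernel, which produce the two standard terms
$$
\frac{(z_t(\al)-z_t(\beta))^\perp}{|z(\al)-z(\beta)|^2},\qquad
\frac{(z(\al)-z(\beta))^\perp}{|z(\al)-z(\beta)|^4}\,(z(\al)-z(\beta))\cdot(z_t(\al)-z_t(\beta)).
$$

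For the first contribution I would apply Lemma \ref{lemmaBR} with $\om$ replaced by $\om_t$, which bounds $\|BR(z,\om_t)\|_{H^k}$ by a polynomial in $\|\F(z)\|^2_{L^\infty}+\|z\|^2_{H^{k+1}}+\|\om_t\|^2_{H^{k}}$, and then invoke Lemma \ref{lemmaomt} to control $\|\om_t\|_{H^k}$ by $\|\F(z)\|^2_{L^\infty}+\|z\|^2_{H^{k+2}}+\|\om\|^2_{H^{k+1}}+\|\varphi\|^2_{H^{k+1}}+\sum_{l=0}^4\frac{1}{m(q^l)(t)}$, which is precisely the right-hand side of \eqref{BRt}. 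For the kernel contributions I would substitute $z_t=Q^2 BR(z,\om)+\tilde c\,z_\al$ from \eqref{zeq}. The factors $Q^2$, $Q_\al$, and their derivatives up to order $k$ are smooth and bounded as long as $z$ stays away from the points $q^0,\dots,q^4$; this is exactly where the $\sum_{l=0}^4\frac{1}{m(q^l)(t)}$ terms enter, just as in the proofs of Lemmas \ref{lemmaomt} and the surrounding estimates. The tangential term $\tilde c$ is obtained from $(Q^2BR)_\al$ by integration, so by Lemma \ref{lemmaBR} it and its derivatives up to order $k+1$ are controlled by the energy; alternatively one uses \eqref{fvar} to trade $c|z_\al|$ for $\varphi$ and $\tfrac{Q^2\om}{2|z_\al|}$, which is why $\|\varphi\|_{H^{k+1}}$ appears on the right of \eqref{BRt}. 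With these substitutions the kernel terms are, modulo lower-order pieces, singular integral operators of the same type treated in \cite{Cordoba-Cordoba-Gancedo:interface-water-waves-2d}: after applying $\da^k$ and extracting the worst contribution, each behaves like a Hilbert transform applied to $\da^2 z$ or to $\da\om$, and is estimated via $L^2$-boundedness of the Hilbert transform together with the commutator and Sobolev bounds used there, the denominators being controlled by $\|\F(z)\|_{L^\infty}$ and by $|z_\al|^2=A(t)$.

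The main obstacle is bookkeeping rather than a new idea: one must verify that inserting the extra factors $Q^2$, $Q_\al$, $(Q^2)_\al$ and the nonlocal tangential velocity $\tilde c$ does not destroy the delicate structure --- the ``miracles'' --- that lets the top-order terms combine into bounded singular integrals in \cite{Cordoba-Cordoba-Gancedo:interface-water-waves-2d}. Since $Q$ and its derivatives are genuinely smooth on the region where the $m(q^l)(t)$ stay positive, and since $\tilde c$ sits at the same level of regularity as $BR$ itself, no loss of derivatives occurs, and the verification is carried out term by term exactly as in the subsequent estimates for $\sigma$ and $\sigma_t$; combining the bounds above yields \eqref{BRt} for $k\ge2$.
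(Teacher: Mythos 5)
Your proof is correct and follows the same route the paper takes: the paper also proceeds as in the cited reference by differentiating $BR(z,\om)$ in time and controlling the resulting pieces via the previously established estimates \eqref{nszt} for $z_t$ and \eqref{nswt} for $\om_t$, with the $Q^2$, $Q_\al$, and $q^l$-distance factors handled exactly as you describe. The only cosmetic difference is that the paper cites the $H^k$ bound on $z_t$ directly rather than re-substituting the formula $z_t=Q^2\,BR+\tilde c\,z_\al$, but this is equivalent bookkeeping.
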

\begin{proof}
We proceed as in \cite[Section 6.5]{Cordoba-Cordoba-Gancedo:interface-water-waves-2d}, where $BR_t$ appears in the formula \eqref{R-T}. We use \eqref{nszt} and \eqref{nswt} to bound $z_t$ and $\omega_t$ in $BR_t$ respectively.
\end{proof}

\subsubsection{Estimates for the Rayleigh-Taylor function $\sigma$}
\label{subsubsigma}
Here we prove that the Rayleigh-Taylor function is at the same level as $\da^2 z$.

\begin{lemma}
\label{lemmasigmaHk}
The following estimate holds
\begin{eqnarray}\label{nswbis}
\|\sigma \|_{H^k}\leq
C\left(\|\F(z)\|^2_{L^\infty}+\|z\|^2_{H^{k+2}}+\|\om\|^2_{H^{k+1}}+\|\varphi\|^2_{H^{k+1}}+\sum_{l=0}^4\frac{1}{m(q^l)(t)}\right)^j,
\end{eqnarray}
for $k\geq 2$, where $C$ and $j$ are constants that depend only on $k$.
\end{lemma}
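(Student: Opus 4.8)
\textbf{Proof proposal for Lemma \ref{lemmasigmaHk}.}

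The plan is to estimate, one by one, the four groups of terms in the formula \eqref{R-T} defining $\sigma$, using only the previously established Lemmas \ref{lemmaBR}, \ref{lemmaBRt} and estimate \eqref{nszt}, together with the Moser product inequality and a composition estimate for the explicit functions $Q^2$, $\nabla(Q^2)$ and $\nabla P_2^{-1}$. Before starting I would record three elementary facts to be used repeatedly. First, since $\F(z)(\al,\beta)\to|z_\al(\al,t)|^{-1}$ as $\beta\to0$, we have $|z_\al(\al,t)|^{-1}\leq\|\F(z)\|_{L^\infty}$; moreover, by the choice of $c$ in \eqref{defc} the quantity $|z_\al(\al,t)|$ is independent of $\al$, so every reciprocal power of $|z_\al|$ occurring in $\sigma$ is bounded by a power of $\|\F(z)\|_{L^\infty}$. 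Second, for $k\geq1$ the space $H^k(\T)$ is a Banach algebra and the refined Moser bound $\|fg\|_{H^k}\leq C(\|f\|_{L^\infty}\|g\|_{H^k}+\|f\|_{H^k}\|g\|_{L^\infty})$ holds. Third — and this is the only point requiring real care — for a function $f$ that is rational in $w=z_1+iz_2$ with singularities only at the points $q^0,\dots,q^4$ (which is the case for $Q^2=\tfrac1{16}|1+w^4|^2/|w|^2$, for $\nabla(Q^2)$, for $P_2^{-1}$ and for $\nabla P_2^{-1}$, whose poles are at $q^0$ for the $Q$-quantities and at $q^1,\dots,q^4$ for the $P_2^{-1}$-quantities), one has the composition estimate $\|f\circ z\|_{H^k}\leq\mathcal{C}\big(\|z\|_{H^k},\sum_{l=0}^{4}\tfrac{1}{m(q^l)}\big)$, obtained by Fa\`a di Bruno together with the fact that on the region $\{|w-q^l|\geq m(q^l)\ \text{for all}\ l\}$ all derivatives of $f$ are bounded by powers of $\sum_l m(q^l)^{-1}$. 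Note that the only power of $Q$ standing alone in \eqref{R-T} is the combination $Q\,(\nabla Q)(z)=\tfrac12\nabla(Q^2)(z)$, which is regular away from $q^0$; so the half-integer power of $Q^2$ never appears and the composition estimate applies as stated.

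With these in hand the proof is pure bookkeeping. For $\big(BR_t+\tfrac{\varphi}{|z_\al|}BR_\al\big)\cdot z_\al^\perp$: Lemma \ref{lemmaBRt} (valid for $k\geq2$, which is why the statement requires $k\geq2$) bounds $\|BR_t\|_{H^k}$ by the asserted right-hand side, while Lemma \ref{lemmaBR} applied at level $k+1$ bounds $\|BR_\al\|_{H^k}\leq\|BR\|_{H^{k+1}}$ by $C(\|\F(z)\|^2_{L^\infty}+\|z\|^2_{H^{k+2}}+\|\om\|^2_{H^{k+1}})^j$; combining with $\varphi\in H^{k+1}\hookrightarrow H^k$, $|z_\al|^{-1}\leq\|\F(z)\|_{L^\infty}$ and $z_\al^\perp\in H^{k+1}\hookrightarrow H^k$ (from $z\in H^{k+2}$), the algebra property closes this term. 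For $\tfrac{\om}{2|z_\al|^2}\big(z_{\al t}+\tfrac{\varphi}{|z_\al|}z_{\al\al}\big)\cdot z_\al^\perp$: estimate \eqref{nszt} applied at level $k+1$ gives $\|z_{\al t}\|_{H^k}\leq\|z_t\|_{H^{k+1}}\leq C(\|\F(z)\|^2_{L^\infty}+\|z\|^2_{H^{k+2}}+\|\om\|^2_{H^{k+1}}+\sum_l m(q^l)^{-1})^j$, and $z_{\al\al}\in H^k$, $\om\in H^{k+1}\hookrightarrow H^k$, $\varphi\in H^k$, so again this term is of the required size. For $Q\,|u|^2\,(\nabla Q)(z)\cdot z_\al^\perp$ with $u=BR+\tfrac{\om}{2|z_\al|^2}z_\al$: Lemma \ref{lemmaBR} and the product estimate for $\tfrac{\om}{2|z_\al|^2}z_\al$ (using $\om\in H^{k+1}$, $z\in H^{k+2}$, $|z_\al|^{-2}\leq\|\F(z)\|^2_{L^\infty}$) give $u\in H^k$, hence $|u|^2\in H^k$ by the algebra property, while $\nabla(Q^2)(z)=2Q(\nabla Q)(z)\in H^k$ by the composition estimate, and $z_\al^\perp\in H^k$. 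Finally, for $g\,(\nabla P_2^{-1})(z)\cdot z_\al^\perp$ the composition estimate gives $(\nabla P_2^{-1})(z)\in H^k$ (controlled by $m(q^l)$, $l=1,\dots,4$) and $z_\al^\perp\in H^k$. Summing the four contributions and absorbing all constants and exponents into a single $(\cdots)^j$ yields \eqref{nswbis}.

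I expect no serious obstacle: the hard analysis has already been absorbed into Lemmas \ref{lemmaBR}, \ref{lemmaBRt} and estimate \eqref{nszt}, and the remaining work is the derivative-counting done above plus the composition estimate for the singular factors $Q^2$, $\nabla(Q^2)$, $\nabla P_2^{-1}$. The one place to be attentive is that composition estimate, which is where Condition~6 of Definition \ref{defsplash} and the $m(q^l)$ terms in the energy \eqref{E} enter — once it is granted (as in \cite[Section 6.5]{Cordoba-Cordoba-Gancedo:interface-water-waves-2d} for the $Q\equiv1$ case), the rest follows by routine Moser inequalities, exactly as in \cite[Section 6.5]{Cordoba-Cordoba-Gancedo:interface-water-waves-2d}.
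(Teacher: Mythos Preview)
Your proposal is correct and follows essentially the same approach as the paper: the paper's proof simply refers to \cite[Section 6.5]{Cordoba-Cordoba-Gancedo:interface-water-waves-2d} via formula \eqref{R-T} and observes that the only new term, $Q|BR+\tfrac{\om}{2|z_\al|^2}z_\al|^2(\nabla Q)(z)\cdot z_\al^\perp$, is less singular than $BR_t\cdot z_\al^\perp$ and hence harmless. Your write-up is just a more explicit rendering of exactly this bookkeeping, correctly invoking Lemmas \ref{lemmaBR} and \ref{lemmaBRt} and estimate \eqref{nszt} at the appropriate regularity levels, and correctly handling the $Q$- and $P_2^{-1}$-factors via composition estimates controlled by the $m(q^l)$ terms.
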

\begin{proof}
We proceed as in \cite[Section 6.5]{Cordoba-Cordoba-Gancedo:interface-water-waves-2d} using formula \eqref{R-T}. There is a new term in the definition of $\sigma$, namely $Q\left|BR(z,\om) + \frac{\om}{|z_{\al}|^{2}}z_{\al}\right|^{2}(\nabla Q)(z) \cdot z_{\al}^{\perp}$, but this term is less singular than $BR_t(z,\om) \cdot z_{\al}^{\perp}$. Hence, the new term causes no trouble.
\end{proof}

\subsubsection{Estimates for $\sigma_t$}
\label{subsubsigmat}
In this section we obtain an upper bound for the $L^\infty$ norm of $\sigma_t$ that will be used in the energy inequalities and in the treatment of the Rayleigh-Taylor condition.

\begin{lemma}
\label{lemmasigmat}
 The following estimate holds
\begin{eqnarray}\label{nlinftyst}
\|\sigma_t \|_{L^\infty}\leq
C\left(\|\F(z)\|^2_{L^\infty}+\|z\|^2_{H^4}+\|\om\|^2_{H^3}+\|\varphi\|^2_{H^3}+\sum_{l=0}^4\frac{1}{m(q^l)(t)}\right)^j,
\end{eqnarray}
where $C$ and $j$ are universal constants.
\end{lemma}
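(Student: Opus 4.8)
The plan is to differentiate the explicit formula \eqref{R-T} for $\sigma$ in time and bound every resulting term in $L^\infty(\T)$, following the treatment of $\sigma_t$ in \cite{Cordoba-Cordoba-Gancedo:interface-water-waves-2d}; the only genuinely new ingredients are the conformal factor $Q$ and the function $P_2^{-1}$, which are real-analytic and nonsingular as long as $z(\al,t)$ stays away from the origin and from the points $q^0,\dots,q^4$, so all their derivatives composed with the curve are controlled in $L^\infty$ by $\|\F(z)\|_{L^\infty}$ and $\sum_{l=0}^4\frac{1}{m(q^l)(t)}$. Since $H^1(\T)\hookrightarrow L^\infty(\T)$, it suffices to show that each term of $\partial_t\sigma$ sits at the level of $\da^3 z$, i.e. is bounded in $H^1(\T)$, with norm at most $CE(t)^j$. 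Throughout I would first use \eqref{nsw} (with $k\geq 4$) to upgrade $\om$ to $H^{k-1}\subset H^3$, so that $z\in H^4$, $\om\in H^3$ and $\varphi\in H^{3+\frac12}$ are all available.

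Differentiating \eqref{R-T} produces three groups of terms. The first is built from $BR$, $BR_t$, $\da BR_t$, $z_t$, $z_{\al t}$, $z_{\al\al t}$, $z_{tt}$, $z_{\al tt}$, $\om$, $\om_t$, $\da\om_t$, $\varphi$, $\varphi_t$, $\da\varphi_t$; using $z_t = Q^2 BR + c z_\al$ one checks $z_{tt}\in H^2$ (hence $z_{\al tt}\in H^1$), while $BR_t\in H^2$, $z_t\in H^3$, $\om_t\in H^2$ follow from Lemma \ref{lemmaBRt} and the estimates \eqref{nszt}, \eqref{nswt} applied with the relevant $k$ — which is precisely why the right-hand side of \eqref{nlinftyst} only needs $\|z\|_{H^4}$, $\|\om\|_{H^3}$, $\|\varphi\|_{H^3}$ — and $\varphi_t$, $\da\varphi_t$ are controlled through \eqref{evpt}. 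Each such term is then estimated in $L^\infty$, or in $H^1$ when it multiplies a top-order factor, by Sobolev embedding and bounded by $CE(t)^j$. The second group gathers the geometric factors $Q$, $\nabla Q$, $\nabla^2 Q$, $\nabla P_2^{-1}$, $\nabla^2 P_2^{-1}$ composed with $z(\al,t)$; these, and the new term $Q\,\big|BR + \frac{\om}{2|z_\al|^2}z_\al\big|^2(\nabla Q)(z)\cdot z_\al^\perp$ in \eqref{R-T} together with its time derivative, are strictly less singular than $BR_t\cdot z_\al^\perp$ and are absorbed into the $m(q^l)$ bounds exactly as in the preceding lemmas. The third and only delicate contribution is $\partial_t\big(BR_t\cdot z_\al^\perp\big) = BR_{tt}\cdot z_\al^\perp + BR_t\cdot z_{\al t}^\perp$, whose second summand is routine.

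For $BR_{tt}\cdot z_\al^\perp$ I would mimic the recovery of $\om$ and $\om_t$ in the earlier lemmas. Expanding $BR_{tt} = \partial_t^2 BR(z,\om)$ by Leibniz, all contributions are products of quantities already shown to be in $L^\infty\cap H^1$ except the piece $BR(z,\om_{tt})$ linear in $\om_{tt}$; and $\om_{tt}$ is obtained from $2\Phi_\al = (I+J_z)\om$, $J_z\om \equiv 2BR(z,\om)\cdot z_\al$, differentiated twice in $t$:
\begin{equation*}
(I+J_z)\om_{tt} = 2\Phi_{\al tt} - 2(\partial_t J_z)\om_t - (\partial_t^2 J_z)\om .
\end{equation*}
Here $\Phi_{\al tt}$ is computed from the $\Phi$-equation in \eqref{ikea}, and once the cancellations already used for $\om_t$ are taken into account it lies in $H^1$; the operators $\partial_t J_z$, $\partial_t^2 J_z$ differentiate only the $z$-dependence, hence are bounded on Sobolev spaces with coefficients controlled by $z_t$, $z_{tt}$. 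So the right-hand side is bounded in $H^1$ by $CE(t)^j$, and invoking the invertibility of $I+J_z$ (as in \cite[Section 5]{Cordoba-Cordoba-Gancedo:interface-water-waves-2d}) together with the standard commutator argument to pass from $L^2$ to $H^1$ gives $\|\om_{tt}\|_{H^1}\leq CE(t)^j$, hence $\|BR_{tt}\|_{H^1}\leq CE(t)^j$ and $\|BR_{tt}\cdot z_\al^\perp\|_{L^\infty}\leq CE(t)^j$. Summing the three groups gives \eqref{nlinftyst}.

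I expect the most involved step to be the $BR_{tt}$ bound: one has to check that, in the tilde setting, the apparent two-derivative loss in $BR_{tt}$ really collapses — equivalently that the right-hand side of the $\om_{tt}$-equation is bounded in $H^1$ — which amounts to running the $\om_t$ computation of \cite{Cordoba-Cordoba-Gancedo:interface-water-waves-2d} one time derivative higher while checking that the favorable structure survives the presence of $Q$ and $P_2^{-1}$. Everything else is bookkeeping, with the conformal factor absorbed into the quantities $m(q^l)$.
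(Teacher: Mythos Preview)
Your proposal is correct and follows the same route as the paper, which simply defers to \cite[Section 6.6]{Cordoba-Cordoba-Gancedo:interface-water-waves-2d} and remarks that the new $Q$- and $P_2^{-1}$-terms are less singular than $BR_t\cdot z_\al^\perp$. You have accurately reconstructed what that reference does: differentiate \eqref{R-T} in $t$, observe that everything but $BR_{tt}\cdot z_\al^\perp$ is routine once \eqref{nszt}, \eqref{nswt}, \eqref{nsw}, \eqref{BRt} and \eqref{evpt} are in hand, and recover $\om_{tt}$ from the implicit relation $(I+J_z)\om=2\Phi_\al$ exactly as $\om_t$ was recovered one level lower.
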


\begin{proof}
Again, as in the previous subsection, the new term is less singular than the terms treated in \cite[Section 6.6]{Cordoba-Cordoba-Gancedo:interface-water-waves-2d}. Hence we deal with them with no problem.
\end{proof}

\subsubsection{Energy estimates on the curve}

In this section we give the proof of the following  lemma when,
again, $k=4$. The case $k>4$ is left to the reader. Regarding  $\|\da^4 z\|^2_{L^2}$ let us remark that we have
\begin{equation}
\label{controlzH4}
\|\da^4 z\|^2_{L^2}(t)=\int_\T
\frac{Q^2\sigma |z_\al|^2}{Q^2\sigma|z_\al|^2}|\da^4 z|^2d\al\leq
\frac{|z_\al|^2}{m(Q^2\sigma)(t)} \int_\T \frac{Q^2\sigma}{|z_\al|^2}|\da^4 z|^2d\al.
\end{equation}

\begin{lemma}
\label{lemmaenergyS}
Let $z(\al,t)$ and $\om(\al,t)$ be a solution of (\ref{zeq}-\ref{eqomega}).
 Then, the following a priori estimate holds:
\begin{align}
\begin{split}\label{eec}
\frac{d}{dt}\Big(\|z\|^2_{H^{k-1}}+\int_{-\pi}^\pi\frac{Q^2\sigma}{|z_\al|^2}|\da^k z|^2 d\al+\|\F(z)\|_{L^\infty}^2\Big)&\leq S(t)+CE^p(t),
\end{split}
\end{align}
for
\begin{equation}\label{fS}
S(t)=\int_{-\pi}^{\pi}2Q^2\sigma\frac{\da^k z\cdot z_\al^{\perp}}{|
z_\al|^3}\la(\da^{k-1}\varphi)d\al,
\end{equation}
and $k\geq 4$, where $C$ and $p$ are constants that depend only on $k$.
\end{lemma}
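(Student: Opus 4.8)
The plan is to differentiate the three terms inside the bracket on the left of \eqref{eec} separately, reproducing the scheme of \cite[Section 6]{Cordoba-Cordoba-Gancedo:interface-water-waves-2d} while carrying along the conformal factor $Q$ and the distances $m(q^l)$. First I would bound $\frac{d}{dt}\|z\|^2_{H^{k-1}}=2\langle z,z_t\rangle_{H^{k-1}}$: by \eqref{nszt} (with $k$ replaced by $k-1$), $\|z_t\|_{H^{k-1}}\le C(\|\F(z)\|^2_{L^\infty}+\|z\|^2_{H^k}+\|\om\|^2_{H^{k-1}}+\sum_l 1/m(q^l))^j$, and every term on the right is controlled by $E$ (for $\|z\|_{H^k}$ use \eqref{controlzH4}, for $\|\om\|_{H^{k-1}}$ use \eqref{nsw}), so $\frac{d}{dt}\|z\|^2_{H^{k-1}}\le CE^p$. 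For the arc-chord term I would differentiate at a maximizing pair $(\al,\beta)$, using
\[
\partial_t\F(z)(\al,\beta)=-\F(z)^2\,|\beta|^{-1}\,(z_t(\al,t)-z_t(\al-\beta,t))\cdot\frac{z(\al,t)-z(\al-\beta,t)}{|z(\al,t)-z(\al-\beta,t)|}
\]
together with $|z_t(\al,t)-z_t(\al-\beta,t)|\le\|\da z_t\|_{L^\infty}|\beta|\le C\|z_t\|_{H^2}|\beta|$ and \eqref{nszt} with $k=2$, to obtain $\frac{d}{dt}\|\F(z)\|^2_{L^\infty}\le C\|\F(z)\|^3_{L^\infty}\|z_t\|_{H^2}\le CE^p$.

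The content is the weighted top-order term. Writing $I(t)=\int_{-\pi}^{\pi}\frac{Q^2\sigma}{|z_\al|^2}|\da^k z|^2\,d\al$, one has
\[
\frac{dI}{dt}=\int_{-\pi}^{\pi}\partial_t\Big(\frac{Q^2\sigma}{|z_\al|^2}\Big)|\da^k z|^2\,d\al+2\int_{-\pi}^{\pi}\frac{Q^2\sigma}{|z_\al|^2}\,\da^k z\cdot\da^k z_t\,d\al.
\]
In the first integral $\partial_t(Q^2\sigma/|z_\al|^2)$ is bounded in $L^\infty$, since $\|\sigma_t\|_{L^\infty}$ is controlled by Lemma \ref{lemmasigmat}, $Q_t=\nabla(Q^2)(z)\cdot z_t$ is bounded because $1/m(q^0)$ keeps $z$ away from the origin, and $|z_\al|^2=A(t)$ depends only on $t$ with controlled derivative; hence this integral is $\le C\|\da^k z\|^2_{L^2}\le CE^p$ by \eqref{controlzH4}. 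In the second integral I would substitute $z_t=Q^2\,BR(z,\om)+c\,z_\al$ from \eqref{zeq}. The only top-order piece of $\da^k(c\,z_\al)$ is $c\,\da^{k+1}z$; integrating by parts turns $2\int\frac{Q^2\sigma c}{|z_\al|^2}\da^k z\cdot\da^{k+1}z\,d\al$ into $-\int\da_\al\big(\frac{Q^2\sigma c}{|z_\al|^2}\big)|\da^k z|^2\,d\al$, whose coefficient lies in $L^\infty$ by the explicit formula \eqref{defc} (which puts $c,c_\al$ at the level of $BR,\da BR$) together with \eqref{nliibr} and Lemma \ref{lemmasigmaHk}; the commutator $[\da^k,c]z_\al$ and the remaining terms of $\da^k(cz_\al)$ are of lower order and contribute $\le CE^p$. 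Likewise, in $\da^k(Q^2 BR)$ every term in which a derivative falls on $Q^2$ pairs a bounded factor times an $H^{k-1}$ function against $\da^k z\in L^2$ (note $BR\in H^{k-1}$ by \eqref{nsibr}, since $z\in H^k$ and $\om\in H^{k-1}$) and is $\le CE^p$. One is thus reduced to controlling $2\int_{-\pi}^{\pi}\frac{Q^4\sigma}{|z_\al|^2}\,\da^k z\cdot\da^k BR(z,\om)\,d\al$.

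Here $\da^k BR$ is \emph{not} controlled in $L^2$ by $E$ alone (that would require $z\in H^{k+1}$), so one must invoke the cancellation structure of \cite[Sections 6.1 and 6.5]{Cordoba-Cordoba-Gancedo:interface-water-waves-2d}. Since $|z_\al|$ does not depend on $\al$, the component $\da^k z\cdot z_\al$ is a sum of strictly lower-order products, so $\da^k z$ may be replaced by $\big(\frac{\da^k z\cdot z_\al^\perp}{|z_\al|^2}\big)z_\al^\perp$ up to terms $\le CE^p$, and it suffices to analyze $z_\al^\perp\cdot\da^k BR$. Expanding the Birkhoff--Rott kernel near $\beta=0$ and peeling off the Hilbert-transform pieces (which are $L^2$-bounded and hence give only $E$-controlled contributions), the genuinely top-order part of $z_\al^\perp\cdot\da^k BR$ carries a factor $\la(\da^{k-1}\om)$ up to a smooth multiple; multiplying by $Q^2$, using boundedness of the commutator $[\la,Q^2]$, and substituting $\frac{Q^2\om}{2|z_\al|}=\varphi+c|z_\al|$ from \eqref{fvar}, this factor becomes $2|z_\al|\la(\da^{k-1}\varphi)$ plus terms that, thanks to the structure of \eqref{defc} and the cancellations of \cite[Section 6]{Cordoba-Cordoba-Gancedo:interface-water-waves-2d}, contribute only to $CE^p$. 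Keeping track of the surviving powers of $Q$ and $|z_\al|$ yields exactly the integrand of \eqref{fS}, so $\frac{dI}{dt}\le S(t)+CE^p$; adding the three pieces gives \eqref{eec}.

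The main obstacle is this last step: one must verify that inserting the conformal weights $Q^2$, $Q^4$ and $\nabla Q$ into the Córdoba--Córdoba--Gancedo computation does not destroy its ``miraculous'' cancellations — i.e.\ that the only term surviving with a net loss of one derivative is precisely $S(t)$, which is built to cancel against the worst term in the forthcoming estimate for $\frac{d}{dt}\|\varphi\|^2_{H^{k-\frac12}}$. Because $Q=Q(z(\al,t))$ depends on the unknown, its $\al$-derivatives are \emph{a priori} not ``lower order,'' so one has to check term by term that each $\da^j(Q^2)$ or $(\nabla Q)(z)$ produced along the way is absorbed either by multiplying something already at the energy level or by an integration by parts; this is exactly the kind of fortunate bookkeeping that must be redone here.
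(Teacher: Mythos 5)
Your proposal mirrors the paper's proof in every essential step: easy bounds for the $H^{k-1}$ and chord-arc pieces, splitting the weighted top-order integral into an $L^\infty$-coefficient term and a $\da^k z_t$ term, substituting $z_t=Q^2 BR+c z_\al$, isolating the genuinely top-order contributions of $\da^k BR$ (the paper's $K_1,\dots,K_4$), exploiting the constraint $|z_\al|^2=A(t)$, and converting the $\la(\da^{k-1}\om)$ factor into $|z_\al|\la(\da^{k-1}\vp)$ via $\tfrac{Q^2\om}{2|z_\al|}=\vp+c|z_\al|$ to produce exactly $S(t)$. The only material differences are in packaging: the paper handles the analogue of $K_1$ by symmetrizing so that the leading block $L_1$ vanishes by the pure orthogonality $a\cdot a^\perp=0$, rather than by projecting $\da^k z$ onto the normal throughout as you propose (both work, and yours reuses a single device uniformly), and your phrase ``peeling off the Hilbert-transform pieces (which are $L^2$-bounded)'' is loose — those pieces lose half a derivative beyond the energy and are only tamed after the cancellations you then describe; finally, the $c|z_\al|$ contribution, which you correctly flag as needing verification, is where the paper expends the bulk of its bookkeeping (the chain $M_1\to N_1,\dots,N_4\to O_1,O_2,O_3\to P_1\to R$).
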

(The term $S(t)$ is uncontrolled but it will appear
in the equation of the evolution of $\varphi$ with the opposite
sign.)\\

\begin{proof} Using \eqref{nszt} and \eqref{controlzH4} one gets easily
\begin{align*}
\begin{split}
\frac{d}{dt}\|z\|^2_{H^3}&\leq C\int_{-\pi}^{\pi}(|z(\al)||z_t(\al)|+|\da^3z(\al)||\da^3z_t(\al)|)d\al\\
&\leq CE^p(t).
\end{split}
\end{align*}
We obtain
\begin{align*}
\begin{split}
\frac{d}{dt}\|\F(z)\|^2_{L^\infty}&\leq CE^p(t)
\end{split}
\end{align*}
in a similar manner as in \cite[Section 7.2]{Cordoba-Cordoba-Gancedo:interface-water-waves-2d}. It remains to deal with the quantity
\begin{align*}
\begin{split}
\frac{d}{dt} \int_{-\pi}^\pi\frac{Q^2\sigma}{|z_\al|^2}|\partial_{\al}^4 z|^2d\al
= &\int_{-\pi}^{\pi}\Big(\frac{Q^2\sigma}{|z_\al|^2}\Big)_t
|\partial_{\al}^4 z|^2d\al+\int_{-\pi}^\pi\frac{2Q^2\sigma}{|z_\al|^2}\partial_{\al}^4 z\cdot\partial_{\al}^4 z_t d\al \\
=&I_1+I_2.
\end{split}
\end{align*}

The bounds \eqref{nszt}, \eqref{controlzH4} and \eqref{nlinftyst} give us
$$I_1\leq C E^p(t).$$

Next for $I_2$ we write
\begin{align*}
\begin{split}
I_2&=\int_{-\pi}^\pi\frac{2Q^2\sigma}{|z_\al|^2} \partial_{\al}^4 z\cdot
\partial_{\al}^4 (Q^2BR) d\al+
\int_{-\pi}^\pi\frac{2Q^2\sigma}{|z_\al|^2}\partial_{\al}^4 z\cdot \da^4(c\da z) d\al \\
&=J_1+J_2.
\end{split}
\end{align*}
The most singular terms in $J_1$ are given by $K_1$, $K_2$, $K_3$ and $K_4$:

$$
K_1=\frac{1}{\pi}PV\int_{-\pi}^{\pi}\int_{-\pi}^\pi
\frac{Q^4\sigma}{|z_\al|^2}\partial_{\al}^4z\cdot\frac{(\da^4z-\da^4z')^\bot}{|z-z'|^2}\om'd\beta
d\alpha,$$

\begin{align*}
\begin{split}
K_2&=-\frac{2}{\pi}PV\int_{-\pi}^\pi\int_{-\pi}^\pi\frac{Q^4\sigma}{|z_\al|^2}\partial_{\al}^4z\cdot\frac{(z-z')^{\bot}}{|z-z'|^4}
(z-z')\cdot(\da^4z-\da^4z')\om'd\beta d\alpha,
\end{split}
\end{align*}
$$
K_3=\frac{1}{\pi}PV\int_{-\pi}^\pi\int_{-\pi}^\pi
\frac{Q^4\sigma}{|z_\al|^2} \da^4z\cdot
\frac{(z-z')^{\bot}}{|z-z'|^2}\da^4\om'd\beta d\al,
$$
and
$$
K_4=\frac{2}{\pi}\int_{-\pi}^\pi
\frac{Q^3\sigma}{|z_\al|^2} \da^4z\cdot BR \,\grad Q(z)\cdot \da^4 z d\al,
$$
where the prime denotes a function in the variable $\al-\beta$, i.e.
$f'=f(\al-\beta)$.

Then we write:
\begin{align*}
\begin{split}
K_1&=\frac{1}{\pi}PV\int_{-\pi}^\pi\int_{-\pi}^\pi
\frac{Q^4(\al)\sigma(\al)}{|z_\al|^2}
\partial_{\al}^4
z(\al)\cdot\frac{(\da^4z(\al)-\da^4z(\beta))^\bot}{|z(\al)-z(\beta)|^2}\om(\beta)d\beta
d\alpha\\
&=\frac{1}{\pi|z_\al|^2}PV\int_{-\pi}^\pi\int_{-\pi}^\pi\partial_{\al}^4
z(\al)\cdot\frac{(\da^4z(\al)-\da^4z(\beta))^\bot}{|z(\al)-z(\beta)|^2}\frac{Q^4(\al)\sigma(\al)\om(\beta)+Q^4(\beta)\sigma(\beta)\om(\alpha)}{2}d\beta
d\alpha\\
&\quad+\frac{1}{\pi|z_\al|^2}PV\int_{-\pi}^\pi\int_{-\pi}^\pi\partial_{\al}^4
z(\al)\cdot\frac{(\da^4z(\al)-\da^4z(\beta))^\bot}{|z(\al)-z(\beta)|^2}\frac{Q^4(\al)\sigma(\al)\om(\beta)-Q^4(\beta)\sigma(\beta)\om(\alpha)}{2}d\beta
d\alpha\\
&=L_1+L_2.
\end{split}
\end{align*}
That is, we have performed a manipulation in $K_1$,
allowing us to show that $L_1$, its  most singular term,  vanishes:
\begin{align*}
\begin{split}
L_1&=\frac{-1}{\pi|z_\al|^2}PV\int_{-\pi}^\pi\int_{-\pi}^\pi\partial_{\al}^4
z(\beta)\cdot\frac{(\da^4z(\al)-\da^4z(\beta))^\bot}{|z(\al)-z(\beta)|^2}\frac{Q^4(\al)\sigma(\al)\om(\beta)+Q^4(\beta)
\sigma(\beta)\om(\alpha)}{2}d\beta d\alpha\\
&=\frac{1}{2\pi|z_\al|^2}PV\int_{-\pi}^\pi\int_{-\pi}^\pi(\partial_{\al}^4
z(\al)\!-\!\partial_{\al}^4 z(\beta))\!\cdot\!
\frac{(\da^4z(\al)\!-\!\da^4z(\beta))^\bot}{|z(\al)\!-\!z(\beta)|^2}\frac{Q^4(\al)\sigma(\al)\om(\beta)\!+\!
Q^4(\beta)\sigma(\beta)\om(\alpha)}{2}d\beta d\alpha\\
&=0.
\end{split}
\end{align*}
The term $L_2$ involves a S.I.O. (Singular Integral Operator) acting on $\da^{4} z(\al)$ thanks to the minus sign between the two terms $Q^4 \sigma \om$. One can show that
$$
L_2\leq C\|\F(z)\|^2_{L^\infty}\|z\|^k_{H^3}\|\om\|_{C^{1,\delta}}
\|\sigma\|_{C^{1,\delta}}\|Q^4\|_{C^{1,\delta}}\|\da^4 z\|^2_{L^2}\leq CE^p(t).
$$

Inside $K_2$ we find that $(z-z')\cdot(\da^4z-\da^4z')$ can be written as follows:
\begin{align}
\begin{split}\label{decomextra}
(z-z')\cdot(\da^4z-\da^4z')&=(z-z'-z_\al\beta)\cdot(\da^4z-\da^4z')\\
  &\quad -\beta (z_\al-z'_\al)\cdot \da^4 z'\\
  &\quad +\beta (z_\al\cdot \da^4 z-z'_\al\cdot \da^4z'),
\end{split}
\end{align}
then using that
\begin{equation}
\label{zdz4trick}
z_\al\cdot \da^4 z=-3\da^2 z\cdot \da^3 z,
\end{equation}
we can split $K_2$ as a sum of S.I.O.s operating on
$\da^4z(\al)$, plus a kernel of the form $\frac{\eta(\alpha,\beta)}{\beta^{2}}$ acting on $\da^{2} z \cdot \da^{3} z$ with $\eta \in \mathcal{C}^{2}$ allowing us to obtain again the estimate
$$K_2\leq C E^p(t).$$
Note that below we will also use a variant of \eqref{zdz4trick}, namely
\begin{equation}
\label{zdz4trick2}
z_\al\cdot (\da^{4} z - \da^{4} z') = (z'_\al-z_\al)\cdot\da^{4} z'-3(\da^2 z\cdot \da^3z-\da^2 z'\cdot \da^3z') .
\end{equation}
The term $K_3$ is a sum of
$$
L_3=\frac{1}{\pi}\int_{-\pi}^\pi\frac{Q^4\sigma}{|z_\al|^2} \da^4z\cdot\int_{-\pi}^\pi
\left[\frac{(z-z')^{\bot}}{|z-z'|^2}-\frac{z^\bot_\al}{| z_\al|^22\tan(\beta/2)}\right]\da^4\om'd\beta d\al,
$$
plus the following term:
$$
L_4=\int_{-\pi}^\pi Q^4\sigma\frac{\da^4 z\cdot z^{\bot}_\al}{|z_\al|^4}H(\da^4\om)d\al.
$$
We can integrate by parts on $L_3$ with respect to $\beta$ since
$\da^4 \om'=-\partial_{\beta}(\da^3\om')$. This
calculation gives a S.I.O. acting on $\da^{3} \om$ which can be estimated as before.

 Next in  $L_4$ we write
\begin{align*}
\begin{split}
L_4&=\int_{-\pi}^\pi Q^4\sigma\frac{\da^4 z\cdot z^{\bot}_\al}{|z_\al|^4} \la(\da^3\om) d\al
\end{split}
\end{align*}
and decompose further
\begin{align*}
\begin{split}
L_4&=\int_{-\pi}^\pi 2Q^2\sigma\frac{\da^4 z\cdot z^{\bot}_\al}{|z_\al|^3} \left[\frac{Q^2}{2|z_\al|}\la(\da^3\om)-\la(\da^3(\frac{Q^2}{2|z_\al|}\om))\right] d\al\\
&\quad+\int_{-\pi}^\pi 2Q^2\sigma\frac{\da^4 z\cdot z^{\bot}_\al}{|z_\al|^3}\la(\da^3\vp)d\al\\
&\quad+\int_{-\pi}^\pi 2Q^2\sigma\frac{\da^4 z\cdot z^{\bot}_\al}{|z_\al|^3}\la(\da^3(c|z_\al|))d\al\\
&=M_{-1}+S+M_1,
\end{split}
\end{align*}
for $S(t)$ given by \eqref{fS}. In $M_{-1}$ we find a commutator that allows us to obtain
$$
M_{-1}\leq CE^p(t).
$$
Using \eqref{defc} for $M_1$ we have
\begin{align*}
\begin{split}
M_1&=-2\int_{-\pi}^\pi H\big(Q^2\sigma\frac{\da^4 z\cdot z^{\bot}_\al}{|z_\al|^3}\big)\da^4(c|z_\al|))d\al=N_1+N_2+N_3+N_4,
\end{split}
\end{align*}
where
$$N_1=2\int_{-\pi}^\pi H\big(Q^2\sigma\frac{\da^4 z\cdot z^{\bot}_\al}{|z_\al|^3}\big)Q^2
BR_\al\cdot \frac{\da^4 z}{|z_\al|} d\al,$$
$$N_2=2\int_{-\pi}^\pi H\big(Q^2\sigma\frac{\da^4 z\cdot z^{\bot}_\al}{|z_\al|^3}\big)Q^2
\da^4 BR\cdot \frac{z_\al}{|z_\al|} d\al,$$
$$N_3=4\int_{-\pi}^\pi H\big(Q^2\sigma\frac{\da^4 z\cdot z^{\bot}_\al}{|z_\al|^3}\big)Q \grad Q\cdot \da^4 z
  BR\cdot\frac{z_\al}{|z_\al|} d\al,$$
and $N_4$ is given by the rest
of the terms which can be controlled easily by the estimates from Section \ref{subsubBR} for the Birkhoff-Rott integral.

 Regarding  $N_1$ a
straightforward calculation gives
\begin{align*}
\begin{split}
N_1&\leq CE^p(t),
\end{split}
\end{align*}
and analogously for $N_3$
\begin{align*}
\begin{split}
N_3&\leq CE^p(t).
\end{split}
\end{align*}
Again, in $N_2$ we consider the most singular terms given by
$$
O_1=\int_{-\pi}^\pi H\big(Q^2\sigma\frac{\da^4 z\cdot z^{\bot}_\al}{|z_\al|^3}\big)\frac{z_\al}{|z_\al|} \cdot
\frac{Q^2}{\pi}PV\int_{-\pi}^\pi \frac{(\da^4 z-\da^4
z')^{\bot}}{|z-z'|^2}\om'd\beta d\al,
$$
$$
O_2=-\int_{-\pi}^\pi H\big(Q^2\sigma\frac{\da^4 z\cdot z^{\bot}_\al}{|z_\al|^3}\big)\frac{z_\al}{|z_\al|} \cdot
\frac{Q^2}{2\pi}PV\int_{-\pi}^\pi
\frac{(z-z')^{\bot}}{|z-z'|^4}(z-z')\cdot(\da^4z-\da^4z')\om'
d\al d\beta,
$$
$$
O_3=2\int_{-\pi}^\pi H\big(Q^2\sigma\frac{\da^4 z\cdot z^{\bot}_\al}{|z_\al|^3}\big)Q^2\frac{z_\al}{|z_\al|} \cdot
BR(z,\da^4\om) d\al.
$$
Using the decomposition \eqref{decomextra} we can easily
estimate $O_2$ as in our discussion of $K_2$.

In $O_3$ we find
$$z_\al\cdot BR(z,\da^4 \om)=\frac{z_\al}{2\pi}\cdot \int_{-\pi}^{\pi}\frac{(z- z'-z_\al\beta)^{\perp}}{|z- z'|^{2}}\da^4\omega'd\beta.$$
Above we can integrate by parts as in our discussion of $L_3$. We find that
$$ O_3 \leq CE^{p}(t).$$

Next we split $O_1$ into a S.I.O. acting on
$(\da^4 z)^{\bot}$, which can be estimated as before, plus the
term
$$
P_1=\int_{-\pi}^\pi H\big(Q^2\sigma\frac{\da^4 z\cdot z^{\bot}_\al}{|z_\al|^3}\big)Q^2\om\frac{z_\al}{|z_\al|^3} \cdot
\la((\da^4 z)^{\bot}) d\al.$$

Then the following estimate for the commutator
$$\|Q^2\om\frac{z_\al}{|z_\al|^3} \cdot
\la((\da^4 z)^{\bot})-\la(Q^2\om\frac{z_\al}{|z_\al|^3} \cdot (\da^4 z)^{\bot})\|_{L^2}\leq CE^p(t)$$
yields
$$
P_1\leq CE^p(t)+R
$$
where
$$
R=-\int_{-\pi}^\pi Q^2\sigma\frac{\da^4 z\cdot z^{\bot}_\al}{|z_\al|^3}\partial_\al(Q^2\om\frac{z_\al}{|z_\al|^3}
(\da^4 z)^{\bot}) d\al.
$$
Using that
$$
\int_{-\pi}^\pi Hf(\al)\la g(\al)d\al=-\int_{-\pi}^\pi f(\al)\da
g(\al)d\al.
$$
We can write
$$
R=\int_{-\pi}^\pi Q^2\sigma\frac{\da^4 z\cdot z^{\bot}_\al}{|z_\al|^3}\partial_\al(Q^2\om)\frac{\da^4 z\cdot z^{\bot}_\al}{|z_\al|^3} d\al+\int_{-\pi}^\pi Q^2\sigma\frac{\da^4 z\cdot z^{\bot}_\al}{|z_\al|^3}Q^2\om \partial_\al(\frac{\da^4 z\cdot z^{\bot}_\al}{|z_\al|^3}) d\al
$$
and a straightforward integration by parts let us control $R$. This calculation allows us to get
$$
P_1\leq CE^p(t).
$$
We can easily show that
$$
K_4\leq CE^p(t)
$$
because we can bound $Q^3\sigma BR\grad Q$ in $L^\infty$. So finally we have controlled  $J_1$ in the following manner:

$$
J_1\leq CE^p(t)+ S.
$$

To finish the proof  let us observe that the term $J_2$ can be
estimated integrating by parts,  using the identity $\da^4
z\cdot \da z=-3\da^3 z\cdot \da^2 z$ to
treat its most singular component. We have obtained
$$
\int_{\T}\frac{Q^2\sigma}{|z_\al|^2}\partial_{\al}^4 z\cdot\da
z \da^4c d\al =
3\int_{\T}\frac{1}{|z_\al|^2}\da(Q^2\sigma\partial_{\al}^3 z\cdot\da^2
z)\da^3c d\al
$$
and this yields the desired control.
\end{proof}

\subsubsection{Energy estimates for $\om$}

In this section we show the following result.

\begin{lemma}
Let $z(\al,t)$ and $\om(\al,t)$ be a solution of
(\ref{zeq}-\ref{eqomega}).  Then, the following
a priori estimate holds:
\begin{align}
\begin{split}\label{eec}
\frac{d}{dt}\|\om\|^2_{H^{k-2}}(t)&\leq CE^p(t),
\end{split}
\end{align}
for $k\geq 4$, where $C$ and $p$ are constants that depend only on $k$.
\end{lemma}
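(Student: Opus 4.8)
The plan is to reduce the estimate entirely to the bound on $\om_t$ already recorded in Lemma \ref{lemmaomt}, supplemented by the control of $\|\om\|_{H^{k-1}}$ from \eqref{nsw} and of $\|\da^k z\|_{L^2}$ from (the obvious analogue of) \eqref{controlzH4}. The crucial structural observation is that, in contrast with the energy estimates for $z$ and $\varphi$ in Lemma \ref{lemmaenergyS}, no delicate cancellation is needed here: the top-order term of $\|\om\|_{H^{k-2}}^2$ can be handled by a crude Cauchy--Schwarz inequality, since $\om_t$ gains a full derivative over $\om$. Concretely, writing $\|\om\|_{H^{k-2}}^2=\|\om\|_{L^2}^2+\|\da^{k-2}\om\|_{L^2}^2$, I would start from
\begin{align*}
\frac{d}{dt}\|\om\|_{H^{k-2}}^2(t)&=2\int_{\T}\om\,\om_t\,d\al+2\int_{\T}\da^{k-2}\om\,\da^{k-2}\om_t\,d\al\\
&\leq 2\,\|\om\|_{H^{k-2}}(t)\,\|\om_t\|_{H^{k-2}}(t)\leq 2\sqrt{E(t)}\;\|\om_t\|_{H^{k-2}}(t),
\end{align*}
where the last step uses that $\|\om\|_{H^{k-2}}^2$ is one of the summands of $E(t)$ in \eqref{E}.

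It then remains to prove $\|\om_t\|_{H^{k-2}}(t)\leq CE^p(t)$. This follows by applying Lemma \ref{lemmaomt} with the index $k-2$ in place of $k$ (legitimate since $k\geq4$ forces $k-2\geq1$); the right-hand side of \eqref{nswt} then reads
$$
C\Big(\|\F(z)\|_{L^\infty}^2+\|z\|_{H^k}^2+\|\om\|_{H^{k-1}}^2+\|\varphi\|_{H^{k-1}}^2+\sum_{l=0}^4\frac{1}{m(q^l)(t)}\Big)^j,
$$
and each ingredient is dominated by a power of $E(t)$: the quantities $\|\F(z)\|_{L^\infty}^2$ and $\sum_l 1/m(q^l)$ enter $E$ directly; $\|\varphi\|_{H^{k-1}}\leq\|\varphi\|_{H^{k-\frac12}}\leq\sqrt{E}$; by \eqref{controlzH4} one has $\|z\|_{H^k}^2\leq\|z\|_{H^{k-1}}^2+\|\da^k z\|_{L^2}^2\leq E+\tfrac{|z_\al|^2}{m(Q^2\sigma)}\int_{\T}\tfrac{Q^2\sigma}{|z_\al|^2}|\da^k z|^2\,d\al\leq CE^2$; and finally $\|\om\|_{H^{k-1}}\leq CE^p$ by \eqref{nsw} (here $k\geq4\geq3$, so that estimate applies). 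This is precisely the step where one needs $\om$ one derivative above the level at which it enters the energy. Combining, $\|\om_t\|_{H^{k-2}}(t)\leq CE^p(t)$, and together with the previous display this yields the claimed estimate. The case $k>4$ is identical and left to the reader.

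The ``main obstacle'' is therefore only bookkeeping: one must check that the price paid for $\om_t\in H^{k-2}$, namely $\om\in H^{k-1}$ and $z\in H^{k}$, remains affordable from the energy --- the former being supplied by \eqref{nsw} and the latter by \eqref{controlzH4}. All the genuine analytic difficulty (the singular-integral cancellations and the ``miracles'') has already been absorbed into Lemmas \ref{lemmaBR}--\ref{lemmasigmat}; here nothing is left but Cauchy--Schwarz, Sobolev embedding, and the Leibniz rule, exactly as in the corresponding estimate of \cite{Cordoba-Cordoba-Gancedo:interface-water-waves-2d}, the only new feature being the factors $Q$ and $P^{-1}_2$, whose norms are already controlled by the $m(q^l)$ quantities inside Lemma \ref{lemmaomt}.
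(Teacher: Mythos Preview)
Your proposal is correct and follows essentially the same approach as the paper: apply Lemma \ref{lemmaomt} at index $k-2$ to bound $\|\om_t\|_{H^{k-2}}$, then invoke \eqref{nsw} to control the resulting $\|\om\|_{H^{k-1}}$ term (and \eqref{controlzH4} for $\|z\|_{H^k}$), exactly as the paper does in its two-line proof. You have simply made explicit the Cauchy--Schwarz step and the bookkeeping that the paper compresses into the phrase ``Formula \eqref{nswt} shows easily that\ldots which together with \eqref{nsw} yields\ldots''.
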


\begin{proof}
We will discuss the case $k=4$, leaving the
other cases to the reader. Formula \eqref{nswt} shows easily that
$$
\frac{d}{dt}\|\om\|^2_{H^2}(t)\leq  \left(\|\F(z)\|^2_{L^\infty}(t)+\|z\|^2_{H^{4}}(t)+\|\om\|^2_{H^{3}}(t)+\|\varphi\|^2_{H^{3}}(t)+\sum_{l=0}^4\frac{1}{m(q^l)(t)}\right)^j
$$
which together with \eqref{nsw} yields
$$
\frac{d}{dt}\|\om\|^2_{H^2}(t)\leq  CE^p(t).
$$
\end{proof}

\subsubsection{Finding the Rayleigh-Taylor function in the equation for $\da\vp_{t}$.}

In this section we get the R-T function in the evolution equation for $\da\vp_t$.

\begin{lemma}
\label{lemmaphit}
Let $z(\al,t)$ and $\om(\al,t)$ be a solution of
(\ref{zeq}-\ref{eqomega}).  Then, the following identity holds:
\begin{align}
\begin{split}\label{eec}
\vp_{\al t}=\text{NICE}-\frac{\vp}{|z_\al|}\vp_{\al\al}-Q^2\sigma\frac{z_{\al\al}\cdot z^{\bot}_\al}{|z_\al|^3}
\end{split}
\end{align}
where $\text{NICE}$ satisfies
\begin{equation}\label{nicee}
\int_{-\pi}^\pi \Lambda(\da^{k-1}\vp)\da^{k-2}(\text{NICE})d\al\leq CE^p(t)
\end{equation}
and
\begin{equation}\label{nicen}
\|\text{NICE}\|_{H^{k-2}}\leq CE^p(t)
\end{equation}
for $k\geq 4$, where $C$ and $p$ are constants that depend only on $k$.
\end{lemma}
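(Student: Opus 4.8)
The plan is to differentiate the evolution equation \eqref{evpt} for $\varphi_t$ with respect to $\alpha$ and then bookkeep, term by term, which pieces produce the transport term $-\frac{\varphi}{|z_\al|}\varphi_{\al\al}$ and the Rayleigh--Taylor term $-Q^2\sigma\,\frac{z_{\al\al}\cdot z_\al^\perp}{|z_\al|^3}$, declaring everything else to be $\text{NICE}$. Throughout I use the two structural consequences of the choice of $c$ in \eqref{defc}: $|z_\al|(t)^2$ is independent of $\alpha$, so $\da|z_\al|=0$ and $z_{\al\al}\cdot z_\al=\tfrac12\da|z_\al|^2=0$; hence $z_{\al\al}=\frac{z_{\al\al}\cdot z_\al^\perp}{|z_\al|^2}\,z_\al^\perp$ is purely normal. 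The payoff of the lemma is that the term $-Q^2\sigma\,\frac{z_{\al\al}\cdot z_\al^\perp}{|z_\al|^3}$, once plugged into the energy estimate for $\|\varphi\|_{H^{k-1/2}}$, generates precisely $-S(t)$ with $S(t)$ as in \eqref{fS}, cancelling the uncontrolled term left open in Lemma \ref{lemmaenergyS}.

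For the transport term: in \eqref{evpt} the summand $-\frac{Q^2}{2|z_\al|}\da\!\big(\frac{\varphi^2}{Q^2}\big)$ equals $-\frac{\varphi\,\varphi_\al}{|z_\al|}+\frac{Q_\al}{Q}\frac{\varphi^2}{|z_\al|}$, and $\da$ of the first piece gives $-\frac{\varphi}{|z_\al|}\varphi_{\al\al}$ together with $-\frac{\varphi_\al^2}{|z_\al|}$; the latter, together with $\da\big(\frac{Q_\al}{Q}\frac{\varphi^2}{|z_\al|}\big)$ and $\da(-\varphi B(t))$, involves only $\varphi,\varphi_\al$ and low-order functions of $z$, so it goes into $\text{NICE}$. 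For the Rayleigh--Taylor term: differentiating $-Q^2\,BR_t\cdot\frac{z_\al}{|z_\al|}$ and using $\da|z_\al|=0$ produces $-\frac{Q^2}{|z_\al|}BR_t\cdot z_{\al\al}$ plus $-\frac{(Q^2)_\al}{|z_\al|}BR_t\cdot z_\al$ and $-\frac{Q^2}{|z_\al|}BR_{t\al}\cdot z_\al$. In the first, substituting $z_{\al\al}=\frac{z_{\al\al}\cdot z_\al^\perp}{|z_\al|^2}z_\al^\perp$ gives $-Q^2\frac{z_{\al\al}\cdot z_\al^\perp}{|z_\al|^3}BR_t\cdot z_\al^\perp$, and then replacing $BR_t\cdot z_\al^\perp$ by its value from the definition \eqref{R-T} of $\sigma$, i.e.\ $BR_t\cdot z_\al^\perp=\sigma-\frac{\varphi}{|z_\al|}BR_\al\cdot z_\al^\perp-\frac{\omega}{2|z_\al|^2}\big(z_{\al t}+\frac{\varphi}{|z_\al|}z_{\al\al}\big)\cdot z_\al^\perp-Q\big|BR+\frac{\omega}{2|z_\al|^2}z_\al\big|^2(\nabla Q)(z)\cdot z_\al^\perp-g(\nabla P_2^{-1})(z)\cdot z_\al^\perp$, isolates exactly $-Q^2\sigma\,\frac{z_{\al\al}\cdot z_\al^\perp}{|z_\al|^3}$, the four remaining products (times $Q^2\frac{z_{\al\al}\cdot z_\al^\perp}{|z_\al|^3}$) going into $\text{NICE}$. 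The $\da$-derivative of the gravity term $-Q^2\frac{\da(gP_2^{-1})}{|z_\al|}$ and of the remaining summands of \eqref{evpt} (those with $QQ_t\frac{\omega}{|z_\al|}$, $cBR\cdot\frac{z_\al}{|z_\al|}QQ_\al$, $\frac{Q_\al}{Q}c^2|z_\al|$, $\frac{Q^3}{|z_\al|}Q_\al|BR|^2$, and $\partial_t(c|z_\al|)$) are likewise absorbed into $\text{NICE}$.

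It remains to verify \eqref{nicen} and \eqref{nicee}. For \eqref{nicen}, since $k\ge4$ makes $H^{k-2}(\mathbb T)$ a Banach algebra, each ingredient is controlled at the right level by the regularity lemmas already available: $BR$ by Lemma \ref{lemmaBR}, $BR_t$ by Lemma \ref{lemmaBRt}, $\sigma$ by Lemma \ref{lemmasigmaHk}, $\omega$ by \eqref{nsw}, $z_t$ by \eqref{nszt}, $\omega_t$ by \eqref{nswt}, together with the smoothness of $Q$, $P_2^{-1}$ and $c$ as functions of $z$ (the factors $m(q^l)$ in $E(t)$ keep $Q$ away from its singularities); every new $Q$-dependent summand is strictly less singular than $BR_t\cdot z_\al^\perp$ and so causes no difficulty beyond \cite{Cordoba-Cordoba-Gancedo:interface-water-waves-2d}. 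For \eqref{nicee} one moves one derivative using self-adjointness of $\la$ and $\la=H\da$, writing $\int\la(\da^{k-1}\varphi)\,\da^{k-2}(\text{NICE})\,d\al=-\int H(\da^{k-1}\varphi)\,\da^{k-1}(\text{NICE})\,d\al$, so that summands of $\text{NICE}$ lying in $H^{k-1}$ are handled by Cauchy--Schwarz and $\|\varphi\|_{H^{k-1/2}}\le E^{1/2}$, while the few that lie only in $H^{k-2}$ are treated with commutator estimates and $L^2$-boundedness of the relevant singular integral operators exactly as in \cite[Sections 6--7]{Cordoba-Cordoba-Gancedo:interface-water-waves-2d}. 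The main obstacle is the top-order term $-\frac{Q^2}{|z_\al|}BR_{t\al}\cdot z_\al$: a direct bound for $BR_{t\al}$ would require $z\in H^{k+1}$, outside the energy budget. The fix is to not estimate $BR_{t\al}$ at top order at all, but to differentiate the identity $\Phi_\al=BR(z,\omega)\cdot z_\al+\tfrac12\omega$ in $t$ and in $\alpha$, re-expressing the tangential contraction $z_\al\cdot BR_{t\al}$ through $\da\Phi_{\al t}$ --- which by the $\Phi$-equation in \eqref{ikea} is $\da^2$ of $\tfrac12 Q^2|u|^2-gP_2^{-1}+\dots$, hence within budget --- together with $\omega_t$, $BR\cdot z_{\al t}$, and $z_{\al\al}\cdot BR_t$, the last of which is purely normal and feeds back into the $\sigma$-substitution already performed. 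Carrying out this cancellation so that no genuine loss of derivatives survives is the ``miracle'' to be checked, in the spirit of \cite[Section 7]{Cordoba-Cordoba-Gancedo:interface-water-waves-2d}.
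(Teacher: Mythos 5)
Your strategy is the right one---differentiate \eqref{evpt} and isolate the transport term and the Rayleigh--Taylor term---but both of the ``miracles'' that make the lemma true are mishandled in your write-up.

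\textbf{The $BR_{t\al}\cdot z_\al$ cancellation.} You treat $(3a)=-\da\big(Q^2BR_t\cdot\frac{z_\al}{|z_\al|}\big)$ and $(3b)=-\da\partial_t(c|z_\al|)$ separately, declaring $(3b)$ NICE and proposing to repair the derivative loss in $(3a)$ by re-expressing $BR_{t\al}\cdot z_\al$ through $\Phi_{\al t}$ and the $\Phi_t$-equation. Neither step works. From the choice \eqref{defc} one has $(c|z_\al|)_\al = |z_\al|B(t)-(Q^2BR)_\al\cdot\frac{z_\al}{|z_\al|}$, so $(3b)$ itself contains $+\,(Q^2BR_t)_\al\cdot\frac{z_\al}{|z_\al|}$, which loses a derivative exactly like the term you are worried about in $(3a)$: $(3b)$ is \emph{not} NICE in isolation. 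And the $\Phi$-equation substitution is circular, because the relation $\Phi_\al=BR\cdot z_\al+\tfrac12\om$ differentiated in $t$ and matched against $\da\Phi_t$ is precisely how \eqref{eqomega} was obtained; the substitution hands you back $\da\om_t$, inside which $BR_{t\al}\cdot z_\al$ is still hiding. The correct mechanism---and this is what the $c$ in \eqref{defc} is designed for---is to add $(3a)+(3b)$ first: the two copies of $(Q^2BR_t)_\al\cdot\frac{z_\al}{|z_\al|}$ cancel identically, and only then are the remaining pieces
$-Q^2BR_t\cdot\big(\frac{z_\al}{|z_\al|}\big)_\al-\big(|z_\al|B(t)\big)_t+(Q^2BR)_\al\cdot\big(\frac{z_\al}{|z_\al|}\big)_t+2(QQ_tBR)_\al\cdot\frac{z_\al}{|z_\al|}$
within energy budget. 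Note also that this produces a term $(Q^2BR)_\al\cdot z_\al^\perp\frac{z_{\al t}\cdot z_\al^\perp}{|z_\al|^3}$ (the paper's $(13)$) which does not appear anywhere in your accounting but is essential for the second point below.

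\textbf{The $D(\al)^2$ identity.} After writing $BR_t\cdot z_\al^\perp=\sigma-\frac{\varphi}{|z_\al|}BR_\al\cdot z_\al^\perp-\frac{\om}{2|z_\al|^2}\big(z_{\al t}+\frac{\varphi}{|z_\al|}z_{\al\al}\big)\cdot z_\al^\perp-\cdots$ and multiplying by $-Q^2\frac{z_{\al\al}\cdot z_\al^\perp}{|z_\al|^3}$, you assert the four remaining products ``go into NICE.'' This is precisely the part that does not go through: each of $BR_\al\cdot z_\al^\perp$, $z_{\al t}\cdot z_\al^\perp$, $z_{\al\al}\cdot z_\al^\perp$ is only $H^{k-2}$ when $z\in H^k$, so products such as $Q^2\frac{\varphi}{|z_\al|}BR_\al\cdot z_\al^\perp\,\frac{z_{\al\al}\cdot z_\al^\perp}{|z_\al|^3}$ fail the sufficient criterion $f\in H^{k-3/2}$ for NICE-ness, and no evident commutator or integration by parts rescues them one at a time. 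The paper's resolution is to group these products together with the term $(Q^2BR)_\al\cdot z_\al^\perp\frac{z_{\al t}\cdot z_\al^\perp}{|z_\al|^3}$ into the single expression (19), recognize that (19) equals $\frac{D(\al)^2}{|z_\al|^3}$ plus lower order terms, with $D(\al)=(Q^2BR)_\al\cdot z_\al^\perp+\frac{Q^2\om}{2|z_\al|^2}z_{\al\al}\cdot z_\al^\perp$, and then prove the identity \eqref{fD}, $D=\mathrm{AN}+|z_\al|H(\da\varphi)$, which places $D$ in $H^{k-3/2}$ via $\varphi\in H^{k-1/2}$ and thus makes $D^2$ NICE. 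Without \eqref{fD} the bookkeeping does not close; the ``four remaining products'' simply are not individually NICE.

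Your observations that $z_{\al\al}$ is purely normal because $\da|z_\al|=0$, that $-\frac{Q^2}{2|z_\al|}\da(\varphi^2/Q^2)$ produces the transport term, and that the intended payoff is the $-S(t)$ cancellation with Lemma \ref{lemmaenergyS} are all correct, but the two cancellation mechanisms above are the substance of the lemma and are missing.
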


\begin{proof} We will give the proof for $k=4$. From now on, when we show that a term $f$ satisfies
\begin{equation*}
\int_{-\pi}^\pi \Lambda(\da^{3}\vp)\da^{2}f d\al\leq CE^p(t)\quad\mbox{and}\,\|f\|_{H^{2}}\leq CE^p(t)
\end{equation*}
we say that this term is ``NICE". Then, $f$ becomes part of NICE and by abuse of notation we denote $f$ by NICE. Notice that, whenever we can estimate the $L^{2}$ norm of $\Lambda^{1/2} \da^{2} f$ by $CE^{p}(t)$, then $f$ is NICE.

We use \eqref{evpt} to compute
\begin{align*}
\vp_{\al t} = & -B(t) \vp_{\al} - \da\left(\frac{Q^{2}}{2|z_\al|}\left(\frac{\vp^{2}}{Q^{2}}\right)_{\al}\right) \underbrace{-\big(Q^2\big(BR_t\cdot\frac{z_\al}{|z_\al|}}_{(3a)}
+\frac{(gP^{-1}_{2}(z))_{\al}}{|z_\al|} \big)\big)_{\al} \\
& + \left(Q Q_{t} \frac{\om}{|z_\al|}\right)_{\al} - \left(2c BR \cdot \frac{z_\al}{|z_\al|} Q Q_{\al}\right)_{\al} - \left(\frac{Q_{\al}}{Q}c^{2}|z_\al|\right)_{\al}
-\left(\frac{Q^3}{|z_\al|}|BR|^{2}Q_{\al}\right)_{\al} \\
& \underbrace{- (c|z_\al|)_{\al t}}_{(3b)}.
\end{align*}

Expanding $(3) = (3a) + (3b)$:
\begin{align*}
(3) = & (3a) + (3b) = -\left(Q^{2}BR_t\cdot\frac{z_\al}{|z_\al|}\right)_{\al} - (c|z_\al|)_{\al t} \\
= & -\left(Q^{2}BR_t\right)_{\al}\cdot\frac{z_\al}{|z_\al|}-Q^{2}BR_t\cdot\left(\frac{z_\al}{|z_\al|}\right)_{\al}
- \left(|z_\al|B(t) - (Q^{2} BR)_{\al} \cdot \frac{z_\al}{|z_\al|}\right)_{t} \\
= &- Q^{2}BR_{t} \cdot  \left(\frac{z_\al}{|z_\al|}\right)_{\al} - \left(|z_\al|B(t)\right)_{t} +   (Q^{2} BR)_{\al} \cdot \left(\frac{z_\al}{|z_\al|}\right)_{t}
+ 2(Q Q_{t} BR)_{\al} \cdot\frac{z_\al}{|z_\al|}.
\end{align*}
We use that
$$  \left(\frac{z_\al}{|z_\al|}\right)_{\al} = \frac{z_{\al \al} \cdot z_\al^{\perp}}{|z_\al|^{2}} \cdot\frac{z_\al^{\perp}}{|z_\al|};
\quad \left(\frac{z_\al}{|z_\al|}\right)_{t} = \frac{z_{\al t} \cdot z_\al^{\perp}}{|z_\al|^{2}} \cdot\frac{z_\al^{\perp}}{|z_\al|}$$
to find

\begin{align}
\vp_{\al t} = & \underbrace{-B(t) \vp_{\al}}_{(4)} - \underbrace{\frac{\da^{2}(\vp^{2})}{2|z_\al|}}_{(5)}
+ \underbrace{\da\left(\frac{Q_{\al}}{|z_\al|Q}\vp^{2}\right)}_{(6)}
- Q^{2} BR_t \cdot z_\al^{\perp} \frac{z_{\al \al} \cdot z_\al^{\perp}}{|z_\al|^{3}} -
(|z_\al|B(t))_{t} \nonumber \\
& + \underbrace{(Q^{2} BR)_{\al} \cdot z_\al^{\perp} \frac{z_{\al t} \cdot z_\al^{\perp}}{|z_\al|^{3}}}_{(13)}
+ \underbrace{2(QQ_{t} BR)_{\al}\cdot \frac{z_\al}{|z_\al|}}_{(7)}
- \underbrace{\left(Q^{2}\frac{(gP^{-1}_{2}(z))_{\al}}{|z_\al|}\right)_{\al}}_{(8)}
+ \underbrace{\left(QQ_{t} \frac{\om}{|z_\al|}\right)_{\al}}_{(9)} \nonumber \\
& - \underbrace{\left(2c BR \cdot \frac{z_\al}{|z_\al|}Q Q_{\al}\right)_{\al}}_{(10)}
- \underbrace{\left(\frac{Q_{\al}}{Q}c^{2}|z_\al|\right)_{\al}}_{(11)}
-\underbrace{\left(\frac{Q^{3}}{|z_\al|}|BR|^{2}Q_{\al}\right)_{\al}}_{(12)}.
\label{phiat412}
\end{align}

The term $(|z_\al|B(t))_{t}$ depends only on $t$ so it is going to be part of NICE.

$$ (4) = -B(t) \vp_{\al} \text{ is NICE (at the level of } \vp_{\al}).$$

$$ (5) = -\frac{\da^{2}(\vp^{2})}{2|z_\al|}=-\frac{\vp^2_\al}{|z_\al|}-\frac{\vp}{|z_\al|}\vp_{\al\al}.$$
The first term is at the level of $\vp_\al$ so it is NICE. The second one is the transport term which appears in \eqref{eec}.
$$ (6) = \da\left(\frac{Q_{\al}}{|z_\al|Q}\vp^{2}\right) = -\frac{Q^{2}_{\al}\vp^{2}}{|z_\al|Q^{2}}
+ \frac{2Q_{\al}\vp \vp_{\al}}{|z_\al|Q} + \frac{\vp^{2}}{Q}\left(\frac{Q_{\al}}{|z_\al|}\right)_{\al}.$$
Above we find the first term at the level of $z_\al$ so it is NICE. The second term is at the level of $\vp_{\al}$ so it is NICE. We write the last one as
$$\frac{\vp^{2}}{Q}\left(\frac{Q_{\al}}{|z_\al|}\right)_{\al}
= \frac{\vp^{2}}{Q}z_\al \cdot \left(\nabla^{2}Q(z)\cdot \frac{z_\al}{|z_\al|}\right)
+ \frac{\vp^{2}}{Q} \nabla Q \cdot z_\al^{\perp} \frac{z_{\al \al} \cdot z_\al^{\perp}}{|z_\al|^{3}}.$$

The first term is at the level of $z_\al$ so it is NICE. For the second term we have used that

$$  \left(\frac{z_\al}{|z_\al|}\right)_{\al} = \frac{z_{\al \al} \cdot z_\al^{\perp}}{|z_\al|^{2}} \cdot\frac{z_\al^{\perp}}{|z_\al|}.$$
Finally:

$$ (6) = \text{NICE } + \frac{\vp^{2}}{Q} \nabla Q \cdot z_\al^{\perp}
\frac{z_{\al \al} \cdot z_\al^{\perp}}{|z_\al|^{3}}.$$

$$ (7) = 2(Q Q_{t} BR)_{\al}\cdot \frac{z_\al}{|z_\al|}
= 2Q_{\al} Q_{t} BR \cdot  \frac{z_\al}{|z_\al|}
+ 2Q \left(\frac{Q_{t}}{|z_\al|}\right)_{\al} BR \cdot z_\al
+ 2QQ_{t} BR_{\al} \cdot \frac{z_\al}{|z_\al|}.$$

The first term is at the level of $z_\al, z_{t}, BR \sim z_\al$ so it is NICE. We use that

$$ \frac{Q_{t\al}}{|z_\al|} = \frac{Q_{\al t}}{|z_\al|}
= \frac{(\nabla Q(z) \cdot z_\al)_{t}}{|z_\al|}
= \left(\nabla Q(z) \cdot \frac{z_\al}{|z_\al|}\right)_{t} - \nabla Q(z) \cdot z_\al \left(\frac{1}{|z_\al|}\right)_{t}.$$

Using equation \eqref{defb}

$$ \frac{z_\al \cdot z_{\al t}}{|z_\al|^{2}} = B(t)$$
and
$$  \left(\frac{z_\al}{|z_\al|}\right)_{t} = \frac{z_{\al t} \cdot z_\al^{\perp}}{|z_\al|^{2}} \cdot\frac{z_\al^{\perp}}{|z_\al|}$$
we find that
\begin{align}
\begin{split}\label{star}
\frac{Q_{t\al}}{|z_\al|}=
& z_t \cdot \left(\nabla^2 Q(z) \cdot \frac{z_\al}{|z_\al|}\right) + \nabla Q(z) \cdot z_\al^{\perp} \frac{z_{\al t} \cdot z_\al^{\perp}}{|z_\al|^{3}}
\\
&+ \nabla Q(z) \cdot\frac{z_\al}{|z_\al|} B(t).
\end{split}
\end{align}

That yields

\begin{align*}
(7) & = 2(QQ_{t} BR)_{\al} \frac{z_\al}{|z_\al|}
= \text{NICE } + \underbrace{2Q BR \cdot z_\al\, z_t \cdot \left(\nabla^2 Q(z) \cdot \frac{z_\al}{|z_\al|}\right)}_{\text{NICE (at the level of }z_\al, z_t, BR)} \\
& + 2Q BR \cdot z_\al \nabla Q(z) \cdot z_\al^{\perp} \frac{z_{\al t} \cdot z_\al^{\perp}}{|z_\al|^{3}}
+ \underbrace{2Q BR \cdot z_\al \nabla Q(z) \cdot\frac{z_\al}{|z_\al|} B(t)}_{\text{NICE (at the level of }z_\al, z_t, BR)} \\
& + 2QQ_{t} BR_{\al} \cdot \frac{z_\al}{|z_\al|}.
\end{align*}

Finally:

\begin{align*}
(7) & = 2(QQ_{t} BR)_{\al} \frac{z_\al}{|z_\al|}
= \text{NICE } + 2Q BR \cdot z_\al \nabla Q(z) \cdot z_\al^{\perp} \frac{z_{\al t} \cdot z_\al^{\perp}}{|z_\al|^{3}}
+ 2QQ_{t} BR_{\al} \cdot \frac{z_\al}{|z_\al|}.
\end{align*}

\begin{align*}
(8) & = -\left(Q^2\frac{(gP^{-1}_{2}(z))_{\al}}{|z_\al|}\right)_{\al}
= -\left(Q^2\nabla gP^{-1}_{2}(z)\cdot \frac{z_\al}{|z_\al|}\right)_{\al}\\
&= -\underbrace{2 Q \nabla Q\cdot z_\al \nabla gP^{-1}_{2}(z) \cdot \frac{z_\al}{|z_\al|}}_{\text{NICE (at the level of }z_\al)}
-\underbrace{Q^{2} z_\al \cdot \left(\nabla^{2}gP^{-1}_{2}(z) \cdot \frac{z_\al}{|z_\al|}\right)}_{\text{NICE (at the level of }z_\al)}\\
&\quad- Q^{2} \nabla gP^{-1}_{2}(z) \cdot z_\al^{\perp} \frac{z_{\al \al} \cdot z_\al^{\perp}}{|z_\al|^{3}},
\end{align*}

which means

\begin{align*}
(8) & = -\left(Q^{2}\frac{(gP^{-1}_{2}(z))_{\al}}{|z_\al|}\right)_{\al}
= \text{NICE } - Q^{2} \nabla gP^{-1}_{2}(z) \cdot z_\al^{\perp} \frac{z_{\al \al} \cdot z_\al^{\perp}}{|z_\al|^{3}}.
\end{align*}
Next
\begin{align*}
(9) & = \left(QQ_{t} \frac{\om}{|z_\al|}\right)_{\al}
= \underbrace{Q_{\al} Q_{t}\frac{\om}{|z_\al|}}_{\text{NICE (at the level of }z_\al, z_t)} + Q \frac{Q_{\al t}}{|z_\al|}\om
+ QQ_{t} \left(\frac{\om}{|z_\al|}\right)_{\al}.
\end{align*}

We use \eqref{star} to deal with $\frac{Q_{\al t}}{|z_\al|}$. We find that
\begin{align*}
(9) & = \left(QQ_{t} \frac{\om}{|z_\al|}\right)_{\al}
= \text{NICE } + Q\om \nabla Q(z) \cdot z_\al^{\perp} \frac{z_{\al t} \cdot z_\al^{\perp}}{|z_\al|^{3}}
+ QQ_{t} \left(\frac{\om}{|z_\al|}\right)_{\al}.
\end{align*}
For the next term
\begin{align*}
(10) & = - \left(2c BR \cdot \frac{z_\al}{|z_\al|}Q Q_{\al}\right)_{\al}
= \underbrace{- 2c BR \cdot \frac{z_\al}{|z_\al|}Q^{2}_{\al}}_{\text{NICE as before}}
- \left(2c BR \cdot \frac{z_\al}{|z_\al|}\right)_{\al}QQ_{\al} \\
&\quad - 2c BR \cdot z_\al Q \nabla Q(z) \cdot z_\al^{\perp} \frac{z_{\al \al} \cdot z_\al^{\perp}}{|z_\al|^{3}}
 \underbrace{- 2c BR \cdot \frac{z_\al}{|z_\al|}Q z_\al \cdot (\nabla^{2} Q(z)) \cdot z_\al}_{\text{NICE as before}}.
\end{align*}
Therefore

\begin{align*}
(10)  = - \left(2c BR \cdot \frac{z_\al}{|z_\al|}Q Q_{\al}\right)_{\al}
& = \text{NICE }
- \left(2c BR \cdot \frac{z_\al}{|z_\al|}\right)_{\al}QQ_{\al} \\
&\quad -2c BR \cdot z_\al Q \nabla Q \cdot z_\al^{\perp} \frac{z_{\al \al} \cdot z_\al^{\perp}}{|z_\al|^{3}}.
\end{align*}
Next
\begin{align*}
(11)  = - \left(\frac{Q_{\al}}{Q}c^{2}|z_\al|\right)_{\al}
& = - \left(c^{2}|z_\al|\right)_{\al}\frac{Q_{\al}}{Q}
- \frac{c^{2}|z_\al|^2}{Q}\nabla Q(z) \cdot z_\al^{\perp} \frac{z_{\al \al} \cdot z_\al^{\perp}}{|z_\al|^{3}} \\
&\quad - \frac{z_\al \cdot (\nabla^{2} Q(z) \cdot z_\al)}{Q}c^2|z_\al| + \frac{Q_{\al}^2}{Q^2}c^2|z_\al|.
\end{align*}

The fact that the last two terms are NICE, allows us to find that

\begin{align*}
(11)  = - \left(\frac{Q_{\al}}{Q}c^{2}|z_\al|\right)_{\al}
& = \text{NICE } - \left(c^{2}|z_\al|\right)_{\al}\frac{Q_{\al}}{Q}
- \frac{c^{2}|z_\al|^2}{Q}\nabla Q(z) \cdot z_\al^{\perp} \frac{z_{\al \al} \cdot z_\al^{\perp}}{|z_\al|^{3}}.
\end{align*}

Finally:
\begin{align*}
(12) = - \left(\frac{Q^{3}}{|z_\al|}|BR|^{2}Q_{\al}\right)_{\al}
& = \underbrace{- \frac{3Q^2Q_{\al}^{2}|BR|^{2}}{|z_{\al}|}}_{\text{NICE}} - \frac{Q^3}{|z_\al|}(|BR|^2)_{\al}Q_{\al} \\
& \quad\underbrace{- \frac{Q^3}{|z_\al|}|BR|^2z_\al \cdot(\nabla^{2}Q(z) \cdot z_\al)}_{\text{NICE}} - Q^{3}|BR|^{2}\nabla Q(z) \cdot z_\al^{\perp} \frac{z_{\al \al} \cdot z_\al^{\perp}}{|z_\al|^{3}}
\end{align*}
which implies that

\begin{align*}
(12) = - \left(\frac{Q^{3}}{|z_\al|}|BR|^{2}Q_{\al}\right)_{\al}
& = \text{NICE } - \frac{Q^3}{|z_\al|}(|BR|^2)_{\al}Q_{\al} - Q^{3}|BR|^{2}\nabla Q(z) \cdot z_\al^{\perp} \frac{z_{\al \al} \cdot z_\al^{\perp}}{|z_\al|^{3}}.
\end{align*}
We gather all the formulas from (4) to (12), keeping term (13) unchanged. They yield:
\begin{align*}
\vp_{\al t}= & \text{NICE}-\frac{\vp}{|z_\al|}\vp_{\al\al}\\
& + \underbrace{\frac{\vp^{2}}{Q} \nabla Q(z) \cdot z_\al^{\perp} \frac{z_{\al \al} \cdot z_\al^{\perp}}{|z_\al|^{3}}}_{(16a)} \underbrace{- Q^{2} BR_t \cdot z_\al^{\perp} \frac{z_{\al \al} \cdot z_\al^{\perp}}{|z_\al|^{3}}}_{(15a)}
\underbrace{- Q^{2} \nabla gP_{2}^{-1}(z) \cdot z_\al^{\perp} \frac{z_{\al \al} \cdot z_\al^{\perp}}{|z_\al|^{3}}}_{(15b)}
\end{align*}
\begin{align*}
\qquad & + \underbrace{Q \om \nabla Q(z) \cdot z_\al^{\perp} \frac{z_{\al t} \cdot z_\al^{\perp}}{|z_\al|^{3}}}_{(18a)}
+ \underbrace{QQ_{t} \left(\frac{\om}{|z_\al|}\right)_{\al}}_{(14a)}
+ \underbrace{2Q BR \cdot z_\al \nabla Q(z) \cdot z_\al^{\perp} \frac{z_{\al t} \cdot z_\al^{\perp}}{|z_\al|^{3}}}_{(18b)} \\
& + \underbrace{QQ_{t} 2 BR_{\al} \cdot \frac{z_\al}{|z_\al|}}_{(14b)}
\underbrace{- \left(2 c BR \cdot \frac{z_\al}{|z_\al|}\right)_{\al}QQ_{\al}}_{(17a)}
\underbrace{-2c BR \cdot z_\al Q \nabla Q(z) \cdot z_\al^{\perp} \frac{z_{\al \al} \cdot z_\al^{\perp}}{|z_\al|^{3}}}_{(16b)}
\end{align*}
\begin{align*}
\qquad& \underbrace{-(c^{2}|z_\al|)_{\al}\frac{Q_{\al}}{Q}}_{(17b)}
 \underbrace{-\frac{c^2|z_\al|^{2}}{Q}\nabla Q(z) \cdot z_\al^{\perp} \frac{z_{\al \al} \cdot z_\al^{\perp}}{|z_\al|^{3}}}_{(16c)}
 \underbrace{-\frac{Q^3}{|z_\al|}(|BR|^{2})_{\al}Q_{\al}}_{(17c)}\\
& \underbrace{-Q^3|BR|^{2}\nabla Q(z) \cdot z_\al^{\perp} \frac{z_{\al \al} \cdot z_\al^{\perp}}{|z_\al|^{3}}}_{(16d)}
 + (Q^2 BR)_{\al} \cdot z_\al^{\perp} \frac{z_{\al t} \cdot z_\al^{\perp}}{|z_\al|^{3}}.
\end{align*}

We compute
\begin{align*}
(14) & =(14a)+(14b)= QQ_{t} \left(\frac{\om}{|z_\al|}\right)_{\al}
+ QQ_{t} 2 BR_{\al} \cdot \frac{z_\al}{|z_\al|}\\
&= 2 \frac{Q_{t}}{Q}Q^2\left(\frac{\om}{2|z_\al|}\right)_{\al}
+ 2\frac{Q_{t}}{Q}Q^2 BR_{\al} \cdot \frac{z_\al}{|z_\al|} \\
& = 2\frac{Q_{t}}{Q}\vp_{\al} - 2\frac{Q_{t}}{Q}(Q^{2})_{\al}\frac{\om}{2|z_\al|}
- 2\frac{Q_{t}}{Q}(Q^{2})_{\al}BR\cdot \frac{z_\al}{|z_\al|}
+ 2\frac{Q_{t}}{Q}(|z_\al|B(t)).
\end{align*}
The last formula allows us to conclude that (14)=NICE.

We reorganize gathering $$(15)=(15a)+(15b),$$ $$(16)=(16a)+(16b)+(16c)+(16d),$$ $$(17)=(17a)+(17b)+(17c)$$ and $$(18)=(18a)+(18b)$$
as follows:
\begin{align*}
\vp_{\al t}  = &\text{NICE }-\frac{\vp}{|z_\al|}\vp_{\al\al}\underbrace{- Q^2(BR_t \cdot z_\al^{\perp} + \nabla gP_{2}^{-1}(z) \cdot z_\al^{\perp})\frac{z_{\al \al} \cdot z_\al^{\perp}}{|z_\al|^{3}}}_{(15)} \\
& \underbrace{- Q^3\left(|BR|^{2} + \frac{c^2|z_\al|^{2}}{Q^{4}}+2c \frac{BR \cdot z_\al}{Q^{2}} - \frac{\vp^{2}}{Q^4}\right)\nabla Q(z) \cdot z_\al^{\perp} \frac{z_{\al \al} \cdot z_\al^{\perp}}{|z_\al|^{3}}}_{(16)} \\
& + (Q^{2} BR)_{\al} \cdot z_\al^{\perp} \frac{z_{\al t} \cdot z_\al^{\perp}}{|z_\al|^{3}}
+\underbrace{(Q \om + 2Q BR \cdot z_\al)\nabla Q(z) \cdot z_\al^{\perp}
\frac{z_{\al t} \cdot z_\al^{\perp}}{|z_\al|^{3}}}_{(18)} \\
& \underbrace{- \left(\frac{Q^{3}(|BR|^{2})_{\al}}{|z_\al|} + \frac{(c^{2}|z_\al|)_{\al}}{Q}
+ \left(2c BR \cdot \frac{z_\al}{|z_\al|}\right)_{\al}Q\right)Q_{\al}}_{(17)}.
\end{align*}

We add and subtract terms in order to find the R-T condition. We recall here that
\begin{align*}
\si & \equiv \left(BR_t + \frac{\vp}{|z_{\al}|}BR_{\al}\right) \cdot z_{\al}^{\perp} + \frac{\om}{2|z_{\al}|^{2}}\left(z_{\al t} + \frac{\vp}{|z_{\al}|}z_{\al \al}\right)\cdot z_{\al}^{\perp}\\
&\quad + Q\left|BR + \frac{\om}{2|z_{\al}|^{2}}z_{\al}\right|^{2}\nabla Q(z) \cdot z_{\al}^{\perp} + \nabla gP_{2}^{-1}(z) \cdot z_{\al}^{\perp}.
\end{align*}

Then, we find

\begin{align*}
\vp_{\al t}  = & \text{NICE }-\frac{\vp}{|z_\al|}\vp_{\al\al}\\
& - Q^{2}\left(\left(BR_t + \frac{\vp}{|z_\al|}BR_{\al}\right) \cdot z_\al^{\perp}
+ \frac{\om}{2|z_\al|^{2}}\left(z_{\al t} + \frac{\vp}{|z_\al|}z_{\al \al}\right)\cdot z_\al^{\perp}
+ \nabla gP_{2}^{-1}(z) \cdot z_\al^{\perp}\right)\frac{z_{\al \al} \cdot z_\al^{\perp}}{|z_\al|^{3}} \\
& \underbrace{+ (Q^{2} BR)_{\al} \cdot z_\al^{\perp} \frac{z_{\al t} \cdot z_\al^{\perp}}{|z_\al|^{3}}
+ Q^{2}\left(\frac{\vp}{|z_\al|}BR_{\al} \cdot z_\al^{\perp} + \frac{\om}{2|z_\al|^{2}}\left(z_{\al t} + \frac{\vp}{|z_\al|}z_{\al \al}\right) \cdot z_\al^{\perp}\right)\frac{z_{\al \al} \cdot z_\al^{\perp}}{|z_\al|^{3}}}_{(19)}\\
& - Q^3\left(|BR|^{2} + \frac{c^2|z_\al|^{2}}{Q^{4}}+2c \frac{BR \cdot z_\al}{Q^{2}} - \frac{\vp^{2}}{Q^4}\right)\nabla Q(z) \cdot z_\al^{\perp} \frac{z_{\al \al} \cdot z_\al^{\perp}}{|z_\al|^{3}} \\
& + (Q \om + 2Q BR \cdot z_\al)\nabla Q(z) \cdot z_\al^{\perp} \frac{z_{\al t} \cdot z_\al^{\perp}}{|z_\al|^{3}} \\
& - \left(\frac{Q^{3}(|BR|^{2})_{\al}}{|z_\al|} + \frac{(c^{2}|z_\al|)_{\al}}{Q}
+ \left(2c BR \cdot \frac{z_\al}{|z_\al|}\right)_{\al}Q\right)Q_{\al}.
\end{align*}

Line (19) can be written as

\begin{align*}
(19)= &  (Q^{2} BR)_{\al} \cdot z_\al^{\perp} \frac{z_{\al t} \cdot z_\al^{\perp}}{|z_\al|^{3}}
+ Q^{2}BR_{\al} \cdot z_\al^{\perp}\frac{\vp}{|z_\al|}\frac{z_{\al \al} \cdot z_\al^{\perp}}{|z_\al|^{3}}
+ \frac{Q^{2}\om}{2|z_\al|^{2}}\left(z_{\al t} + \frac{\vp}{|z_\al|}z_{\al \al}\right)
 \cdot z_\al^{\perp}\frac{z_{\al \al} \cdot z_\al^{\perp}}{|z_\al|^{3}} \\
 =& (Q^{2} BR)_{\al} \cdot z_\al^{\perp} \frac{z_{\al t} \cdot z_\al^{\perp}}{|z_\al|^{3}}
+ (Q^{2}BR)_{\al} \cdot z_\al^{\perp}\frac{\vp}{|z_\al|}\frac{z_{\al \al} \cdot z_\al^{\perp}}{|z_\al|^{3}}
+ \frac{Q^{2}\om}{2|z_\al|^{2}}\left(z_{\al t}\cdot z_\al^{\perp} + \frac{\vp}{|z_\al|}z_{\al \al}\cdot z_\al^{\perp}\right)
\frac{z_{\al \al} \cdot z_\al^{\perp}}{|z_\al|^{3}} \\
& - 2QQ_{\al} BR \cdot z_\al^{\perp} \frac{\vp}{|z_\al|}\frac{z_{\al \al} \cdot z_\al^{\perp}}{|z_\al|^{3}} \\
=&(Q^{2} BR)_{\al} \cdot z_\al^{\perp}\frac{1}{|z_\al|^{3}}
\left( z_{\al t} \cdot z_\al^{\perp}
+ \frac{\vp}{|z_\al|}z_{\al \al} \cdot z_\al^{\perp}\right)
+ \frac{Q^{2}\om}{2|z_\al|^{2}}\frac{1}{|z_\al|^{3}}\left(z_{\al t}\cdot z_\al^{\perp} + \frac{\vp}{|z_\al|}z_{\al \al}\cdot z_\al^{\perp}\right)
z_{\al \al} \cdot z_\al^{\perp} \\
& - 2QQ_{\al} BR \cdot z_\al^{\perp} \frac{\vp}{|z_\al|}\frac{z_{\al \al} \cdot z_\al^{\perp}}{|z_\al|^{3}} \\
=& \frac{1}{|z_\al|^{3}}\left( z_{\al t} \cdot z_\al^{\perp}
+ \frac{\vp}{|z_\al|}z_{\al \al} \cdot z_\al^{\perp}\right)\left((Q^{2} BR)_{\al} \cdot z_\al^{\perp}
+ \frac{Q^{2}\om}{2|z_\al|^{2}}z_{\al \al} \cdot z_\al^{\perp}\right)\\
&- 2QQ_{\al} BR \cdot z_\al^{\perp} \frac{\vp}{|z_\al|}\frac{z_{\al \al} \cdot z_\al^{\perp}}{|z_\al|^{3}}.
\end{align*}

We expand $z_{\al t}$ to find
\begin{align*}
(19) =& \frac{1}{|z_\al|^{3}}\left((Q^{2} BR)_{\al} \cdot z_\al^{\perp}
+ \frac{Q^{2}\om}{2|z_\al|^{2}}z_{\al \al} \cdot z_\al^{\perp}\right)^2\\
& - 2QQ_{\al} BR \cdot z_\al^{\perp} \frac{\vp}{|z_\al|}\frac{z_{\al \al} \cdot z_\al^{\perp}}{|z_\al|^{3}}.
\end{align*}

We denote
\begin{equation}
\label{spiral}
D(\al) = (Q^{2} BR)_{\al} \cdot z_\al^{\perp}
+ \frac{Q^{2}\om}{2|z_\al|^{2}}z_{\al \al} \cdot z_\al^{\perp}.
\end{equation}

We claim that
\begin{equation}\label{fD}
D(\al) = \text{AN}(\al) + |z_\al|H(\da \vp)
\end{equation}

where
\begin{equation}
\label{defAN}
\|\text{AN}\|_{H^3}\leq C E^p(t).
\end{equation}

That means
$$ (D(\al))^2 = \text{NICE}.$$

Thus
\begin{align*}
(19) = \text{NICE} - 2QQ_{\al} BR \cdot z_\al^{\perp} \frac{\vp}{|z_\al|}\frac{z_{\al \al} \cdot z_\al^{\perp}}{|z_\al|^{3}}.
\end{align*}

We write
\begin{align*}
D(\al)= &  \underbrace{2QQ_{\al} BR \cdot z_\al^{\perp}}_{\text{part of AN, at the level of }z_\al}
+ \underbrace{Q^{2}\frac{1}{2\pi}PV\int \frac{(z_\al - z'_\al) \cdot z_\al}{|z-z'|^{2}}\om'd\beta}_{\text{part of AN, we use \eqref{zdz4trick2} } } \\
& \underbrace{- Q^{2}\frac{1}{\pi}PV\int \frac{(z- z') \cdot z_\al}{|z-z'|^{4}}(z- z')\cdot(z_\al - z_\al')\om'd\beta}_{\text{part of AN, we use \eqref{decomextra} and \eqref{zdz4trick}}} \\
& + \underbrace{Q^2 BR(z,\om_{\al}) \cdot z_\al^{\perp}}_{\text{AN + $\frac{Q^2}{2} H(\omega_\al) $}} + \frac{Q^{2}\om}{2|z_\al|^{2}}z_{\al \al} \cdot z_\al^{\perp}.
\end{align*}

Therefore
\begin{align*}
D(\al) & = \text{AN} + |z_\al|Q^{2}H\left(\left(\frac{\om}{2|z_\al|}\right)_{\al}\right) + \frac{Q^{2}\om}{2|z_\al|^{2}}z_{\al \al} \cdot z_\al^{\perp} \\
& = \text{AN} + |z_\al|H\left(\left(\frac{Q^{2}\om}{2|z_\al|}\right)_{\al}\right) + \frac{Q^{2}\om}{2|z_\al|^{2}}z_{\al \al} \cdot z_\al^{\perp} \\
& = \text{AN} + |z_\al|H(\da \vp) + H\left((c |z_\al|^{2})_{\al}\right)+ \frac{Q^{2}\om}{2|z_\al|^{2}}z_{\al \al} \cdot z_\al^{\perp} \\
& = \text{AN} + |z_\al|H(\vp_{\al}) - H\left((Q^{2}BR)_{\al} \cdot z_\al\right)+ \frac{Q^{2}\om}{2|z_\al|^{2}}z_{\al \al} \cdot z_\al^{\perp}.
\end{align*}
We have
\begin{align*}
(Q^{2}BR)_{\al} \cdot z_\al  = &\underbrace{2QQ_{\al} BR \cdot z_\al}_{\text{AN}}
+ Q^{2}\frac{1}{2\pi}PV\int \frac{(z_\al - z'_\al)^{\perp} \cdot z_\al}{|z-z'|^{2}}\om'd\beta \\
& \underbrace{- Q^{2}\frac{1}{\pi}PV\int \frac{(z- z')^{\perp} \cdot z_\al}{|z-z'|^{4}}(z-z')\cdot(z_\al - z'_\al)\om'd\beta}_{\text{AN, we use that }(z- z')^{\perp} \cdot z_\al = (z - z' - \beta z_{\al})^{\perp} \cdot z_{\al}} \\
& + \underbrace{Q^{2}\frac{1}{2\pi}PV\int \frac{(z - z')^{\perp} \cdot z_\al}{|z-z'|^{2}}\om'_\al d\beta}_{\text{AN, we use that }(z- z')^{\perp} \cdot z_\al = (z - z' - \beta z_{\al})^{\perp} \cdot z_{\al}}.
\end{align*}
For the second term on the right one finds
\begin{align*}
\da^3\Big(\frac{Q^{2}}{2\pi}PV\int &\frac{(z_\al - z'_\al)^{\perp} \cdot z_\al}{|z-z'|^{2}}\om'd\beta\Big)=
\frac{Q^{2}}{2\pi}PV\int \frac{(\da^4z_\al - \da^4z')^{\perp} \cdot z_\al}{|z-z'|^{2}}\om'd\beta\\
&+\frac{Q^{2}}{2\pi}PV\int \frac{(z_\al - z'_\al)^{\perp}}{|z-z'|^{2}}\om'd\beta\cdot \da^4z+\frac{Q^{2}}{2\pi}PV\int \frac{(z_\al - z'_\al)^{\perp} \cdot z_\al}{|z-z'|^{2}}\da^3\om'd\beta\\
&-\frac{Q^{2}}{\pi}PV\int \frac{(z_\al - z'_\al)^{\perp} \cdot z_\al}{|z-z'|^{4}}(z-z')\cdot(\da^3z-\da^3z')\om'd\beta+\mbox{l.o.t.},
\end{align*}
where in l.o.t. we gather the terms of lower order. Then, all the terms above can be estimated in $L^2$ but the first one on the right. That is equal to
$$
\frac{1}{2}H\left(Q^2\frac{\da^5z^{\perp}\cdot z_\al}{|z_\al|^{2}}\om\right)
$$
plus a commutator which can be estimated in $L^2$. This means that
\begin{align*}
(Q^{2}BR)_{\al} \cdot z_\al & = \text{AN} + \frac{1}{2}H\left(Q^2\frac{z^{\perp}_{\al\al} \cdot z_\al}{|z_\al|^{2}}\om\right).
\end{align*}
Taking Hilbert transforms:
\begin{align*}
-H\left((Q^{2}BR)_{\al} \cdot z_\al\right) & = \text{AN} - \frac{1}{2}H^2\left(Q^2\frac{z^{\perp}_{\al\al} \cdot z_\al}{|z_\al|^{2}}\om\right)
= \text{AN} + \frac{1}{2}Q^2\frac{z^{\perp}_{\al\al} \cdot z_\al}{|z_\al|^{2}}\om.
\end{align*}
Using that $z^{\perp}_{\al\al} \cdot z_\al = -z_{\al\al} \cdot z_\al^{\perp}$ we complete the proof of \eqref{fD}. Thus (19) yields

\begin{align*}
\vp_{\al t}  =& \text{NICE}-\frac{\vp}{|z_\al|}\vp_{\al\al}\\
& - Q^{2}\left(\left(BR_t + \frac{\vp}{|z_\al|}BR_{\al}\right) \cdot z_\al^{\perp}
+ \frac{\om}{2|z_\al|^{2}}\left(z_{\al t} + \frac{\vp}{|z_\al|}z_{\al \al}\right)\cdot z_\al^{\perp}
+ \nabla gP_{2}^{-1}(z) \cdot z_\al^{\perp}\right)\frac{z_{\al \al} \cdot z_\al^{\perp}}{|z_\al|^{3}} \\
& - Q^3\left(|BR|^{2} + \frac{c^2|z_\al|^{2}}{Q^{4}}+2c \frac{BR \cdot z_\al}{Q^{2}} - \frac{\vp^{2}}{Q^4}\right)\nabla Q(z) \cdot z_\al^{\perp} \frac{z_{\al \al} \cdot z_\al^{\perp}}{|z_\al|^{3}} \\
& + (Q \om + 2Q BR \cdot z_\al)\nabla Q(z) \cdot z_\al^{\perp} \frac{z_{\al t} \cdot z_\al^{\perp}}{|z_\al|^{3}} \\
& \underbrace{- \left(\frac{Q^{3}(|BR|^{2})_{\al}}{|z_\al|} + \frac{(c^{2}|z_\al|)_{\al}}{Q}
+ \left(2c BR \cdot \frac{z_\al}{|z_\al|}\right)_{\al}Q\right)Q_{\al}}_{(20)} \\
& \underbrace{- 2QQ_{\al} BR \cdot z_\al^{\perp} \frac{\vp}{|z_\al|} \frac{z_{\al \al} \cdot z_\al^{\perp}}{|z_\al|^{3}}}_{(21)}.
\end{align*}

For (20) we write
\begin{align*}
|z_t|^{2} & = Q^{4}|BR|^{2} + c^{2}|z_\al|^{2} + 2 Q^{2} c BR \cdot z_\al \\
\Rightarrow \frac{|z_t|^{2}}{Q|z_\al|} & =  \frac{Q^{3}|BR|^{2}}{|z_\al|} + \frac{c^{2}|z_\al|}{Q} + 2 Q c BR \cdot \frac{z_\al}{|z_\al|}.\\
\end{align*}

Now
\begin{align*}
(20) = \text{NICE } - \frac{(|z_t|^{2})_{\al}}{Q|z_\al|}Q_{\al},
\end{align*}
which means
\begin{align*}
(20)+(21) & = \text{NICE} - \frac{(|z_t|^{2})_{\al}}{Q|z_\al|}Q_{\al}
- 2QQ_{\al} BR \cdot z_\al^{\perp} \frac{\vp}{|z_\al|} \frac{z_{\al \al} \cdot z_\al^{\perp}}{|z_\al|^{3}}.
\end{align*}

We write
\begin{align*}
z_{\al t} & = \underbrace{(z_{\al t} \cdot z_\al)}_{\text{only depends on }t} \frac{z_\al}{|z_\al|^{2}}
+ (z_{\al t} \cdot z_\al^{\perp}) \frac{z_\al^{\perp}}{|z_\al|^{2}} \\
& = \underbrace{B(t)}_{\text{See } \eqref{defb}}z_\al + ((Q^{2} BR)_{\al} \cdot z_\al^{\perp} + c z_{\al \al} \cdot z_\al^{\perp})\frac{z_\al^{\perp}}{|z_\al|^{2}} \\
& = B(t)z_\al +\underbrace{D(\al)}_{\text{ as in }\eqref{spiral}}\frac{z_\al^{\perp}}{|z_\al|^{2}}- \frac{\vp}{|z_\al|}z_{\al \al} \cdot z_\al^{\perp}
\frac{z_\al^{\perp}}{|z_\al|^{2}}.
\end{align*}

Writing $z_t = Q^{2}BR + c z_\al$ we compute
\begin{align*}
z_{\al t} \cdot z_{t} & = \underbrace{Q^{2} BR \cdot z_\al B(t)}_{\text{NICE}} + \underbrace{D Q^{2} BR \cdot \frac{z_\al^{\perp}}{|z_\al|^{2}}}_{\text{NICE because }D \text{ is nice}} \\
& - \frac{\vp}{|z_\al|}z_{\al \al} \cdot z_\al^{\perp} Q^{2} BR \cdot \frac{z_\al^{\perp}}{|z_\al|^{2}}
+\underbrace{cB(t)|z_\al|^{2}}_{\text{NICE}}.
\end{align*}
To simplify we write
$$ z_{\al t} \cdot z_{t} = \text{NICE}- \frac{\vp}{|z_\al|}z_{\al \al} \cdot z_\al^{\perp} Q^{2} BR \cdot \frac{z_\al^{\perp}}{|z_\al|^{2}}.$$

Setting the above formula in the expression of (20)+(21) allows us to find
$$ (20) + (21) = \text{NICE }.$$

This yields
\begin{align*}
\vp_{\al t}  =& \text{NICE }-\frac{\vp}{|z_\al|}\vp_{\al\al}\\
&- Q^{2}\left(\left(BR_t + \frac{\vp}{|z_\al|}BR_{\al}\right)
+ \frac{\om}{2|z_\al|^{2}}\left(z_{\al t} + \frac{\vp}{|z_\al|}z_{\al \al}\right)
+ \nabla gP_{2}^{-1}(z)\right)\cdot z_\al^{\perp}\frac{z_{\al \al} \cdot z_\al^{\perp}}{|z_\al|^{3}} \\
&\quad - Q^3\left(|BR|^{2} + \frac{c^2|z_\al|^{2}}{Q^{4}}+2c \frac{BR \cdot z_\al}{Q^{2}} - \frac{\vp^{2}}{Q^4}\right)\nabla Q(z) \cdot z_\al^{\perp} \frac{z_{\al \al} \cdot z_\al^{\perp}}{|z_\al|^{3}} \\
&\quad + (Q \om + 2Q BR \cdot z_\al)\nabla Q(z) \cdot z_\al^{\perp} \frac{z_{\al t} \cdot z_\al^{\perp}}{|z_\al|^{3}}.
\end{align*}

We now complete the formula for $\si$ in \eqref{R-T} to find
\begin{align*}
\vp_{\al t} = & \text{NICE}-\frac{\vp}{|z_\al|}\vp_{\al\al} - Q^{2}\si\frac{z_{\al \al} \cdot z_\al^{\perp}}{|z_\al|^{3}} \\
& + \underbrace{Q^3 \left|BR + \frac{\om}{2|z_\al|^{2}}z_\al\right|^{2}
\nabla Q \cdot z_\al^{\perp} \frac{z_{\al \al} \cdot z_\al^{\perp}}{|z_\al|^{3}}}_{(22)} \\
& + \underbrace{Q^3\left(-|BR|^{2} - \frac{c^2|z_\al|^{2}}{Q^{4}}-2c \frac{BR \cdot z_\al}{Q^{2}} + \frac{\vp^{2}}{Q^4}\right)\nabla Q(z) \cdot z_\al^{\perp} \frac{z_{\al \al} \cdot z_\al^{\perp}}{|z_\al|^{3}}}_{(23)} \\
& + \underbrace{(Q\om + 2Q BR \cdot z_\al)\nabla Q(z) \cdot z_\al^{\perp} \frac{z_{\al t} \cdot z_\al^{\perp}}{|z_\al|^{3}}}_{(24)}.
\end{align*}

Expanding
$$ \frac{\vp^{2}}{Q^{4}} = \frac{\om^{2}}{4|z_\al|^{2}} + \frac{c^{2}|z_\al|^{2}}{Q^{4}} - \frac{\om c}{Q^{2}}$$

we find

\begin{align*}
(22) + (23) = Q^3\left(\frac{\om^{2}}{2|z_\al|^{2}} + BR \cdot z_\al \frac{\om}{|z_\al|^{2}}-2c \frac{BR \cdot z_\al}{Q^{2}}
- \frac{\om c}{Q^{2}}\right)\nabla Q(z) \cdot z_\al^{\perp} \frac{z_{\al \al} \cdot z_\al^{\perp}}{|z_\al|^{3}}.
\end{align*}

Writing
\begin{align*}
z_{\al t} \cdot z_\al^{\perp} = (Q^{2} BR)_{\al}\cdot z_\al^{\perp} + c z_{\al \al} \cdot z_\al^{\perp}
\end{align*}
we obtain that
\begin{align*}
(24)=&   \left(Q \om + 2Q BR \cdot z_\al\right)\nabla Q(z) \cdot z_\al^{\perp} \frac{(Q^{2} BR)_{\al} \cdot z_\al^{\perp}}{|z_\al|^{3}} \\
& + \left(Q \om + 2Q BR \cdot z_\al\right)\nabla Q(z) \cdot z_\al^{\perp} c\frac{z_{\al \al} \cdot z_\al^{\perp}}{|z_\al|^{3}}.
\end{align*}

Thus
\begin{align*}
(22)+(23)+(24) & = Q^3\left(\frac{\om^{2}}{2|z_\al|^{2}} + BR \cdot z_\al \frac{\om}{|z_\al|^{2}}
\right)\nabla Q(z) \cdot z_\al^{\perp} \frac{z_{\al \al} \cdot z_\al^{\perp}}{|z_\al|^{3}} \\
&\quad + \left(Q \om + 2Q BR \cdot z_\al\right)\nabla Q(z) \cdot z_\al^{\perp} \frac{(Q^{2} BR)_{\al} \cdot z_\al^{\perp}}{|z_\al|^{3}}\\
& = Q\nabla Q(z) \cdot z_\al^{\perp} \left(\om + 2BR \cdot z_\al\right)\left(\frac{Q^{2}\om}{2|z_\al|^{2}}\frac{z_{\al \al} \cdot z_\al^{\perp}}{|z_\al|^{3}} + \frac{(Q^{2}BR)_{\al} \cdot z_\al^{\perp}}{|z_\al|^{3}}\right)\\
& = Q \nabla Q(z) \cdot z_\al^{\perp} \left(\om + 2BR \cdot z_\al\right)\frac{1}{|z_\al|^{3}}D(\al)\\
& = \text{NICE}.
\end{align*}

Finally, we obtain
$$ \vp_{\al t} = \text{NICE}-\frac{\vp}{|z_\al|}\vp_{\al\al}-Q^{2}\si\frac{z_{\al \al} \cdot z_\al^{\perp}}{|z_\al|^{3}}.$$

\end{proof}

\begin{corollary}
If we disregard the condition on the $H^{k-2}$ norm for the definition of the NICE terms, imposing only the first condition, then
$$ \vp_{\al t} = \text{NICE}-Q^{2}\si\frac{z_{\al \al} \cdot z_\al^{\perp}}{|z_\al|^{3}}.$$
\end{corollary}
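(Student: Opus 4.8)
The plan is to obtain the corollary directly from Lemma \ref{lemmaphit}, which already gives
$$\vp_{\al t}=\text{NICE}-\frac{\vp}{|z_\al|}\vp_{\al\al}-Q^{2}\si\frac{z_{\al \al} \cdot z_\al^{\perp}}{|z_\al|^{3}},$$
where the NICE part satisfies both \eqref{nicee} and \eqref{nicen}. Under the weakened convention, in which ``NICE'' only requires the integral estimate \eqref{nicee}, it thus suffices to show that the transport term $T:=-\frac{\vp}{|z_\al|}\vp_{\al\al}$ itself satisfies \eqref{nicee}, i.e.
$$\int_{-\pi}^{\pi}\la(\da^{k-1}\vp)\,\da^{k-2}(T)\,d\al\leq CE^p(t).$$
Once this is established, $\text{NICE}+T$ is again NICE in the weak sense (the defining inequality is additive), and the stated identity follows at once.

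First I would expand $\da^{k-2}(T)$ by the Leibniz rule. Every term in which at least one derivative lands on $\vp/|z_\al|$ is a product of $\da^{j}\vp$ with $j\le k-1$ and a factor of the form $\da^{j'}(\vp/|z_\al|)$ with $j'\ge1$, which sits in $H^{k-3/2}(\T)$ and, for $k\geq4$, is a multiplier on $H^{1/2}(\T)$; standard product estimates then place each such term in $H^{1/2}(\T)$ with norm $\le CE^p(t)$, so its pairing against $\la(\da^{k-1}\vp)\in H^{-1/2}(\T)$ is controlled by $\|\vp\|_{H^{k-1/2}}^{2}$ times energy-controlled factors. The only delicate contribution is the top-order piece $-\tfrac{\vp}{|z_\al|}\da^{k}\vp$, which cannot be estimated termwise since $\da^{k}\vp$ fails to be in $L^{2}$ when $\vp\in H^{k-1/2}$.

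For this piece I would exploit the antisymmetry of the Hilbert transform together with a Calder\'on commutator bound. Writing $g=\vp/|z_\al|$ and using $\la(\da^{k-1}\vp)=H(\da^{k}\vp)$ and $H^{*}=-H$, one gets
$$\int_{-\pi}^{\pi}H(\da^{k}\vp)\,g\,\da^{k}\vp\,d\al=-\tfrac12\int_{-\pi}^{\pi}\da^{k}\vp\,[H,g](\da^{k}\vp)\,d\al,$$
interpreted as an $H^{-1/2}$--$H^{1/2}$ duality pairing. Since $[H,g]$ gains one derivative with operator norm $\le C\|g\|_{C^{1,\delta}}$, the right-hand side is bounded by $C\|g\|_{C^{1,\delta}}\|\da^{k}\vp\|_{H^{-1/2}}^{2}\le C\|g\|_{C^{1,\delta}}\|\vp\|_{H^{k-1/2}}^{2}$, and $\|g\|_{C^{1,\delta}}\lesssim\|\vp\|_{H^{k-1/2}}\|\F(z)\|_{L^\infty}$ because $|z_\al|$ is constant in $\al$ (by the choice of $c$) and $1/|z_\al|$ is controlled by $\|\F(z)\|_{L^\infty}$. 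This yields the required $CE^p(t)$ bound. The main obstacle is precisely this last step: one must resist estimating $\da^{k}\vp$ on its own and instead notice that it enters the integral only through the commutator $[H,g]$, which lives at the right regularity; all remaining parts are the routine Leibniz and commutator bookkeeping already developed in the proof of Lemma \ref{lemmaphit}.
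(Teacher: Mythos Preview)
Your proposal is correct. In the paper the corollary is stated without a separate proof; the estimate you need (that the transport term satisfies \eqref{nicee}) is established a few pages later, in Section~\ref{subsubenergyphi}, when bounding the term $I_2=-2\int_\T \la(\da^{k-1}\varphi)\,\da^{k-2}\!\big(\tfrac{\vp}{|z_\al|}\vp_{\al\al}\big)\,d\al$ in the $\varphi$ energy. The strategy there is the same as yours---treat the lower Leibniz terms directly and isolate the top-order piece $-\tfrac{\vp}{|z_\al|}\da^k\vp$---but the top piece is handled with a $\Lambda^{1/2}$ commutator rather than a Hilbert-transform commutator. Concretely, the paper writes (for $k=4$)
\[
-2\int_\T \frac{1}{|z_{\al}|}\la(\da^3\varphi)\,\vp\,\da^4 \vp\,d\al
= \frac{2}{|z_\al|}\int_\T \Lambda^{1/2}(\da^{3}\vp)\big[\vp\,\Lambda^{1/2}(\da^{4}\vp)-\Lambda^{1/2}(\vp\,\da^{4}\vp)\big]d\al
+\frac{1}{|z_\al|}\int_{\T}\da\vp\,|\Lambda^{1/2}(\da^{3}\vp)|^{2}\,d\al,
\]
and closes with the commutator estimate \eqref{commu}, $\|g\,\Lambda^{1/2}(\da f)-\Lambda^{1/2}(g\,\da f)\|_{L^{2}}\le \|g\|_{C^{2}}\|f\|_{H^{1/2}}$. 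This keeps everything in honest $L^2$ pairings, whereas your route writes $\la(\da^{k-1}\vp)=H(\da^{k}\vp)$, antisymmetrizes to get $-\tfrac12\int \da^{k}\vp\,[H,g](\da^{k}\vp)$, and invokes the one-derivative gain of $[H,g]$ to interpret the integral as an $H^{-1/2}$--$H^{1/2}$ pairing. Both arguments are standard and equivalent in strength; the paper's version has the small advantage of never leaving $L^2$, so no density/duality justification is needed, while yours makes the structural reason (antisymmetry of $H$ forcing a commutator) slightly more transparent.
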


\subsubsection{Higher order derivatives of $\sigma$}
In this section we deal with the highest order derivative of the R-T function. We show that
\begin{lemma}
Let $z(\al,t)$ and $\om(\al,t)$ be a solution of
(\ref{zeq}-\ref{eqomega}).  Then, the following identity holds:
\begin{align}
\begin{split}\label{rtc}
\da^{k-1}(Q^2\sigma)=\text{ANN}+|z_\al|H(\da^{k-1}\vp_t)+\vp H(\da^k\vp)
\end{split}
\end{align}
where $\text{ANN}$ satisfies
\begin{equation}\label{niceep}
\|\text{ANN}\|_{L^2}\leq CE^p(t)
\end{equation}
for $k\geq 4$, where $C$ and $p$ are constants that depend only on $k$.
\end{lemma}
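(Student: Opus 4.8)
The plan is to start from the definition \eqref{R-T} of $\sigma$, multiply by $Q^2$, apply $\da^{k-1}$, and track which pieces survive the $L^2$ bound; I will do the case $k=4$, the general case being identical up to bookkeeping. First I would split $Q^2\sigma$ according to the five groups of terms in \eqref{R-T}: the $BR_t\cdot z_\al^\perp$ group, the $\frac{\vp}{|z_\al|}BR_\al\cdot z_\al^\perp$ group, the $\frac{\om}{2|z_\al|^2}\left(z_{\al t}+\frac{\vp}{|z_\al|}z_{\al\al}\right)\cdot z_\al^\perp$ group, the $Q\left|BR+\frac{\om}{2|z_\al|^2}z_\al\right|^2\nabla Q(z)\cdot z_\al^\perp$ group, and the gravity group $g\,\nabla P_2^{-1}(z)\cdot z_\al^\perp$. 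In the last three groups every factor is at most at the level of $\da^2 z$ --- here I would invoke Lemma \ref{lemmaBR} for $BR$, the estimate \eqref{nszt} for $z_t$, Lemma \ref{lemmaomt} for $\om_t$, the $m(q^l)$ terms to control $Q$, $\nabla Q(z)$ and $\nabla P_2^{-1}(z)$, and the definition \eqref{E} of the energy --- so that distributing the $k-1=3$ derivatives over them yields only $\da^{k+1}z$-, $\da^k\om$- and $\da^{k-1}\vp$-level objects multiplied by $L^\infty$-bounded coefficients. By the product rule in Sobolev spaces these three groups are ANN.

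The two groups that genuinely lose a derivative are $\da^{k-1}\big(Q^2 BR_t\cdot z_\al^\perp\big)$ and $\da^{k-1}\big(Q^2\tfrac{\vp}{|z_\al|}BR_\al\cdot z_\al^\perp\big)$, and here I would extract the Hilbert transforms. For the first, write $BR_t=BR(z,\om_t)+R_1$, where $R_1$ carries the $z_t$-contributions and is at the level of $\da^2 z$ by Lemma \ref{lemmaBRt}, so $\da^{k-1}(Q^2 R_1\cdot z_\al^\perp)$ is ANN; and, exactly as in the proof of Lemma \ref{lemmaBR}, the only non-$L^2$ contribution to $\da^{k-1}BR(z,\om_t)$ is $\tfrac{z_\al^\perp}{2|z_\al|^2}H(\da^{k-1}\om_t)$ up to ANN. This gives $\da^{k-1}\big(Q^2 BR_t\cdot z_\al^\perp\big)=\text{ANN}+\tfrac{Q^2}{2}H(\da^{k-1}\om_t)$. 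Differentiating the algebraic identity $\tfrac{Q^2\om}{2|z_\al|}=\vp+c|z_\al|$ in $t$ (recall that $|z_\al|$ depends only on $t$, by the choice of $c$ in \eqref{defc}) yields $\da^{k-1}\big(\tfrac{Q^2}{2|z_\al|}\om_t\big)=\da^{k-1}\vp_t+(\text{ANN})$, and then a Calderón-commutator estimate converting $\tfrac{Q^2}{2}H(\da^{k-1}\om_t)$ into $|z_\al|\,H\big(\da^{k-1}(\tfrac{Q^2}{2|z_\al|}\om_t)\big)$ up to an $L^2$ error bounded by $CE^p(t)$ produces exactly $|z_\al|H(\da^{k-1}\vp_t)$ modulo ANN.

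The second group is treated the same way: the single uncontrolled contribution is the one with all $k-1$ derivatives on $BR_\al$, namely $Q^2\tfrac{\vp}{|z_\al|}\,z_\al^\perp\cdot\da^k BR$, whose leading part is $\tfrac{Q^2\vp}{2|z_\al|}H(\da^k\om)$ up to ANN; differentiating $\tfrac{Q^2\om}{2|z_\al|}=\vp+c|z_\al|$ in $\al$ and running the same commutator reduction turns this into $\vp\,H(\da^k\vp)$ modulo ANN. Collecting the three displays gives \eqref{rtc}, and all the remainders are $L^2$-bounded by $CE^p(t)$ by the estimates already established for $\om$ (see \eqref{nsw}), $\om_t$, $BR$ and $BR_t$.

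The main obstacle is the bookkeeping in this last step: one must verify that in each of the two surviving groups precisely one Hilbert-transform term resists the $L^2$ bound, while every other commutator and remainder is genuinely controlled by $CE^p(t)$. This is of the same flavour as the ``miracles'' already carried out in Lemmas \ref{lemmaenergyS} and \ref{lemmaphit}, and I expect it to rely on the identity $z_\al\cdot\da^k z=-3\da^2 z\cdot\da^{k-1}z$ (cf.\ \eqref{zdz4trick}), on the decomposition \eqref{decomextra}, and on the regularity of $Q$ and $P_2^{-1}$ away from the points $q^l$.
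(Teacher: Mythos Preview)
Your overall plan---split $Q^2\sigma$ into the five groups of \eqref{R-T}, extract the leading Hilbert transforms from the $BR_t$ and $BR_\al$ groups, and convert from $\om$ to $\vp$ via $\frac{Q^2\om}{2|z_\al|}=\vp+c|z_\al|$---matches the paper's. However, your identification of ANN terms is too optimistic in three linked places, and the actual content of the lemma is precisely the cancellation among these three.

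First, the third group $\frac{Q^2\om}{2|z_\al|^2}\big(z_{\al t}+\frac{\vp}{|z_\al|}z_{\al\al}\big)\cdot z_\al^\perp$ is \emph{not} ANN after $\da^{k-1}$: both $z_{\al t}$ and $z_{\al\al}$ lie only in $H^{k-2}$, so putting all $k-1$ derivatives on either of them lands outside $L^2$. The paper rewrites this group as $\frac{Q^2\om}{2|z_\al|^2}D(\al)$ via \eqref{spiral} and uses \eqref{fD} to obtain (equation \eqref{mqiaf}) $\text{AN}+\frac{Q^2\om}{2|z_\al|}H(\da\vp)$; hence its $(k-1)$st derivative is $\text{ANN}+\frac{Q^2\om}{2|z_\al|}H(\da^k\vp)$, and this last term is not in $L^2$.

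Second, when you pass from $\frac{Q^2}{2}H(\da^{k-1}\om_t)$ to $|z_\al|H(\da^{k-1}\vp_t)$ you assert that the $\partial_t(c|z_\al|)$ leftover is ANN. It is not: since $\da(c|z_\al|)=|z_\al|B(t)-\frac{1}{|z_\al|}(Q^2BR)_\al\cdot z_\al$, the term $H\big(\da^{k-2}\partial_t((Q^2BR)_\al\cdot z_\al)\big)$ appears, and its leading part is $-\frac{Q^2\om}{2|z_\al|^2}\da(\da^{k-1}z_t\cdot z_\al^\perp)$ (the paper computes this around \eqref{anchor}), which is at the level of $\da^k z_t$ and hence outside $L^2$. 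The same issue arises in your second group: the leftover $\vp H\big(\da^k(c|z_\al|)\big)$ is likewise not ANN.

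The paper shows that these non-ANN leftovers cancel. Concretely, the $BR_t$ and $BR_\al$ groups together (the term $L$ in the paper) produce $|z_\al|H(\da^{k-1}\vp_t)-c|z_\al|H(\da^k\vp)$ modulo ANN, the verification of which is the claim \eqref{claimsmiley} (using $H\Lambda=-\da$ and the extra cancellation in $(z-z')^\perp\cdot z_\al$). Adding the third-group contribution $\frac{Q^2\om}{2|z_\al|}H(\da^k\vp)$ then gives exactly $\vp H(\da^k\vp)$, since $\frac{Q^2\om}{2|z_\al|}-c|z_\al|=\vp$. So the obstacle you flag in your last paragraph is real, and the resolution is \emph{not} that ``precisely one Hilbert-transform term resists the $L^2$ bound'' in each of the first two groups; there are more surviving pieces, and they must be matched against the third group.
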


\begin{proof}
We show the proof for $k=4$. From now on, if a term $f$ satisfies

$$ \|f\|_{L^{2}} \leq CE^{p}(t)$$

we say that this term becomes part of ANN. By abuse of notation we will denote $f$ by ANN. We recall

\begin{align*}
Q^{2}\si =&  Q^{2}\left(BR_t + \frac{\vp}{|z_{\al}|}BR_{\al}\right) \cdot z_{\al}^{\perp}
+ \frac{Q^2\om}{2|z_{\al}|^{2}}\left(z_{\al t} + \frac{\vp}{|z_{\al}|}z_{\al \al}\right) \cdot z_{\al}^{\perp} \\
& + \underbrace{Q^{3}\left|BR + \frac{\om}{2|z_{\al}|^{2}}z_{\al}\right|^{2}\nabla Q \cdot z_{\al}^{\perp}}_{\text{this term is in }H^3 \text{ so its third derivative is in  ANN}} + \underbrace{Q^{2} \nabla gP_{2}^{-1}(z) \cdot z_{\al}^{\perp}}_{\text{this term is also in }H^3}.
\end{align*}

We write
\begin{align*}
\frac{Q^2\om}{2|z_{\al}|^{2}}\left(z_{\al t} + \frac{\vp}{|z_{\al}|}z_{\al \al}\right) \cdot z_{\al}^{\perp}
& = \frac{Q^2\om}{2|z_{\al}|^{2}}\left((Q^2 BR)_{\al} \cdot z_{\al}^{\perp} + c z_{\al \al} \cdot z_{\al}^{\perp}
+\frac{\vp}{|z_{\al}|}z_{\al \al} \cdot z_{\al}^{\perp}\right) \\
& = \frac{Q^2\om}{2|z_{\al}|^{2}}\left((Q^2 BR)_{\al} \cdot z_{\al}^{\perp} + \left(c + \frac{\vp}{|z_{\al}|}\right) z_{\al \al} \cdot z_{\al}^{\perp}\right)\\
& = \frac{Q^2\om}{2|z_{\al}|^{2}}\left((Q^2 BR)_{\al} \cdot z_{\al}^{\perp} + \frac{Q^{2}\om}{2|z_{\al}|^{2}} z_{\al \al} \cdot z_{\al}^{\perp}\right)\\
& = \frac{Q^2\om}{2|z_{\al}|^{2}}D(\al).
\end{align*}
Above we use \eqref{spiral} and \eqref{fD} to find
\begin{align}\label{mqiaf}
\frac{Q^2\om}{2|z_{\al}|^{2}}\left(z_{\al t} + \frac{\vp}{|z_{\al}|}z_{\al \al}\right) \cdot z_{\al}^{\perp}
& =\text{AN}+\frac{Q^2\om}{2|z_\al|}H(\vp_\al),
\end{align}
where AN is as in \eqref{defAN}. The remaining terms in $Q^{2}\si$ are

$$ L = Q^{2} BR_t \cdot z_{\al}^{\perp} + \frac{Q^{2}\vp}{|z_{\al}|}BR_{\al} \cdot z_{\al}^{\perp}.$$

We take 3 derivatives and consider the most dangerous characters:
$$
\da^{3}(L) = M_1 + M_2 + M_3 + \text{ANN},
$$
where
$$
M_1 = Q^{2} BR(z,\da^{3} \om_t) \cdot z_{\al}^{\perp} + \frac{Q^{2}\vp}{|z_{\al}|}BR(z,\da^{4}\om) \cdot z_{\al}^{\perp},
$$
\begin{align*}
M_2 =&  Q^{2} \frac{1}{2\pi}\int_{-\pi}^{\pi}\frac{(\da^{3} z_{t} - \da^{3}z'_{t})\cdot z_{\al}}{|z - z'|^{2}}
\om'd\beta \\
& + \frac{Q^{2}\vp}{|z_{\al}|} \frac{1}{2\pi}\int_{-\pi}^{\pi}\frac{(\da^{4} z - \da^{4}z')\cdot z_{\al}}{|z - z'|^{2}}
\om'd\beta,
\end{align*}
\begin{align*}
M_3= &  -\frac{Q^2}{\pi}\int_{-\pi}^{\pi}\frac{(z-z') \cdot z_{\al}}{|z-z'|^{4}}(z-z') \cdot
 (\da^{3}z_t-\da^{3}z'_t)\om'd\beta \\
&  -\frac{Q^2\vp}{|z_{\al}|\pi}\int_{-\pi}^{\pi}\frac{(z-z') \cdot z_{\al}}{|z-z'|^{4}}(z-z') \cdot
(\da^{4}z-\da^{4}z') \om'd\beta.
\end{align*}
Here we point out that in order to deal with $BR_t$ in the less singular terms we proceed using estimate \eqref{BRt}. In $M_2$ we find
$$ M_2 = \frac{Q^2\om}{2|z_{\al}|^{2}}\Lambda(\da^{3}z_t \cdot z_{\al})
+ \frac{Q^{2}\vp\om}{2|z_{\al}|^{3}}\Lambda(\da^{4}z \cdot z_{\al}) + \text{ANN}.$$

For the second term we use the usual trick
$$ \da^{4} z \cdot z_{\al} = -3\da^{3} z \cdot z_{\al \al}.$$

For the first term we recall that
\begin{align*}
& |z_{\al}|^{2} = A(t) \Rightarrow z_{\al} \cdot z_{\al t} = \frac{1}{2}A'(t) \Rightarrow (z_{\al} \cdot z_{\al t})_{\al} = 0 \\
& \Rightarrow z_{\al \al} \cdot z_{\al t} + z_{\al} \cdot z_{\al \al t} = 0 \Rightarrow
z_{\al \al \al} \cdot z_{\al t} + 2z_{\al \al} \cdot z_{\al \al t} + z_{\al} \cdot z_{\al \al \al t} = 0\\
& \Rightarrow z_{\al} \cdot z_{\al \al \al t} = - 2 z_{\al \al} \cdot z_{\al \al t} - z_{\al \al \al} \cdot z_{\al t}.
\end{align*}

This allows us to control $M_2$. For $M_3$ we find

$$ M_3 = -\frac{Q^2\om}{|z_{\al}|^{2}}\Lambda(z_{\al} \cdot \da^{3} z_t)
- \frac{Q^{2}\vp\om}{|z_{\al}|^{3}}\Lambda(z_{\al} \cdot \da^{4}z) + \text{ANN}$$

so it can be estimated as $M_2$. There remains $M_1$. Using that $(z-z')^{\perp}\cdot z_{\al}^{\perp}=(z-z')\cdot z_{\al}$ we find
\begin{align}
\label{ouroboros}
M_1 = \frac{Q^{2}}{2}H(\da^{3}\om_{t}) + \frac{Q^{2} \vp}{2|z_{\al}|}H(\da^{4}\om) + \text{ANN}.
\end{align}

We compute
\begin{align}
\frac{Q^{2}}{2}H(\da^{3}\om_{t}) & = H\left(\da^{3}\left(\frac{Q^{2}\om}{2}\right)_{t}\right) + \text{ANN} \nonumber \\
& = H(\da^{3}(|z_{\al}|\vp)_{t}) + H(\da^{3}(|z_{\al}|c)_{t}) +
\text{ANN} \nonumber \\
& = |z_{\al}|H(\da^{3}\vp_{t}) + H(\da^{2}\partial_t(-(Q^{2} BR)_{\al} \cdot z_{\al})) + \text{ANN}. \label{anchor}
\end{align}

We compute the most singular term in
\begin{align*}
\da^{2}\partial_t(-(Q^{2} BR)_{\al} \cdot z_{\al})  =& -\frac{Q^{2}}{2\pi}\int_{-\pi}^{\pi}\frac{(\da^{3}z_t- \da^{3}z'_t)^{\perp}\cdot z_{\al}}{|z-z'|^{2}}\om'd\beta \\
& +\underbrace{\frac{Q^{2}}{\pi}\int_{-\pi}^{\pi}\frac{(z-z')^{\perp} \cdot z_{\al}}{|z-z'|^{4}}(z-z')\cdot(\da^{3} z_{t}-\da^{3} z'_{t}) \om'd\beta}_{\text{extra cancelation in } (z-z')^{\perp} \cdot z_{\al}=(z-z'-z_\al\beta)^{\perp} \cdot z_{\al} } \\
& - \underbrace{\frac{Q^{2}}{2\pi}PV\int_{-\pi}^{\pi}\frac{(z-z')^{\perp} \cdot z_{\al}}{|z-z'|^{2}}\da^{3}\om'_{t}d\beta}_{\text{extra cancelation as above}} + \text{ANN}.
\end{align*}

This shows that
\begin{align*}
\da^{2}\partial_t(-(Q^{2} BR)_{\al} \cdot z_{\al}) = -\frac{Q^{2}\om}{2|z_{\al}|^{2}}\Lambda(\da^{3} z_{t}^{\perp} \cdot z_{\al})  +\text{ANN}.
\end{align*}

That gives
\begin{align*}
\da^{2}\partial_t(-(Q^{2} BR)_{\al} \cdot z_{\al}) = -\Lambda\left(\frac{Q^{2}\om}{2|z_{\al}|^{2}}\da^{3} z_{t}^{\perp} \cdot z_{\al}\right)  +\text{ANN},
\end{align*}

which implies
\begin{align*}
H(\da^{2}\partial_t(-(Q^{2} BR)_{\al} \cdot z_{\al})) = \da\left(\frac{Q^{2}\om}{2|z_{\al}|^{2}}\da^{3} z_{t}^{\perp} \cdot z_{\al}\right)  + \text{ANN} = -\frac{Q^{2}\om}{2|z_{\al}|^{2}}\da\left(\da^{3} z_{t} \cdot z_{\al}^{\perp}\right)+\text{ANN}.
\end{align*}

Plugging the above formula in \eqref{anchor} we find that
\begin{align*}
\frac{Q^{2}}{2}H(\da^{3}\om_{t}) & = |z_{\al}|H(\da^{3}\vp_{t}) -
\frac{Q^{2}\om}{2|z_{\al}|^{2}}\da\left(\da^{3} z_{t} \cdot z_{\al}^{\perp}\right)
+\text{ANN}\\
& = |z_{\al}|H(\da^{3}\vp_{t}) - \frac{Q^{2}\om}{2|z_{\al}|^{2}}\da\left(\da^{3}(Q^{2} BR) \cdot z_{\al}^{\perp}\right)
- \frac{Q^{2}\om}{2|z_{\al}|^{2}}\da\left(c \da^{4} z \cdot z_{\al}^{\perp}\right)\\
&\quad  +\text{ANN}.
\end{align*}

As we did before, we expand $\da(\da^{3}(Q^{2}BR) \cdot z_{\al}^{\perp})$ to find
\begin{align*}
\da(\da^{3}(Q^{2}BR) \cdot z_{\al}^{\perp})=&2Q\grad Q(z)\cdot\da^4z BR\cdot z_{\al}^{\perp}+
\frac{Q^{2}}{2\pi}PV\int\frac{(\da^4z-\da^4z') \cdot z_{\al}}{|z-z'|^{2}}\om'd\beta\\
&-\frac{Q^{2}}{\pi}PV\int\frac{(z-z') \cdot z_{\al}}{|z-z'|^{4}}(z-z')\cdot(\da^4z-\da^4z')\om'd\beta\\
&+\frac{Q^{2}}{2\pi}PV\int\frac{(z-z')\cdot z_{\al}}{|z-z'|^{2}}\da^{4}\om'd\beta+\text{ANN}.
\end{align*}
Therefore, we can use \eqref{decomextra},\eqref{zdz4trick} and \eqref{zdz4trick} to show that the most dangerous term is given by $Q^{2}\frac{1}{2}H(\da^{4}\om)$. It implies
\begin{align*}
\da(\da^{3}(Q^{2}BR) \cdot z_{\al}^{\perp}) & = Q^{2}\frac{1}{2}H(\da^{4}\om) +\text{ANN}
\end{align*}
and therefore
\begin{align*}
\frac{Q^{2}}{2}H(\da^{3}\om_{t}) = |z_{\al}|H(\da^{3}\vp_{t}) - \frac{Q^{2}\om}{2|z_{\al}|^{2}}\frac{Q^{2}}{2}H(\da^{4}\om)
- \frac{Q^{2}\om}{2|z_{\al}|^{2}}c\da\left(\da^{4} z \cdot z_{\al}^{\perp}\right) + \text{ANN}.\\
\end{align*}

We use the above formula and expand $\varphi$ to find
\begin{align*}
M_1 & = \frac{Q^{2}}{2}H(\da^{3}\om_{t}) + \frac{Q^{2}\vp}{2|z_{\al}|}H(\da^{4}\om) + \text{ANN}
 \\
 & = |z_{\al}|H(\da^{3}\vp_{t}) - \frac{Q^{2}}{2}cH(\da^{4}\om)
- \frac{Q^{2}\om}{2|z_{\al}|^{2}}c\da\left(\da^{4} z \cdot z_{\al}^{\perp}\right) + \text{ANN}\\
& = |z_{\al}|H(\da^{3}\vp_{t}) - c|z_{\al}|H\left(\da^{4}\left(\frac{Q^{2}\om}{2|z_{\al}|}\right)\right)
- \frac{Q^{2}\om}{2|z_{\al}|^{2}}c\da\left(\da^{4} z \cdot z_{\al}^{\perp}\right) + \text{ANN}\\
& = |z_{\al}|H(\da^{3}\vp_{t}) - c|z_{\al}|H\left(\da^{4}\vp\right) - c|z_{\al}|H(\da^{4}(c|z_{\al}|))
- \frac{Q^{2}\om}{2|z_{\al}|^{2}}c\da\left(\da^{4} z \cdot z_{\al}^{\perp}\right) + \text{ANN}.
\end{align*}

We will show that

\begin{equation}
\label{claimsmiley}
 - c|z_{\al}|H(\da^{4}(c|z_{\al}|))
- \frac{Q^{2}\om}{2|z_{\al}|^{2}}c\da\left(\da^{4} z \cdot z_{\al}^{\perp}\right)=\text{ANN}.
\end{equation}
It yields
$$
\frac{Q^{2}}{2}H(\da^{3}\om_{t}) + \frac{Q^{2}\vp}{2|z_{\al}|}H(\da^{4}\om)=|z_{\al}|H(\da^{3}\vp_{t}) - c|z_{\al}|H\left(\da^{4}\vp\right)+\text{ANN}
$$
that together with \eqref{mqiaf} allows us to obtain \eqref{rtc}. We have

\begin{align*}
- c|z_{\al}|H(\da^{4}(c|z_{\al}|))- \frac{Q^{2}\om}{2|z_{\al}|^{2}}c\da\left(\da^{4} z \cdot z_{\al}^{\perp}\right)
& = - cH(\da^{4}(c|z_{\al}|^{2}))- \frac{Q^{2}\om}{2|z_{\al}|^{2}}c\da\left(\da^{4} z \cdot z_{\al}^{\perp}\right) \\
& = cH(\da^{3}((Q^{2} BR)_{\al} \cdot z_{\al}))- \frac{Q^{2}\om}{2|z_{\al}|^{2}}c\da\left(\da^{4} z \cdot z_{\al}^{\perp}\right).
\end{align*}

We repeat the calculation for dealing with the most dangerous terms in
\begin{align*}
\da^{3}((Q^{2} BR)_{\al} \cdot z_{\al}) = \Lambda\left(\da^{4} z^{\perp} \cdot z_{\al} \frac{\om Q^{2}}{2|z_{\al}|^{2}}\right) + \text{ANN}.
\end{align*}

We recognized as before terms in ANN using that $(z-z')^{\perp}\cdot z_\al$ gives an extra cancellation. We find that
\begin{align*}
cH(\da^{3}((Q^{2} BR)_{\al} \cdot z_{\al})) & - \frac{Q^{2}\om}{2|z_{\al}|^{2}}c\da\left(\da^{4} z \cdot z_{\al}^{\perp}\right) \\
& = cH\left(\Lambda\left(\da^{4} z^{\perp} \cdot z_{\al} \frac{\om Q^{2}}{2|z_{\al}|^{2}}\right)\right)- \frac{Q^{2}\om}{2|z_{\al}|^{2}}c\da\left(\da^{4} z \cdot z_{\al}^{\perp}\right) + \text{ANN}\\
& = -c\da\left(\da^{4} z^{\perp} \cdot z_{\al} \frac{\om Q^{2}}{2|z_{\al}|^{2}}\right)- \frac{Q^{2}\om}{2|z_{\al}|^{2}}c\da\left(\da^{4} z \cdot z_{\al}^{\perp}\right) +\text{ANN}.
\end{align*}

Using that $\da^{4} z^{\perp} \cdot z_{\al} = - \da^{4} z \cdot z_{\al}^{\perp} $ we are done proving \eqref{claimsmiley}.
\end{proof}

\subsubsection{Energy estimates for $\varphi$}
\label{subsubenergyphi}

In this section we prove the following result.

\begin{lemma}
\label{lemmaenergyphiS}
Let $z(\al,t)$ and $\om(\al,t)$ be a solution of
(\ref{zeq}-\ref{eqomega}).  Then, the following
a priori estimate holds:
\begin{align}
\begin{split}\label{eec}
\frac{d}{dt}\|\varphi\|^2_{H^{k-\frac12}}(t)&\leq -S(t)+CE^p(t)
\end{split}
\end{align}
for $k\geq 4$, where $C$ and $p$ are constants that depend only on $k$.
\end{lemma}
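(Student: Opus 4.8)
The plan is to run an $H^{k-\frac12}$ energy estimate for $\vp$, doing the case $k=4$ in detail and leaving $k>4$ to the reader. Write $\|\vp\|^2_{H^{k-\frac12}}$ as the sum of the integer part $\|\vp\|^2_{H^{k-1}}$ and the top half-derivative part $\|\la^{1/2}\da^{k-1}\vp\|^2_{L^2}$. The integer part is routine: for $0\le j\le k-1$ one computes $\frac{d}{dt}\|\da^j\vp\|^2_{L^2}$, using \eqref{evpt} (for small $j$) or $\da^{k-1}\vp_t=\da^{k-2}(\vp_{\al t})$ together with the identity of Lemma \ref{lemmaphit}, $\vp_{\al t}=\text{NICE}-\frac{\vp}{|z_\al|}\vp_{\al\al}-Q^2\si\frac{z_{\al\al}\cdot z_\al^\perp}{|z_\al|^3}$; the NICE term is controlled by \eqref{nicen}, the transport term is integrated by parts and bounded by $\|\da(\vp/|z_\al|)\|_{L^\infty}\|\da^{k-1}\vp\|^2_{L^2}$ — here we use that the choice \eqref{defc} of $c$ makes $|z_\al|$ independent of $\al$, so $\da(\vp/|z_\al|)=\vp_\al/|z_\al|$ is bounded by Sobolev embedding — and the $Q^2\si$ term pairs an $L^2$ function (using \eqref{nswbis} for $\si$, the weighted control of $\da^k z$ coming from $E(t)$, and the bounds on $Q$) against $\da^{k-1}\vp\in L^2$. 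Hence $\frac{d}{dt}\|\vp\|^2_{H^{k-1}}\le CE^p(t)$, with no uncontrolled term.

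The heart of the matter is the top half-derivative. Using that $\la^{1/2}$ is self-adjoint and $\da^{k-1}\vp_t=\da^{k-2}(\vp_{\al t})$,
\[
\frac{d}{dt}\|\la^{1/2}\da^{k-1}\vp\|^2_{L^2}=2\int_{-\pi}^\pi \la(\da^{k-1}\vp)\,\da^{k-2}(\vp_{\al t})\,d\al ,
\]
and I would again substitute Lemma \ref{lemmaphit}. The NICE contribution is bounded by $CE^p(t)$ because \eqref{nicee} is stated precisely in this pairing. For the transport term I expand $\da^{k-2}\big(\frac{\vp}{|z_\al|}\vp_{\al\al}\big)$ by Leibniz: the leading piece $\frac{\vp}{|z_\al|}\da^k\vp$ leads to $-\int\la(\da^{k-1}\vp)\frac{\vp}{|z_\al|}\da^k\vp\,d\al$, which I symmetrize via $\int g\,f\,Hf=\frac{1}{2}\int [g,H]f\cdot f$ (with $g=\vp/|z_\al|$ and $f=\da^k\vp$), the commutator being controllable since $\da(\vp/|z_\al|)=\vp_\al/|z_\al|$ is bounded; the remaining Leibniz terms are fractional commutators of $\la^{1/2}\da^{k-2}$ with $\vp/|z_\al|$ acting on $\vp_{\al\al}$, controlled at the level of $\|\vp\|_{H^{k-\frac12}}$ by standard estimates. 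All of this is $\le CE^p(t)$.

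Finally comes the $Q^2\si$ term, $-2\int\la(\da^{k-1}\vp)\,\da^{k-2}\big(Q^2\si\frac{z_{\al\al}\cdot z_\al^\perp}{|z_\al|^3}\big)d\al$. Expanding by Leibniz, the piece in which all $k-2$ derivatives land on $z_{\al\al}$ is $-2\int\la(\da^{k-1}\vp)\,Q^2\si\frac{\da^kz\cdot z_\al^\perp}{|z_\al|^3}d\al=-S(t)$, with $S(t)$ as in \eqref{fS}; this is the announced uncontrolled term, and it is exactly the one appearing with the opposite sign in Lemma \ref{lemmaenergyS}, so it cancels when the full energy \eqref{E} is differentiated. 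The genuinely delicate pieces are those in which many derivatives fall on $Q^2\si$: since $\da^{k-1}(Q^2\si)$ sits at the level of $\vp_t$ and is not controlled by $E(t)$, instead of differentiating all the way I would invoke the identity \eqref{rtc}, $\da^{k-1}(Q^2\si)=\text{ANN}+|z_\al|H(\da^{k-1}\vp_t)+\vp H(\da^k\vp)$ with $\|\text{ANN}\|_{L^2}\le CE^p(t)$ and $|z_\al|$ constant in $\al$, and then integrate by parts in $\al$ so that the top derivatives of $Q^2\si$ are always paired — through Hilbert-transform cancellations and the identity $\int(Hf)(Hg)=\int fg$ — with $\la(\da^{k-1}\vp)$ or $\da^k\vp$ in a way that either cancels or reduces to commutators bounded by $CE^p(t)$; the intermediate Leibniz terms (fewer derivatives on $Q^2\si$, the rest on $\frac{z_{\al\al}\cdot z_\al^\perp}{|z_\al|^3}$) are manifestly controlled by \eqref{nswbis}. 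Collecting everything gives $\frac{d}{dt}\|\vp\|^2_{H^{k-\frac12}}\le -S(t)+CE^p(t)$. The main obstacle is exactly this last step: the precise bookkeeping that isolates $-S(t)$ and shows that every other borderline contribution is a total derivative or a controllable commutator — the analogue here of the ``miracles'' mentioned in the introduction.
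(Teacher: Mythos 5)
Your proposal is correct and follows essentially the same route as the paper: split the $H^{k-\frac12}$ norm, differentiate the top half-derivative piece, substitute the identity of Lemma \ref{lemmaphit} to get the NICE / transport / $Q^2\sigma$ trichotomy, isolate $-S(t)$ from the term where all derivatives land on $z_{\al\al}$, and invoke \eqref{rtc} to handle the over-derivative on $Q^2\sigma$.

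The one place your outline diverges from the paper at the crux is the treatment of the $|z_\al|H(\da^{k-1}\vp_t)$ contribution coming from \eqref{rtc}. You describe pairing it with $\la(\da^{k-1}\vp)$ or $\da^k\vp$ ``through Hilbert-transform cancellations,'' but there is no such direct cancellation: $\da^{k-1}\vp_t$ is not an object controlled by $E(t)$, and no Hilbert-transform identity makes it one. The paper's actual mechanism is to substitute Lemma \ref{lemmaphit} a \emph{second} time into $\da^{k-1}\vp_t = \da^{k-2}(\vp_{\al t})$. This regenerates a NICE piece (handled by \eqref{nicee}), a transport piece with half a derivative to spare, and a $Q^2\sigma\,\frac{z_{\al\al}\cdot z_\al^\perp}{|z_\al|^3}$ piece — but now under only $\da^{k-2}$, which \emph{is} controlled by Lemma \ref{lemmasigmaHk}, so the recursion terminates after one round. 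This is precisely the bookkeeping you flag as the ``main obstacle''; once you make the recursive use of the $\vp_{\al t}$ identity explicit, your sketch matches the paper.
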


\begin{proof} We shall present the details in the case $k=4$, leaving the
other cases to the reader.

Using the estimates obtained before one has
$$
\frac{d}{dt}\|\varphi\|^2_{L^2}(t)\leq  C E^p(t).
$$

Developing the derivative using Lemma \ref{lemmaphit}, we get that:

\begin{align}\label{pcch}
\frac{d}{dt}\|\la^{1/2}(\da^3 \varphi)\|^2_{L^2}(t)=&2\int_\T\Lambda(\da^3\varphi)\da^3\varphi_td\al\\
=&I_1+I_2+I_3,\nonumber
\end{align}
where
$$
I_1=2\int_\T \la(\da^3\varphi)\da^2(\text{NICE})d\al,\qquad I_2=-2\int_\T \la(\da^3\varphi)\da^2(\frac{\vp}{|z_\al|}\vp_{\al\al})d\al,
$$
$$
I_3=-2\int_\T \la(\da^3\varphi)\da^2(Q^2\sigma \frac{z_{\al\al}\cdot z^\perp_\al}{|z_\al|^3})d\al.
$$
We use \eqref{nicee} to control $I_1$. The most singular term in $I_2$ is the one given by

\begin{align*}
 -2\int_\T \frac{1}{|z_{\al}|}\la(\da^3\varphi)\da^4 \vp \vp d\al = & 2\int_\T \frac{1}{|z_{\al}|}\Lambda^{1/2}(\da^{3}\vp)\left[\vp \Lambda^{1/2}(\da^{4} \vp) - \Lambda^{1/2}(\vp \da^{4} \vp) \right]d\al \\
& + \int_{\T} \frac{1}{|z_{\al}|}\da \vp |\Lambda^{1/2}(\da^{3}\vp)|^{2} d\al.
\end{align*}

Using the commutator estimate
\begin{equation}
\label{commu}
\|g \Lambda^{1/2}(\da f) - \Lambda^{1/2}(g \da f)\|_{L^{2}} \leq \|g\|_{C^{2}}\|f\|_{H^{1/2}}
\end{equation}

we can bound $I_2$. In $I_3$ we split further considering the most singular terms
$$
J_1=-2\int_\T \la(\da^3\varphi)Q^2\sigma z_{\al\al}\cdot\da^2(\frac{z^\perp_\al}{|z_\al|^3})  d\al,
$$
$$
J_2=-2\int_\T \la(\da^3\varphi)Q^2\sigma \frac{\da^4z\cdot z^\perp_\al}{|z_\al|^3}d\al,
$$
$$
J_3=-2\int_\T \la(\da^3\varphi)\da^2(Q^2\sigma)  \frac{z_{\al\al}\cdot z^\perp_\al}{|z_\al|^3}d\al.
$$
The term $J_1$ can be estimated as before. Recalling \eqref{fS} we see that $J_2=-S(t)$. It remains to control $J_3$ in order to find \eqref{eec}.

We decompose $J_3=K_1+K_2$ where
$$
K_1=2\int_\T H(\da^3\varphi)\da^2(Q^2\sigma) \da( \frac{z_{\al\al}\cdot z^\perp_\al}{|z_\al|^3}) d\al
$$
and
$$
K_2=2\int_\T H(\da^3\varphi)\da^3 (Q^2\sigma) \frac{z_{\al\al}\cdot z^\perp_\al}{|z_\al|^3} d\al.
$$
Inequality \eqref{nswbis} for $k=4$ allows us to obtain
$$
K_1\leq C E^p(t).
$$
To finish the proof we use formula \eqref{rtc} for $k=4$ to find
$K_2=L_1+L_2+L_3$ where
$$
L_1=2\int_\T H(\da^3\varphi)\text{ANN} \frac{z_{\al\al}\cdot z^\perp_\al}{|z_\al|^3} d\al,
$$
$$
L_2=2\int_\T H(\da^3\varphi)|z_\al|H(\da^{3}\vp_t) \frac{z_{\al\al}\cdot z^\perp_\al}{|z_\al|^3} d\al,
$$
$$
L_3=2\int_\T H(\da^3\varphi)\vp H(\da^4\vp) \frac{z_{\al\al}\cdot z^\perp_\al}{|z_\al|^3} d\al.
$$
The term $L_1$ can be easily estimated using \eqref{niceep}.
For $L_2$ we substitute the expression \eqref{eec} for $\da^{3}\vp_t$ to get $L_2 = M_1 + M_2 + M_3$:

$$
M_1=2\int_\T H(\da^3\varphi)|z_\al|H(\da^{2}(\text{NICE})) \frac{z_{\al\al}\cdot z^\perp_\al}{|z_\al|^3} d\al.
$$
$$
M_2=-2\int_\T H(\da^3\varphi)|z_\al|H(\da^{2}(\frac{\vp \vp_{\al \al}}{|z_{\al}|})) \frac{z_{\al\al}\cdot z^\perp_\al}{|z_\al|^3} d\al.
$$
$$
M_3=-2\int_\T H(\da^3\varphi)|z_\al|H(\da^{2}(Q^{2}\si\frac{z_{\al \al} \cdot z_\al^{\perp}}{|z_\al|^{3}} )) \frac{z_{\al\al}\cdot z^\perp_\al}{|z_\al|^3} d\al.
$$
By equation \eqref{nicee}, $M_1$ is bounded. $M_2$ is bounded knowing that we have room for half derivative in the term which is not the third factor. Finally we can bound $M_3$ in virtue of Lemma \ref{lemmasigmaHk}. To finish, in $L_3$ we integrate by parts to find
$$
L_3=-\int_\T |H(\da^3\varphi)|^2\da(\vp \frac{z_{\al\al}\cdot z^\perp_\al}{|z_\al|^3}) d\al\leq CE^p(t)
$$
using Sobolev embedding.
\end{proof}

\subsubsection{Energy estimates for $\D\frac{|z_\al|^2}{m(Q^2\sigma)(t)}+\sum_{l=0}^4\frac{1}{m(q^l)(t)}$.}

\begin{lemma}
Let $z(\al,t)$ and $\om(\al,t)$ be a solution of
(\ref{zeq}-\ref{eqomega}).  Then, the following
a priori estimate holds:
\begin{align}
\label{edqp}
\frac{d}{dt}\Big(\D\frac{|z_\al|^2}{m(Q^2\sigma)(t)}+\sum_{l=0}^4\frac{1}{m(q^l)(t)}\Big)&\leq CE^p(t)
\end{align}
for $k\geq 4$, where $C$ and $p$ are constants that depend only on $k$.
\end{lemma}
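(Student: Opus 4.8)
The plan is to handle the two pieces of the functional separately, in each case reducing the time derivative of a reciprocal of a minimum to an $L^\infty$ bound on a time derivative, and then feeding in the estimates already established for $z_t$, $BR$, $\sigma$ and $\sigma_t$. The basic tool is the following elementary fact: if $m(t)=\min_{\al\in\T}f(\al,t)$ with $f$ smooth and positive, then $m$ is locally Lipschitz, hence differentiable a.e., and by Danskin's theorem $m'(t)=\partial_t f(\al^*(t),t)$ for any minimizer $\al^*(t)$; in particular $m'(t)\geq-\|\partial_t f\|_{L^\infty}$, so
\[
\frac{d}{dt}\frac{1}{m(t)}=-\frac{m'(t)}{m(t)^2}\leq\frac{\|\partial_t f\|_{L^\infty}}{m(t)^2}.
\]
(Since we are proving an a priori estimate on an interval where $E(t)<\infty$, the relevant minima $m(q^l)(t)$ and $m(Q^2\sigma)(t)$ are strictly positive there, so all reciprocals make sense.)

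First I would treat $\sum_{l=0}^4\frac1{m(q^l)(t)}$. Here $\partial_t|z(\al,t)-q^l|=\frac{(z(\al,t)-q^l)\cdot z_t(\al,t)}{|z(\al,t)-q^l|}$, so $\|\partial_t|z-q^l|\,\|_{L^\infty}\leq\|z_t\|_{L^\infty}$ and the displayed inequality gives $\frac{d}{dt}\frac1{m(q^l)(t)}\leq\|z_t\|_{L^\infty}/m(q^l)(t)^2$. By Sobolev embedding and \eqref{nszt} with $k=2$, $\|z_t\|_{L^\infty}\leq C\|z_t\|_{H^2}\leq CE^p(t)$, because each term in \eqref{nszt} — $\|\F(z)\|_{L^\infty}^2$, $\|z\|_{H^3}^2$, $\|\om\|_{H^2}^2$ and $\sum_l 1/m(q^l)$ — is dominated by $E(t)$ for $k\geq4$; and $1/m(q^l)(t)^2\leq E(t)^2$ by the definition \eqref{E}. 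This settles the first sum.

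Second, for $\frac{|z_\al|^2}{m(Q^2\sigma)(t)}$, I use that our choice \eqref{defc} of $c$ makes $|z_\al(\al,t)|^2=A(t)$ independent of $\al$, so
\[
\frac{d}{dt}\frac{A(t)}{m(Q^2\sigma)(t)}=\frac{A'(t)}{m(Q^2\sigma)(t)}-A(t)\,\frac{\frac{d}{dt}m(Q^2\sigma)(t)}{m(Q^2\sigma)(t)^2}.
\]
For $A'(t)=2z_\al\cdot z_{\al t}=\frac1\pi\int_{-\pi}^{\pi}(Q^2BR)_\beta\cdot z_\beta\,d\beta$ (the identity used in the derivation of \eqref{evpt}), Cauchy–Schwarz gives $|A'(t)|\leq C\|Q^2BR\|_{H^1}\|z\|_{H^1}$, which is $\leq CE^p(t)$ by Lemma \ref{lemmaBR} and the control of $\|Q^2\|_{W^{1,\infty}}$ by powers of $1/m(q^0)$. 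For the second term the displayed Danskin bound gives $\frac{d}{dt}m(Q^2\sigma)(t)\geq-\|\partial_t(Q^2\sigma)\|_{L^\infty}$, and $\partial_t(Q^2\sigma)=2QQ_t\sigma+Q^2\sigma_t$ with $Q_t=\nabla Q(z)\cdot z_t$; using Lemma \ref{lemmasigmaHk} for $\|\sigma\|_{L^\infty}$, Lemma \ref{lemmasigmat} for $\|\sigma_t\|_{L^\infty}$, the bound on $\|z_t\|_{L^\infty}$ above, and the control of $Q,\nabla Q$ by $1/m(q^l)$, one gets $\|\partial_t(Q^2\sigma)\|_{L^\infty}\leq CE^p(t)$. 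It then remains to absorb the geometric prefactors: $A(t)=|z_\al|^2=\frac1{2\pi}\|z_\al\|_{L^2}^2\leq\frac1{2\pi}\|z\|_{H^1}^2\leq E(t)$; $\frac{A(t)}{m(Q^2\sigma)(t)}\leq E(t)$ directly from \eqref{E}; and since $\frac1{|z_\al|}=\lim_{\beta\to0}\frac{|\beta|}{|z(\al)-z(\al-\beta)|}\leq\|\F(z)\|_{L^\infty}$ we have $\frac1{m(Q^2\sigma)(t)}=\frac1{A(t)}\cdot\frac{A(t)}{m(Q^2\sigma)(t)}\leq\|\F(z)\|_{L^\infty}^2\,E(t)\leq E(t)^2$, whence $\frac{A(t)}{m(Q^2\sigma)(t)^2}\leq E(t)^3$. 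Combining gives $\frac{d}{dt}\frac{A(t)}{m(Q^2\sigma)(t)}\leq CE^p(t)$, and adding the two contributions yields \eqref{edqp}.

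The only genuinely delicate point is the differentiation of the minima $m(q^l)(t)$ and $m(Q^2\sigma)(t)$, which are in general only Lipschitz, not $C^1$; this is dealt with by the one-sided Danskin bound above, which is all that is needed to close the Grönwall-type inequality for $E$. Beyond that, the proof is pure bookkeeping: every factor that appears is reduced, via the estimates of Lemmas \ref{lemmaBR}, \ref{lemmasigmaHk}, \ref{lemmasigmat} and \eqref{nszt} and the inequality $1/|z_\al|\leq\|\F(z)\|_{L^\infty}$, to a fixed power of $E(t)$; no new cancellation or ``miracle'' is required here.
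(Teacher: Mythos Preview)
Your proof is correct and follows essentially the same approach as the paper's own proof: differentiate the minima via Rademacher/Danskin, then bound the resulting expressions using the $L^\infty$ estimates on $z_t$ and $(Q^2\sigma)_t$ already established in \eqref{nszt} and \eqref{nlinftyst}. If anything, your writeup is more explicit than the paper's---you spell out the $A'(t)$ contribution, the use of $1/|z_\al|\leq\|\F(z)\|_{L^\infty}$ to control the bare $1/m(Q^2\sigma)$, and the bookkeeping of powers of $E(t)$---whereas the paper simply records the formulas $(m(Q^2\sigma))'(t)=(Q^2\sigma)_t(\al_t,t)$ and $\frac{d}{dt}\frac{1}{m(q^l)}=-\frac{z_t\cdot(z-q^l)}{m(q^l)^3}$ and then invokes ``the previous a priori estimates for the $L^\infty$ bounds.''
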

\begin{proof}
Inequalities \eqref{nszt} and \eqref{nlinftyst} show that $(Q^2\sigma) \in C^{1}([0,T]\times[-\pi,\pi])$ for some $T$ and therefore $m(Q^2\sigma)(t)$ is a
Lipschitz function differentiable almost everywhere by
Rademacher's theorem.  Let

$$m(Q^2\sigma)(t)=\D\min_{\al\in [-\pi,\pi]} (Q^2\sigma)(\al,t)=(Q^2\sigma)(\al_t,t).$$
We can calculate the derivative of
$m(Q^2\sigma)(t)$, to obtain
$$
(m(Q^2\sigma))'(t)=(Q^2\sigma)_t(\al_t,t)
$$
for almost every $t$. Then it follows that:
$$
\frac{d}{dt}\left(\frac{1}{m(Q^2\sigma)}\right)(t)=-\frac{(Q^2\sigma)_{t}(\al_t,t)}{(m(Q^2\sigma))^2(t)}
$$
 almost everywhere. By using the previous a priori estimates for the $L^{\infty}$ bounds, we get to
 $$ \frac{d}{dt}\left(\frac{|z_\al|^2}{m(Q^2\sigma)}\right)(t) \leq CE^p(t).$$
 On the other hand, we can apply the same argument to $\D \frac{1}{m(q^{l})(t)}$. Denoting again by $\al_t$ the point where the minimum is attained we have that:
$$
\frac{d}{dt}\left(\frac{1}{m(q^{l})}\right)(t)=-\frac{z_t(\al_t,t) \cdot (z(\al_t,t) - q^{l})}{(m(q^{l}))^3(t)}
$$

which again can be easily bounded and we get \eqref{edqp}, as desired.

\end{proof}

\subsection{Proof of short-time existence (Theorem \ref{localexistencetilde})}

To conclude the proof of the local existence, we shall use the previous a priori estimates. We now introduce a regularized version of the evolution equation  which is well-posed for short time independently of the sign condition on $\sigma(\al,t)$ at $t=0$.  But for
$\sigma(\al,0)>0$, we shall find a time of existence  uniformly in
the regularization, allowing us to take the limit.

Now, let $z^{\ep,\delta,\mu} (\al,t)$ be a solution of the following system (compare with \eqref{paraomt}):
\begin{align}
\label{epsdelmuzt}
\begin{split}z^{\ep,\delta,\mu}_t(\al,t)&=\phi_{\delta} * \phi_{\delta} * \left(Q^2(z^{\ep,\delta,\mu})BR(z^{\ep,\delta,\mu},\om^{\ep,\delta,\mu})\right)(\al,t)+\phi_{\mu} * \left(c^{\ep,\delta,\mu}\left(\phi_{\mu} * \da z^{\ep,\delta,\mu}\right)\right)(\al,t),
\end{split}
\end{align}
\begin{align}
\begin{split}
\om^{\ep,\delta,\mu}_{t}  =& \frac{|\da z^{\ep,\delta,\mu}|}{Q^{2}(z^{\ep,\delta,\mu})}\phi_{\delta} * \phi_{\delta} * \left(\frac{Q^{2}(z^{\ep,\delta,\mu})}{|\da z^{\ep,\delta,\mu}(\al,t)|}\left[-2BR(z^{\ep,\delta,\mu},\om^{\ep,\delta,\mu})_t \cdot z^{\ep,\delta,\mu}_{\al}\right.\right. \\
 & - 2Q(z^{\ep,\delta,\mu})Q(z^{\ep,\delta,\mu})_{\al}|BR(z^{\ep,\delta,\mu},\om^{\ep,\delta,\mu})|^{2} -\da\left(\frac{(\vp^{\ep,\delta,\mu})^{2}}{Q(z^{\ep,\delta,\mu})^{2}}\right) \\
& +\frac{c^{\ep,\delta,\mu}|z^{\ep,\delta,\mu}_\al|^2}{\pi Q(z^{\ep,\delta,\mu})^2}\int_{-\pi}^\pi(Q(z^{\ep,\delta,\mu})^2BR(z^{\ep,\delta,\mu},\om^{\ep,\delta,\mu}))_\be\cdot \frac{z^{\ep,\delta,\mu}_\be}{|z^{\ep,\delta,\mu}_\be|^2} d\be \\
& - \frac{4c^{\ep,\delta,\mu}Q(z^{\ep,\delta,\mu})_\al BR(z^{\ep,\delta,\mu},\om^{\ep,\delta,\mu})\cdot z^{\ep,\delta,\mu}_\al}{Q(z^{\ep,\delta,\mu})}\\
& \left.\left.-\frac{2(c^{\ep,\delta,\mu})^2|z^{\ep,\delta,\mu}_\al|^2Q(z^{\ep,\delta,\mu})_\al}{Q(z^{\ep,\delta,\mu})^3}- 2\da\left(gP_{2}^{-1}(z^{\ep,\delta,\mu})\right)\right] \right) \\
& - \frac{2|\da z^{\ep,\delta,\mu}(\al,t)|}{Q^{2}(z^{\ep,\delta,\mu})}\left(Q(z^{\ep,\delta,\mu})\partial_t Q(z^{\ep,\delta,\mu})\frac{\om^{\ep,\delta,\mu}}{|\da z^{\ep,\delta,\mu}|} - \frac{Q^2(z^{\ep,\delta,\mu})\om^{\ep,\delta,\mu}}{2|\da z^{\ep,\delta,\mu}|^3} \da z^{\ep,\delta,\mu} \cdot \da \partial_t z^{\ep,\delta,\mu}\right) \\
& + \frac{2|\da z^{\ep,\delta,\mu}(\al,t)|}{Q^{2}(z^{\ep,\delta,\mu})} \phi_{\delta} * \phi_{\delta} * \left(Q(z^{\ep,\delta,\mu})\partial_t Q(z^{\ep,\delta,\mu})\frac{\om^{\ep,\delta,\mu}}{|\da z^{\ep,\delta,\mu}|} - \frac{Q^2(z^{\ep,\delta,\mu})\om^{\ep,\delta,\mu}}{2|\da z^{\ep,\delta,\mu}|^3} \da z^{\ep,\delta,\mu} \cdot \da \partial_t z^{\ep,\delta,\mu}\right) \\
& - 2\ep\frac{|\da z^{\ep,\delta,\mu}|}{Q^2(z^{\ep,\delta,\mu})}\Lambda (\phi_{\mu} * \phi_{\mu} * \varphi^{\ep,\delta,\mu}),
\label{epsdelmuwt}
\end{split}
\end{align}
$z^{\ep,\delta,\mu}(\al,0)=z_0(\al)$ and $\om^{\ep,\delta,\mu}(\al,0)=\om_0(\al)$ for $\ep>0, \delta > 0, \mu > 0$, $\phi_{\delta}$ and $\phi_{\mu}$ even mollifiers, and

\begin{align*}
c^{\ep,\delta,\mu}(\al)&=\frac{\al+\pi}{2\pi}\int_{-\pi}^\pi\frac{\partial_{\beta} z^{\ep,\delta,\mu}(\be))}{|\partial_{\beta}
z^{\ep,\delta,\mu}(\be)|^2}\cdot \phi_{\delta} * \phi_{\delta} * (\partial_{\beta} (Q^2(z^{\ep,\delta,\mu})(\be) BR(z^{\ep,\delta,\mu},\om^{\ep,\delta,\mu}))(\be)) d\be \\
& -\int_{-\pi}^\al
\frac{\partial_{\beta} z^{\ep,\delta,\mu}(\beta)}{|\partial_{\beta} z^{\ep,\delta,\mu}(\beta)|^2}\cdot\phi_{\delta} * \phi_{\delta} * (\partial_{\beta} (Q^2(z^{\ep,\delta,\mu})(\beta)
BR(z^{\ep,\delta,\mu},\om^{\ep,\delta,\mu}))(\beta)) d\beta,
\end{align*}
$$
\varphi^{\ep,\delta,\mu}=\D\frac{Q^2(z^{\ep,\delta,\mu})\om^{\ep,\delta,\mu}}{2|\da z^{\ep,\delta,\mu}|}-\mathcal{C}^{\ep,\delta,\mu},$$
$$B^{\ep,\delta,\mu}(t)=\frac{1}{2\pi}\int_{-\pi}^\pi \!\frac{\da z^{\ep,\delta,\mu}(\al,t)}{|\da
z^{\ep,\delta,\mu}(\al,t)|^2}\cdot \da(Q^2(z^{\ep,\delta,\mu}) BR(z^{\ep,\delta,\mu},\om^{\ep,\delta,\mu}))(\al,t) d\al,
$$
\begin{align*}
\mathcal{C}^{\ep,\delta,\mu} & = \phi_{\delta} * \phi_{\delta} * \left(\frac{\al+\pi}{2\pi}\int_{-\pi}^\pi\frac{\partial_{\beta} z^{\ep,\delta,\mu}(\be)}{|\partial_{\beta}
z^{\ep,\delta,\mu}(\be)|}\cdot   (\partial_{\beta} (Q^2(z^{\ep,\delta,\mu}) BR(z^{\ep,\delta,\mu},\om^{\ep,\delta,\mu})))(\be) d\be\right) \\
& -\phi_{\delta} * \phi_{\delta} * \left(\int_{-\pi}^\al
\frac{\partial_{\beta} z^{\ep,\delta,\mu}(\beta)}{|\partial_{\beta} z^{\ep,\delta,\mu}(\beta)|}\cdot (\partial_{\beta} (Q^2(z^{\ep,\delta,\mu})(\beta)
BR(z^{\ep,\delta,\mu},\om^{\ep,\delta,\mu}))(\beta)) d\beta\right).
\end{align*}

We start proving the following lemma:
\begin{lemma}
\label{omegaH3}
Let $z^{\ep,\delta,\mu}(\al,t) \in H^{4}(\mathbb{T})$, $\om^{\ep,\delta,\mu}(\al,t) \in H^2(\mathbb{T})$, $\varphi^{\ep,\delta,\mu}(\al,t) \in H^{3}(\mathbb{T})$. Then $\om^{\ep,\delta,\mu}(\al,t) \in H^{3}(\mathbb{T})$.
\end{lemma}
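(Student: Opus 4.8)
The plan is to invert the algebraic relation defining $\varphi^{\ep,\delta,\mu}$ and to read off $\om^{\ep,\delta,\mu}$ as a product of $H^3$ factors. From
$$\varphi^{\ep,\delta,\mu}=\frac{Q^2(z^{\ep,\delta,\mu})\om^{\ep,\delta,\mu}}{2|\da z^{\ep,\delta,\mu}|}-\mathcal{C}^{\ep,\delta,\mu}$$
one obtains
$$\om^{\ep,\delta,\mu}=\frac{2|\da z^{\ep,\delta,\mu}|}{Q^2(z^{\ep,\delta,\mu})}\left(\varphi^{\ep,\delta,\mu}+\mathcal{C}^{\ep,\delta,\mu}\right),$$
and since $H^3(\mathbb{T})$ is a Banach algebra it suffices to check that each of the factors $|\da z^{\ep,\delta,\mu}|/Q^2(z^{\ep,\delta,\mu})$, $\varphi^{\ep,\delta,\mu}$, and $\mathcal{C}^{\ep,\delta,\mu}$ lies in $H^3(\mathbb{T})$. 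The middle one is in $H^3(\mathbb{T})$ by hypothesis. There is no circularity: $\om^{\ep,\delta,\mu}$ reappears on the right only inside $\mathcal{C}^{\ep,\delta,\mu}$, and there it enters through $BR(z^{\ep,\delta,\mu},\om^{\ep,\delta,\mu})$, which is then smoothed by the mollifier $\phi_\delta*\phi_\delta$, so the given regularity $\om^{\ep,\delta,\mu}\in H^2(\mathbb{T})$ is more than enough.

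For the prefactor: since $z^{\ep,\delta,\mu}\in H^4(\mathbb{T})$ we have $\da z^{\ep,\delta,\mu}\in H^3(\mathbb{T})$, hence $|\da z^{\ep,\delta,\mu}|^2=(\da z^{\ep,\delta,\mu}_1)^2+(\da z^{\ep,\delta,\mu}_2)^2\in H^3(\mathbb{T})$ by the algebra property; as this quantity is bounded below by a positive constant (a consequence of the arc-chord/normalization condition, which persists on the short time interval under consideration), the composition $|\da z^{\ep,\delta,\mu}|=\sqrt{|\da z^{\ep,\delta,\mu}|^2}$ with a function smooth away from $0$ is again in $H^3(\mathbb{T})$. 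Likewise $Q^2(z^{\ep,\delta,\mu})$ is the composition of $z^{\ep,\delta,\mu}\in H^4\subset H^3$ with the map $w\mapsto\tfrac1{16}|(1+w^4)/w|^2$, which is smooth as long as $z^{\ep,\delta,\mu}\neq q^0=(0,0)$, while its reciprocal is smooth as long as $z^{\ep,\delta,\mu}\neq q^l$ for $l=1,\dots,4$; granted that the curve stays away from all the $q^l$ (again a condition valid for short time), both $Q^2(z^{\ep,\delta,\mu})$ and $1/Q^2(z^{\ep,\delta,\mu})$ lie in $H^3(\mathbb{T})$. Multiplying in the algebra $H^3(\mathbb{T})$ yields $2|\da z^{\ep,\delta,\mu}|/Q^2(z^{\ep,\delta,\mu})\in H^3(\mathbb{T})$.

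For $\mathcal{C}^{\ep,\delta,\mu}$ I would first apply Lemma \ref{lemmaBR} with $k=2$ to get $BR(z^{\ep,\delta,\mu},\om^{\ep,\delta,\mu})\in H^2(\mathbb{T})$, using $z^{\ep,\delta,\mu}\in H^4\hookrightarrow H^3$ and $\om^{\ep,\delta,\mu}\in H^2$. Then $Q^2(z^{\ep,\delta,\mu})BR(z^{\ep,\delta,\mu},\om^{\ep,\delta,\mu})\in H^2(\mathbb{T})$, so $\da\big(Q^2(z^{\ep,\delta,\mu})BR(z^{\ep,\delta,\mu},\om^{\ep,\delta,\mu})\big)\in H^1(\mathbb{T})\subset L^2(\mathbb{T})$; pairing it with $\da z^{\ep,\delta,\mu}/|\da z^{\ep,\delta,\mu}|\in H^3(\mathbb{T})$ leaves an integrable (indeed $H^1$) function of $\beta$. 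The operations $g\mapsto\tfrac{\al+\pi}{2\pi}\int_{-\pi}^{\pi}g\,d\beta$ and $g\mapsto\int_{-\pi}^{\al}g\,d\beta$ then produce a bounded function of $\al$, and the final convolution with the $C^\infty$ even mollifier $\phi_\delta*\phi_\delta$ upgrades it to $C^\infty(\mathbb{T})\subset H^3(\mathbb{T})$. Combining the three steps gives $\om^{\ep,\delta,\mu}=\frac{2|\da z^{\ep,\delta,\mu}|}{Q^2(z^{\ep,\delta,\mu})}\big(\varphi^{\ep,\delta,\mu}+\mathcal{C}^{\ep,\delta,\mu}\big)\in H^3(\mathbb{T})$.

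The only delicate point — the main obstacle — is the one flagged above: one must know that along the regularized flow the curve $z^{\ep,\delta,\mu}$ stays away from the points $q^0,\dots,q^4$ and that $|\da z^{\ep,\delta,\mu}|$ stays bounded away from zero, so that the Nemytskii-type compositions $Q^2(z^{\ep,\delta,\mu})$, $1/Q^2(z^{\ep,\delta,\mu})$ and $|\da z^{\ep,\delta,\mu}|$ are genuinely in $H^3(\mathbb{T})$ and the algebra/composition estimates apply. This holds on the short time interval on which the regularized system is being solved, exactly as in the non-regularized discussion following \eqref{defc}. Everything else is a routine application of the algebra and composition properties of $H^3(\mathbb{T})$ together with the smoothing effect of the mollifiers.
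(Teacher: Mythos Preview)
Your argument is correct for the lemma as literally stated, but it takes a different route from the paper and loses something the paper cares about.

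You invoke the smoothing of the mollifier: since $\mathcal{C}^{\ep,\delta,\mu}=\phi_\delta*\phi_\delta*F$ with $F$ merely bounded, you get $\mathcal{C}^{\ep,\delta,\mu}\in C^\infty\subset H^3$ for free, and the rest is the $H^3$ algebra. This is fine. The paper instead writes $\om=\frac{2|\da z|}{Q^2}(\varphi+\mathcal{C})$, takes three $\alpha$-derivatives \emph{through} the mollifier, and isolates the only term not obviously in $L^2$, namely
\[
-\frac{2|\da z|}{Q^2}\,\phi_\delta*\phi_\delta*\Big(\tfrac{\da z}{|\da z|}\cdot Q^2\,BR(z,\da^3\om)\Big).
\]
Using $BR(z,\da^3\om)=\text{SAFE}+\tfrac12\tfrac{(\da z)^\perp}{|\da z|^2}H(\da^3\om)$, the dangerous piece is $\tfrac{\da z}{|\da z|}\cdot\tfrac{(\da z)^\perp}{|\da z|^2}H(\da^3\om)=0$ by orthogonality. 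So the paper's proof exhibits a structural cancellation rather than relying on the smoothing of $\phi_\delta$.

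The payoff of the paper's approach is the remark following the proof: ``the lemma holds independently of $\delta$, $\mu$ and $\ep$'', i.e.\ the $H^3$ bound on $\om$ is uniform in the regularization parameters. Your smoothing step $\phi_\delta*\phi_\delta*F\in C^\infty$ produces an $H^3$ norm that blows up as $\delta\to 0$, so while your proof closes the lemma and is adequate for the Picard argument at fixed $\delta$, it does not give the uniform Corollary (the Lipschitz map $(z,\om)\mapsto\om$ into $H^3$ with $\delta$-independent constant) that the paper extracts from the same computation and later uses when passing to the limit $\delta\to 0$.
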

\begin{proof}
We can write $\om^{\ep,\delta,\mu}$ as:

$$ \om^{\ep,\delta,\mu} = \frac{2|\da z^{\ep,\delta,\mu}|}{Q^2(z^{\ep,\delta,\mu})}\left(\varphi^{\ep,\delta,\mu} + \mathcal{C}^{\ep,\delta,\mu}\right).$$
Taking three derivatives yields
\begin{align*}
\da^{3} \om^{\ep,\delta,\mu} & = \text{ SAFE } + \frac{2|\da z^{\ep,\delta,\mu}|}{Q^2(z^{\ep,\delta,\mu})} \da^{3} \mathcal{C}^{\ep,\delta,\mu} \\
& = \text{ SAFE } - \frac{2|\da z^{\ep,\delta,\mu}|}{Q^2(z^{\ep,\delta,\mu})} \phi_{\delta} * \phi_{\delta} * \da^{2}\left(
\frac{\da z^{\ep,\delta,\mu}}{|\da z^{\ep,\delta,\mu}|}\cdot (\da (Q^2(z^{\ep,\delta,\mu})
BR(z^{\ep,\delta,\mu},\om^{\ep,\delta,\mu})))\right) \\
& = \text{ SAFE } - \frac{2|\da z^{\ep,\delta,\mu}|}{Q^2(z^{\ep,\delta,\mu})}
\phi_{\delta} * \phi_{\delta} * \left(\frac{\da z^{\ep,\delta,\mu}}{|\da z^{\ep,\delta,\mu}|}\cdot (Q^2(z^{\ep,\delta,\mu})
\da^{3} BR(z^{\ep,\delta,\mu},\om^{\ep,\delta,\mu}))\right) \\
& = \text{ SAFE } - \frac{2|\da z^{\ep,\delta,\mu}|}{Q^2(z^{\ep,\delta,\mu})}
\phi_{\delta} * \phi_{\delta} *\left(
\frac{\da z^{\ep,\delta,\mu}}{|\da z^{\ep,\delta,\mu}|}\cdot (Q^2(z^{\ep,\delta,\mu})
 BR(z^{\ep,\delta,\mu},\da^{3} \om^{\ep,\delta,\mu}))\right)
\end{align*}
where SAFE means bounded in $L^{2}$. Using the representation
$$ BR(z^{\ep,\delta,\mu},\da^{3} \om^{\ep,\delta,\mu}) = \text{ SAFE }  + \frac{1}{2} \frac{(\da z^{\ep,\delta,\mu})^{\perp}}{|\da z^{\ep,\delta,\mu}|^{2}} H(\da^{3} \om^{\ep,\delta,\mu})$$

we get that
\begin{align*}
\da^{3} \om^{\ep,\delta,\mu} & = \text{ SAFE }- \frac{2|\da z^{\ep,\delta,\mu}|}{Q^2(z^{\ep,\delta,\mu})}
 \phi_{\delta} * \phi_{\delta} * \underbrace{ \left(Q^2(z^{\ep,\delta,\mu})
 \frac{1}{2} \frac{(\da z^{\ep,\delta,\mu})^{\perp}}{|\da z^{\ep,\delta,\mu}|^{2}} H(\da^{3} \om^{\ep,\delta,\mu})\cdot \frac{\da z^{\ep,\delta,\mu}}{|\da z^{\ep,\delta,\mu}|}\right)  }_{ = 0}
\end{align*}
and we are done. We should remark that the lemma holds independently of $\delta$, $\mu$ and $\ep$.
\end{proof}

We define a distance between data $(z,\om)$ and $(\underline{z}, \underline{\om})$ by taking
$$ d((z,\om),(\underline{z},\underline{\om})) = \|z-\underline{z}\|_{H^{4}} + \|\om-\underline{\om}\|_{H^{2}} + \|\varphi - \underline{\varphi}\|_{H^{3}}$$

where $\varphi$ and $\underline{\varphi}$ arise from $(z,\om)$ and $(\underline{z},\underline{\om})$ respectively by \eqref{fvar}. Let $XX$ denote the resulting metric space. The proof of Lemma \ref{omegaH3} gives also the following

\begin{corollary}
The map $(z,\om)\mapsto \om$ is Lipschitz from any ball in $XX$ into $H^{3}(\T)$.
\end{corollary}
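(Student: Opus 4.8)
The map in question is $(z,\om)\mapsto\om$, and the content of the corollary is the estimate $\|\om-\underline\om\|_{H^3}\le C\,d\big((z,\om),(\underline z,\underline\om)\big)$ for $(z,\om),(\underline z,\underline\om)$ in a fixed ball of $XX$, where $d$ already controls $\|z-\underline z\|_{H^4}$, $\|\om-\underline\om\|_{H^2}$ and $\|\vp-\underline\vp\|_{H^3}$. The plan is to linearize, around each of the two data points, the identity that was used to prove Lemma \ref{omegaH3}, and then difference. First I would record the representation that makes $\om$ explicit: from \eqref{fvar} one has $\om=\frac{2|z_\al|}{Q^2(z)}\vp+\frac{2|z_\al|^2}{Q^2(z)}c$, and from \eqref{defc} the tangential term $c$ is itself an affine (indeed linear, up to the $z$--dependence) functional of $\om$ through the occurrences of $BR(z,\om)$ in $(Q^2BR)_\beta\cdot\frac{z_\beta}{|z_\beta|^2}$. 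Hence $\om$ solves $\om=G(z,\vp)+\mathcal L_z(\om)$ with $\mathcal L_z$ linear in $\om$.

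Next I would repeat, verbatim, the computation in the proof of Lemma \ref{omegaH3}: take three derivatives of the representation and track where $\da^3\om$ can land. The only top--order occurrence comes from $\da^3 BR(z,\om)$, which by the Birkhoff--Rott structure equals $\frac12\frac{(\da z)^\perp}{|\da z|^2}H(\da^3\om)$ plus terms controlled in $L^2$ by Lemma \ref{lemmaBR}; and this dangerous piece is always dotted against $\frac{\da z}{|\da z|}$, so it is identically zero because $(\da z)^\perp\cdot\da z=0$. Consequently there is no genuine operator acting on $\da^3\om$, and one obtains $\da^3\om=\mathcal N(z,\om,\vp,c)$, where $\mathcal N$ is a finite sum of products of (iterated) Hilbert transforms and singular integral operators with smooth factors built from $z$ and its derivatives up to order $4$, from $\om$ up to order $2$, from $\vp$ up to order $3$, and from $c,\da c,\da^2 c$; moreover $\|\mathcal N(z,\om,\vp,c)\|_{L^2}$ is bounded on balls. (The mollifiers $\phi_\delta\ast\phi_\delta$, $\phi_\mu$ in the regularized system are bounded linear operators and commute with this argument, so the same statement holds uniformly in $\ep,\delta,\mu$ if needed.)

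Then comes the Lipschitz step. For $(z,\om),(\underline z,\underline\om)$ in a ball of $XX$, Lemma \ref{omegaH3} already gives $\om,\underline\om\in H^3$ with norms bounded by the ball radius, and from \eqref{defc} together with the $BR$ bounds of Lemma \ref{lemmaBR} one gets $\|c-\underline c\|_{H^{2}}\le C\,d(\cdot,\cdot)$. Subtracting the identities $\da^3\om=\mathcal N(z,\om,\vp,c)$ and $\da^3\underline\om=\mathcal N(\underline z,\underline\om,\underline\vp,\underline c)$, each term of $\mathcal N$ is a product of factors that depend smoothly (and multilinearly in the top--order arguments) on $(z,\om,\vp,c)$ in the norms $H^4\times H^2\times H^3\times H^2$; differencing a product distributes onto a sum in which exactly one factor is replaced by a difference and the others are frozen in the ball, so $\mathcal N$ is Lipschitz in $(z,\om,\vp)$ with respect to $d$, and hence $\|\da^3\om-\da^3\underline\om\|_{L^2}\le C\,d\big((z,\om),(\underline z,\underline\om)\big)$. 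Since $\|\om-\underline\om\|_{H^2}\le d(\cdot,\cdot)$ trivially, adding the two estimates yields $\|\om-\underline\om\|_{H^3}\le C\,d(\cdot,\cdot)$, proving the corollary.

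\textbf{Main obstacle.} The delicate point is the same ``miracle'' that powers Lemma \ref{omegaH3}: one must be sure that after differencing no top--order term involving $\da^3(\om-\underline\om)$ survives uncontrolled. This is in fact robust, because the cancelling combination $(\da z)^\perp\cdot\da z=0$ holds pointwise at each curve separately, so the dangerous term is identically zero for $(z,\om)$ and for $(\underline z,\underline\om)$ \emph{before} subtracting; where the exact cancellation is replaced in the Lemma's proof by a commutator estimate, one checks that the commutator is itself Lipschitz in the ball (a standard $C^2$--times--$L^2$ commutator bound as in \eqref{commu}). Verifying this bookkeeping term by term is the bulk of the work, but it is routine once the structure of $\mathcal N$ is laid out.
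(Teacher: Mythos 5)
Your proposal is correct and takes essentially the same route as the paper, which simply remarks that ``the proof of Lemma \ref{omegaH3} gives also the following'': you isolate the representation $\om=\frac{2|z_\al|}{Q^2}(\vp+\mathcal{C})$, note that the only place $\da^3\om$ could appear at top order in $\da^3\mathcal{C}$ is the term $Q^2\,\tfrac12\tfrac{(\da z)^\perp}{|\da z|^2}H(\da^3\om)\cdot\tfrac{\da z}{|\da z|}$, which vanishes pointwise, and then difference the resulting safe identity for the two data points. The observation you single out as the ``main obstacle'' — that the cancellation $(\da z)^\perp\cdot\da z=0$ holds at each curve separately, so no uncontrolled $\da^3(\om-\underline\om)$ survives after differencing — is exactly the point, and the remaining bookkeeping is the same multilinear Lipschitz argument the paper intends.
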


We note that throughout this section we will repeatedly use the following commutator estimate for convolutions:

\begin{equation}
\label{commmol}
 \|\phi_{\delta} * (\da f g) - g \phi_{\delta} * (\da f)\|_{L^{2}} \leq C\|\da g\|_{L^{\infty}} \|f\|_{L^{2}},
\end{equation}

where the constant $C$ is independent of $\delta, f$ and $g$. We can now operate to get the following expression for $\varphi^{\ep,\delta,\mu}$:

\begin{align*}
\partial_t \varphi^{\ep,\delta,\mu} & = \frac{Q(z^{\ep,\delta,\mu})\partial_t Q(z^{\ep,\delta,\mu}) \om^{\ep,\delta,\mu}}{|\da z^{\ep,\delta,\mu}|}
- \frac{Q^2(z^{\ep,\delta,\mu})\om^{\ep,\delta,\mu}}{2|\da z^{\ep,\delta,\mu}|^{3}}\da z^{\ep,\delta,\mu} \cdot \partial_t \da z^{\ep,\delta,\mu}
+ \frac{Q^2(z^{\ep,\delta,\mu})\partial_t \om^{\ep,\delta,\mu}}{2|\da z^{\ep,\delta,\mu}|}
- \partial_t \mathcal{C}^{\ep,\delta,\mu} \\
& = \phi_{\delta} * \phi_{\delta} * \left(\frac{Q^{2}(z^{\ep,\delta,\mu})}{2|\da z^{\ep,\delta,\mu}(\al,t)|}\left[-2BR(z^{\ep,\delta,\mu},\om^{\ep,\delta,\mu})_t \cdot z^{\ep,\delta,\mu}_{\al}\right.\right. \\
 & - 2Q(z^{\ep,\delta,\mu})Q(z^{\ep,\delta,\mu})_{\al}|BR(z^{\ep,\delta,\mu},\om^{\ep,\delta,\mu})|^{2} -\da\left(\frac{(\vp^{\ep,\delta,\mu})^{2}}{Q(z^{\ep,\delta,\mu})^{2}}\right) \\
& +\frac{c^{\ep,\delta,\mu}|z^{\ep,\delta,\mu}_\al|^2}{\pi Q(z^{\ep,\delta,\mu})^2}\int_{-\pi}^\pi(Q(z^{\ep,\delta,\mu})^2BR(z^{\ep,\delta,\mu},\om^{\ep,\delta,\mu}))_\be\cdot \frac{z^{\ep,\delta,\mu}_\be}{|z^{\ep,\delta,\mu}_\be|^2} d\be \\
& - \frac{4c^{\ep,\delta,\mu}Q(z^{\ep,\delta,\mu})_\al BR(z^{\ep,\delta,\mu},\om^{\ep,\delta,\mu})\cdot z^{\ep,\delta,\mu}_\al}{Q(z^{\ep,\delta,\mu})}\\
& \left.\left.-\frac{2(c^{\ep,\delta,\mu})^2|z^{\ep,\delta,\mu}_\al|^2Q(z^{\ep,\delta,\mu})_\al}{Q(z^{\ep,\delta,\mu})^3}- 2\da\left(gP_{2}^{-1}(z^{\ep,\delta,\mu})\right)\right] \right) \\
& + \phi_{\delta} * \phi_{\delta} * \left(Q(z^{\ep,\delta,\mu})\partial_t Q(z^{\ep,\delta,\mu})\frac{\om^{\ep,\delta,\mu}}{|\da z^{\ep,\delta,\mu}|} - \frac{Q^2(z^{\ep,\delta,\mu})\om^{\ep,\delta,\mu}}{2|\da z^{\ep,\delta,\mu}|^3} \da z^{\ep,\delta,\mu} \cdot \da \partial_t z^{\ep,\delta,\mu}\right) \\
& - \ep\Lambda (\phi_{\mu} * \phi_{\mu} * \varphi^{\ep,\delta,\mu})
 - \partial_ t \mathcal{C}^{\ep,\delta,\mu}.
\end{align*}

The RHS of the evolution equations for $z^{\ep,\delta,\mu}$ and $\varphi^{\ep,\delta,\mu}$ are Lipschitz in the spaces $H^{4}(\T)$ and $H^{3+\frac{1}{2}}(\T)$ since they are mollified. For the case of $\omega^{\ep,\delta,\mu}$ (Lipschitz in the space $H^{2}(\mathbb{T})$) we use that for $\delta$ small enough $\phi_{\delta} * \phi_{\delta}$ is close to the identity and the a priori bounds. In all of the cases we have taken advantage of Lemma \ref{omegaH3}. Therefore we can solve (\ref{epsdelmuzt}-\ref{epsdelmuwt}) for short time, thanks to Picard's theorem.

Now, we can perform energy estimates as in the a priori case to get uniform bounds in $\mu$ and we can let $\mu$ go to zero. The energy estimates that we can get are the following:

\begin{align*}
& \frac{d}{dt}\left(\|z^{\ep,\delta,\mu}\|^2_{H^4}+\|\F(z^{\ep,\delta,\mu})\|^2_{L^\infty}+\|\om^{\ep,\delta,\mu}\|^2_{H^{2}}+\|\varphi^{\ep,\delta,\mu}\|^2_{H^{3+\frac{1}{2}}}
+\sum_{l=0}^{4}\frac{1}{m^{\ep,\delta,\mu}(q^l)}\right)(t) \\
&\leq
C(\ep,\delta)\left(\|z^{\ep,\delta,\mu}\|^2_{H^4}+\|\F(z^{\ep,\delta,\mu})\|^2_{L^\infty}+
\|\om^{\ep,\delta,\mu}\|^2_{H^{2}}+\|\varphi^{\ep,\delta,\mu}\|^2_{H^{3+\frac{1}{2}}}+\sum_{l=0}^4\frac{1}{m^{\ep,\delta,\mu}(q^l)}\right)^j(t).
\end{align*}

We should note that for the new system without the $\phi_\mu$ mollifier, the length of the tangent vector $|\da z^{\ep,\delta}|$ is now constant in space and depends only on time. Lemma \ref{omegaH3} still applies and we can still perform energy estimates as in the a priori case. The only difference relies on the fact that we should have to move the mollifiers and apply the estimate \eqref{commmol}. We should also remark that because of the dissipative term $\ep \Lambda \varphi^{\ep,\delta}$ it is enough to use the following estimate

$$ \frac{1}{2}\frac{d}{dt}\|\Lambda^{1/2}(\da^{3} \varphi^{\ep,\delta})\|_{L^{2}}^{2} = \int \Lambda(\da^{3} \varphi^{\ep,\delta}) \partial_t \da^{3} \varphi^{\ep,\delta}d\al
 \leq \frac{\ep}{64} \|\Lambda(\da^{3} \varphi^{\ep,\delta})\|_{L^{2}}^{2} + C(\ep) \| \partial_t \da^{3} \varphi^{\ep,\delta}\|_{L^{2}}^{2}$$

and hence require only that $\partial_t \varphi^{\ep,\delta} \in H^{3}(\mathbb{T})$ (instead of the $H^{3+\frac{1}{2}}(\mathbb{T})$ that was required before) except for the transport term that can be estimated as in subsection \ref{subsubenergyphi}. The estimations are performed following exactly the same steps of subsection \ref{subsection4d}. More precisely, we can get the following energy estimates:

\begin{align*}
& \frac{d}{dt}\left(\|z^{\ep,\delta}\|^2_{H^4}+\|\F(z^{\ep,\delta})\|^2_{L^\infty}+\|\om^{\ep,\delta}\|^2_{H^{2}}+\|\varphi^{\ep,\delta}\|^2_{H^{3+\frac{1}{2}}}
+\sum_{l=0}^{4}\frac{1}{m^{\ep,\delta}(q^l)}\right)(t) \\
&\leq
C(\ep)\left(\|z^{\ep,\delta}\|^2_{H^4}+\|\F(z^{\ep,\delta})\|^2_{L^\infty}+
\|\om^{\ep,\delta}\|^2_{H^{2}}+\|\varphi^{\ep,\delta}\|^2_{H^{3+\frac{1}{2}}}+\sum_{l=0}^4\frac{1}{m^{\ep,\delta}(q^l)}\right)^j(t).
\end{align*}

Under these conditions, we can let $\delta$ go to zero.

Finally, let $z^\ep (\al,t)$ be a solution of the following system (compare with \eqref{paraomt}):
\begin{align}
\label{epszt}
\begin{split}z^{\ep}_t(\al,t)&=Q^2(z^{\ep})(\al,t)BR(z^{\ep},\om^{\ep})(\al,t)+c^{\ep}(\al,t)\da z^{\ep}(\al,t),
\end{split}
\end{align}
\begin{align}
\begin{split}
\om^{\ep}_{t}  =& -2BR(z^{\ep},\om^{\ep})_t \cdot z^{\ep}_{\al} - 2Q(z^{\ep})Q(z^{\ep})_{\al}|BR(z^{\ep},\om^{\ep})|^{2} -\da\left(\frac{(\vp^{\ep})^{2}}{Q(z^{\ep})^{2}}\right)\\
&+\frac{c^{\ep}|z^{\ep}_\al|^2}{\pi Q(z^{\ep})^2}\int_{-\pi}^\pi(Q(z^{\ep})^2BR(z^{\ep},\om^{\ep}))_\be\cdot \frac{z^{\ep}_\be}{|z^{\ep}_\be|^2} d\be-\frac{4c^{\ep}Q(z^{\ep})_\al BR(z^{\ep},\om^{\ep})\cdot z^{\ep}_\al}{Q(z^{\ep})}\\
& -\frac{2(c^{\ep})^2|z^{\ep}_\al|^2Q(z^{\ep})_\al}{Q(z^{\ep})^3}- 2\da\left(gP_{2}^{-1}(z^{\ep})\right) - 2\ep\frac{|\da z^{\ep}|}{Q^2(z^{\ep})}\Lambda \varphi^{\ep},
\label{epswt}
\end{split}
\end{align}
$z^\ep(\al,0)=z_0(\al)$ and $\om^\ep(\al,0)=\om_0(\al)$ for $\ep>0$, where

\begin{align*}
c^{\ep}(\al)&=\frac{\al+\pi}{2\pi}\int_{-\pi}^\pi\frac{\partial_{\beta} z^{\ep}(\be))}{|\partial_{\beta}
z^{\ep}(\be)|^2}\cdot \partial_{\beta} (Q^2(z^{\ep})(\be) BR(z^{\ep},\om^{\ep})(\be)) d\be \\
& -\int_{-\pi}^\al
\frac{\partial_{\beta} z^{\ep}(\beta)}{|\partial_{\beta} z^{\ep}(\beta)|^2}\cdot\partial_{\beta} (Q^2(z^{\ep})(\beta)
BR(z^{\ep},\om^{\ep})(\beta)) d\beta,
\end{align*}
$$
\varphi^\ep=\D\frac{Q^2(z^{\ep})\om^\ep}{2|\da z^\ep|}-|\da z^\ep|c^\ep,\qquad B^\ep(t)=\frac{1}{2\pi}\int_{-\pi}^\pi \!\frac{\da z^\ep(\al,t)}{|\da
z^\ep(\al,t)|^2}\cdot \da(Q^2(z^{\ep}) BR(z^\ep,\om^\ep))(\al,t) d\al.
$$

Proceeding as in section \ref{subdefphi} (compare with equation \eqref{phiat412}) we find
\begin{align}
\begin{split}\label{fephitep}
\da \vp^{\ep}_{ t} = & -B^{\ep}(t) \vp^{\ep}_{\al} - \frac{\da^{2}((\vp^{\ep})^{2})}{2|\da z^{\ep}|}
+ \da\left(\frac{\da Q(z^{\ep})}{|\da z^{\ep}|Q(z^{\ep})}(\vp^{\ep})^{2}\right) \\
& - Q(z^{\ep})^{2} \partial_t BR(z^{\ep},\om^{\ep}) \cdot \dpa z^{\ep} \frac{\da^{2} z^{\ep} \cdot \dpa z^{\ep}}{|\da z^{\ep}|^{3}} -
\partial_t(|\da z^{\ep}|B^{\ep}(t)) \\
& + \da(Q(z^{\ep})^{2} BR(z^{\ep},\om^{\ep})) \cdot \dpa z^{\ep} \frac{\partial_t \da z^{\ep} \cdot \dpa z^{\ep}}{|\da z^{\ep}|^{3}}
+ 2\da (Q(z^{\ep})\partial_t Q(z^{\ep}) BR(z^{\ep},\om^{\ep}))\cdot \frac{\da z^{\ep}}{|\da z^{\ep}|} \\
& - \da \left(Q(z^{\ep})^{2}\frac{\da (gP^{-1}_{2}(z^{\ep}))}{|\da z^{\ep}|}\right)
+ \da \left(Q(z^{\ep})\partial_t Q(z^{\ep}) \frac{\om^{\ep}}{|\da z^{\ep}|}\right) \\
& - \da \left(2c^{\ep} BR(z^{\ep},\om^{\ep}) \cdot \frac{\da z^{\ep}}{|\da z^{\ep}|}Q(z^{\ep}) \da Q(z^{\ep})\right)
- \da \left(\frac{\da Q(z^{\ep})}{Q(z^{\ep})}(c^{\ep})^{2}|\da z^{\ep}|\right) \\
& - \da \left(\frac{Q(z^{\ep})^{3}}{|\da z^{\ep}|}|BR(z^{\ep},\om^{\ep})|^{2}\da Q(z^{\ep})\right) -\ep\Lambda\da\varphi^\ep.
\end{split}
\end{align}
We also define (compare with equation \eqref{R-T})
\begin{align*}
\begin{split}
\sigma^\ep &=(\partial_t BR(z^\ep,\om^\ep)+\frac{\varphi^\ep}{|\da z^\ep|}\da BR(z^\ep,\om^\ep))\cdot \dpa z^\ep
+\frac12\frac{\om^\ep}{|\da z^\ep|^2}(\da
z^\ep_t+\frac{\varphi^\ep}{|\da z^\ep|}\da^2 z^\ep)\cdot
\dpa z^\ep \\
&\quad + Q(z^{\ep})\left|BR(z^{\ep},\om^{\ep}) + \frac{\om^{\ep}}{2|\da z^{\ep}|^{2}}\da z^{\ep}\right|^{2}(\nabla Q(z^{\ep})) \cdot \dpa z^{\ep}
+g(\nabla P_{2}^{-1}(z^{\ep})) \cdot \dpa z^{\ep}.
\end{split}
\end{align*}

\begin{rem}
The system (\ref{epszt}-\ref{epswt}) is analogous to the system considered in \cite[Section 8]{Cordoba-Cordoba-Gancedo:interface-water-waves-2d}. We point out an unfortunate typographical error in that section; the Laplacian should have been written as the square root of the Laplacian.
\end{rem}

For this $\varepsilon$-system (\ref{epszt}-\ref{epswt}) we now know that there is local-existence for initial data
satisfying $\F(z_0)(\al,\beta)< \infty$ even if
$\sigma^\ep(\al,0)$ does not have the proper sign. In the
following we shall show briefly  how to obtain a solution of the
regularized system with $z^{\ep}\in C([0,T^{\ep}],H^{k}), \varphi^\ep\in
C([0,T^{\ep}],H^{k-\frac{1}{2}}), \om^{\ep}\in C([0,T^{\ep}],H^{k-2})$ for $k\geq 4$.

The next step is to integrate the system during a time $T$
independent of $\ep$. We will show that for this system we have
\begin{align}
\begin{split}\label{ntniepsilon}
\frac{d}{dt}E(t)&\leq CE^{p}(t),
\end{split}
\end{align}
where $E(t)$ is given by the analogous formula \eqref{E} for
the $\ep$-system, and $C$ and $p$ are constants independent of $\ep$.

  In the following we shall see what is the impact of the $\ep$ system on the a priori estimates and check that there is no practical impact for sufficiently small $\ep$. To do that, we will show the corresponding uniform estimates for $k=4$ and
leave to the reader the remaining easier cases. Let us consider
the one corresponding to $I_3$ in section \ref{subsubenergyphi}, we have
$$
I^\ep_3=-2\int_{-\pi}^\pi\frac{1}{|\da z^\ep|^3}\la(\da^3\varphi^\ep(\al))\da^2(Q^2(z^{\ep})\sigma^\ep\da^2z^\ep\cdot\dpa z^\ep)d\al.
$$
Proceeding in the same way as before, we can perform the same splittings and get uniform bounds such that
$I^\ep_3=-S^\ep+M_4^\ep+\mbox{``bounded terms''}$ where
$S^\ep$ corresponds to $S$ in \eqref{fS},
$$ |\text{``bounded terms''}| \leq CE^{p}(t),$$
and

$$
M_4^\ep = -2\ep\int_{-\pi}^\pi\frac{\da^2z^\ep\cdot\dpa z^\ep}{|\da z^\ep|^2}H(\da^3\varphi^\ep)H(\Lambda\da^3\varphi^\ep) d\al.
$$

Then we can write $M_4^{\ep}$ as follows
$$
M_4^\ep = -2\ep\int_{-\pi}^\pi\la^{\frac12}\Big(\frac{\da^2z^\ep\cdot\dpa
z^\ep}{|\da z^\ep|^2}
H(\da^3\varphi^\ep)\Big)\la^{\frac12}(H\da^3\varphi^\ep) d\al,
$$
and therefore, for small $\varepsilon$
$$
M_4^{\ep} \leq \|\la^{\frac12}\da^3\varphi^\ep\|^2_{L^2}+\mbox{``bounded terms''},
$$
which gives
\begin{align*}
\begin{split}
\frac{d}{dt}E(t)&\leq
CE^p(t)- \frac{\ep}{2}\|\Lambda(\da^{3}\varphi)\|_{L^{2}}^{2}.
\end{split}
\end{align*}
This finally shows \eqref{ntniepsilon} and therefore
$$
E(t)\leq (Ct(1-p)+E^{1-p}(0))^{1/(1-p)}.
$$
Now we are in position to extend the time of existence $T^\ep$ so
long as the above estimate works and obtain  a time $T$
dependent only on the initial data (arc-chord,
Rayleigh-Taylor, distance to the points $q^{0}, \ldots, q^{4}$, and Sobolev norms of $z, \omega,$ and $\varphi$). We can let $\ep$ tend to $0$, and get a solution of the original system. This concludes the proof.

\subsection*{{\bf Acknowledgements}}

\smallskip

 AC, DC, FG and JGS were partially supported by the grant {\sc MTM2008-03754} of the MCINN (Spain) and
the grant StG-203138CDSIF  of the ERC. CF was partially supported by
NSF grant DMS-0901040. FG was partially supported by NSF grant DMS-0901810. We are grateful to CCC of Universidad
Aut\'onoma de Madrid for computing facilities.

\bibliographystyle{abbrv}
\bibliography{references}

\begin{tabular}{ll}
\textbf{Angel Castro} &  \\
{\small D\'epartement de Math\'ematiques et Applications} & \\
{\small \'Ecole Normale Sup\'erieure} &\\
{\small 45, Rue d'Ulm, 75005 Paris} & \\
{\small Email: castro@dma.ens.fr} & \\
   & \\
\textbf{Diego C\'ordoba} &  \textbf{Charles Fefferman}\\
{\small Instituto de Ciencias Matem\'aticas} & {\small Department of Mathematics}\\
{\small Consejo Superior de Investigaciones Cient\'ificas} & {\small Princeton University}\\
{\small C/ Nicol\'{a}s Cabrera, 13-15} & {\small 1102 Fine Hall, Washington Rd, }\\
{\small Campus Cantoblanco UAM, 28049 Madrid} & {\small Princeton, NJ 08544, USA}\\
{\small Email: dcg@icmat.es} & {\small Email: cf@math.princeton.edu}\\
 & \\
\textbf{Francisco Gancedo} &  \textbf{Javier G\'omez-Serrano}\\
{\small Departamento de An\'alisis Matem\'atico} & {\small Instituto de Ciencias Matem\'aticas}\\
{\small Universidad de Sevilla} & {\small Consejo Superior de Investigaciones Cient\'ificas}\\
{\small C/ Tarfia, s/n } & {\small C/ Nicol\'{a}s Cabrera, 13-15} \\
{\small Campus Reina Mercedes, 41012 Sevilla}  & {\small Campus Cantoblanco UAM, 28049 Madrid} \\
{\small Email: fgancedo@us.es} & {\small Email: javier.gomez@icmat.es}\\
\end{tabular}

\end{document}